\documentclass[a4paper,reqno]{amsart}

\textwidth16cm \textheight21.25cm \oddsidemargin-0.1cm
\evensidemargin-0.1cm

\usepackage[english]{babel}
\usepackage[utf8]{inputenc}
\usepackage{amsfonts,amssymb,amsmath,amsthm}
\usepackage{booktabs}
\usepackage[all,rotate]{xy}
\usepackage{fixltx2e}
\usepackage{xspace}
\usepackage{aliascnt}

\usepackage{xcolor}
\definecolor{linkblue}{RGB}{1,1,255}
\usepackage[pdfborder={0 0 0.8 [2 3]}]{hyperref}
\hypersetup{pdftitle={Sets of lengths in maximal orders in central simple algebras},
            pdfauthor={Daniel Smertnig}}





\theoremstyle{definition}
\newtheorem{defi}{Definition}[section]

\newaliascnt{defiprop}{defi}

\aliascntresetthe{defiprop}

\newaliascnt{defilemma}{defi}
\newtheorem{defilemma}[defilemma]{Definition \& Lemma}
\aliascntresetthe{defilemma}

\theoremstyle{plain}

\newaliascnt{thm}{defi}
\newtheorem{thm}[thm]{Theorem}
\newtheorem*{thm*}{Theorem}
\aliascntresetthe{thm}

\newaliascnt{lemma}{defi}
\newtheorem{lemma}[lemma]{Lemma}
\aliascntresetthe{lemma}

\newaliascnt{prop}{defi}
\newtheorem{prop}[prop]{Proposition}
\aliascntresetthe{prop}


\newaliascnt{cor}{defi}
\newtheorem{cor}[cor]{Corollary}
\aliascntresetthe{cor}


\theoremstyle{remark}
\newaliascnt{remark}{defi}
\newtheorem{remark}[remark]{Remark}
\aliascntresetthe{remark}

\newaliascnt{exm}{defi}
\newtheorem{exm}[exm]{Example}
\aliascntresetthe{exm}

\addto\extrasenglish{

}

\newcommand{\bF}{ \mathbb{F} }

\newcommand{\bN}{ \mathbb{N} }

\newcommand{\bQ}{ \mathbb{Q} }

\newcommand{\bZ}{ \mathbb{Z} }
\newcommand{\bG}{ \mathbb{G} }

\newcommand{\cA}{ \mathcal{A} }
\newcommand{\cB}{ \mathcal{B} }
\newcommand{\cC}{ \mathcal{C} }

\newcommand{\cF}{ \mathcal{F} }
\newcommand{\cG}{ \mathcal{G} }
\newcommand{\cH}{ \mathcal{H} }
\newcommand{\cI}{ \mathcal{I} }

\newcommand{\cM}{ \mathcal{M} }

\newcommand{\cL}{ \mathcal{L} }
\newcommand{\cP}{ \mathcal{P} }
\newcommand{\cQ}{ \mathcal{Q} }
\newcommand{\cO}{ \mathcal{O} }

\newcommand{\cU}{ \mathcal{U} }

\newcommand{\cX}{ \mathcal{X} }


\newcommand{\sL}{ \mathsf{L} }
\newcommand{\sZ}{ \mathsf{Z} }

\newcommand{\fD}{ \mathfrak{D} }
\newcommand{\fN}{ \mathfrak{N} }
\newcommand{\fP}{ \mathfrak{P} }

\newcommand{\fS}{ \mathfrak{S} }

\newcommand{\fa}{ \mathfrak{a} }
\newcommand{\ff}{ \mathfrak{f} }
\newcommand{\fm}{ \mathfrak{m} }

\newcommand{\fp}{ \mathfrak{p} }

\newcommand{\fr}{ \mathfrak{r} }

\newcommand{\ideal}{ \,\triangleleft\, }

\newcommand{\quo}{ \mathbf{q} }

\newcommand{\lbar}{ \overline }
\renewcommand{\vec}[1]{\mathbf{#1}}
\providecommand{\length}[1]{\lvert#1\rvert}
\providecommand{\abs}[1]{\lvert#1\rvert}

\providecommand{\card}[1]{\lvert#1\rvert}

\providecommand{\val}{ \mathsf{v} }

\newcommand{\isomto}{\overset{\sim}{\rightarrow}}
\newcommand{\f}{\frac}

\providecommand{\meet}{\wedge}
\providecommand{\join}{\vee}

\newcommand{\lc}[2]{{({#1}\!\!:_l\!{#2})}}
\newcommand{\rc}[2]{{({#1}\!\!:_r\!{#2})}}

\DeclareMathOperator{\id}{id}

\DeclareMathOperator{\ord}{ord}
\DeclareMathOperator{\chr}{char}

\DeclareMathOperator{\nr}{nr}
\DeclareMathOperator{\tr}{tr}
\DeclareMathOperator{\Trace}{Tr}
\DeclareMathOperator{\Norm}{N}
\DeclareMathOperator{\Cl}{\mathcal C}
\DeclareMathOperator{\LCl}{\mathcal{LC}}

\DeclareMathOperator{\lcm}{lcm}
\DeclareMathOperator{\cntr}{center}
\DeclareMathOperator{\gen}{gen}
\DeclareMathOperator{\Pic}{Pic}

\renewcommand{\theenumi}{\arabic{enumi}}
\renewcommand{\labelenumi}{\theenumi.}

\newcommand{\enumequiv}{%
  \renewcommand{\theenumi}{(\alph{enumi})}%
  \renewcommand{\labelenumi}{\theenumi}%
}

\renewcommand{\theenumii}{(\roman{enumii})}
\renewcommand{\labelenumii}{\theenumii}

\def\rfop{*}
\makeatletter
\newcommand\rigidfactorization[2][]{%
  \def\rf@delim{\rfop}
  \newif\ifrf@notfirst
  #1
  \@for\next:=#2\do{%
    \ifrf@notfirst
      \rf@delim
    \fi
    \rf@notfirsttrue
    \next
  }%
}
\makeatother
\newcommand\rf\rigidfactorization

\newcommand{\cgrp}{C}

\begin{document}

\title{Sets of lengths in maximal orders in central simple algebras} 

\author{Daniel Smertnig}
\address{Institut f\"ur Mathematik und Wissenschaftliches Rechnen \\
Karl-Franzens-Uni\-ver\-si\-t\"at Graz \\
Heinrichstra\ss e 36\\
8010 Graz, Austria} \email{daniel.smertnig@uni-graz.at}
\subjclass[2010]{16H10, 16U30, 20M12, 20M13, 11R54}
\thanks{The author is supported by the Austrian Science Fund (FWF): W1230, Doctoral Program ``Discrete Mathematics''}

\keywords{sets of lengths, maximal orders, global fields, Brandt groupoid, divisorial ideals, Krull monoids}

\begin{abstract}
  Let $\cO$ be a holomorphy ring in a global field $K$, and $R$ a classical maximal $\cO$-order in a central simple algebra over $K$.
  We study sets of lengths of factorizations of cancellative elements of $R$ into atoms (irreducibles).
  In a large majority of cases there exists a transfer homomorphism to a monoid of zero-sum sequences over a ray class group of $\cO$,
  which implies that all the structural finiteness results for sets of lengths---valid for commutative Krull monoids with finite class group---hold also true for $R$.
  If $\cO$ is the ring of algebraic integers of a number field $K$, we prove that in the remaining cases no such transfer homomorphism can exist and that several invariants dealing with sets of lengths are infinite.
\end{abstract}

\maketitle

\section{Introduction}
Let $H$ be a (left- and right-) cancellative semigroup and $H^\times$ its group of units. An element $u \in H \setminus H^\times$ is called \emph{irreducible} (or an \emph{atom}) if $u=ab$ with $a,b \in H$ implies that $a \in H^\times$ or $b \in H^\times$. If $a \in H \setminus H^\times$, then $l \in \bN$ is a \emph{length of $a$} if there exist atoms $u_1, \ldots, u_l \in H$ with $a = u_1 \cdot\ldots\cdot u_l$, and the \emph{set of lengths} of $a$, written as $\sL (a)$, consists of all such lengths. If there is a non-unit $a \in H$ with $\card{\sL(a)} > 1$, say $1 < k < l \in \sL(a)$, then for every $n \in \bN$, we have $\sL(a^n) \supset \{\, kn + \nu (l-k) \mid \nu \in [0, n] \,\}$, which shows that sets of lengths become arbitrarily large.
If $H$ is commutative and satisfies the ACC on divisorial ideals, then all sets of lengths are finite and non-empty.

Sets of lengths (and all invariants derived from them, such as the set of distances) are among the most investigated invariants in factorization theory. So far research has almost been entirely devoted to the commutative setting, and it has focused on commutative noetherian domains, commutative Krull monoids, numerical monoids, and others (cf. \cite{andersondd97,chapman05,ghk06,geroldinger-hassler08,geroldinger-grynkiewicz09,fontana-houston-lucas12,baeth-wiegand13}). Recall that a commutative noetherian domain is a Krull domain if and only if the monoid of non-zero elements is a Krull monoid and this is the case if and only if the domain is integrally closed. Suppose that $H$ is a Krull monoid (so completely integrally closed and the ACC on divisorial two-sided ideals holds true). Then the monoid of divisorial two-sided ideals is a free abelian monoid. If $H$ is commutative (or at least normalizing), this gives rise to the construction of a transfer homomorphism $\theta \colon H \to \cB(G_P)$, where $\cB(G_P)$ is the monoid of zero-sum sequences over a subset $G_P$ of the class group $G$ of $H$. Transfer homomorphisms preserve sets of lengths, and if $G_P$ is finite, then $\cB(G_P)$ is a finitely generated commutative Krull monoid, whose sets of lengths can be studied with
methods from combinatorial number theory. This approach has lead to a large variety of structural results for sets of lengths in commutative Krull monoids (see \cite{ghk06, geroldinger09} for an overview).

Only first hesitant steps were taken so far to study factorization properties in a non-commutative setting (for example, quaternion orders are investigated in \cite{estes-nipp89, estes91, conway-smith03}), semifirs in (\cite{cohn85,cohn06}, semigroup algebras in \cite{jespers-wang01}). The present paper provides an in-depth study of sets of lengths in classical maximal orders over holomorphy rings in global fields.

Let $\cO$ be a commutative Krull domain with quotient field $K$, $A$ a central simple algebra over $K$, $R$ a maximal order in $A$, and $R^{\bullet}$ the semigroup of cancellative elements (equivalently, $R$ is a PI Krull ring). Any approach to study sets of lengths, which runs as described above and involves divisorial two-sided ideals, is restricted to normalizing Krull monoids (\cite[Theorem 4.13]{geroldinger13}). For this reason we develop the theory of divisorial one-sided ideals. In \autoref{sec:arithinvariants} we fix our terminology in the setting of cancellative small categories. Following ideas of Asano and Murata \cite{asano-murata53} and partly of Rehm \cite{rehm77-1, rehm77-2}, we provide in \autoref{sec:fact} a factorization theory of integral elements in arithmetical groupoids, and introduce an abstract transfer homomorphism for a subcategory of such a groupoid (\autoref{thm:abstract-transfer}). In \autoref{sec:ideals} the divisorial one-sided ideal theory of maximal orders in quotient semigroups is given, and \autoref{prop:ideals-are-lgrpd} establishes the relationship with arithmetical groupoids.
\autoref{thm:transfer-semigroup} is a main result in the abstract setting of arithmetical maximal orders (Remarks \hyperref[rem:ideal-struct:normalizing]{\ref*{rem:ideal-struct}.\ref*{rem:ideal-struct:normalizing}} and \hyperref[rtr:normalizing]{\ref*{rem:rtr}.\ref*{rtr:normalizing}} reveal how the well-known transfer homomorphisms for normalizing Krull monoids fit into our abstract theory). For maximal orders over commutative Krull domains, we see that all sets of lengths are finite and non-empty (\autoref{cor:pi-krull-ring}).
In \autoref{sec:maxord-transfer} we demonstrate that classical maximal orders over holomorphy rings in global fields fulfill the abstract assumptions of \autoref{thm:transfer-semigroup}, which implies the following structural finiteness results on sets of lengths.

\begin{thm} \label{thm:main-transfer}
  Let $\cO$ be a holomorphy ring in a global field $K$, $A$ a central simple algebra over $K$, and $R$ a classical maximal $\cO$-order of $A$.
  Suppose that every stably free left $R$-ideal is free. Then there exists a transfer homomorphism $\theta \colon R^\bullet \to \cB(\Cl_A(\cO))$, where
  \[
    \Cl_A(\cO) = \cF^\times(\cO) \,/\, \{\, a\cO \mid a \in K^\times,\, a_v > 0 \text{ for all archimedean places $v$ of $K$ where $A$ is ramified.} \,\}
  \]
  is a ray class group of $\cO$, and $\cB(\Cl_A(\cO))$ is the monoid of zero-sum sequences over $\Cl_A(\cO)$. In particular,
  \begin{enumerate}
    \item The set of distances $\Delta(R^\bullet)$ is a finite interval, and if it is non-empty, then $\min \Delta(R^\bullet) = 1$.
    \item For every $k \in \bN$, the union of sets of lengths containing $k$, denoted by $\cU_k(R^\bullet)$, is a finite interval.
    \item There is an $M \in \bN_0$ such that for every $a \in R^\bullet$ the set of lengths $\sL(a)$ is an AAMP with difference $d \in \Delta (R^{\bullet})$ and bound $M$.
  \end{enumerate}
\end{thm}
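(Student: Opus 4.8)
The plan is to derive the theorem from the abstract transfer result \autoref{thm:transfer-semigroup}. First I would check that $R^\bullet$ satisfies its hypotheses. A classical maximal $\cO$-order in the central simple $K$-algebra $A$ is a PI Krull ring whose quotient ring is $A$, so by \autoref{prop:ideals-are-lgrpd} the divisorial fractional left $R$-ideals form an arithmetical groupoid and $R^\bullet$ is an arithmetical maximal order in the sense of \autoref{sec:ideals}; moreover all sets of lengths in $R^\bullet$ are then finite and non-empty by \autoref{cor:pi-krull-ring}. Hence \autoref{thm:transfer-semigroup} applies and yields a transfer homomorphism $R^\bullet \to \cB(G)$, where $G$ is the abelian ``class group'' attached to that groupoid. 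I would point out that the assumption that every stably free left $R$-ideal is free is exactly what is needed here: it ensures that the subgroup of divisorial ideals identified to $0$ in \autoref{thm:transfer-semigroup} is precisely the set of principal ones, so that $G$ does not shrink to a proper quotient --- without this assumption the conclusion genuinely fails, as happens for totally definite quaternion algebras.

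The central step is the explicit computation $G \cong \Cl_A(\cO)$, and I expect this to be the main obstacle. Locally it is painless: for each maximal ideal $\fp$ of $\cO$ the completion $R_\fp$ is a maximal order in the central simple $K_\fp$-algebra $A_\fp$, hence Morita equivalent to a maximal order in a division algebra over the local field $K_\fp$, and the divisorial one-sided ideal theory of such an order is ``principal'' --- its ideals are linearly ordered powers of the radical --- so $\fp$ contributes nothing to $G$. Globally, $G$ is therefore the group of divisorial fractional left $R$-ideals modulo the principal ones, and I would identify it with a ray class group of $\cO$ through the reduced norm $\nr\colon A^\times \to K^\times$: the Hasse--Schilling--Maass norm theorem together with strong approximation (Eichler's theorem on the reduced norms of maximal orders over global fields) shows that this quotient is $\cF^\times(\cO)$ modulo the subgroup generated by the $a\cO$ with $a \in K^\times$ positive at every archimedean place of $K$ that is ramified in $A$. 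The archimedean modulus is forced because at such a place --- necessarily real --- the completion of $A$ is the Hamilton quaternions, whose reduced norms are exactly the positive reals. Getting this identification right is the delicate part: one must match the abstract groupoid-theoretic class group of \autoref{thm:transfer-semigroup} with the concrete ray class group, which requires bookkeeping of the local data at all places and careful use of the arithmetic of reduced norms, especially at the ramified real places.

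Finally, the numbered assertions follow formally. Since $\cO$ is a holomorphy ring in a global field, the ray class group $\Cl_A(\cO)$ is finite, so $\cB(\Cl_A(\cO))$ is a finitely generated commutative Krull monoid. A transfer homomorphism preserves sets of lengths, so the system of sets of lengths of $R^\bullet$ coincides with that of $\cB(\Cl_A(\cO))$; in particular $\Delta(R^\bullet) = \Delta(\cB(\Cl_A(\cO)))$ and $\cU_k(R^\bullet) = \cU_k(\cB(\Cl_A(\cO)))$ for every $k \in \bN$. Statements (1)--(3) are then precisely the classical structural results for sets of lengths over $\cB(G)$ with $G$ a finite abelian group --- the set of distances is a finite interval with minimum $1$ whenever it is non-empty, every union $\cU_k$ is a finite interval, and every set of lengths is an AAMP with difference in $\Delta(R^\bullet)$ and some uniform bound $M$ --- which I would cite from \cite{ghk06} and the subsequent literature.
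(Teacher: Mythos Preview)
Your overall strategy---reduce to \autoref{thm:transfer-semigroup}, identify the abstract class group via the reduced norm, then invoke \autoref{prop:zss}---is exactly the paper's. But two points in your sketch are off, and one of them is a genuine gap.

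First, a confusion about what the class group $C$ in \autoref{thm:transfer-semigroup} actually is. It is $\bG/P_{R^\bullet}$, where $\bG$ is the \emph{universal vertex group} of the arithmetical groupoid $\cF_v(\alpha)$---canonically isomorphic to the free abelian group on the maximal divisorial \emph{two-sided} $R$-ideals, hence to $\cF^\times(\cO)$ via \autoref{lemma:eta-nr}---and $P_{R^\bullet}$ is the image of the principal one-sided ideals under the abstract norm $\eta$. It is \emph{not} ``the group of divisorial fractional left $R$-ideals modulo the principal ones''; left ideal classes do not form a group in general. Your local argument (``$\fp$ contributes nothing to $G$'') is therefore aimed at the wrong object: each $\fp$ contributes a copy of $\bZ$ to $\bG\cong\cF^\times(\cO)$. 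The identification $C\cong\Cl_A(\cO)$ is obtained purely from $r\circ\eta=\nr$ and Hasse--Schilling--Maass (\autoref{lemma:clgrp-iso}); strong approximation and Eichler's theorem play no role here---they are what would \emph{supply} the stably-free hypothesis in the Eichler case, not what identifies the quotient. Relatedly, the stably-free assumption is not used to ``prevent $G$ from shrinking''; $C\cong\Cl_A(\cO)$ holds unconditionally. The assumption is precisely what verifies condition~(i) of \autoref{thm:transfer-semigroup}: the forward implication (principal $\Rightarrow$ $\eta(I)\in P_{R^\bullet}$) is automatic, but the converse is exactly \autoref{lemma:sff-equiv}.

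Second, and this is the actual gap: you never address condition~(ii) of \autoref{thm:transfer-semigroup}, nor the equality $C_M=C$. One must show that for every maximal order $R'\in\alpha$ and every class $g\in\Cl_A(\cO)$ there is a maximal left $R'$-ideal $I$ with $[\nr(I)]=g$. This is \autoref{lemma:primes-in-every-class}: since every class of the ray class group $\Cl_A(\cO)$ contains infinitely many prime ideals of $\cO$ (a generalised Dirichlet/Chebotarev statement for holomorphy rings), one can pick $\fp$ in the class and then any maximal left $R'$-ideal above $\fp$. Without this, \autoref{thm:transfer-semigroup} gives only a transfer homomorphism to $\cB(C_M)$ for some subset $C_M\subset\Cl_A(\cO)$, and the structural statements (1)--(3) in the form quoted (which need the full finite abelian group) would not follow.
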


Thus, under the additional hypothesis that every stably free left $R$-ideal is free, we obtain a transfer homomorphism to a monoid of zero-sum sequences over a finite abelian group. Therefore, sets of lengths in $R$ are the same as sets of lengths in a commutative Krull monoid with finite class group.

If $A$ satisfies the Eichler condition relative to $\cO$, then every stably free left $R$-ideal is free by Eichler's Theorem. In particular, if $K$ is a number field and $\cO$ is its ring of algebraic integers, then $A$ satisfies the Eichler condition relative to $\cO$ unless $A$ is a totally definite quaternion algebra. Thus in this setting \autoref{thm:main-transfer} covers the large majority of cases, and the following complementary theorem shows that the condition that every stably free left $R$-ideal is free is indeed necessary.

\begin{thm} \label{thm:main-distances}
  Let $\cO$ be the ring of algebraic integers in a number field $K$, $A$ a central simple algebra over $K$, and $R$ a classical maximal $\cO$-order of $A$. If there exists a stably free left $R$-ideal that is not free, then there exists no transfer homomorphism $\theta \colon R^\bullet \to \cB(G_P)$, where $G_P$ is any subset of an abelian group. Moreover,
  \begin{enumerate}
    \item $\Delta(R^\bullet) = \bN$.
    \item For every $k \ge 3$, we have $\bN_{\ge 3} \subset \cU_k(R^\bullet) \subset \bN_{\ge_2}$.
  \end{enumerate}
\end{thm}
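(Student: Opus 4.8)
First I would fix the algebra. Eichler's Theorem says that every stably free left $R$-ideal is free whenever $A$ satisfies the Eichler condition relative to $\cO$, so the hypothesis forces that condition to fail; over a number field this happens precisely when $A$ is a totally definite quaternion algebra over the (necessarily totally real) field $K$. In particular $\nr$ has degree $2$ and $R^\times$ is finite. All factorization questions then get transported, via \autoref{prop:ideals-are-lgrpd}, into the arithmetical (Brandt) groupoid of divisorial one-sided ideals of the maximal orders of $A$: a factorization of $a\in R^\bullet$ into atoms is the same as a saturated chain of principal left ideals from $R$ down to $Ra$, so $\sL(a)$ is read off from the poset of principal left $R$-ideals between $R$ and $Ra$. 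When $\nr(a)\cO$ is a power of one prime $\fp$ this poset is governed by the local Bruhat--Tits tree at $\fp$ together with the finite set of its vertices whose corresponding left $R$-ideal is principal.

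The heart of the matter is a construction, and this is where I expect the real work to lie. The content of ``some stably free left $R$-ideal is not free'' is that the groupoid has a non-trivial loop at the vertex $[R]$ whose reduced norm is a principal ideal of $\cO$ with a totally positive generator. Using this, together with the positive density of split primes having totally positive prime generators, I would produce a \emph{fixed} split prime $\fp$, with totally positive generator $\pi$, such that: (i)~$\fp$ is represented, i.e.\ some atom $c\in R^\bullet$ has $\nr(c)\cO=\fp$, so that $\pi=\varepsilon\,c\overline c$ for a totally positive unit $\varepsilon$ and $\sL(\pi)=\{2\}$; and (ii)~for \emph{every} $m$ there is an atom $u_m\in R^\bullet$ with $\nr(u_m)\cO=\fp^{m}$. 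Statement (ii) is exactly what the non-trivial stably free class buys: it makes the local Brandt graph at a suitable $\fp$ carry, in every length, a non-backtracking closed walk at $[R]$ that meets $[R]$ only at its ends. Since $\pi^{m}=\varepsilon^{m}u_m\overline{u_m}$ we get $2\in\sL(\pi^m)$, and hence $\min\sL(\pi^m)=2$, for all $m$; iterating $\pi=\varepsilon c\overline c$ gives $\{2,4,\dots,2m\}\subseteq\sL(\pi^m)$ as well. Varying $\fp$ and using the order of $[\fp]$ in the class group of $\cO$ to restrict the ``jump sizes'' available at the prime, I would in the same way produce, for each $d\in\bN$, a central element $g_d\in R^\bullet$ whose set of lengths is an arithmetic progression of difference $d$ with at least two terms.

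These elements give the numerical claims. The $g_d$ show $d\in\Delta(R^\bullet)$ for every $d$, which is (1). For (2), $\cU_k(R^\bullet)\subseteq\bN_{\ge2}$ is automatic (for $k\ge2$, any $a$ with $k\in\sL(a)$ is neither a unit nor an atom); and for $3\le j<k$ the element $g_{k-j}$ times $j-2$ atoms of pairwise coprime prime reduced norm has $\sL$ equal to $\{j,k\}$, so $\bN_{\ge3}\subseteq\cU_k(R^\bullet)$. The non-existence of a transfer homomorphism is then immediate. Suppose $\theta\colon R^\bullet\to\cB(G_P)$ were one, with $G_P$ a subset of an abelian group. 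Transfer homomorphisms preserve sets of lengths, so $S:=\theta(c)$ and $T:=\theta(\overline c)$ are atoms of $\cB(G_P)$, the unit $\varepsilon$ dies, and $\theta(\pi^m)=\theta(\pi)^m=S^mT^m$ for all $m$. This sequence is supported on the \emph{fixed finite} set $X:=\operatorname{supp}(S)\cup\operatorname{supp}(T)$, and any atom of $\cB(G_P)$ dividing it is a minimal zero-sum sequence over $X$, hence an atom of the monoid $\cB(X)$ of zero-sum sequences over $X$; since $X$ is finite, $\cB(X)$ is an affine monoid, so its Davenport constant $\sD(X)$ is finite. Therefore every factorization of $S^mT^m$ into atoms of $\cB(G_P)$ has at least $\length{S^mT^m}/\sD(X)=m\,(\length S+\length T)/\sD(X)$ factors, forcing $\min\sL_{\cB(G_P)}(\theta(\pi^m))\to\infty$. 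But $\min\sL_{\cB(G_P)}(\theta(\pi^m))=\min\sL_{R^\bullet}(\pi^m)=2$ for every $m$, a contradiction.

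Thus the plan divides cleanly: the Eichler reduction, the numerical consequences, and the non-existence argument are all short, while the one genuinely substantial step is extracting from the hypothesis a single prime $\fp$ with atoms $u_m$ of reduced norm $\fp^m$ for all $m$, and the companion elements $g_d$ realizing every distance --- both of which rest on the combinatorics of the Brandt/Bruhat--Tits structure at varying primes (which vertices along a geodesic from $[R]$ carry principal ideals, and how the finitely many ideal classes of $R$ distribute themselves).
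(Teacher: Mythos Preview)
Your reduction to totally definite quaternion algebras and your non-existence argument are both sound. The Davenport-constant argument for non-existence is a genuinely different route from the paper's: the paper instead observes that in $\cB(G_P)$ one always has $\max\sL(SU)\le\lvert S\rvert+1$ when $U$ is an atom, and contradicts this with a family of elements $y_l p$ (atom times a fixed central element) satisfying $\max\sL(y_l p)\ge l+2$. Both arguments need, at bottom, the existence of atoms of arbitrarily large prime-power reduced norm at a single prime, and your intuition that this is ``exactly what the non-trivial stably free class buys'' is correct---the paper's \autoref{lemma:an-atom} is precisely this construction, carried out via Kirschmer--Voight and representation numbers of quaternary forms.

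The real gap is in your plan for $\Delta(R^\bullet)=\bN$. Your elements $\pi^m$ satisfy $\{2,4,\ldots,2m\}\subseteq\sL(\pi^m)$, so their distances are at most~$2$; they cannot produce large gaps. The proposed fix---varying $\fp$ and using the order of $[\fp]$ in $\Cl^+(\cO_K)$ to ``restrict jump sizes''---does not work as stated: even if every atom of $\fp$-power reduced norm has norm $\fp^{kd}$ for some $k$, the \emph{lengths} of factorizations can still differ by~$1$ (an element of norm $\fp^{2d}$ may factor as one atom or as two), so no arithmetic progression of difference $d$ emerges. The paper avoids this by a two-prime construction: with $p,q$ chosen carefully it builds $x_l=y_l p$ (where $y_l$ is an atom of reduced norm $q^{l+2}$) and then determines $\sL(x_l)$ \emph{exactly} as $\{3\}\cup(l+E)$ for a fixed $E\subset\{2,3,4\}$, by enumerating all rigid factorizations of the ideal $Rx_l$. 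The isolated length~$3$ against the cluster near $l$ is what gives every distance. Finally, the ``shift by one'' you invoke for $\cU_k$ (multiplying by an atom of coprime prime reduced norm shifts $\sL$ exactly by~$1$) is not automatic in the noncommutative setting; the paper proves it separately (\autoref{prop:shift}) by choosing the auxiliary atom inside the intersection of all orders appearing in all rigid factorizations of the given element.
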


The proof of \autoref{thm:main-distances} is based on recent work of Kirschmer and Voight (\cite{kirschmer-voight10, kirschmer-voight10cor}),
and will be given in \autoref{sec:maxord-distances}.
If $H$ is a commutative Krull monoid with an infinite class group such that every class contains a prime divisor, then Kainrath showed that every finite subset of $\bN_{\ge2 }$ can be realized as a set of lengths (\cite{kainrath99}, or \cite[Section 7.4]{ghk06}), whence $\Delta(H) = \bN$ and $\cU_k(H) = \bN_{\ge 2}$ for all $k \ge 2$. However, we explicitly show that in the above situation no transfer homomorphism is possible, implying that the factorization of $R^\bullet$ cannot be modeled by a monoid of zero-sum sequences.
A similar statement about sets of lengths in the integer-valued polynomials, as well as the impossibility of a transfer homomorphism to a monoid of zero-sum sequences, was recently shown by Frisch \cite{frisch12}.

\section{Preliminaries} \label{sec:prelim}

Let $\bN$ denote the set of positive integers and put $\bN_0 = \{\, 0 \,\} \cup \bN$. For integers $a,b \in \bZ$, let $[a,b] = \{\, x \in \bZ \mid a \le x \le b \,\}$ denote the discrete interval.
All semigroups and rings are assumed to have an identity element, and all homomorphisms respect the identity.
By a factorization we always mean a factorization of a cancellative element into irreducible elements (a formal definition follows in \autoref{sec:arithinvariants}). 
In order to study factorizations in semigroups we will have to investigate their divisorial one-sided ideal theory, in which the multiplication of ideals only gains sufficiently nice properties if one considers it as a partial operation that is only defined for certain pairs of ideals. This is the reason why we introduce our concepts in the setting of groupoids and consider subcategories of these groupoids.

Throughout the paper there will be many statements that can be either formulated ``from the left'' or ``from the right'', and most of the time it is obvious how the symmetric statement should look like. Therefore often just one variant is formulated and it is left to the reader to fill in the symmetric definition or statement if required.

\subsection{Small categories as generalizations of semigroups} \label{sec:small-cat}
Let $H$ be a small category. In the sequel the objects of $H$ play no role, and therefore we shall identify $H$ with the set of morphisms of $H$. We denote by $H_0$ the set of identity morphisms (representing the objects of the category). There are two maps $s,t\colon H \to H_0$ such that two elements $a,b \in H$ are composable to a (uniquely determined) element $ab \in H$ if and only if $t(a)=s(b)$.
\footnote{This choice of $t$ and $s$ is compatible with the usual convention for groupoids, but unfortunately opposite to the usual convention for categories.}
For $e,f \in H_0$ we set $H(e,f) = \{\, a \in H \mid s(a) = e,\, t(a) = f \,\}$, $H(e) = H(e,e)$, $H(e,\cdot) = \bigcup_{f' \in H_0} H(e,f')$ and $H(\cdot,f) = \bigcup_{e' \in H_0} H(e',f)$. Note that an element $e \in H$ lies in $H_0$ if and only if $s(e) = t(e) = e$, $ea = a$ for all $a \in H(e,\cdot)$ and $ae = a$ for all $a \in H(\cdot,e)$. 

A semigroup may be viewed as a category with a single object (corresponding to its identity element), and elements of the semigroup as morphisms with source and target this unique object. In this way the notion of a small category generalizes the usual notion of a semigroup ($H$ is a semigroup if and only if $\card{H_0}=1$). We will consider a semigroup to be a small category in this sense whenever this is convenient, without explicitly stating this anymore.
For $A, B \subset H$ we write $AB = \{\, ab \in H \mid a \in A, b \in B \text{ and } t(a) = s(b) \,\}$ for the set of all possible products, and if $b \in H$, then $Ab = A \{b\}$ and $bA = \{b\}A$.

An element $a \in H$ is called \emph{left-cancellative} if it is an epimorphism ($ab=ac$ implies $b=c$ for all $b,c \in H(t(a),\cdot)$), and it is called \emph{right-cancellative} if it is a monomorphism ($ba=ca$ implies $b=c$ for all $b,c \in H(\cdot,s(a))$), and \emph{cancellative} if it is both.
The set of all cancellative elements is denoted by $H^\bullet$, and $H$ is called \emph{cancellative} if $H = H^\bullet$.
The set of isomorphisms of $H$ will also be called the \emph{set of units}, and we denote it by $H^\times$.
A subcategory $D \subset H$ is \emph{wide} if $D_0 = H_0$.

In line with the multiplicative notation, if $H$ and $D$ are two small categories, we call a functor $f\colon H \to D$ a homomorphism (of small categories). Explicitly, a map $f\colon H \to D$ is a homomorphism if $f(H_0) \subset D_0$ and whenever $a,b \in H$ with $t(a)=s(b)$ then also $f(a)\cdot f(b)$ is defined (i.e., $t(f(a)) = s(f(b))$) and $f(ab) = f(a)f(b)$.

\smallskip
If $H$ is a commutative semigroup, and $D \subset H$ is a subsemigroup, then a localization $D^{-1}H$ with an embedding $H \hookrightarrow D^{-1}H$ exists whenever all elements of $D$ are cancellative, and in particular $H$ has a group of fractions if and only if $H$ is cancellative. 
If $H$ is a non-commutative semigroup and $D \subset H$, then a semigroup of right fractions with respect to $D$, $HD^{-1}$, in which every element can be represented as a fraction $ad^{-1}$ with $a \in H$, $d \in D$, together with an embedding $H \hookrightarrow HD^{-1}$, exists if and only if $D$ is cancellative and $D$ satisfies the \emph{right Ore condition}, meaning $aD \cap dH \ne \emptyset$ for all $a \in H$ and $d \in D$. For a semigroup of left fractions, $D^{-1}H$, one gets the analogous \emph{left Ore condition}, and if $D$ satisfies both, the left and the right Ore condition, then every semigroup of right fractions is a semigroup of left fractions and conversely. In this case we write $D^{-1}H=HD^{-1}$. If $H^\bullet$ satisfies the left and right Ore condition, we also write $\quo(H) = H(H^\bullet)^{-1} = ({H^\bullet}^{-1})H$ for the corresponding semigroup of fractions. 

The notion of semigroups of fractions generalizes to categories of fractions with analogous conditions (\cite{gabriel-zisman67}).
Let $H$ be a small category, and $D \subset H^\bullet$ a subset of the cancellative elements.
Then $D$ \emph{admits a calculus of right fractions} if $D$ is a wide subcategory of $H$ and it satisfies the right Ore condition, i.e., $aD \cap dH \ne \emptyset$ for all $a \in H$ and $d \in D$ with $s(a)=s(d)$.
In that case there exists a small category $HD^{-1}$ with $(HD^{-1})_0 = H_0$ and an embedding $j\colon H \to HD^{-1}$ (i.e., $j$ is a faithful functor) with $j\mid H_0 = \id$ and such that every element of $HD^{-1}$ can be represented in the form $j(a) j(d)^{-1}$ with $a \in H$, $d \in D$ and $t(a)=t(d)$, $j(D) \subset H^\times$ and it is universal with respect to that property, i.e., if $f\colon H \to S$ is any homomorphism with $f(D) \subset S^\times$, then there exists a unique $D^{-1}f\colon HD^{-1} \to S$ such that $D^{-1}f \circ j = f$. We can assume $H \subset HD^{-1}$ and take $j$ to be the inclusion map, and we call $HD^{-1}$ the \emph{category of right fractions} of $H$ with respect to $D$. If $D$ also admits a \emph{left calculus of fractions}, then $HD^{-1}$ is also a category of left fractions, and we write $HD^{-1}=D^{-1}H$.

\smallskip
A \emph{monoid} is a cancellative semigroup satisfying the left and right Ore condition (following the convention of \cite{geroldinger13}). Every monoid has a (left and right) group of fractions which is unique up to unique isomorphism. A semigroup $H$ is called \emph{normalizing} if $aH = Ha$ for all $a \in H$. It is easily checked that a normalizing cancellative semigroup is already a normalizing monoid.

\smallskip
Let $\cM$ be a directed multigraph (i.e., a quiver).
For every edge $a$ of $\cM$ we write $s(a)$ for the vertex that is its source and $t(a)$ for the vertex that is its target.
The \emph{path category} on $\cM$, denoted by $\cF(\cM)$, is defined as follows:
It consists of all tuples $y=(e, a_1, \ldots, a_k, f)$ with $k \in \bN_0$, $e,f$ vertices of $\cM$ and $a_1,\ldots,a_k$ edges of $\cM$ with either $k=0$ and $e=f$ or $k > 0$, $s(a_1)=e$, $t(a_i) = s(a_{i+1})$ for all $i \in [1,k-1]$ and $t(a_k) = f$. The set of identities $\cF(\cM)_0$ is the set of all tuples with $k=0$, and given any tuple $y$ as above, $s(y)=(e,e)$ and $t(y)=(f,f)$. Composition is defined in the obvious manner by concatenating tuples and removing the two vertices in the middle. We identify the set of vertices of $\cM$ with $\cF(M)_0$ so that $(e,e) = e$. Every subset $M$ of a small category $H$ will be viewed as a quiver, with vertices $\{\, s(a) \mid a \in M \,\} \cup \{\, t(a) \mid a \in M \,\}$ and for each $a \in M$ a directed edge (again called $a$) from $s(a)$ to $t(a)$.

\subsection{Groupoids} \label{sec:groupoids} A groupoid $G$ is a small category in which every element is a unit (i.e., every morphism is an isomorphism). If $e,f,e',f' \in G_0$ and there exist $a \in G(e,f)$ and $b \in G(e',f')$, then
\begin{equation} \label{eq:groupoid-bij}
  \begin{cases}
    G(e,e')  &\to G(f,f') \\
          x &\mapsto a^{-1} x b
  \end{cases}
\end{equation}
is a bijection. 

For all $e \in G_0$ the set $G(e)$ is a group, called the \emph{vertex group} or \emph{isotropy group} of $G$ at $e$.
If $f \in G_0$ and $a \in G(e,f)$, then, taking $b=a$, the map in \eqref{eq:groupoid-bij} is a group isomorphism from $G(e)$ to $G(f)$.
If $G(e)$ is abelian, it can be easily checked that this isomorphism does not depend on the choice of $a$: If $a,a' \in G(e,f)$, then 
\[
  a'(a^{-1}xa)a'^{-1} = (a'a^{-1})x(aa'^{-1}) = (a'a^{-1})(aa'^{-1})x = x.
\]
In particular, if $G$ is connected (meaning $G(e,e') \ne \emptyset$ for all $e,e' \in G_0$) and one vertex group is abelian, then all vertex groups are abelian, and they are canonically isomorphic.

In this case we define for $e \in G_0$ and $x \in G(e)$ the set $(x) = \{\, a^{-1}xa \mid a \in G(e,\cdot) \,\}$,
and the \emph{universal vertex group} as 
\[
  \bG = \{\, (x) \mid x \in G(e),\, e \in G_0 \,\}.
\]
$\bG$ indeed has a natural abelian group structure: For every $e \in G_0$ there is a bijection $j_e\colon G(e) \to \bG, x \mapsto (x)$ inducing the structure of an abelian group on $\bG$, and because the diagrams
\[
\xy*\xybox{
  \xymatrix@R=3em@C=2em{
    G(e) \ar@{->}[rr]^{x \mapsto a^{-1}xa} \ar@{->}[rd]_{j_e}&             & G(f) \ar@{->}[ld]^{j_f} \\
                      & \bG         &
  }
}\endxy.
\]
commute for every choice of $e,f \in G_0$ and $a \in G(e,f)$, this group structure is independent of the choice of $e$, yielding a canonical group isomorphism $j_e\colon G(e) \to \bG$ for every $e \in G_0$. We will use calligraphic letters to denote elements of $\bG$. If $\cX \in \bG$, then the unique representative of $\cX$ in $G(e)$, $j_e^{-1}(\cX)$, will be denoted by $\cX_e$.

If $G$ is a groupoid, and $H \subset G$ is a subcategory, then $HH^{-1}$ denotes the set of all right fractions of elements of $H$.
Furthermore, $H H^{-1} \subset G$ is a subgroupoid if and only if $H$ satisfies the right Ore condition.

\subsection{Krull monoids and Krull rings}

A monoid $H$ is called a \emph{Krull monoid} if it is completely integrally closed (in other words, a maximal order) and satisfies the ACC on divisorial two-sided ideals. A prime Goldie ring $R$ is a \emph{Krull ring} if it is completely integrally closed and satisfies the ACC on divisorial two-sided ideals (equivalently, its monoid $R^{\bullet}$ of cancellative elements is a Krull monoid; see \cite{geroldinger13}). 
The theory of commutative Krull monoids is presented in \cite{halterkoch98, ghk06}. The simplest examples of non-commutative Krull rings are classical maximal orders in central simple algebras over Dedekind domains (see \autoref{sec:ring-csa}). We discuss monoids of zero-sum sequences.

Let $G = (G,0_G,+)$ be an additively written abelian group, $G_P \subset G$ a subset and let $\cF_{ab}(G_P)$ be the (multiplicatively written) free abelian monoid with basis $G_P$.
Elements $S \in \cF_{ab}(G_P)$ are called \emph{sequences over $G_P$}, and are written in the form $S = g_1\cdot\ldots\cdot g_l$ where $l \in \bN_0$ and $g_1, \ldots, g_l \in G_P$.
We denote by $\length{S}=l$ the \emph{length} of $S$.
Such a sequence $S$ is said to be a \emph{zero-sum sequence} if $\sigma(S) = g_1 + \ldots + g_l = 0_G$.
The submonoid
\[
  \cB(G_P) = \{\, S \in \cF_{ab}(G_P) \mid \sigma(S) = 0 \,\} \;\subset\; \cF_{ab}(G_P)
\]
is called the \emph{monoid of zero-sum sequences} over $G_P$. It is a reduced commutative Krull monoid, which is finitely generated whenever $G_P$ is finite (\cite[Theorem 3.4.2]{ghk06}). 
Moreover, every commutative Krull monoid possesses a transfer homomorphism onto a monoid of zero-sum sequences, and thus $\cB(G_P)$ provides a model for the factorization behavior of commutative Krull monoids (\cite[Section 3.4]{ghk06}).

\section{Arithmetical Invariants} \label{sec:arithinvariants}

In this section we introduce our main arithmetical invariants (rigid factorizations, sets of lengths, sets of distances) and transfer homomorphisms in the setting of cancellative small categories.

\begin{center}
  \emph{Throughout this section, let $H$ be a cancellative small category.}
\end{center}

$H$ is \emph{reduced} if $H^\times = H_0$. An element $u \in H \setminus H^\times$ is an \emph{atom} (or \emph{irreducible}) if $u = bc$ with $b,c \in H$ implies $b \in H^\times$ or $c \in H^\times$. By $\cA(H)$ we denote the set of all atoms of $H$, and call $H$ \emph{atomic} if every $a \in H \setminus H^\times$ can be written as a (finite) product of atoms.
A left ideal of $H$ is a subset $I \subset H$ with $H I \subset I$, and a right ideal of $H$ is defined similarly. A \emph{principal left (right) ideal of $H$} is a set of the form $Ha$ ($aH$) for some $a \in H$.
If $H$ is a commutative monoid, then $p \in H \setminus H^\times$ is a \emph{prime element} if $p \mid ab$ implies $p \mid a$ or $p \mid b$ for all $a, b \in H$. 

\begin{prop} \label{prop:atomic}
  If $H$ satisfies the ACC on principal left and right ideals, then $H$ is atomic.
\end{prop}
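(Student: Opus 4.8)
The statement is the standard fact that ACCP (on both sides, since $H$ is non-commutative) implies atomicity, and I would prove it by the usual well-foundedness argument adapted to cancellative small categories. The key observation is that in a cancellative category, $a$ properly divides $b$ on the left (meaning $b = ac$ with $c \notin H^\times$) if and only if $Hb \subsetneq Ha$... wait, let me get the direction right: $b = ac$ means $b \in aH$, so $bH \subseteq aH$, and properness of the factor should correspond to proper inclusion of principal ideals. So I would first record the dictionary: for $a, b \in H \setminus H^\times$ with $t(a) = s(b)$, the left ideal inclusions $Ha \supseteq Hb$ hold when $b \in Ha$, i.e. $b = ca$, and this inclusion is strict iff $c \notin H^\times$; symmetrically on the right with $aH \supseteq bH$.

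**Main argument.** I would argue by contradiction. Let $\Sigma$ be the set of all $a \in H \setminus H^\times$ that are not products of atoms, and suppose $\Sigma \neq \emptyset$. First, no element of $\Sigma$ is itself an atom. Pick $a \in \Sigma$; then $a = bc$ with $b, c \in H \setminus H^\times$ (non-units, with $t(b) = s(c)$). If both $b \notin \Sigma$ and $c \notin \Sigma$, then each is a product of atoms, hence so is $a$ — contradiction. So at least one of $b, c$ lies in $\Sigma$. Now I want to use this to build a strictly ascending chain of principal ideals. If $c \in \Sigma$, then from $a = bc$ and $b$ a non-unit we get $aH = bcH \subseteq ... $ hmm, $a = bc$ gives $a \in Hc$ so $Ha \subseteq Hc$, and since $b \notin H^\times$ this is strict: $Ha \subsetneq Hc$. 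Symmetrically, if $b \in \Sigma$, then $aH \subsetneq bH$ with $c$ the witnessing non-unit factor. So: starting from $a_0 = a \in \Sigma$, I can always pass to $a_1 \in \Sigma$ with either $Ha_0 \subsetneq Ha_1$ or $a_0 H \subsetneq a_1 H$. Iterating produces an infinite sequence $(a_n)$ in $\Sigma$ where at each step one of the two principal-ideal chains grows strictly. By the pigeonhole principle, infinitely many steps are of the same type, say infinitely many satisfy $Ha_n \subsetneq Ha_{n+1}$; for the steps of the other type the inclusion $Ha_n \subseteq Ha_{n+1}$ still holds (from $a_n \in Ha_{n+1}$, which holds regardless of which factor is the bad one — since $a_n = b_n a_{n+1}$ or $a_n = a_{n+1}c_n$, in the latter case $a_n \in a_{n+1}H$, not necessarily $Ha_{n+1}$).

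**The obstacle and the fix.** The subtlety — and the main thing to get right — is that a "right-type" step $a_n H \subsetneq a_{n+1} H$ need not give $Ha_n \subseteq Ha_{n+1}$, so I cannot simply merge both kinds of steps into one ascending chain of left ideals. The clean fix is to argue on the tree of factorizations rather than a single sequence: consider the set of all $a \in \Sigma$ and note each has a factorization $a = bc$ with a non-unit factor in $\Sigma$; this gives, for each $a \in \Sigma$, a "child" $a' \in \Sigma$ that is a proper left divisor or a proper right divisor of $a$. Building the sequence $(a_n)$ as above, consider the two subsequences. Actually the robust approach: among $a_0, a_1, a_2, \dots$, by pigeonhole an infinite subset $n_0 < n_1 < \cdots$ consists entirely of left-type steps $Ha_{n} \subsetneq Ha_{n+1}$ at those indices — but I still need the intermediate inclusions. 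So instead I would set it up so the chain is genuinely monotone: reformulate by choosing, for $a \in \Sigma$, the child to be a proper \emph{left} divisor whenever possible, falling back to a proper right divisor otherwise; but the truly safe route is to observe that the relation "$b$ is a proper left-or-right divisor of $a$, both non-units" on $H \setminus H^\times$ is well-founded under the hypothesis, because an infinite descending chain for it would, via pigeonhole, contain an infinite sub-chain all of left-divisor type (or all of right type), and along \emph{that} sub-chain $Ha_{n_{k}} \subseteq Ha_{n_{k}+1} \subseteq \cdots \subseteq Ha_{n_{k+1}}$ with the last inclusion strict — here the intermediate right-type steps do give $a_m \in a_{m+1}H$ but we need left inclusions. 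Hence the genuinely correct formulation: track, for the subsequence of left-type steps, that $Ha_{n_k} \supseteq Ha_{n_k + 1} = \dots$? I will instead prove it cleanly by strong induction on a rank, but since $H$ need not be noetherian in a numerical sense, the honest proof is: define $D = \{ a \in H \setminus H^\times : a$ is not a product of atoms$\}$, show $D$ has no minimal element with respect to the partial order $a \preceq b \iff (Ha \supseteq Hb$ and $bH \supseteq aH)$... no. The cleanest: take $a \in D$; write $a=bc$, non-units, with (WLOG, by symmetry) $c \in D$; then $Hc \supsetneq Ha$, so replacing $a$ by the $D$-element whose left ideal is larger, OR if forced to the right side, by the one whose right ideal is larger, and finally invoking that we cannot have both chains infinitely often because ACCP holds on both sides simultaneously — I'd formalize "cannot" by: the sequence $(Ha_n)$ is non-decreasing along left-steps and the sequence $(a_n H)$ is non-decreasing along right-steps, and a K\H{o}nig-type / pigeonhole argument shows one of them is eventually strictly increasing infinitely often while remaining non-decreasing throughout, contradicting the respective ACCP. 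I would write this final bookkeeping carefully; that is the one place a careless proof goes wrong.
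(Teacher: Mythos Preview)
Your plan has a genuine gap, and it is precisely the one you keep circling but never close. In the sequence $a_0, a_1, \ldots \in \Sigma$, a ``left-type'' step $a_n = b_n a_{n+1}$ gives $Ha_n \subsetneq Ha_{n+1}$, while a ``right-type'' step $a_n = a_{n+1} c_n$ gives $a_n H \subsetneq a_{n+1} H$. Your final fix asserts that $(Ha_n)$ ``remain[s] non-decreasing throughout'', so that the left-type steps alone yield an infinite strictly ascending chain of principal left ideals. But that monotonicity claim is simply false: along a right-type step you have $a_n = a_{n+1} c_n$, which places $a_n \in a_{n+1}H$ and says nothing whatsoever about the relation between $Ha_n$ and $Ha_{n+1}$. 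They need not be comparable. So the pigeonhole/K\H{o}nig argument does not go through, and none of the variants you sketch (tree, preferred side, partial order $\preceq$) repairs it without importing a new idea.

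The paper's proof supplies exactly that missing idea by \emph{decoupling} the two chain conditions instead of trying to run them in parallel. First, using only the ACC on principal \emph{right} ideals, it shows (Claim~A) that every non-unit $a$ has an atom as a \emph{left} factor: $a = u a_0$ with $u \in \cA(H)$. This is proved by taking $aH$ maximal among the right ideals generated by ``bad'' elements and factoring. Second, using only the ACC on principal \emph{left} ideals, one takes $Ha$ maximal among left ideals generated by elements of $\Sigma$; Claim~A gives $a = u a_0$ with $u$ an atom and $a_0$ a non-unit, whence $Ha \subsetneq Ha_0$ forces $a_0 \notin \Sigma$, so $a_0$ is a product of atoms and hence so is $a$. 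The two ACCs are used \emph{sequentially}, each for a single maximal-element selection, rather than interleaved along one chain---and that is why the argument closes cleanly where yours does not.
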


\begin{proof}
  We first note that if $a,b \in H$ then $aH = bH$ if and only if $a=b \varepsilon$ with $\varepsilon \in H^\times$, and similarly $Ha = Hb$ if and only if $a = \varepsilon b$ with $\varepsilon \in H^\times$. [We only show the statement for the right ideals. The non-trivial direction is showing that $aH = bH$ implies $a = b \varepsilon$. Since $aH=bH$ implies $a=bx$ and $b=ay$ with $x,y \in H$, we get $a=a(yx)$ and $b=b(xy)$. Since $H$ is cancellative, this implies $xy=t(b)=s(x)$ and $yx=t(a)=s(y)$, hence $y = x^{-1}$ and therefore $x,y \in H^\times$.]

  \begin{enumerate}
    \setlength\itemindent{1.55em}
    \item[Claim \textbf{A}.] If $a \in H \setminus H^\times$, then there exist $u \in \cA(H)$ and $a_0 \in H$ such that $a=u a_0$.
  \end{enumerate}

  \begin{proof}[Proof of Claim \textbf{A}]
    \renewcommand{\qedsymbol}{$\Box$(\textbf{A})}
    Assume the contrary. Then the set
    \[
      \Omega = \{\, a' H \mid \text{$a' \in H \setminus H^\times$ such that there are no $u \in \cA(H)$, $a_0 \in H$ with $a'=ua_0$} \,\} 
    \]
    is non-empty, and hence, using the ascending chain condition on the principal right ideals, possesses a maximal element $aH$ with $a \in H \setminus H^\times$. Then $a \not \in \cA(H)$, and therefore $a = b c$ with $b,c \in H \setminus H^\times$.
    But $a H \subsetneq b H$ since $c \not \in H^\times$, and thus maximality of $aH$ in $\Omega$ implies $b=u b_0$ with $u \in \cA(H)$ and $b_0 \in H$. But then $a = u(b_0 c)$, a contradiction.
  \end{proof}

  We proceed to show that every $a \in H \setminus H^\times$ is a product of atoms. Again, assume that this is not the case.
  Then
  \[
    \Omega' = \{\, H a' \mid \text{$a' \in H\setminus H^\times$ such that $a'$ is not a product of atoms} \,\} 
  \]
  is non-empty, and hence possesses a maximal element $Ha$ with $a \in H \setminus H^\times$ (this time using the ascending chain condition on principal left ideals). Again $a \not \in \cA(H)$ as otherwise it would be a product of atoms.
  By Claim \textbf{A}, $a = u a_0$ with $u \in \cA(H)$ and $a_0 \in H$. Since $a \not \in \cA(H)$, $a_0 \not \in H^\times$. Moreover, $Ha \subsetneq Ha_0$ since $u \not \in H^\times$ and therefore $a_0 = u_1\cdot\ldots\cdot u_l$ with $l \in \bN$ and $u_1,\ldots,u_l \in \cA(H)$. Thus $a = u u_1 \cdot\ldots\cdot u_l$ is a product of atoms, a contradiction.
\end{proof}

The following definition provides a natural notion of an ordered factorization (called a \emph{rigid factorization}) in a cancellative small category. It is modeled after a terminology by Cohn \cite{cohn85,cohn06}. 

Let $\cF(\cA(H))$ denote the path category on atoms of $H$. We define
\[
  H^\times \times_r \cF(\cA(H)) = \{\, (\varepsilon, y) \in H^\times \times \cF(\cA(H)) \mid t(\varepsilon) = s(y) \,\},
\]
and define an associative partial operation on $H^\times \times_r \cF(\cA(H))$ as follows:
If $(\varepsilon,y), (\varepsilon', y') \in H^\times \times_r \cF(\cA(H))$ with $\varepsilon, \varepsilon' \in H^\times$,
\[
  y = (e, u_1, u_2, \ldots, u_k, f) \in \cF(\cA(H)) \;\text{ and }\; y' = (e', v_1, v_2, \ldots, v_l, f') \in \cF(\cA(H)),
\]
then the operation is defined if $t(y) = s(\varepsilon')$, and
\[
  (\varepsilon,y) \cdot (\varepsilon',y') = (\varepsilon, (e, u_1,\ldots, u_k\varepsilon', v_1, v_2, \ldots, v_l, f')) \quad\text{if $k > 0$, }
\]
while $(\varepsilon,y)\cdot(\varepsilon',y') = (\varepsilon\varepsilon', y')$ if $k = 0$.
In this way $H^\times \times_r \cF(\cA(H))$ is again a cancellative small category (with identities $\{\, (e, (e,e)) \mid e \in H_0 \}$ that we identify with $H_0$ again, $s(\varepsilon,y) = s(\varepsilon)$ and $t(\varepsilon,y) = t(y)$).
We define a congruence relation $\sim$ on it as follows: If $(\varepsilon,y), (\varepsilon',y') \in H^\times \times_r \cF(\cA(H))$ with $y,y'$ as before, then
$(\varepsilon,y) \sim (\varepsilon',y')$ if $k = l$, $\varepsilon u_1\cdot\ldots\cdot u_k = \varepsilon' v_1 \cdot\ldots\cdot v_l \in H$ and either $k=0$ or there exist $\delta_2,\ldots,\delta_k \in H^\times$ and $\delta_{k+1} = t(u_k)$ such that
\[
  \varepsilon' v_1 = \varepsilon u_1 \delta_2^{-1} \quad\text{and}\quad v_i = \delta_i u_i \delta_{i+1}^{-1} \text{ for all $i \in [2,k]$}.
\]

\begin{defi}
  The \emph{category of rigid factorizations of $H$} is defined as
  \[
    \sZ^*(H) = (H^\times \times_r \cF(\cA(H))) / \sim,
  \]

  For $z \in \sZ^*(H)$ with $z = [(\varepsilon, (e, u_1, u_2, \ldots, u_k, f))]_\sim$ we write $z=\rf[\varepsilon]{u_1,\ldots,u_k}$ and the operation on $\sZ^*(H)$ is also denoted by $\rfop$. The \emph{length} of $z$ is $\length{z} = k$.
  There is a surjective homomorphism $\pi\colon \sZ^*(H) \to H$, induced by multiplying out the elements of the factorization in $H$, explicitly $\pi(z) = \varepsilon u_1 u_2 \cdot\ldots\cdot u_k \in H$. For $a \in H$, we define $\sZ^*(a) = \sZ_H^*(a) = \pi^{-1}(\{a\})$ to be the \emph{set of rigid factorization of $a$}.
\end{defi}

To simplify the notation, we make the following conventions:
\begin{itemize}
  \item If, for a rigid factorization $z = \rf[\varepsilon]{u_1,\ldots,u_k} \in \sZ^*(H)$, we have $k > 0$ (i.e., $\pi(z) \not \in H^\times$), then the unit $\varepsilon$ can be absorbed into the first factor $u_1$ (replacing it by $\varepsilon u_1$), 
    and we can essentially just work in $\cF(\cA(H)) / \sim$, with $\sim$ defined to match the equivalence relation on $H^\times \times_r \cF(\cA(H))$.
  \item If $H$ is reduced but $\card{H_0} > 1$, we often still write $\rf[s(u_1)]{u_1,\ldots, u_k}$ instead of the shorter $\rf{u_1,\ldots, u_k}$, as $k=0$ is allowed and in the path category there is a different empty path for every $e \in H_0$.
\end{itemize}

\begin{remark}
  \mbox{}
  \begin{enumerate}
    \item If $H$ is reduced, then $\sZ^*(H) = \cF(\cA(H))$.

      If $H$ is not reduced, the $H^\times$ factor allows us to represent trivial factorizations of units, and the equivalence relation $\sim$ allows us to deal with trivial insertion of units.
      In the commutative setting these technicalities can easily be avoided by identifying associated elements and passing to the reduced monoid $H_{\text{red}} = \{\, aH^\times \mid a \in H \,\}$. Unfortunately, associativity (left, right or two-sided) is in general no congruence relation in the non-commutative case.

    \item If $H$ is a commutative monoid, then $\sZ^*(H) \cong H^\times \times \cF(\cA(H_{\text{red}}))$, where $\cF(\cA(H_\text{red}))$ is the free monoid on $\cA(H_{\text{red}})$, while a \emph{factorization} in this setting is usually defined as an element of the free abelian monoid $\sZ(H) = \cF_{\text{ab}}(\cA(H_{\text{red}}))$, implying in particular that factorizations are unordered while rigid factorizations are ordered. The homomorphism $\pi: \sZ^*(H_\text{red}) \to H_{\text{red}}$ obviously factors through the multiplication homomorphism $\sZ(H_{\text{red}}) \to H_{\text{red}}$, and the fibers consist of the different permutations of a factorization.

      In the following we will only be concerned with invariants related to the lengths of factorizations, which may as well be defined using rigid factorizations.
  \end{enumerate}
\end{remark}

\begin{defi} \label{defi:arithinv}
  Let $a \in H$.
  \begin{enumerate}
    \item We call
      \[
        \sL(a) = \sL_H(a) = \{\, \length{z} \in \bN_0 \mid z \in \sZ^*(a) \,\}
      \]
      the \emph{set of lengths of $a$}.

    \item The \emph{system of sets of lengths of $H$} is defined as $\cL(H) = \{\, \sL(a) \subset \bN_0 \mid a \in H \,\}$.

    \item A positive integer $d \in \bN$ is a \emph{distance of $a$} if there exists an $l \in \sL(a)$ such that $\{\, l, l + d \,\} \in \sL(a)$ and $\sL(a) \cap [l+1,l+d-1] = \emptyset$. The \emph{set of distances of $a$} is the set consisting of all such distances and is denoted by $\Delta(a)=\Delta_H(a)$. 
     The \emph{set of distances of $H$} is defined as
     \[
        \Delta(H) = \bigcup_{a \in H} \Delta(a).
     \] 

    \item We define $\cU_k(H) = \bigcup_{\substack{L \in \cL(H) \\ k \in L}} L$ for $k \in \bN_0$.

    \item $H$ is \emph{half-factorial} if $\card{\sL(a)} = 1$ for all $a \in H$ (equivalently, $H$ is atomic and $\Delta(H) = \emptyset$).
  \end{enumerate}
\end{defi}

We write $b \mid_H^r a$ if $a \in Hb$ and similarly $b \mid_H^l a$ if $a \in bH$.

\begin{defilemma} Let $H \subset D$ be subcategories of a groupoid.
  The following are equivalent:
  \begin{enumerate}
    \enumequiv

    \item For all $a,b \in H$, $b \mid_D^r a$ implies $b \mid_H^r a$,
    \item $H H^{-1} \cap D = H$.
  \end{enumerate}
  $H \subset D$ is called \emph{right-saturated} if these equivalent conditions are fulfilled.
\end{defilemma}

\begin{proof}
  (a) $\Rightarrow$ (b): Let $c = ab^{-1}$ with $a,b \in H$, $t(a) = t(b)$ and $c \in D$. Then $cb = a$, i.e., $b \mid_D^r a$ and hence also $b \mid_H^r a$. Since the left factor is uniquely determined as $c = ab^{-1}$, it follows that $c \in H$.

  (b) $\Rightarrow$ (a): Let $b \mid_{D}^r a$. There exists $c \in D$ with $cb = a$, and thus $c = ab^{-1}$. Therefore $c \in HH^{-1} \cap D = H$, hence $b \mid_{H}^r a$.
\end{proof}

\begin{defi}
  Let $B$ be a reduced cancellative small category.
  A homomorphism $\theta\colon H \to B$ is called a \emph{transfer homomorphism} if it has the following properties:
  \begin{enumerate}
      \renewcommand{\theenumi}{T\arabic{enumi}}
      \renewcommand{\labelenumi}{(\theenumi)}

    \item\label{tr:T1} $B = \theta(H)$ and $\theta^{-1}(B_0) = H^\times$.
    \item\label{tr:T2} If $a \in H$, $b_1,b_2 \in B$ and $\theta(a) = b_1 b_2$, then there exist $a_1,a_2 \in H$ such that $a = a_1 a_2$, $\theta(a_1) = b_1$ and $\theta(a_2) = b_2$.
  \end{enumerate}
\end{defi}

The notion of a transfer homomorphism plays a central role in studying sets of lengths. It is easily checked that the following still holds in our generalized setting (cf. \cite[\S3.2]{ghk06} for the commutative case, \cite[Proposition 6.4]{geroldinger13} for the non-commutative monoid case).

\begin{prop} If $\theta\colon H \to B$ is a transfer homomorphism, then $\sL_H(a) = \sL_B(\theta(a))$ for all $a \in H$ and hence
  all invariants defined in terms of lengths coincide for $H$ and $B$. In particular,
  \begin{itemize}
    \item $\cL(H) = \cL(B)$,
    \item $\cU_k(H) = \cU_k(B)$ for all $k \in \bN_0$,
    \item $\Delta_H(a) = \Delta_B(\theta(a))$ for all $a \in H$, and $\Delta(H) = \Delta(B)$.
  \end{itemize}
\end{prop}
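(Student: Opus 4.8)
The plan is to reduce everything to the single identity $\sL_H(a) = \sL_B(\theta(a))$ for every $a \in H$; once this is established, all the listed consequences follow formally. Indeed, $\Delta_H(a)$ and $\Delta_B(\theta(a))$ are defined purely in terms of the set of lengths, so they agree; and since $\theta$ is surjective by \ref{tr:T1}, we get $\cL(B) = \{\, \sL_B(\theta(a)) \mid a \in H \,\} = \{\, \sL_H(a) \mid a \in H \,\} = \cL(H)$, which in turn yields $\cU_k(H) = \cU_k(B)$ for all $k$, and $\Delta(H) = \bigcup_{a \in H}\Delta_H(a) = \bigcup_{b \in B}\Delta_B(b) = \Delta(B)$. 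So the work is in proving the two inclusions of sets of lengths.

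For $\sL_H(a) \subseteq \sL_B(\theta(a))$, I would first dispose of length $0$: if $0 \in \sL_H(a)$ then $a \in H^\times$, so $\theta(a) \in B_0$ by \ref{tr:T1}, and since $B$ is reduced ($B^\times = B_0$) this means $0 \in \sL_B(\theta(a))$. For $l \geq 1$, write $a = u_1 \cdots u_l$ with $u_i \in \cA(H)$. The crucial point is that a transfer homomorphism carries atoms to atoms: $u_i \notin H^\times$ gives $\theta(u_i) \notin B_0 = B^\times$ by \ref{tr:T1}; and if $\theta(u_i) = b_1 b_2$ in $B$, then \ref{tr:T2} applied to $u_i$ produces $u_i = c_1 c_2$ in $H$ with $\theta(c_j) = b_j$, so $c_1 \in H^\times$ or $c_2 \in H^\times$ because $u_i$ is an atom, whence $b_1 \in B_0 = B^\times$ or $b_2 \in B_0 = B^\times$ again by \ref{tr:T1}. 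Thus $\theta(a) = \theta(u_1)\cdots\theta(u_l)$ is a product of $l$ atoms of $B$, so $l \in \sL_B(\theta(a))$.

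For the reverse inclusion I would lift a factorization of $\theta(a)$ to one of $a$ by iterating \ref{tr:T2}. If $0 \in \sL_B(\theta(a))$ then $\theta(a) \in B_0$, so $a \in \theta^{-1}(B_0) = H^\times$ and $0 \in \sL_H(a)$. If $\theta(a) = v_1 \cdots v_l$ with $v_i \in \cA(B)$ and $l \geq 1$, apply \ref{tr:T2} with $b_1 = v_1$ and $b_2 = v_2\cdots v_l$ (a legitimate decomposition in $B$) to get $a = a_1 a'$ with $\theta(a_1) = v_1$ and $\theta(a') = v_2\cdots v_l$; an induction on $l$ then yields $a = a_1 a_2 \cdots a_l$ with $\theta(a_i) = v_i$. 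Each $a_i$ is a non-unit, since $a_i \in H^\times$ would force $\theta(a_i) = v_i \in B_0$ by \ref{tr:T1}, contradicting $v_i \in \cA(B)$; and each $a_i$ is in fact an atom, for if $a_i = bc$ then $v_i = \theta(b)\theta(c)$, so one of $\theta(b),\theta(c)$ lies in $B_0 = B^\times$ as $v_i$ is an atom, and hence one of $b,c$ lies in $H^\times$ by \ref{tr:T1}. Therefore $a = a_1 \cdots a_l$ is a factorization into $l$ atoms and $l \in \sL_H(a)$.

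I do not anticipate a genuine obstacle; the argument is essentially the one from \cite[\S3.2]{ghk06} transported to cancellative small categories. The only points requiring care are the unit bookkeeping in the (possibly) non-reduced category $H$ — which is precisely why \ref{tr:T1} is stated as $\theta^{-1}(B_0) = H^\times$ rather than merely $\theta(H^\times) \subseteq B_0$, so that non-units are detected on both sides — and the straightforward induction used to iterate \ref{tr:T2}, where one must only note that the compositions appearing are automatically defined in the category because they come from decompositions of a single element.
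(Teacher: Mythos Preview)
Your proof is correct and follows the standard argument; the paper itself does not supply a proof but merely notes that the result is easily checked and refers to \cite[\S3.2]{ghk06} and \cite[Proposition 6.4]{geroldinger13}, where precisely this line of reasoning is carried out.
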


\begin{prop} \label{prop:zss}
  Let $H$ be a cancellative small category, $G$ a finite abelian group and $\theta\colon H \to \cB(G)$ a transfer homomorphism.
  Then $H$ is half-factorial if and only if $\card{G} \le 2$.
  If $\card{G} \ge 3$, then we have
  \begin{enumerate}
    \item\label{zss:delta-interval} $\Delta(H)$ is a finite interval, and if it is non-empty, then $\min \Delta(H) = 1$,
    \item\label{zss:uk-interval} for every $k \ge 2$, the set $\cU_k(H)$ is a finite interval,
    \item\label{zss:aamp} there exists an $M \in \bN_0$ such that for every $a \in H$ the set of lengths $\sL(a)$ is an almost arithmetical multiprogression (AAMP) with difference $d \in \Delta(H)$ and bound $M$.
  \end{enumerate}
\end{prop}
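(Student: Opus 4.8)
The plan is to reduce the whole statement to the corresponding, by now classical, facts about the monoid of zero-sum sequences $\cB(G)$ over a finite abelian group, and then to invoke them. By the preceding proposition a transfer homomorphism preserves sets of lengths, so $\sL_H(a) = \sL_{\cB(G)}(\theta(a))$ for every $a \in H$; together with the surjectivity of $\theta$ this gives $\cL(H) = \cL(\cB(G))$, $\cU_k(H) = \cU_k(\cB(G))$ for all $k \in \bN_0$, and $\Delta(H) = \Delta(\cB(G))$. In particular $H$ is half-factorial exactly when $\cB(G)$ is, and each of (1)--(3) for $H$ follows from the same assertion for $\cB(G)$. Hence from now on we replace $H$ by $\cB(G)$, which is a reduced commutative Krull monoid and, since $G$ is finite, a finitely generated one.

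For the half-factoriality dichotomy: if $|G| \le 2$, then for $|G| = 1$ the monoid $\cB(G)$ is free on the single atom $0$, and for $|G| = 2$ the atoms are $0$ and $g \cdot g$ with $g$ the nonzero element, so a zero-sum sequence $S = 0^a \cdot g^{2b}$ has $\sL(S) = \{a+b\}$; either way $\cB(G)$ is half-factorial. Conversely, if $|G| \ge 3$ I would exhibit an element with at least two lengths. If $G$ contains an element $g$ of order $n \ge 3$, then $S = g^n \cdot (-g)^n$ has the factorization $(g^n)\cdot((-g)^n)$ of length $2$ and the factorization $\bigl(g \cdot (-g)\bigr)^n$ of length $n$, so $\{2,n\} \subseteq \sL(S)$. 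Otherwise every nonzero element of $G$ has order $2$, so $G$ is an elementary abelian $2$-group of rank $\ge 2$; picking independent $e_1, e_2$, the sequence $S = e_1^2 \cdot e_2^2 \cdot (e_1+e_2)^2$ has a factorization of length $3$ and the factorization $\bigl(e_1 \cdot e_2 \cdot (e_1+e_2)\bigr)^2$ of length $2$, so $\{2,3\} \subseteq \sL(S)$. In all cases $\cB(G)$ is not half-factorial, equivalently $\Delta(\cB(G)) \ne \emptyset$; both directions are standard (cf. \cite[Chapter 3]{ghk06}).

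Now assume $|G| \ge 3$. The three claims are precisely the established structure theorems for sets of lengths over a finite abelian group, applied to $\cB(G)$: for (1), $\Delta(\cB(G))$ is a finite interval which, being nonempty by the previous paragraph, satisfies $\min \Delta(\cB(G)) = 1$; for (2), the structure theorem for unions of sets of lengths yields that each $\cU_k(\cB(G))$ is a finite interval; and for (3), the Structure Theorem for Sets of Lengths provides a bound $M \in \bN_0$ such that every $\sL(S)$, $S \in \cB(G)$, is an AAMP with difference $d \in \Delta(\cB(G))$ and bound $M$. All of this is contained in \cite[Chapters 3 and 4]{ghk06} (see also \cite{geroldinger09}). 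Transporting these statements back along the identifications of the first paragraph gives (1)--(3) for $H$.

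The only point requiring care is the bookkeeping in the first paragraph---that the length-preserving property of $\theta$, combined with its surjectivity, transports $\cL$, the $\cU_k$, and $\Delta$ verbatim (immediate from the preceding proposition)---together with checking that $\cB(G)$ satisfies the hypotheses under which the quoted theorems are stated, namely that $G$ is a finite abelian group, so that $\cB(G)$ is a finitely generated Krull monoid. There is no genuine obstacle: all the substantive content sits in the combinatorial results for $\cB(G)$, which are used here as black boxes.
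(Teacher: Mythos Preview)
Your proposal is correct and follows essentially the same approach as the paper: reduce everything to $\cB(G)$ via the length-preserving property of transfer homomorphisms, then invoke the known structural results for zero-sum sequences over finite abelian groups as black boxes. The paper's proof is even terser than yours (it simply cites \cite[Proposition 2.5.6]{ghk06} for the half-factoriality dichotomy rather than writing out the explicit elements, and points to \cite{geroldinger-yuan12}, \cite[Theorem 3.1.3]{geroldinger09}, and \cite[Chapter 4]{ghk06} for (1), (2), (3) respectively), but the strategy is identical.
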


\begin{proof}
  By the previous lemma it is sufficient to show these statements for the monoid of zero-sum sequences $\cB(G)$ over a finite abelian group $G$. $\cB(G)$ is half-factorial if and only if $\card{G} \le 2$ by \cite[Proposition 2.5.6]{ghk06}. The first statement is proven in \cite{geroldinger-yuan12}, the second can be found in \cite[Theorem 3.1.3]{geroldinger09}. For the definition of AAMPs and a proof of 3 see \cite[Chapter 4]{ghk06}.
\end{proof}

The description in \ref{zss:aamp}. is sharp by a realization theorem of W.A.~Schmid \cite{schmid09}.

\section{Factorization of integral elements in arithmetical groupoids} \label{sec:fact}

In this section we introduce arithmetical groupoids and study the factorization behavior of integral elements.
In \autoref{sec:ideals} we will see that the divisorial fractional one-sided ideals of suitable semigroups form such groupoids.
Thus in non-commutative semigroups arithmetical groupoids generalize the free abelian group of divisorial fractional two-sided ideals familiar from the commutative setting (see \autoref{prop:agrpd-gen-freeab} and \autoref{rem:after-abstract-transfer}).
This abstract approach to factorizations was first used by Asano and Murata in \cite{asano-murata53}. We follow their ideas and also those of Rehm in \cite{rehm77-1,rehm77-2}, who studies factorizations of ideals in rings in a different abstract framework.
The notation and terminology for lattices follows \cite{graetzer11}, a reference for l-groups is \cite{steinberg10}.
\autoref{prop:fact-freeish-grpd} is the main result on factorizations of integral elements in a lattice-ordered groupoid (due to Asano and Murata). We introduce an abstract norm homomorphism $\eta$, and as the main result in this section, we present a transfer homomorphism to a monoid of zero-sum sequences in \autoref{thm:abstract-transfer}.

\begin{defi} 
  A lattice-ordered groupoid $(G, \le)$ is a groupoid $G$ together with a relation $\le$ on $G$ such that for all $e, f \in G_0$
  \begin{enumerate}
    \item $(G(e,\cdot),\, \le \mid_{G(e,\cdot)})$ is a lattice (we write $\meet'_e$ and $\join'_e$ for the meet and join),
    \item $(G(\cdot,f),\, \le \mid_{G(\cdot,f)})$ is a lattice (we write $\meet''_f$ and $\join''_f$ for the meet and join),
    \item $(G(e,f), \, \le \mid_{G(e,f)})$ is a sublattice of both $G(e,\cdot)$ and $G(\cdot,f)$.
      Explicitly: For all $a,b \in G(e,f)$ it holds that $a \meet'_e b = a \meet''_f b \in G(e,f)$ and $a \join'_e b = a \join''_f b \in G(e,f)$. 
  \end{enumerate}

  If $a,b \in G$ and $s(a) = s(b)$ we write $a \meet b = a \meet'_{s(a)} b$ and $a \join b = a \join'_{s(a)} b$. If $t(a) = t(b)$ we write $a \meet b = a \meet''_{t(a)} b$ and $a \join b = a \join''_{t(a)} b$. By 3 this is unambiguous if $s(a)=s(b)$ and $t(a)=t(b)$ both hold.
  The restriction of $\le$ to any of $G(e,\cdot)$, $G(\cdot,f)$ or $G(e,f)$ will in the following simply be denoted by $\le$ again. (Keep in mind however that $\le$ need not be a partial order on the entire set $G$, and $\meet$ and $\join$ do not represent meet and join operations on the entire set $G$ in the order-theoretic sense.)

  An element $a$ of a lattice-ordered groupoid is called \emph{integral} if $a \le s(a)$ and $a \le t(a)$, and we write $G_+$ for the subset of all integral elements of $G$.
\end{defi}

\begin{defi} \label{defi:arith-grpd}
  A lattice-ordered groupoid $G$ is called an \emph{arithmetical groupoid} if it has the following properties for all $e, f \in G_0$:

\begin{enumerate}
    \renewcommand{\theenumi}{P\textsubscript{\arabic{enumi}}}
    \renewcommand{\labelenumi}{(\theenumi)}

  \item \label{P:int} For $a \in G$, $a \le s(a)$ if and only if $a \le t(a)$.
  \item \label{P:mod} $G(e, \cdot)$ and $G(\cdot, f)$ are modular lattices.
  \item \label{P:ord} If $a \le b$ for $a, b \in G(e, \cdot)$ and $c \in G(\cdot,e)$, then $ca \le cb$. Analogously, if $a,b \in G(\cdot, f)$ and $c \in G(f, \cdot)$, then $ac \le bc$.
  \item \label{P:sup} For every non-empty subset $M \subset G(e,\cdot) \cap G_+$, $\sup(M) \in G(e,\cdot)$ exists, and similarly for $M \subset G(\cdot, f) \cap G_+$. 
    If moreover $M \subset G(e,f)$ then $\sup_{G(e,\cdot)}(M) = \sup_{G(\cdot,f)}(M)$.
  \item \label{P:bdd} $G(e,f)$ contains an integral element.
  \item \label{P:acc} $G(e, \cdot)$ and $G(\cdot, f)$ satisfy the ACC on integral elements.
\end{enumerate}
\end{defi}

\begin{center}
  \emph{For the remainder of this section, let $G$ be an arithmetical groupoid.}
\end{center}

\ref{P:bdd} implies in particular $G(e,f) \ne \emptyset$ for all $e,f \in G_0$, i.e., $G$ is connected.
If $e,e' \in G_0$ and $c \in G(e',e)$, then $G(e,\cdot) \to G(e',\cdot), x \mapsto c x$ is an order isomorphism by \ref{P:ord}, and similarly every $d \in G(f,f')$ induces an order isomorphism from $G(\cdot, f)$ to $G(\cdot, f')$.
\ref{P:mod} could therefore equivalently be required for a single $e$ and a single $f \in G_0$. Moreover, since the map $(G(e, \cdot), \le) \to (G(\cdot, e), \ge), x \mapsto x^{-1}$ is also an order isomorphism (\hyperref[gb:inv]{Lemma \ref*{lemma:gb}.\ref*{gb:inv}}) and the property of being modular is self-dual, it is in fact sufficient that one of $G(e,\cdot)$ and $G(\cdot,e)$ is modular for one $e \in G_0$.

Using \ref{P:bdd} we also observe that it is sufficient to have the ACC on integral elements on one $G(e,\cdot)$ and one $G(\cdot, f)$: If, say, $a_1 \le a_2 \le a_3 \le \ldots$ is an ascending chain of integral elements in $G(e',\cdot)$ and $c \in G(e,e')$ is integral, then $c a_1 \le c a_2 \le c a_3 \le \ldots$ is an ascending chain of integral elements in $G(e,\cdot)$ (\hyperref[gb:integral]{Lemma \ref*{lemma:gb}.\ref*{gb:integral}}), hence becomes stationary, and multiplying by $c^{-1}$ from the left again shows that the original chain also becomes stationary.

We summarize some basic properties that follow immediately from the definitions.

\begin{lemma} \label{lemma:gb}
  Let $e,f \in G_0$.
  \begin{enumerate}
    \item\label{gb:inv} $a \le x \Leftrightarrow a^{-1} \ge x^{-1}$ holds if either $a,x \in G(e,\cdot)$ or $a,x \in G(\cdot,f)$.
      In particular, for $a \in G$ the following are equivalent: (a) $a \le s(a)$; (b) $a \le t(a)$; (c) $a^{-1} \ge s(a)$; (d) $a^{-1} \ge t(a)$.
    \item\label{gb:integral}
      Let $a \in G(e,f)$. If $x \in G(\cdot,e)$ and $y \in G(f,\cdot)$ are integral, then $xa \le a$ and $ay \le a$.
    \item If $a \in G(e,f)$, $x \in G(\cdot,e)$ and $y \in G(f,\cdot)$, then
      \begin{enumerate}
        \item $x(a \join b) = xa \join xb$ and $x(a \meet b) = xa \meet xb$ if $b \in G(e,\cdot)$,
        \item $(a \join b)y = ay \join by$ and $(a \meet b)y = ay \meet by$ if $b \in G(\cdot,f)$.
      \end{enumerate}
    \item \label{gb:sup} Let $\emptyset \ne M \subset G(e,\cdot)$ and $x \in G(\cdot, e)$.
      If $\sup_{G(e,\cdot)}(M)$ exists, then also $\sup_{G(s(x),\cdot)}(xM)$ exists, and $\sup(xM) = x \sup(M)$.
      Moreover, then also $\inf_{G(\cdot,e)}(M^{-1})$ exists and $\inf(M^{-1}) = \sup(M)^{-1}$.
      Analogous statements hold for $\emptyset \ne M \subset G(\cdot, f)$ and $x \in G(f,\cdot)$.
    \item \label{gb:cc} $G(e,\cdot)$, $G(\cdot,f)$, $G(e,f)$ and in particular $G(e)$ are conditionally complete as lattices.
    \item The set $G_+$ of all integral elements forms a reduced wide subcategory of $G$, and $G = \quo(G_+)$
          is the groupoid of (left and right) fractions of this subcategory.
    \item For every $a \in G(e,f)$, there exist $b \in G(e)$ and $c \in G(f)$ with $b \le a$ and $c \le a$.
  \end{enumerate}
\end{lemma}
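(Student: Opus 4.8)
The plan is to prove the seven assertions in the order stated, so that each may invoke its predecessors. The only ingredients needed are \ref{P:int}, \ref{P:ord}, \ref{P:sup}, \ref{P:bdd}, the lattice-ordered groupoid axioms, and the observation recorded just after \autoref{defi:arith-grpd} that for $c \in G(e',e)$ the map $x \mapsto cx$ is an order isomorphism $G(e,\cdot) \to G(e',\cdot)$, with inverse $x \mapsto c^{-1}x$, and symmetrically on the right; in particular \ref{P:mod} and \ref{P:acc} are not used. The recurring move is to conjugate an inequality by a suitable unit so as to reduce to integral elements, where \ref{P:int} and \ref{P:sup} apply. For the first assertion: if $a \le x$ with $a, x \in G(e,\cdot)$, then left multiplication by $x^{-1}$ and \ref{P:ord} give $x^{-1}a \le x^{-1}x$, and $x^{-1}x$ is the identity $s(x^{-1}a)$, so $x^{-1}a$ is integral; then \ref{P:int} gives $x^{-1}a \le t(x^{-1}a)$, and right multiplication by $a^{-1}$ and \ref{P:ord} give $x^{-1} = (x^{-1}a)a^{-1} \le t(x^{-1}a)a^{-1} = a^{-1}$. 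The case $a, x \in G(\cdot, f)$ is the mirror image, and in both the reverse implication follows by applying the implication just proved to $a^{-1}, x^{-1}$; the equivalences (a)--(d) then drop out, (a)$\Leftrightarrow$(b) being \ref{P:int} and (a)$\Leftrightarrow$(c), (b)$\Leftrightarrow$(d) being the cases $x = s(a)$ and $x = t(a)$. For the second assertion, integrality of $x$ means $x \le t(x)$ in $G(\cdot,e)$, so right multiplication by $a$ and \ref{P:ord} give $xa \le a$, and $ay \le a$ is symmetric. The third assertion is then immediate: $x \cdot (-)$ is an order isomorphism and hence preserves $\meet$ and $\join$, and likewise on the right.

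For the fourth assertion, the order isomorphism $x \cdot (-)$ preserves whatever suprema and infima exist, giving $\sup(xM) = x \sup(M)$, while by the first assertion $(-)^{-1} \colon G(e,\cdot) \to G(\cdot,e)$ is an order anti-isomorphism and therefore interchanges suprema and infima, giving $\inf(M^{-1}) = \sup(M)^{-1}$. For the fifth assertion I would first recall that a lattice is conditionally complete as soon as every non-empty subset bounded above has a supremum (the infimum of a non-empty set bounded below is the supremum of its set of lower bounds). So let $\emptyset \ne M \subset G(e,\cdot)$ be bounded above by $u \in G(e,f)$. Then $u^{-1}M \subset G(f,\cdot)$ consists of integral elements, since each $m \le u$ gives $u^{-1}m \le u^{-1}u$, the identity $s(u^{-1}m)$; hence $\sup(u^{-1}M)$ exists by \ref{P:sup}, and $\sup(M) = u \sup(u^{-1}M)$ exists by the fourth assertion. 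The same works for $G(\cdot,f)$. For $G(e,f)$, take $\emptyset \ne M \subset G(e,f)$ bounded above by some $u \in G(e,f)$; now $u^{-1}M$ lies in $G(f,f)$, so the last clause of \ref{P:sup} forces the two one-sided suprema of $u^{-1}M$ to agree and hence to lie in $G(f,f)$, whence $\sup(M) = u \sup(u^{-1}M) \in G(e,f)$ and, $G(e,f)$ being a sublattice of $G(e,\cdot)$, this is the supremum formed within $G(e,f)$. Finally $G(e) = G(e,e)$ is the case $f = e$.

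For the sixth assertion, $G_0 \subset G_+$ is clear, so $G_+$ is wide; it is closed under composition since for integral $a \in G(e,f)$ and $b \in G(f,g)$ the second assertion gives $ab \le a \le s(a) = s(ab)$; and it is reduced because $u, u^{-1} \in G_+$ forces $u^{-1} \ge s(u)$ (by the first assertion applied to $u \le s(u)$) and $u^{-1} \le t(u^{-1}) = s(u)$, hence $u^{-1} = s(u) \in G_0$ and so $u \in G_0$. That $G = \quo(G_+)$ then comes down to writing every $g \in G(e,f)$ as a right and a left fraction of integral elements, the Ore conditions being immediate since $G$ is a groupoid: put $b := g^{-1} \meet s(g^{-1}) \in G(f,\cdot)$, which is integral as $b \le s(g^{-1}) = s(b)$, and then $b \le g^{-1}$ together with \ref{P:ord} gives $gb \le gg^{-1} = s(g) = s(gb)$, so $gb$ is integral and $g = (gb)b^{-1}$; the left-fraction representation is symmetric. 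For the seventh assertion, \ref{P:bdd} furnishes an integral $d \in G(f,e)$, and then $ad \in G(e,e)$ and $da \in G(f,f)$, while the second assertion gives $ad \le a$ and $da \le a$; so $b := ad$ and $c := da$ work.

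The only step needing genuine care, beyond routine source/target bookkeeping, is the conditional completeness of $G(e,f)$ in the fifth assertion: one must verify that the supremum formed in the larger lattice $G(e,\cdot)$ actually returns to $G(e,f)$, and it is precisely the last clause of \ref{P:sup}---equality of the two one-sided suprema of a subset of $G(f,f)$---that makes this work. Everything else reduces to \ref{P:ord} in the guise ``multiplication by a fixed element is an order isomorphism'' together with the reduction to integral elements.
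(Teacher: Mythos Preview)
Your proof is correct and follows essentially the same route as the paper's, with only cosmetic differences: you invoke the order-isomorphism observation directly for part~3 (the paper re-derives it inline), and for part~5 you reduce conditional completeness to existence of suprema via the standard lattice fact, whereas the paper constructs the infimum explicitly as $x\sup(M^{-1}x)^{-1}$. All the key moves---translating to integral elements via \ref{P:ord}, using \ref{P:int} to switch sides, and applying the last clause of \ref{P:sup} to land back in $G(e,f)$---are the same.
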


\begin{proof}
  \mbox{}
  \begin{enumerate}
    \item 
      Assume first $s(x)=s(a)$. By \ref{P:ord}, $a \le x$ if and only if $x^{-1}a \le t(x)$. By \ref{P:int} this is equivalent to $x^{-1}a \le t(a)$. Again by \ref{P:ord} this is equivalent to $x^{-1} \le a^{-1}$. The case $t(x)=t(a)$ is proven similarly.
      
      (a) $\Leftrightarrow$ (b) and (c) $\Leftrightarrow$ (d) by $\ref{P:int}$. 
      For (a) $\Leftrightarrow$ (c) set $x = s(a)$.

    \item Since $x \le t(x) = s(a)$, we have $xa \le s(a)a = a$ by \ref{P:ord}. Similarly, $by \le y$.

    \item 
      We show (i), (ii) is similar.
      Since $a \le a \join b$ and $b \le a \join b$, $\ref{P:ord}$ implies $xa \le x(a \join b)$ and $xb \le x(a \join b)$, thus $xa \join xb \le x(a \join b)$. Therefore
      \[
        a \join b = (x^{-1}xa) \,\join\, (x^{-1}xb) \le x^{-1}(xa \join xb),
      \]
      and multiplying by $x$ from the left gives $x(a \join b) \le xa \join xb$. Dually, $x(a \meet b) = xa \meet xb$.

    \item Let $c = \sup(M)$. Then for all $m \in M$, $xm \le xc$, hence $xc$ is an upper bound for $xM$. If $d \in G(s(x),\cdot)$ is another upper bound for $xM$, then $m \le x^{-1} d$ for all $m \in M$, hence $c \le x^{-1} d$ and thus $xc \le d$. Therefore $xc = \sup(xM)$.

      For $d \in G(e,\cdot)$ we have $m \le d$ for all $m \in M$ if and only if $m^{-1} \ge d^{-1}$ (in $G(\cdot,e)$), and $\inf(M^{-1}) = \sup(M)^{-1}$ follows.

    \item
      We show the claim for $G(e,\cdot)$, for $G(\cdot,f)$ the proof is similar.
      Let $\emptyset \ne M \subset G(e,\cdot)$ be bounded, say $x \le m \le y$ for some $x,y \in G(e,\cdot)$ and all $m \in M$.
      Then $y^{-1} M \subset G(t(y),\cdot)$ is integral, hence $\sup(y^{-1}M)$ exists by \ref{P:sup}, and $\sup(M) = y\sup(y^{-1}M)$ by \ref{gb:sup}. Similarly, $M^{-1} x \subset G(\cdot, t(x))$ is integral, and therefore $\sup(M^{-1} x)$ exists, implying $\inf(M) = \sup(M^{-1})^{-1} = x \sup(M^{-1}x)^{-1}$.

      The proof for $G(e,f)$ is similar but uses in addition $\sup_{G(t(y),\cdot)}(y^{-1}M) = \sup_{G(\cdot,f)}(y^{-1}M)$ (from \ref{P:sup}), to ensure that the supremum lies in $G(e,f)$ again.

    \item
      By 2 and the fact that every $e \in G_0$ is integral by definition, $G_+$ forms a wide subcategory of $G$.
      If $a \in G_+ \setminus G_0$, then $a < s(a)$, thus $a^{-1} > s(a)$ and therefore $a^{-1}$ is not integral. Hence the subcategory of integral elements is reduced.
      Let $x \in G$ and $e = s(x)$. Then $a = x \meet e \le e$, hence $a$ is integral. Since $a \le x$, also $x^{-1} a \le t(x)$ is integral. Set $b = x^{-1} a$. Then $x = ab^{-1}$ with $a,b \in G_+$. Similarly one can find $c,d \in G_+$ with $x = d^{-1} c$.
    \item By \ref{P:bdd} there exist integral $b' \in G(f,e)$ and $c' \in G(e,f)$. Set $b = ab'$ and $c = c'a$. Then $b \le a$, $c \le a$ and $b \in G(e)$, $c \in G(f)$.
      \qedhere
  \end{enumerate}
\end{proof}

For $e,f \in G$ it is immediate from the definitions that $G_+(e,\cdot) = G(e,\cdot) \cap G_+$,\; $G_+(\cdot, f) = G(\cdot,f) \cap G_+$ and $G_+(e,f) = G(e,f) \cap G_+$. Moreover, $G_+(e,\cdot)$ is a sublattice of $G(e,\cdot)$, $G_+(\cdot,f)$ is a sublattice of $G(\cdot,f)$, and $G_+(e,f)$ is a sublattice of $G(e,f)$.

If $a,b \in G_+(e,\cdot)$, then $a \le b$ if and only if $b \mid_{G_+}^l a$ as $a = b(b^{-1}a)$, and $b^{-1}a$ is integral if and only if $a \le b$. Similarly, if $a,b \in G_+(\cdot, f)$, then $a \le b$ if and only if $b \mid_{G_+}^r a$. Correspondingly, for integral elements with the same left (right) identity, we may view the join and meet operations as left (right) gcd and lcm.

\begin{defilemma} \label{defilemma:max-int}
  For $u \in G$ the following are equivalent:
  \begin{enumerate}
    \enumequiv

  \item $u$ is maximal in $G_+(s(u), \cdot) \setminus \{\, s(u) \,\}$,
  \item $u$ is maximal in $G_+(\cdot, t(u)) \setminus \{\, t(u) \,\}$,
    \item $u \in \cA(G_+)$.
  \end{enumerate}
  An element $u \in G$ satisfying these equivalent conditions is called \emph{maximal integral}.
\end{defilemma}

\begin{proof}
  (a) $\Rightarrow$ (b): By definition, $u$ is maximal in $G(s(u),\, \cdot)$ with $u < s(u)$. If $u \le y < t(u)$ with $y \in G(\cdot,\, t(u))$, then $u y^{-1} \le y y^{-1} = s(y)$, hence $u y^{-1} \in G(s(u),\, \cdot)$ is integral, and therefore $u < u y^{-1} \le s(u)$. By maximality of $u$ in the first set, therefore $u y^{-1} = s(u)$, whence $y=u$ and $u$ is maximal in the second set.

  (b) $\Rightarrow$ (c): 
  Assume $u=vw$ with $v,w \in G_+ \setminus G_0$. Then $u < w < t(u)$, contradicting the maximality of $u$ in $G_+(\cdot, t(u))$.

  (c) $\Rightarrow$ (a): Let $v \in G_+(s(u),\cdot)$ with $u \le v < s(u)$. Then $u = v(v^{-1}u)$ with $v$ and $v^{-1}u$ integral, and since $v \not \in G_0$ necessarily $v^{-1}u \in G_0$, i.e., $u=v$.
\end{proof}

\begin{lemma} \label{lemma:lgrp-primes}
  Let $U$ be an l-group.
  For $p \in U$ the following are equivalent:
  \begin{enumerate}
    \enumequiv

    \item $p$ is maximal integral,
    \item $p$ is a prime element in $U_+$,
    \item $p \in \cA(U_+)$.
  \end{enumerate}
\end{lemma}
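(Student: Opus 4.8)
\emph{Plan.} Write $1$ for the identity of the l-group $U$ and $U_+ = \{\, a \in U \mid a \le 1 \,\}$ for the monoid of integral elements (note $s(p) = t(p) = 1$, so $U_+ = U(1)_+$). As in the remarks preceding \autoref{lemma:gb} and \autoref{defilemma:max-int}, $U_+$ is a reduced cancellative monoid with $U_+^\times = \{\,1\,\}$ which is a sublattice of $U$, and left and right multiplication distribute over finite joins and meets (this is classical for l-groups, cf.\ the argument of \autoref{lemma:gb}\,(3)). Moreover $b \mid a$ in $U_+$ --- from the left or, equivalently, from the right --- if and only if $a \le b$, so that ``$p$ maximal integral'' means exactly ``$p$ is maximal in $U_+ \setminus \{\,1\,\}$'' and ``$p \mid ab$'' means ``$ab \le p$''. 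I would prove the cycle (a)\,$\Rightarrow$\,(b)\,$\Rightarrow$\,(c)\,$\Rightarrow$\,(a).

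\emph{(a)\,$\Rightarrow$\,(b) --- the crux.} Let $p$ be maximal in $U_+ \setminus \{\,1\,\}$ (in particular a non-unit) and let $a, b \in U_+$ with $ab \le p$. If $a \le p$, then $p \mid a$ and we are done; otherwise $p \lneq p \join a \le 1$ with $p \join a \in U_+$, so maximality of $p$ forces $p \join a = 1$. Multiplying by $b$ on the right and distributing,
\[
  b = (p \join a)\, b = pb \join ab \le p,
\]
since $pb \le p$ (because $b \le 1$) and $ab \le p$ by hypothesis. Hence $p \mid b$, and $p$ is a prime element of $U_+$.

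\emph{(b)\,$\Rightarrow$\,(c) and (c)\,$\Rightarrow$\,(a).} For (b)\,$\Rightarrow$\,(c): a prime is a non-unit, and if $p = vw$ with $v,w \in U_+$, then $p \mid v$ or $p \mid w$; in the first case write $v = pv'$ with $v' \in U_+$, so cancelling $p$ in $p = p(v'w)$ gives $v'w = 1$ and hence $w \in U_+^\times = \{\,1\,\}$ --- thus $p \in \cA(U_+)$. For (c)\,$\Rightarrow$\,(a): if $p \in \cA(U_+)$ and $p \le v \in U_+$, then $p = v\,(v^{-1}p)$ with both factors integral, so $v = 1$ or $v^{-1}p = 1$, i.e.\ $v \in \{\,1, p\,\}$; therefore $p$ is maximal in $U_+ \setminus \{\,1\,\}$. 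This last step is exactly the argument for (c)\,$\Rightarrow$\,(a) in \autoref{defilemma:max-int}, which uses only the reduced monoid and sublattice structure available here, not the remaining arithmetical-groupoid axioms.

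\emph{Main obstacle.} Everything is routine except (a)\,$\Rightarrow$\,(b), whose content is the GCD-behaviour of the negative cone of an l-group: the step $p \join a = 1$ (valid because $p$ is maximal and $p \nmid a$) together with the distributivity identity $(p \join a)\,b = pb \join ab$ is precisely what upgrades ``atom'' to ``prime'', and this is the feature that can fail in a general arithmetical groupoid.
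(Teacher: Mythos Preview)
Your proof is correct and follows essentially the same approach as the paper's: the key step (a)\,$\Rightarrow$\,(b) via $p\join a = 1$ and the distributivity $(p\join a)b = pb \join ab$ is identical, and the remaining implications are the routine ones (the paper cites \autoref{defilemma:max-int} for (a)\,$\Leftrightarrow$\,(c) just as you do). The only cosmetic difference is in (b)\,$\Rightarrow$\,(c), where the paper argues directly that $p\mid a$ gives $a\le p\le a$ (using $p=ab\le a$), while you cancel $p$ from $p = p v'w$; both are equally short.
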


\begin{proof}
  (a) $\Leftrightarrow$ (c) is shown as in \autoref{defilemma:max-int}. It suffices to show (a) $\Rightarrow$ (b) and (b) $\Rightarrow$ (c).
  Let $e$ be the identity of $U$.

  (a) $\Rightarrow$ (b): Let $p$ be maximal in $U_+$ with $p \ne e$.
  Assume $p \mid ab$ for $a,b \le e$. That means $ab \le p$. Assume $a \not \le p$.
  Then $b \,=\, (a \join p)b \,=\, ab \,\join\, pb \,\le\, p \,\join\, pb \,=\, p$, i.e., $p \mid b$.

  (b) $\Rightarrow$ (c): Let $p$ be a prime, $p = ab$ with $a, b \le e$. Say $p \mid a$, i.e., $a \le p$.
  Then $a \le p \le a$ implies $p=a$ and therefore $b=e$.
\end{proof}

\begin{prop} \label{prop:agrpd-gen-freeab}
  \mbox{}
  \begin{enumerate}
    \item If $G$ is a group (i.e., $\card{G_0} = 1$), then $G$ is the free abelian group with basis $\cA(G_+)$, and $G_+$ is the free abelian monoid with basis $\cA(G_+)$. Moreover $\gcd(a,b) = a \join b$ and $\lcm(a,b) = a \meet b$.

    \item Let $\cM$ be a set, $F$ the free abelian group with basis $\cM$, and $H \subset F$ the free abelian monoid with the same basis.
          A lattice order is defined on $F$ by $a \le b$ if $a = cb$ with $c \in H$, and $(F, \le)$ is an arithmetical groupoid with $F_+ = H$ and $\cM = \cA(F_+)$.

    \item \label{agf:vgrp} For every $e \in G_0$, the group isomorphism $j_e\colon G(e) \to \bG$ induces the structure of an arithmetical groupoid on $\bG$, and the induced structure on $\bG$ is independent of the choice of $e$.

  $\bG$  ($G(e)$) is a free abelian group and $\bG_+$ ($G(e)_+$) is a free abelian monoid with basis $\cA(\bG_+)$ ($\cA(G(e)_+)$). Moreover, $j_e(G(e)_+) = \bG_+$ and $j_e(\cA(G(e)_+)) = \cA(\bG_+)$.
  \end{enumerate}
\end{prop}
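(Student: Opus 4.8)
The plan is to derive part~(1) directly from lattice-ordered group theory, to obtain part~(2) by checking the axioms \ref{P:int}--\ref{P:acc} individually, and to reduce part~(3) to part~(1) applied to the vertex groups, transported along the isomorphisms $j_e$. For~(1), note that since $\card{G_0}=1$ we have $G(e,\cdot)=G(\cdot,e)=G(e,e)=G$, so the definition of a lattice-ordered groupoid together with \ref{P:ord} says precisely that $(G,\le)$ is a lattice-ordered group, while \ref{P:acc} becomes the ACC on the cone $G_+$ of integral elements. By the equivalence $a\le b\Leftrightarrow b\mid^l_{G_+}a$ recorded just before \autoref{defilemma:max-int}, this ACC is the ACC on principal one-sided ideals of $G_+$, so $G_+$ is atomic by \autoref{prop:atomic}; and by \autoref{lemma:lgrp-primes} every atom of $G_+$ is a prime element and is maximal integral.

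The crux of~(1) is that $G$ is abelian. If $p\neq q$ are distinct atoms, then $e$ is a common upper bound of $p$ and $q$, so $p\le p\join q\le e$, and maximal-integrality of $p$ forces $p\join q\in\{p,e\}$; but $p\join q=p$ would give $q\le p$, i.e.\ $q=px$ with $x\in G_+\setminus G_0$, contradicting that $q$ is an atom. Hence $p\join q=e$, so distinct atoms are disjoint, and disjoint elements of a lattice-ordered group commute (see \cite{steinberg10}; alternatively one may invoke the structure theorem for l-groups with ACC on the positive cone). Since $G_+$ is reduced (\autoref{lemma:gb}) and atomic, it is generated by its atoms, so $G_+$ is commutative and hence so is its group of fractions $G=\quo(G_+)$. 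A reduced, commutative, cancellative, atomic monoid all of whose atoms are prime is free abelian on its set of atoms; applying this to $G_+$ and passing to fractions yields $G_+=\cF_{ab}(\cA(G_+))$ and that $G$ is free abelian on $\cA(G_+)$. The identities $\gcd(a,b)=a\join b$ and $\lcm(a,b)=a\meet b$ then follow from the coordinatewise description of meet and join on $\cF_{ab}(\cA(G_+))$ ordered by divisibility.

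For~(2), again $\card{F_0}=1$, so only \ref{P:int}--\ref{P:acc} need to be verified. Identifying $(F,\le)$ with $\bigoplus_{\cM}\bZ$ ordered by reverse coordinatewise comparison, meet and join are coordinatewise, hence the lattice is distributive and in particular modular, giving \ref{P:mod}; \ref{P:ord} is immediate since $F$ is abelian; \ref{P:int} is vacuous; the identity is integral, giving \ref{P:bdd}; a nonempty set $M$ of integral elements lies in $H=\bigoplus_{\cM}\bN_0$ and has a supremum there, namely $\gcd(M)$, whose support is contained in the finite support of any member of $M$, giving \ref{P:sup}; and an ascending chain of integral elements has coordinatewise non-increasing, $\bN_0$-valued exponent vectors supported in one fixed finite set, hence is eventually stationary, giving \ref{P:acc}. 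Finally $F_+=H$ and $\cA(F_+)=\cM$ by construction.

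For~(3), I would first check that each vertex group $G(e)$ with the restriction of $\le$ is itself an arithmetical groupoid with one object: it is a sublattice of $G(e,\cdot)$ by the sublattice condition in the definition of a lattice-ordered groupoid (hence modular), suprema of its integral subsets remain in $G(e)$ because of the clause $\sup_{G(e,\cdot)}=\sup_{G(\cdot,e)}$ in \ref{P:sup}, and the remaining axioms restrict directly. By~(1), $G(e)$ is free abelian on $\cA(G(e)_+)$ and $G(e)_+$ is free abelian on the same basis; in particular each vertex group is abelian, so $\bG$ is defined as in \autoref{sec:groupoids}. Transporting $\le$ along the group isomorphism $j_e\colon G(e)\to\bG$ makes $\bG$ an arithmetical group isomorphic to $G(e)$, and this structure is independent of $e$: for $a\in G(e,f)$ the conjugation $x\mapsto a^{-1}xa$ is an order isomorphism $G(e)\to G(f)$ (by \ref{P:ord} and the sublattice condition), it does not depend on the choice of $a$ since $G(e)$ is abelian, and it intertwines $j_e$ with $j_f$. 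Applying~(1) to $\bG$ (equivalently, transporting the conclusion from $G(e)$) then gives the freeness statements, and since $j_e$ is an order isomorphism it carries integral elements to integral elements and atoms to atoms, so $j_e(G(e)_+)=\bG_+$ and $j_e(\cA(G(e)_+))=\cA(\bG_+)$. The only step I expect to be substantial is the commutativity of $G$ in~(1): the remaining axiom checks in~(2) and transport arguments in~(3) are routine, but it is the lattice-ordered group input there---disjointness of distinct atoms together with commutativity of disjoint elements---that makes the passage from unique factorization to freeness possible.
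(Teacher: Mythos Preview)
Your proof is correct and tracks the paper closely for parts (2) and (3). The one genuine difference is in part (1), where the paper obtains commutativity of $G$ differently: it observes that $G$ is a conditionally complete lattice-ordered group (from \hyperref[gb:cc]{Lemma \ref*{lemma:gb}.\ref*{gb:cc}}) and then cites the classical theorem that conditionally complete l-groups are abelian (\cite[Theorems 2.3.1(d) and 2.3.9]{steinberg10}). Your route---showing that distinct atoms $p,q$ satisfy $p\join q=e$ by maximal-integrality, invoking that disjoint elements of an l-group commute, and then using atomicity to conclude that $G_+$, and hence $G=\quo(G_+)$, is commutative---is a legitimate alternative. It trades the single black-box citation (conditional completeness $\Rightarrow$ abelian) for a more hands-on argument that only uses the ACC and elementary l-group identities; in particular, your argument does not need \ref{P:sup} or conditional completeness at all, which makes it slightly more robust. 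Both approaches arrive at the same endpoint (a reduced atomic commutative monoid whose atoms are prime, hence free abelian), and the paper's $\gcd/\lcm$ identification is the same as yours. Your remark in (3) that abelianness of the vertex groups---and hence the very definition of $\bG$---is being \emph{established} here via part (1) is worth keeping; the paper leaves this implicit.
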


\begin{proof}
  \mbox{}
  \begin{enumerate}
    \item  $G$ is an l-group, and by \hyperref[gb:cc]{Lemma \ref*{lemma:gb}.\ref*{gb:cc}} it is conditionally complete. Therefore $G$ is commutative (\cite[Theorems 2.3.1(d) and 2.3.9]{steinberg10}). Since it satisfies the ACC on integral elements, $G_+$ is atomic (\autoref{prop:atomic}). By the previous lemma, every atom of $G_+$ is a prime element, and therefore $G_+$ is factorial.
      Because it is also reduced, $G_+$ is the free abelian monoid with basis $\cA(G_+)$. 
      Now $G=\quo(G_+)$ implies that $G$ is the free abelian group with basis $\cA(G_+)$. 
      Finally, $\gcd(a,b) = a \join b$ and $\gcd(a,b) = a \meet b$ for $a,b \in G_+$ follow because $c \le d$ if and only if $d \mid c$ for all $c,d \in G_+$.

    \item Clearly $\le$ defines a lattice order on $F$, and the properties of an arithmetical groupoid are, except for $\ref{P:mod}$, either trivial, or easily checked. For \ref{P:mod} recall that every l-group is distributive, hence modular, as a lattice (\cite[Theorem 2.1.3(a)]{steinberg10}). Now $F_+ = H$ and $\cA(F_+) = \cM$ are immediate from the definitions.

  \item For every $e \in G_0$ the vertex group $G(e)$ is an arithmetical groupoid, as is easily checked. Via the group isomorphism $j_e: G(e) \to \bG$ therefore $\bG$ gains the structure of an arithmetical groupoid. If $f \in G_0$ and $c \in G(e,f)$, then for all $x,y \in G(e)$ we have $x \le y \Leftrightarrow c^{-1}xc \le c^{-1}yc$, and since $j_f^{-1} \circ j_e(x) = c^{-1} x c$, the induced order on $\bG$ is independent of the choice of $e$.

  By 1., applied to $\bG$, respectively $G(e)$, the remaining claims follow (for $j_e(\cA(\bG_+))=\cA(G(e)_+)$ use the characterization of atoms as maximal integral elements from \autoref{lemma:lgrp-primes}).
  \qedhere
  \end{enumerate}
\end{proof}

Let $[a,b]$, $[c,d]$ be intervals in a lattice. Recall that $[a,b]$ is \emph{down-perspective} to $[c,d]$ if $c = a \meet d$ and $b = a \join d$. Moreover, $[a,b]$ is \emph{perspective} to $[c,d]$ if either $[a,b]$ is down-perspective to $[c,d]$, or $[c,d]$ is down-perspective to $[a,b]$. The intervals $[a,b]$, $[c,d]$ are \emph{projective} if there exists a finite sequence of intervals $[a,b]=[a_0,b_0], [a_1,b_1], \ldots, [a_k,b_k]=[c,d]$ such that $[a_{i-1},b_{i-1}]$ is perspective to $[a_i,b_i]$ for all $i \in [1,k]$. (See \cite[Chapter I.3.5]{graetzer11}.)

\begin{defi}
  \mbox{}
  \begin{enumerate}
    \item An element $a \in G_+(e,f)$ is \emph{transposable} to an element $b \in G_+(e',f')$ if there exists an element $c \in G_+(e,e')$ such that $[a,e]$ is down-perspective to $[cb,c]$.
      Explicitly: $b = c^{-1}(c \meet a)$ and $c \join a = e$.
    \item An element $a \in G_+$ is \emph{projective} to an element $b \in G_+$ if there exists a sequence of integral elements $a = c_0, c_1, \ldots, c_n, c_{n+1} = b$, such that for any pair of successive elements $(c_i, c_{i+1})$ either $c_i$ is transposable to $c_{i+1}$ or $c_{i+1}$ is transposable to $c_i$.
  \end{enumerate}
\end{defi}

It is easily checked that being transposable is a transitive and reflexive relation (but not symmetric), and projectivity is an equivalence relation. Note that in a modular lattice perspective intervals are isomorphic (\cite[p.308, Theorem 348]{graetzer11}), and therefore in particular the lengths of $[a, s(a)]$ and $[b,s(b)]$ coincide if $a$ is projective to $b$.

\begin{lemma}
  If $a, b, c$ are as in the definition of transposability, then $cb = c \meet a = ad$ for some $d \in G_+$ and $c$ is transposable to $d$.
\end{lemma}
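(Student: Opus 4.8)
The plan is to extract the two displayed equalities almost directly from the definition of transposability, and then to recognise that $a$ itself is the witness realising $c$ as transposable to $d$. No serious difficulty is expected; the main thing to watch is the bookkeeping of sources and targets, so that every meet, join, and composite that occurs is legitimate.

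\emph{The equalities.} By hypothesis there is $c \in G_+(e,e')$ with $[a,e]$ down-perspective to $[cb,c]$, i.e.\ $b = c^{-1}(c\meet a)$ and $c \join a = e$, where $e = s(a) = s(c)$. Multiplying $b = c^{-1}(c\meet a)$ on the left by $c$ gives $cb = c\meet a$, the first equality. Since $a, c \in G_+(e,\cdot)$ and $G_+(e,\cdot)$ is a sublattice of $G(e,\cdot)$, the element $c\meet a$ is integral, and $c\meet a \le a$ in $G_+(e,\cdot)$; hence, by the observation recorded just before \autoref{defilemma:max-int} (namely that $a^{-1}x$ is integral whenever $x \le a$ with $x, a \in G_+(e,\cdot)$), the element $d := a^{-1}(c\meet a)$ is integral. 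By construction $ad = c\meet a$, so $cb = c\meet a = ad$ with $d \in G_+$; inspecting sources and targets, $a^{-1} \in G(f,e)$ and $c\meet a = cb \in G(e,f')$, so in fact $d \in G_+(f,f')$.

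\emph{Transposability of $c$ to $d$.} I would take $a$ itself as the witness. Since $c \in G_+(e,e')$ and $d \in G_+(f,f')$, a witness must lie in $G_+(s(c),s(d)) = G_+(e,f)$, which indeed contains $a$. It then remains to check that $[c,s(c)] = [c,e]$ is down-perspective to $[ad,a]$, i.e.\ that $ad = c\meet a$ and $c \join a = e$: the first is the relation $ad = c\meet a$ just established, and the second is exactly the relation $c \join a = e$ supplied by the hypothesis that $a$ is transposable to $b$. Hence $c$ is transposable to $d$, as claimed.

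In short there is no real obstacle: once one has the elementary correspondence between $\le$ on integral elements with a common source and left-divisibility in $G_+$ (already noted in the text, and a consequence of \ref{P:ord} and \ref{P:int}), everything is a matter of unwinding the definitions of down-perspectivity and transposability.
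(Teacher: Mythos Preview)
Your proof is correct and follows essentially the same approach as the paper's: define $d = a^{-1}(c\meet a)$ from $c\meet a \le a$, read off $cb = c\meet a = ad$, and use $a$ itself as the witness for transposability of $c$ to $d$ via the two relations $d = a^{-1}(a\meet c)$ and $a\join c = e$. The paper's version is just terser.
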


\begin{proof}
  Since $c \meet a \le a$, there exists an integral $d$ with $ad = c \meet a$, and $cb=c \meet a$ by definition of transposability.
  The claim follows from $d = a^{-1}(c \meet a)$ and $c \join a = e$.
\end{proof}

\begin{lemma} \label{lemma:proj-is-proj}
  If two lattice intervals $[a,b]$ and $[c,d]$ of $G(e,\cdot)$ are projective, then the integral elements $b^{-1}a$ and $d^{-1}c$ are projective to each other in the sense of the previous definition.
\end{lemma}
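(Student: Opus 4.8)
The plan is to reduce to the case of a single perspectivity and then write down an explicit transposing element. Since projectivity of integral elements is an equivalence relation (in particular transitive), and since by definition $[a,b]$ and $[c,d]$ are joined by a finite chain $[a,b]=[a_0,b_0],\dots,[a_k,b_k]=[c,d]$ in $G(e,\cdot)$ with $[a_{i-1},b_{i-1}]$ perspective to $[a_i,b_i]$, it suffices to prove: if $[a',b']$ is perspective to $[c',d']$ in $G(e,\cdot)$, then $b'^{-1}a'$ is projective to $d'^{-1}c'$. By symmetry we may assume $[a',b']$ is down-perspective to $[c',d']$, i.e.\ $c' = a'\meet d'$ and $b' = a'\join d'$ with $\meet,\join$ the lattice operations of $G(e,\cdot)$ (in the opposite case the argument below shows instead that $d'^{-1}c'$ is transposable to $b'^{-1}a'$, which is equally good since projectivity is symmetric); I will show $b'^{-1}a'$ is transposable to $d'^{-1}c'$.

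Dropping primes, assume $[a,b]$ is down-perspective to $[c,d]$ in $G(e,\cdot)$, so $c = a\meet d$ and $b = a\join d$. Although $a,b,c,d$ themselves need not be integral, the quotients $u := b^{-1}a$, $v := d^{-1}c$ and $w := b^{-1}d$ are integral: indeed $a\le b$, $c\le d$ and $d\le a\join d=b$, and e.g.\ $b^{-1}a \le b^{-1}b = t(b) = s(b^{-1}a)$ by \ref{P:ord}. Here $u\in G_+\bigl(t(b),t(a)\bigr)$, $v\in G_+\bigl(t(d),t(c)\bigr)$ and $w\in G_+\bigl(t(b),t(d)\bigr) = G_+\bigl(s(u),s(v)\bigr)$, so $w$ is a legitimate candidate for the transposing element.

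It then remains to check that $[u,s(u)]$ is down-perspective to $[wv,w]$ in $G\bigl(s(u),\cdot\bigr)=G\bigl(t(b),\cdot\bigr)$ and that $v = w^{-1}(w\meet u)$. Using that left multiplication by $b^{-1}$ (respectively $d^{-1}$) commutes with $\meet$ and $\join$ (\autoref{lemma:gb}), together with $bb^{-1} = s(b) = e$ and $dd^{-1} = s(d) = e$, one computes
\[
  u\join w = b^{-1}(a\join d) = b^{-1}b = t(b) = s(u), \qquad u\meet w = b^{-1}(a\meet d) = b^{-1}c,
\]
and hence $wv = b^{-1}d\,d^{-1}c = b^{-1}c = u\meet w$ and $w^{-1}(w\meet u) = d^{-1}b\,b^{-1}c = d^{-1}c = v$. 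Thus $u$ is transposable to $v$ via $w$, so $b^{-1}a$ is projective to $d^{-1}c$, and concatenating the chains obtained for the successive perspectivities of the original chain finishes the proof.

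The only step that is not pure bookkeeping is guessing the transposing element $w = b^{-1}d$, i.e.\ recognizing that the down-perspectivity one needs in the lattice $G(t(b),\cdot)$ is just the left translate by $b^{-1}$ of the given one in $G(e,\cdot)$; after that, keeping track of sources and targets is the only thing to watch, and in particular modularity of the lattices is not used here.
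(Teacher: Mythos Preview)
Your proof is correct and takes essentially the same approach as the paper: reduce to a single down-perspectivity and use $w=b^{-1}d$ as the transposing element (this is exactly the paper's $y$), then verify $u\join w=s(u)$ and $u\meet w=wv$ by pushing the given perspectivity through the order-isomorphism $x\mapsto b^{-1}x$. The only cosmetic difference is that the paper names the three integral quotients $x,y,z$ from the start and phrases the computation in terms of them.
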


\begin{proof}
  It suffices to show that if the lattice interval $[a,b]$ is down-perspective to $[c,d]$, then $b^{-1}a$ is transposable to $d^{-1}c$.
  Then $a = bx$, $d = by$ and $c = dz = byz$ with $x, y, z \in G_+$.
  \[
    \xy*\xybox{
      \xymatrix@R=0.7em@C=-0.5em{
        b \ar@{-}[rd] \ar@{-}[dd] & \\
                                  & *+![r]{d = by} \ar@{-}[dd] \\
       *+![l]{bx = a} \ar@{-}[rd] & \\
                                  & *+![r]{c = byz}
    } }\endxy
  \]
  Since $[a,b]$ is down-perspective to $[c,d]$, we get
  \begin{align*}
    b &= a \join d = bx \join by = b(x \join y)        &\text{ and therefore }& & x \join y &= s(x) = t(b), \text{ and } \\
    c &= a \meet d = bx \meet by = b(x \meet y)  &\text{ and therefore }& & x \meet y &= b^{-1}c = yz.
  \end{align*}
  Thus $d^{-1} c = z = y^{-1} (x \meet y)$ with $y \in G(t(b), t(d))$, and hence $x=b^{-1}a$ is transposable to $d^{-1}c$.
\end{proof}

\begin{defilemma}
  For every $a \in G$,
  \[
    \{\, \cX \in \bG \mid \cX_{s(a)} \le a \,\} = \{\, \cX \in \bG \mid \cX_{t(a)} \le a \,\},
  \]
  and we write $\bG_{\le a}$ for this set.
  The \emph{lower bound} $\Phi\colon G \to \bG$ is defined by $\Phi(a) = \sup(\bG_{\le a})$.
\end{defilemma}

\begin{proof}
  Let $\cX \in \bG$. Recall from \autoref{sec:groupoids} that $\cX_{s(a)} = j_{s(a)}^{-1}(\cX)$ denotes the unique representative of $\cX$ in $G(s(a))$, and that $\cX_{t(a)} = a^{-1} \cX_{s(a)} a$.
  We have to show that $\cX_{s(a)} \le a$ if and only if $\cX_{t(a)} \le a$, but this follows from $\cX_{s(a)} \le a \;\;\Leftrightarrow\;\; a^{-1} \cX_{s(a)} a \le a^{-1} a a = a$.
\end{proof}

With the definition of $\Phi$ and the notation of \autoref{sec:groupoids} we have: If $a \in G(e,f)$, then $\Phi(a)_e = \sup\{\, x \in G(e) \mid x \le a \,\} \in G(e)$,\; $\Phi(a)_f = \sup\{\, x \in G(f) \mid x \le a \,\} \in G(f)$,\; $\Phi(a)_f = a^{-1} \Phi(a)_e a = b^{-1} \Phi(a)_e b$ for all $b \in G(e,f)$ and $\Phi(a) = j_e(\Phi(a)_e) = j_f(\Phi(a)_f)$.

\begin{lemma} \label{lemma:max-int}
  Let $e,f \in G_0$.
  \begin{enumerate}
    \item If $a,b \in G(e,\cdot)$ or $a,b \in G(\cdot,f)$ with $a \le b$, then $\Phi(a) \le \Phi(b)$. In particular, if $a \in G_+$, then $\Phi(a) \in \bG_+$.
    \item \label{mi:phi-prod} If $a \in G(e,f)$, $b \in G(f, \cdot)$ then $\Phi(a)\Phi(b) \le \Phi(ab)$.
      If moreover $a, b \in G_+$, then $\Phi(ab) \le \Phi(a) \meet \Phi(b)$, and if furthermore $\Phi(a)$ and $\Phi(b)$ are coprime, then $\Phi(ab) = \Phi(a)\Phi(b)$.
    \item \label{max-int:prime} If $u \in \cA(G_+)$, then $\Phi(u) \in \bG_+$ is prime.
    \item Let $u, v \in G_+$ be projective elements.
      If $u \in \cA(G_+)$, then $v \in \cA(G_+)$, and $\Phi(u) = \Phi(v)$.
  \end{enumerate}
\end{lemma}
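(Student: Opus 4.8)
The plan is to establish the four parts in order, using parts~1 and~2 in the proofs of parts~3 and~4.

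Parts~1 and~2 are essentially order bookkeeping. For part~1, if $a,b\in G(e,\cdot)$ with $a\le b$ then $\bG_{\le a}\subseteq\bG_{\le b}$ directly from the definition ($\cX_e\le a\le b$ gives $\cX_e\le b$), so $\Phi(a)=\sup\bG_{\le a}\le\sup\bG_{\le b}=\Phi(b)$, and the case $a,b\in G(\cdot,f)$ is symmetric; for the addendum, when $a\in G_+$ every $\cX\in\bG_{\le a}$ has $\cX_{s(a)}\le a\le s(a)$, hence $\cX\in\bG_+$, and since $\bG_+$ is the positive cone of a free abelian group (\autoref{prop:agrpd-gen-freeab}) it is closed under bounded suprema, so $\Phi(a)\in\bG_+$. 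For the first inequality of part~2 I would work in the abelian vertex group $G(g)$ with $g=t(b)$: from $\Phi(b)\in\bG_{\le b}$ one has $\Phi(b)_g\le b$; from $\Phi(a)\in\bG_{\le a}$, the transport identity $b\,\Phi(a)_g=\Phi(a)_{t(a)}\,b$, and $\Phi(a)_{t(a)}\le a$ one gets $b\,\Phi(a)_g\le ab$ by \ref{P:ord}; then commutativity of $G(g)$ and \ref{P:ord} give $\Phi(a)_g\Phi(b)_g=\Phi(b)_g\Phi(a)_g\le b\,\Phi(a)_g\le ab$, and since $\Phi(a)_g\Phi(b)_g=(\Phi(a)\Phi(b))_g$ lies in $G(g)$ below $ab$ it is $\le\Phi(ab)_g$, i.e.\ $\Phi(a)\Phi(b)\le\Phi(ab)$. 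The ``moreover'' follows since $ab\le a$ and $ab\le b$ by \hyperref[gb:integral]{Lemma~\ref*{lemma:gb}.\ref*{gb:integral}}, whence $\Phi(ab)\le\Phi(a)\meet\Phi(b)$ by part~1; and the ``furthermore'' because for coprime elements of the free abelian monoid $\bG_+$ one has $\Phi(a)\meet\Phi(b)=\lcm(\Phi(a),\Phi(b))=\Phi(a)\Phi(b)$, which combined with the first inequality forces equality.

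For part~3, set $e=s(u)$; then $\Phi(u)\in\bG_+$ by part~1 and $\Phi(u)_e=\sup\{x\in G(e)\mid x\le u\}$ is the largest element of the vertex group $G(e)$ below $u$. It is $\ne e$ (otherwise $e=\Phi(u)_e\le u$ forces the atom $u$ to equal $e$), so it remains to show $\Phi(u)_e$ is a \emph{maximal} integral element of $G(e)$; granting this, \autoref{lemma:lgrp-primes} makes it an atom of $G(e)_+$, hence $\Phi(u)=j_e(\Phi(u)_e)$ is an atom, i.e.\ a prime, of $\bG_+$ by \autoref{prop:agrpd-gen-freeab}. I would prove the maximality inside the modular lattice $G(e,\cdot)$: the interval $[u,e]$ has length $1$ since $u$ is maximal integral, and no element of $G(e)$ lies strictly between $\Phi(u)_e$ and $u$ by construction; combining this with perspectivity of intervals in modular lattices and the maximality of $u$ should exclude any element of $G(e)$ strictly between $\Phi(u)_e$ and $e$. \textbf{This reconciliation of covering relations in $G(e,\cdot)$ with those in its sublattice $G(e)$ is the step I expect to be the main obstacle.}

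For part~4 it suffices, since projectivity is the transitive closure of transposability, to treat a single step, say $u$ transposable to $v$ via $c\in G_+$ (so $cv=c\meet u$ and $c\join u=s(u)$). Perspective intervals in a modular lattice are isomorphic, so $[u,s(u)]\cong[cv,c]\cong[v,s(v)]$, the last isomorphism being left multiplication by $c^{-1}$; since $[u,s(u)]$ has length $1$ and $v\ne s(v)$, this shows $v\in\cA(G_+)$. For the equality $\Phi(u)=\Phi(v)$, both of which are primes of $\bG_+$ by part~3, I would exploit the divisibility relations furnished by parts~1 and~2 for the identities $cv=c\meet u=ud$ (the element $d$, and the fact that $c$ is transposable to $d$, coming from the lemma following the definition of transposability): from $cv\le u$ and $cv\le v$ one gets $\Phi(u)\mid\Phi(cv)$ and $\Phi(v)\mid\Phi(cv)$, while part~2 gives $\Phi(cv)\mid\Phi(c)\Phi(v)$, so $\Phi(u)\ne\Phi(v)$ would force $\Phi(u)\mid\Phi(c)$; this last possibility must be ruled out using $c\join u=s(u)$ (left-coprimality of $c$ and $u$), and closing it off is --- together with the maximality step in part~3 --- the delicate point. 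Once $\Phi(u)=\Phi(v)$ for a single step, iterating along the chain (all of whose members are then atoms) yields the general statement.
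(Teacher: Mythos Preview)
Parts 1 and 2 are fine and match the paper up to cosmetic differences.

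In part 3 you correctly locate the obstacle but do not overcome it; the vague appeal to perspectivity does not give a proof that no element of $G(e)$ lies strictly between $\Phi(u)_e$ and $e$. The paper's argument is a short direct computation: suppose $\Phi(u)_e=ab$ with $a,b\in G(e)$ and $a,b<e$. Since $b>ab=\Phi(u)_e$ and $\Phi(u)_e$ is the largest element of $G(e)$ below $u$, we have $b\not\le u$, and maximality of $u$ then forces $b\join u=e$. Hence $u\ge ab\join au=a(b\join u)=a$, contradicting $a>ab=\Phi(u)_e$. No perspectivity is needed.

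In part 4 your plan has a genuine gap. From $\Phi(u)\mid\Phi(cv)$ and $\Phi(cv)\mid\Phi(c)\Phi(v)$ you conclude that $\Phi(u)\ne\Phi(v)$ would force $\Phi(u)\mid\Phi(c)$, and propose to rule this out using $c\join u=s(u)$. But left-coprimality of $c$ and $u$ does \emph{not} preclude $\Phi(u)\mid\Phi(c)$: a rigid factorization of $c$ could begin with an atom $u'\ne u$ satisfying $\Phi(u')=\Phi(u)$, in which case $c\join u=s(u)$ yet $\Phi(u)$ divides $\Phi(c)$. The paper avoids this detour entirely. With $p=\Phi(u)_e$ and $f=s(v)$, one has $c^{-1}pc=\Phi(u)_f$, and since $p$ is integral and $p\le u$,
\[
pc\;\le\;c\meet p\;\le\;c\meet u\;=\;cv,
\]
so $\Phi(u)_f=c^{-1}pc\le v$. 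This places $\Phi(u)$ in $\bG_{\le v}$, hence $\Phi(u)\le\Phi(v)$, and primality of $\Phi(u)$ (from part~3) forces $\Phi(u)=\Phi(v)$.
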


\begin{proof}
  \mbox{}
  \begin{enumerate}
    \item Immediate from the definition of $\Phi$.

    \item 
      Observe that $c = a^{-1} \Phi(a)_e \in G(f,e)$ is integral. Therefore
      \[
        \Phi(a)_e \Phi(b)_e = ac \Phi(b)_e = acc^{-1}\Phi(b)_f c = a \Phi(b)_f c \le a \Phi(b)_f \le ab,
      \]
      and hence $\Phi(a)\Phi(b) \le \Phi(ab)$.

      Let now $a,b$ be integral. Then 1. implies $\Phi(ab) \le \Phi(a)$ and $\Phi(ab) \le \Phi(b)$, so $\Phi(ab) \le \Phi(a) \meet \Phi(b)$.
      The last statement follows because $\Phi(a) \meet \Phi(b) = \lcm(\Phi(a),\Phi(b))$ in $\bG_+$.

    \item 
      By \autoref{lemma:lgrp-primes} it suffices to show $\Phi(u) \in \cA(\bG_+)$.
  If $e = s(u)$, then it suffices to prove $\Phi(u)_e \in \cA(G(e)_+)$ (by \hyperref[agf:vgrp]{Proposition \ref*{prop:agrpd-gen-freeab}.\ref*{agf:vgrp}}).
      Assume that $\Phi(u)_e = ab$ with $a,b \in G(e)$ such that $a < e$ and $b < e$.
      Then $b \join u = e$, since $b > ab = \Phi(u)_e$, and therefore
      \[
        u \,\ge\, ab \join au \,=\, a(b \join u) \,=\, a,
      \]
      a contradiction to $a > ab = \Phi(u)_e$.

    \item
      We first show that $v$ is maximal integral, and may assume that either $u$ is transposable to $v$ or $v$ is transposable to $u$.
      Let $e = s(u)$ and $f = s(v)$. Assume first that $u$ is transposable to $v$ via $c \in G_+(e,f)$.
      Then $[u,e]$ is down-perspective to $[cv,c]$, and since $G(e,\cdot)$ is modular, the intervals are isomorphic, hence have the same length (namely $1$). Multiplying from the left by $c^{-1}$ therefore also $[v,f]$ has length $1$, and thus $v$ is maximal integral. If $v$ is transposable to $u$, one argues along similar lines. 

      For the remainder of the claim we may now assume that $u$ is transposable to $v$ (since we already know that $v$ is also maximal integral). Let again $c \in G_+(e,f)$ be such that $cv = c \meet u$ and $e = c \join u$. If $p = \Phi(u)_e$, then $c^{-1}pc = \Phi(u)_f$. 
      Since $pc \le c \meet p \le c \meet u = cv$, we get $c^{-1}pc \le v$ and therefore $\Phi(v) \ge \Phi(c^{-1}pc) = \Phi(\Phi(u)_f) = \Phi(u)$. By \ref*{max-int:prime}., $\Phi(u)$ is prime and thus maximal integral in $\bG_+$, which implies $\Phi(u) = \Phi(v)$.
      \qedhere
  \end{enumerate}
\end{proof}

The converse of \hyperref[lemma:max-int]{Lemma \ref*{lemma:max-int}.\ref*{max-int:prime}} is false in general: A non-maximal integral element can have a prime lower bound. 

\begin{prop} \label{prop:fact-freeish-grpd}
  \mbox{}
  \begin{enumerate}
    \item \label{ffg:hf}
      The category $G_+$ is half-factorial.
      Explicitly: Every $a \in G_+$ possesses a rigid factorization
      \[
        \rf[s(a)]{u_1, \ldots, u_k} \in \sZ^*(a)
      \]
      with $k \in \bN_0$ and $u_1,\ldots,u_k \in \cA(G_+)$ and the number of factors, $k \in \bN_0$, is uniquely determined by $a$. 
      Moreover, if $\rf[s(a)]{v_1, \ldots, v_k} \in \sZ^*(a)$ is another rigid factorization with $v_1,\ldots,v_k \in \cA(G_+)$, then there exists a permutation $\tau \in \fS_k$ such that $u_{\tau(i)}$ is projective to $v_{i}$ for all $i \in [1,k]$. In particular, $\Phi(u_{\tau(i)}) = \Phi(v_{i})$ for all $i \in [1,k]$.

    \item \label{ffg:two}
      Any two rigid factorizations of $a \in G_+$ can be transformed into each other by a number of steps, each of which only involves replacing two successive elements by two new ones.

    \item \label{ffg:transposition}(Transposition.) 
      If $a = uv$ with $u,v \in \cA(G_+)$ and $\Phi(u) = \cP$, $\Phi(v)=\cQ$, $\cQ \ne \cP$, then there exist uniquely determined $v', u' \in \cA(G_+)$ such that $\Phi(v') = \cQ$, $\Phi(u') = \cP$ and $uv = v'u'$.

      Explicitly,
      \begin{align*}
        u' &= a \join \cP_{t(a)},  &  u' \meet v &= a,  &  u' \join v &= t(a),  \\
        v' &= a \join \cQ_{s(a)},  &  u  \meet v'&= a,  &  u \join v' &= s(a).
      \end{align*}
      So $u$ is transposable to $u'$ and $v'$ is transposable to $v$.

    \item \label{ffg:realize}
      Given any permutation $\tau' \in \fS_k$, there exist $w_1,\ldots,w_k \in \cA(G_+)$, such that
      \[
        \rf[s(a)]{w_1, \ldots, w_k} \in \sZ^*(a)
      \]
      and $\Phi(w_i) = \Phi(u_{\tau'(i)})$ for all $i \in [1,k]$.
  \end{enumerate}
\end{prop}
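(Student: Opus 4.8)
The plan is to reduce everything to the Jordan--Hölder--Dedekind theory of the modular lattices $G(e,\cdot)$, using three facts from earlier: $G_+$ is atomic (\autoref{prop:atomic} together with \ref{P:acc}, via the translation between $\le$ and one-/two-sided divisibility), its atoms are exactly the maximal integral elements (\autoref{defilemma:max-int}), and $\Phi(u)\in\bG_+$ is prime whenever $u\in\cA(G_+)$ (\autoref{lemma:max-int} and \autoref{prop:agrpd-gen-freeab}). For \ref*{ffg:hf}: a rigid factorization $\rf[s(a)]{u_1,\ldots,u_k}\in\sZ^*(a)$ yields the chain $s(a)=b_0>b_1>\cdots>b_k=a$ in $G(s(a),\cdot)$ with $b_i=u_1\cdots u_i\in G_+$; since $u_{i+1}$ is maximal integral, left multiplication by $b_i^{-1}$ maps $[b_{i+1},b_i]$ isomorphically onto $[u_{i+1},s(u_{i+1})]$, which has length $1$, so the chain is a composition series of the interval $[a,s(a)]$ (consisting of integral elements by \ref{P:int}). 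As $G(s(a),\cdot)$ is modular (\ref{P:mod}) and satisfies the ACC on integral elements (\ref{P:acc}), the Jordan--Hölder--Dedekind theorem (\cite{graetzer11}) gives a common length $k$ and, for a second factorization $\rf[s(a)]{v_1,\ldots,v_k}$ with interior vertices $b'_i$, a permutation $\tau$ with $[b_{\tau(i)-1},b_{\tau(i)}]$ projective to $[b'_{i-1},b'_i]$; by \autoref{lemma:proj-is-proj} the atoms $u_{\tau(i)}$ and $v_i$ are projective, whence $\Phi(u_{\tau(i)})=\Phi(v_i)$ by \autoref{lemma:max-int}.

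Statement \ref*{ffg:two} is the constructive half of the same theorem: replacing two successive atoms of a rigid factorization by two new ones is the same as replacing one interior vertex $b_i$ of the associated composition series by another vertex in the length-$2$ interval $[b_{i+1},b_{i-1}]$. I would prove by induction on $k$ that any two composition series of $[a,s(a)]$ are connected by such single-vertex exchanges: if their top interior vertices agree, recurse on the interval below; if not, modularity makes their meet covered by both, so prepending each of the two vertices to a fixed composition series of the interval below the meet produces two composition series differing in exactly one vertex, and each of these is linked to one of the originals by the inductive hypothesis on a length-$(k-1)$ interval.

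For the transposition \ref*{ffg:transposition}, put $e=s(a)$, $f=t(a)$. Since $\bG_+$ is free abelian and $\cP\ne\cQ$, the primes $\cP,\cQ$ are coprime, so \autoref{lemma:max-int} (part on products) gives $\Phi(a)=\cP\cQ$; and, as in \ref*{ffg:hf}, $a=uv$ shows that both $[a,e]$ and $[a,f]$ have length $2$. The key step is that $u':=a\join\cP_{f}\in G(\cdot,f)$ satisfies $u'\meet v=a$: by the modular law (using $a\le v$) one has $u'\meet v=a\join(\cP_f\meet v)$, so it suffices to rule out $\cP_f\meet v\not\le a$; if that held then $a\join(\cP_f\meet v)$, being strictly above $a$ and $\le v$ with $a\prec v$, would equal $v$, forcing $v\le u'$ and hence (with $\cP_f\le u'$) $\cP\cQ\mid\Phi(u')$; but then $u'$ lies strictly between $a$ and $f$ (it is $\ne f$ since $\Phi(u')\ne\Phi(f)$, and $\ne a$ since $\cP_f\not\le a$), so it is a middle vertex of the length-$2$ interval $[a,f]$, hence maximal integral, hence $\Phi(u')$ is prime --- a contradiction. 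Granting $u'\meet v=a$: then $u'\ne f$ (else $u'\meet v=v\ne a$), so $u'$ is maximal integral with $\cP\le\Phi(u')$, whence $\Phi(u')=\cP$, and $u'\join v=f$ by perspectivity. Setting $v':=au'^{-1}$, one gets $a=v'u'$ with $v'$ integral; from $\Phi(v')\Phi(u')\le\Phi(a)=\cP\cQ$ and $\Phi(u')=\cP$ it follows that $\cQ\mid\Phi(v')$, and $v'$ is maximal integral (perspectivity $[a,u']\cong[v',s(u')]$), so $\Phi(v')=\cQ$. Finally $v'=a\join\cQ_e$, since $v'$ is a middle vertex of $[a,e]$ with $\cQ_e=\Phi(v')_e\le v'$, so $a\join\cQ_e\le v'$ and $a\prec v'$ forces equality; the explicit meet/join identities for $v'$ follow by the left--right symmetric computation, and uniqueness follows since any transposition $a=\tilde v\tilde u$ with $\Phi(\tilde v)=\cQ$ forces $\tilde v$ to be a middle vertex of $[a,e]$ with $\cQ_e\le\tilde v$, hence $\tilde v=a\join\cQ_e=v'$ and $\tilde u=u'$.

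Then \ref*{ffg:realize} follows from \ref*{ffg:transposition} and \ref*{ffg:hf}: write $\tau'\in\fS_k$ as a product of adjacent transpositions and apply them successively to $\rf[s(a)]{u_1,\ldots,u_k}$, where to exchange positions $i,i+1$ in a current factorization $\rf[s(a)]{w_1,\ldots,w_k}$ one does nothing if $\Phi(w_i)=\Phi(w_{i+1})$ (the sequence of $\Phi$-values is then unchanged) and otherwise invokes \ref*{ffg:transposition} on $w_iw_{i+1}$; after all steps the sequence of $\Phi$-values has been permuted by $\tau'$, and by \ref*{ffg:hf} its underlying multiset is that of $u_1,\ldots,u_k$. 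I expect the main obstacle to be exactly the key step of \ref*{ffg:transposition}, namely $u'\meet v=a$: everything else is either routine manipulation in the groupoid or an appeal to the Jordan--Hölder--Dedekind machinery, but this is where the modular-lattice structure and the norm $\Phi$ must genuinely interact, through the primality of $\Phi$ on maximal integral elements together with the coprimality of $\cP$ and $\cQ$.
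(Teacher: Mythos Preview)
Your proposal is correct and, for parts \ref*{ffg:hf}, \ref*{ffg:two}, and \ref*{ffg:realize}, it follows the paper's proof essentially verbatim: rigid factorizations of $a$ are identified with maximal chains in the interval $[a,s(a)]$ of the modular lattice $G(s(a),\cdot)$, and the Jordan--H\"older--Dedekind theorem does the rest; part \ref*{ffg:realize} is reduced to part \ref*{ffg:transposition} by writing $\tau'$ as a product of adjacent transpositions. One small point you gloss over and the paper makes explicit: to know $[a,s(a)]$ has finite length you also want the DCC there, which the paper obtains from the ACC on $G_+(\cdot,t(a))$ via $x\mapsto x^{-1}a$.

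The genuine difference is in part \ref*{ffg:transposition}. Both proofs define one of the two new atoms as a join with the appropriate prime representative (the paper sets $v'=a\join\cQ_{s(a)}$ in $G(s(a),\cdot)$; you set $u'=a\join\cP_{t(a)}$ in $G(\cdot,t(a))$), and the heart of the matter is the meet identity $v'\meet u=a$ (respectively $u'\meet v=a$). The paper proves this by a direct lattice computation: modularity gives $(a\join\cQ_e)\meet u=a\join(\cQ_e\meet u)$, then one shows $\cQ_e\meet u=\cQ_e\,u=u\,\cQ_{t(u)}$ using that $[\cQ_e u,\cQ_e]\cong[u,e]$ has length one, and concludes from $\cQ_{t(u)}\le v$. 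Your argument is instead by contradiction: if $u'\meet v>a$, then $v\le u'$, so together with $\cP_f\le u'$ both $\cP$ and $\cQ$ divide $\Phi(u')$; but $u'$ is then a proper middle vertex of the length-two interval $[a,f]$, hence maximal integral, hence $\Phi(u')$ is prime --- impossible. Both routes are short; the paper's is a clean explicit calculation that never leaves the lattice, while yours leans on the earlier structural fact that $\Phi$ sends atoms to primes (\autoref{lemma:max-int}) and on $\Phi(a)=\cP\cQ$ from the coprimality clause of the same lemma. Either way, once the meet identity is in hand the remaining formulas and uniqueness drop out exactly as you indicate.
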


\begin{proof}
  \mbox{}
  \begin{enumerate}
    \item[1, 2.]
      We observe that rigid factorizations of $a$ correspond bijectively to maximal chains of the sublattice $[a,s(a)]$ of $G(s(a),\cdot)$:
      If $\rf[s(a)]{u_1, \ldots, u_k} \in \sZ^*(a)$, then $s(a) > u_1 > u_1 u_2 > \ldots > u_1\cdot\ldots\cdot u_k = a$ is a chain in $[a,s(a)]$ and since $u_1, \ldots, u_k$ are maximal integral, it is in fact a maximal chain of $[a, s(a)]$.
      Conversely, if $s(a)=x_0 > x_1 > x_2 > \ldots > x_k=a$ is a maximal chain of $[a,s(a)]$ then we set $u_i = x_{i-1}^{-1} x_i$ for all $i \in [1,k]$. These elements are maximal integral, i.e., atoms of $G_+$, and $a = x_k = s(a) x_1 (x_1^{-1} x_2)\cdot \ldots\cdot (x_{k-2}^{-1}x_{k-1}) (x_{k-1}^{-1} x_k) = s(a) u_1\cdot\ldots\cdot u_k$.

      By \ref{P:acc}, $[a,s(a)]$ satisfies the ACC, but also the DCC because if $s(a) = x_0 \ge x_1 \ge \ldots \ge a$ is a descending chain in $[a,s(a)]$, then $x_0^{-1} a \le x_1^{-1} a \le \ldots \le a^{-1}a = t(a)$ is an ascending chain in $G_+(\cdot, t(a))$ and therefore becomes stationary again by \ref{P:acc}. Being a modular lattice, $[a,s(a)]$ is therefore of finite length.

      The claims now follow from the Jordan-Hölder Theorem for modular lattices (see e.g., \cite[p.333, Theorem 377]{graetzer11}).
      The existence of maximal chains implies that $G_+$ is atomic (alternatively, use \autoref{prop:atomic} together with the ACC on integral elements). For half-factoriality, and projectivity of the factors, assume that $s(a)=x_0 > x_1 > x_2 > \ldots > x_k=a$ and $s(a)=y_0 > y_1 > y_2 > \ldots > y_l=a$ are two maximal chains from which rigid factorizations with factors $u_i = x_{i-1}^{-1}x_i$ for $i \in [1,k]$ and $v_i = y_{i-1}^{-1} y_i$ for $i \in [1,l]$ are derived.
      Then the uniqueness part of the Jordan-Hölder Theorem implies $k=l$ and that there exists a permutation $\tau \in \fS_k$ such that $[x_{\tau(i)},x_{\tau(i)-1}]$ is projective to $[y_i,y_{i - 1}]$ for all $i \in [1,k]$. 
      By \autoref{lemma:proj-is-proj}, this implies that $u_{\tau(i)}$ is projective to $v_i$ for all $i \in [1,k]$.

      Finally, \ref{ffg:two}. follows in a similar manner by induction on the length of $a$. 
      Fix a composition series of $[u_1 \meet v_1, a]$. This gives rise to refinements of $s(a) > u_1 > u_1 \meet v_1 > a$ and $s(a) > v_1 > u_1 \meet v_1 > a$ to composition series of $[a,s(a)]$. Applying the induction hypothesis to $\rf{u_2,\ldots,u_k}$ (respectively $\rf{v_2,\ldots,v_k}$), and the rigid factorization derived from the refined chain $t(u_1) > u_1^{-1}(u_1 \meet v_1) > \ldots > u_1^{-1}a$ (respectively $t(v_1) > v_1^{-1}(u_1 \meet v_1) > \ldots > v_1^{-1}a$) one proves the claim.

    \item[3.]
      Let $e = s(u)$, $q = \cQ_e$ and set $v' = uv \join q$.
      We first show:
      \begin{enumerate}
        \setlength\itemindent{3em}
        \item[Claim \textbf{A}.] $q \not \le u$,
        \item[Claim \textbf{B}.] $v' \meet u = uv$,
        \item[Claim \textbf{C}.] $v'$ is maximal integral.
      \end{enumerate}

      \emph{Proof of Claim \textbf{A}.} Suppose $q \le u$. Then $\cQ \le \Phi(u) = \cP$, a contradiction to $\cP$ and $\cQ$ being distinct prime elements of $\bG_+$ (\hyperref[max-int:prime]{Lemma \ref*{lemma:max-int}.\ref*{max-int:prime}}).

      \emph{Proof of Claim \textbf{B}.}
      Since $G(e,\cdot)$ is modular and $uv \le u$,
      \[
        v' \meet u = (uv \join q) \meet u = uv \join (q \meet u).
      \]
      Because $q \not \le u$ (Claim \textbf{A}), we have $q > q \meet u \ge qu$ and thus, by maximality of $u$, $qu = q \meet u$. Therefore
      \[
        uv \join (q \meet u) = uv \join qu = uv \join u\cQ_{t(u)} = u(v \join \cQ_{t(u)}) = uv.
      \]

      \emph{Proof of Claim \textbf{C}.} 
      Since $uv$ is a product of two atoms and $G_+$ is half-factorial, it suffices to show $uv < v' < e$.
      Suppose first $uv = v'$. Then $q \le uv \le u$, contradicting Claim \textbf{A}. Assume now $v' = e$. Then $u = e \meet u = v' \meet u$, and by Claim \textbf{B} therefore $u=uv$, contradicting $v < s(v)$.

      \emph{Existence.}
      We have $uv = v'u'$ with $v' \in \cA(G_+)$ (by Claim \textbf{C}) and $u' = v'^{-1}uv \in G_+$.
      Since $G_+$ is half-factorial, this necessarily implies $u' \in \cA(G_+)$.
      By definition of $v'$, $\cQ \le \Phi(v') < 1_{\bG}$, where the latter inequality is strict because $v' < e$.
      Thus $\Phi(v') = \cQ$ and \ref{ffg:hf}. implies $\Phi(u') = \cP$.

      \emph{Uniqueness.}
      If $v''u'' = uv$ with $\Phi(v'') = \cQ$, then $v'' < e$ and $v'' \ge uv \join q=v'$. By Claim \textbf{C}, $v'$ is maximal integral and thus $v'' = v'$, and then also $u''=u'$.

      \smallskip
      \emph{Explicit formulas.}
      Since $e \ge u \join v' \ge \Phi(u)_e \join \Phi(v')_e = \cP_e \join \cQ_e = (\cP \join \cQ)_e = (1_\bG)_e = e$, it follows that $u \join v' = e$.
      By Claim \textbf{B}, $u \meet v' = uv = a$.

      The equalities $u' = a \join \cP_{t(a)}$, $u' \meet v = a$ and $u' \join v = t(a)$ are shown similarly.

    \item[4.] Write $\tau'$ as a product of transpositions and use \ref{ffg:transposition}.
      \qedhere
  \end{enumerate}
\end{proof}

\hyperref[ffg:transposition]{Proposition \ref*{prop:fact-freeish-grpd}.\ref*{ffg:transposition}} gives an explicit and complete description of the possible relations between two maximal integral elements with coprime lower bound. The case where the lower bounds coincide is more complicated (there can be no relations, or many), but in the case where we will need it, it is quite simple (see \autoref{sec:dist-prelim}).

\begin{cor} \label{cor:bf-ff}
  If $H \subset G_+$ is a subcategory, then $\sL_H(a)$ is finite and non-empty for all $a \in H$.
  If for every prime $\cP \in \bG_+$ and all (equivalently, one) $e \in G_0$ the set $\{\, u \in \cA(G_+) \mid \Phi(u) = \cP \text{ and } s(u)=e\,\}$ is finite, then $\sZ^*_H(a)$ is finite for all $a \in H$.
\end{cor}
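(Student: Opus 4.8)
The plan is to reduce both parts to the half-factoriality of $G_+$ from \hyperref[ffg:hf]{Proposition \ref*{prop:fact-freeish-grpd}.\ref*{ffg:hf}}. Write $k(c) \in \bN_0$ for the common length of all rigid factorizations of $c \in G_+$ into atoms of $G_+$; by half-factoriality this is well defined, it is additive along composable products (concatenate the factorizations), and $k(c) = 0$ if and only if $c \in G_0$. Note also that $H \subseteq G_+$ forces $H^\times \subseteq G_+^\times = G_0$ (an element of $G_+$ whose inverse is again integral lies in $G_0$, as in the proof of \autoref{lemma:gb}); since $H \cap G_0 = H_0$ for a subcategory, this gives $H^\times = H_0$, so $H$ is reduced and a rigid factorization of $a \in H$ is simply a tuple of atoms of $H$.

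\emph{First assertion.} First I would check that $H$ is atomic, by induction on $k(a)$ for $a \in H \setminus H^\times$ (so $k(a) \ge 1$). If $a \notin \cA(H)$, write $a = bc$ with $b, c \in H \setminus H^\times$; then $b, c \notin G_0$, so $1 \le k(b), k(c)$ and $k(b) + k(c) = k(a)$, hence $k(b), k(c) < k(a)$ and induction applies to $b$ and $c$. In the base case $k(a) = 1$ the element $a$ is already an atom of $G_+$, and such an element of $H$ is automatically an atom of $H$ (a nontrivial factorization in $H$ would have both factors outside $G_0$ and so force $k(a) \ge 2$). For finiteness, if $a = u_1 \cdots u_l$ with $u_i \in \cA(H)$, then each $u_i \notin G_0$, so $k(u_i) \ge 1$ and $l \le \sum_i k(u_i) = k(a)$; hence $\sL_H(a) \subseteq [0, k(a)]$ is finite, and it is nonempty since $H$ is atomic.

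\emph{Second assertion.} I would first treat the case $H = G_+$ and show $\sZ^*_{G_+}(c)$ is finite for all $c \in G_+$, by induction on $k(c)$. If $k(c) \ge 1$, each $z \in \sZ^*_{G_+}(c)$ has a first factor $v \in \cA(G_+)$ with $s(v) = s(c)$ and $v \mid_{G_+}^l c$, and $z$ is determined by $v$ together with a rigid factorization of $v^{-1}c$; since $k(v^{-1}c) = k(c) - 1$, by induction it suffices to bound the number of such $v$. For any of them, $\Phi(v)$ is a prime of $\bG_+$ by \hyperref[max-int:prime]{Lemma \ref*{lemma:max-int}.\ref*{max-int:prime}}, and \hyperref[mi:phi-prod]{Lemma \ref*{lemma:max-int}.\ref*{mi:phi-prod}} applied to $c = v \cdot (v^{-1}c)$ gives $\Phi(c) \le \Phi(v)$, i.e.\ $\Phi(v) \mid \Phi(c)$ in the free abelian monoid $\bG_+$. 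Since $\Phi(c)$ has only finitely many prime divisors, the hypothesis --- finitely many atoms of $G_+$ over a fixed prime with a fixed source --- leaves only finitely many choices for $v$.

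\emph{Second assertion, general case.} Given $z = \rf[s(a)]{u_1, \ldots, u_l} \in \sZ^*_H(a)$, each $u_i \in \cA(H) \subseteq G_+ \setminus G_0$ admits a rigid $G_+$-factorization of length $m_i = k(u_i) \ge 1$; concatenating these yields $z' \in \sZ^*_{G_+}(a)$ of length $\sum_i m_i = k(a)$, and $z$ is recovered from $z'$ together with the composition $(m_1, \ldots, m_l)$ of $k(a)$ by grouping the factors of $z'$ into consecutive blocks. Hence $\sZ^*_H(a)$ lies in the image of the map that cuts a rigid $G_+$-factorization of $a$ along a composition of $k(a)$; since $\sZ^*_{G_+}(a)$ is finite and there are finitely many compositions of $k(a)$, this image is finite. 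The crux is this last step: atoms of $H$ need be neither atoms of $G_+$ nor have prime lower bound, so the finiteness hypothesis does not apply to them directly, and the refine-and-record-the-cuts device is what gets around this.
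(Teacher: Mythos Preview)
Your argument is correct. The core idea --- bound lengths in $H$ by the common $G_+$-length $k(a)$ from \hyperref[ffg:hf]{Proposition \ref*{prop:fact-freeish-grpd}.\ref*{ffg:hf}} --- is exactly what the paper does. There are two organizational differences worth noting. For non-emptiness, the paper observes that since $G_+$ is reduced, \ref{P:acc} forces $H$ to inherit the ACC on principal left and right ideals, and then invokes \autoref{prop:atomic}; your induction on $k(a)$ bypasses this and is arguably more direct once half-factoriality of $G_+$ is in hand. For the second assertion, the paper simply writes ``a similar argument shows the second claim'' without details; your two-step treatment (finiteness of $\sZ^*_{G_+}(a)$ by induction on $k(a)$, bounding first factors via $\Phi(v)\mid\Phi(c)$ and the hypothesis, then passing to general $H$ by the refine-and-record-composition device) makes this explicit and is a clean way to fill in what the paper leaves to the reader. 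One minor remark: in the $G_+$ step you could equally well cite \hyperref[ffg:hf]{Proposition \ref*{prop:fact-freeish-grpd}.\ref*{ffg:hf}} directly, since it tells you the multiset $\{\Phi(u_1),\ldots,\Phi(u_k)\}$ is determined by $a$, so the first factor $v$ has $\Phi(v)$ among these finitely many primes; this avoids the appeal to \hyperref[mi:phi-prod]{Lemma \ref*{lemma:max-int}.\ref*{mi:phi-prod}}, though what you wrote is fine.
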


\begin{proof}
  Using that $G_+$ is reduced, it follows from \ref{P:acc} that $H$ satisfies the ACC on principal left and right ideals, and hence $\sZ_H^*(a) \ne \emptyset$.
  If $\rf[s(a)]{u_1,\ldots,u_k} \in \sZ_H^*(a)$ with $k \in \bN_0$ and $u_1,\ldots,u_k \in \cA(H)$, then in particular $u_i < s(u_i)$ for all $i \in [1,k]$, and therefore $k$ is bounded by the length of the factorization of $a$ in $G_+$.
  A similar argument shows the second claim.
\end{proof}

The properties that all sets of lengths, respectively that all sets of factorizations, are finite have been studied a lot in the commutative setting. Note, if $H$ is a commutative monoid and $a \in H$, then $\sZ_H(a)$ is finite if and only if $\sZ^*_H(a)$ is finite.

\begin{defilemma} \label{deflemma:abstract-norm}
  There exists a unique groupoid epimorphism $\eta\colon G \to \bG$ such that $\eta(u) = \Phi(u)$ for all $u \in \cA(G_+)$.
  We call $\eta$ the \emph{abstract norm} of $G$.
\end{defilemma}

\begin{proof}
  We need to show existence and uniqueness of such a homomorphism.
  Let $a \in G_+$, and let $\rf[s(a)]{u_1,\ldots,u_k} \in \sZ^*(a)$ with $u_1,\ldots,u_k \in \cA(G_+)$.
  Since the sequence of $\Phi(u_1), \ldots, \Phi(u_k)$ is, up to order, uniquely determined by $a$ (\autoref{prop:fact-freeish-grpd}), it follows that we can define $\eta(a) = \Phi(u_1) \cdot\ldots\cdot \Phi(u_k)$, and this is a homomorphism $G_+ \to \bG_+$ with $\eta(u) = \Phi(u)$ for all $u \in \cA(G_+)$.
  $G$ is the category of (left and right) fractions of $G_+$, and hence $\eta$ extends to a unique groupoid homomorphism $\eta\colon G \to \bG$.

 To verify that $\eta$ is surjective, let first $\cP \in \bG$ be a prime element of $\bG_+$, and let $e \in G_0$. Let $u \in G_+(e,\cdot)$ be a maximal integral element with $\cP_u \le u$. Then $\Phi(u) = \cP$, and therefore $\eta(u) = \cP$. The claim follows since $\bG$ is the free abelian group with basis $\cA(\bG_+)$.
\end{proof}

In general $\eta \ne \Phi$, since $\Phi$ need not be a homomorphism, but from \hyperref[mi:phi-prod]{Lemma \ref*{lemma:max-int}.\ref*{mi:phi-prod}} it follows that for integral $a$ the prime factorizations of $\Phi(a)$ and $\eta(a)$ have the same support and $\val_{\cP}(\eta(a)) \ge \val_{\cP}(\Phi(a))$ for all primes $\cP$ of $\bG_+$.

\begin{thm} \label{thm:abstract-transfer}
  Let $G$ be an arithmetical groupoid, $\eta\colon G \to \bG$ the abstract norm, $H$ a right-saturated subcategory of $G_+$, and $\cgrp = \bG / \quo(\eta(H))$. For $\cG \in \bG$ set $[\cG] = \cG \quo(\eta(H)) \in \cgrp$, and $\cgrp_M = \{\, [\eta(u)] \in \cgrp \mid \text{$u \in \cA(G_+)$} \,\}$.
  Assume that
  \begin{enumerate}
    \item for $a \in G$ with $s(a) \in H_0$, $a \in HH^{-1}$ if and only if $\eta(a) \in \quo(\eta(H))$.
    \item for every $e \in G_0$ and $g \in \cgrp_M$, there exists an element $u \in \cA(G_+)$ such that $s(u) = e$ and $[\eta(u)] = g$.
  \end{enumerate}

  Then there exists a transfer homomorphism $\theta\colon H \to \cB(\cgrp_M)$.
\end{thm}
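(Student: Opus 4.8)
The plan is to construct $\theta$ by combining the abstract norm $\eta$ with the quotient map $\bG \to C$, checking that the resulting map lands in $\cB(C_M)$ and satisfies the two transfer axioms (T1) and (T2).

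\smallskip
\noindent\textbf{Construction of $\theta$.}
First I would define, for $a \in H$ with a rigid factorization $\rf[s(a)]{u_1,\dots,u_k} \in \sZ^*(a)$ (which exists and has bounded length by \autoref{cor:bf-ff}, using that $G_+$ is atomic via \ref{P:acc} and \autoref{prop:atomic}), the sequence
\[
  \theta(a) = [\eta(u_1)] \cdot \ldots \cdot [\eta(u_k)] \in \cF_{ab}(C_M).
\]
By \autoref{deflemma:abstract-norm} we have $\eta(u_i) = \Phi(u_i)$, and \hyperref[max-int:prime]{Lemma \ref*{lemma:max-int}.\ref*{max-int:prime}} shows $\eta(u_i) \in \bG_+$, so each $[\eta(u_i)]$ does lie in $C_M$. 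By \hyperref[ffg:hf]{Proposition \ref*{prop:fact-freeish-grpd}.\ref*{ffg:hf}} the multiset $\{\Phi(u_1),\dots,\Phi(u_k)\}$ is uniquely determined by $a$, so $\theta(a)$ is well defined (independent of the chosen rigid factorization), and in particular $\card{\theta(a)} = k$ is the common length of all rigid factorizations of $a$. Multiplicativity $\theta(ab) = \theta(a)\theta(b)$ follows by concatenating rigid factorizations, and $\theta(H_0) = \{$empty sequence$\}$. To see $\sigma(\theta(a)) = 0$ in $C$: since $\eta$ is a homomorphism, $\eta(a) = \eta(u_1)\cdots\eta(u_k)$, hence $\sum_i [\eta(u_i)] = [\eta(a)] = [1_\bG] = 0_C$ because $\eta(a) \in \eta(H) \subset \quo(\eta(H))$. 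Thus $\theta$ indeed maps $H$ into $\cB(C_M)$, which is a reduced cancellative (commutative) monoid, so it makes sense as a target.

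\smallskip
\noindent\textbf{Axiom (T1).}
Surjectivity onto $\cB(C_M)$: I would argue that any zero-sum sequence $g_1 \cdots g_k$ over $C_M$ is realized. Starting from an arbitrary $e_0 \in H_0$, hypothesis~2 of the theorem lets me pick, inductively, atoms $u_1,\dots,u_k \in \cA(G_+)$ with $s(u_1) = e_0$, $s(u_{i+1}) = t(u_i)$, and $[\eta(u_i)] = g_i$; set $a = u_1 \cdots u_k \in G_+$. Then $\eta(a) = \eta(u_1)\cdots\eta(u_k)$, and $[\eta(a)] = \sum_i g_i = 0$, i.e.\ $\eta(a) \in \quo(\eta(H))$. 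Since $s(a) = e_0 \in H_0$, hypothesis~1 gives $a \in HH^{-1}$; combined with $a \in G_+$ and $H$ being right-saturated in $G_+$ (so $HH^{-1} \cap G_+ = H$ by \autoref{defilemma:max-int}... — more precisely by the right-saturated criterion), we get $a \in H$, and $\theta(a) = g_1\cdots g_k$. For $\theta^{-1}(B_0) = H^\times$: $H^\times = H \cap G_0$ since $G_+$ is reduced, and $\theta(a)$ is the empty sequence iff $a$ has a rigid factorization of length $0$ iff $a \in G_0$.

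\smallskip
\noindent\textbf{Axiom (T2) — the main obstacle.}
Given $a \in H$ and a factorization $\theta(a) = S_1 S_2$ in $\cB(C_M)$, I must lift it to $a = a_1 a_2$ in $H$ with $\theta(a_i) = S_i$. Fix a rigid factorization $\rf[s(a)]{u_1,\dots,u_k} \in \sZ^*(a)$, so $\theta(a) = [\eta(u_1)]\cdots[\eta(u_k)]$. Since $\cB(C_M) \subset \cF_{ab}(C_M)$ is a \emph{divisor-closed} submonoid of a free abelian monoid, $S_1$ picks out a sub-multiset of $\{[\eta(u_1)],\dots,[\eta(u_k)]\}$. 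The difficulty is that this sub-multiset need not occur as an \emph{initial segment} of the given ordered rigid factorization. This is exactly what \hyperref[ffg:realize]{Proposition \ref*{prop:fact-freeish-grpd}.\ref*{ffg:realize}} is designed to fix: there is a permutation $\tau'$ of $[1,k]$ so that the first $\card{S_1}$ values of $\Phi(u_{\tau'(\cdot)})$ are precisely the elements of $S_1$ (matched with multiplicity), and a rigid factorization $\rf[s(a)]{w_1,\dots,w_k} \in \sZ^*(a)$ with $\Phi(w_i) = \Phi(u_{\tau'(i)})$. Hence $\eta(w_i) = \Phi(w_i)$ realizes the prescribed order. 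Setting $m = \card{S_1}$, $a_1 = w_1\cdots w_m$ and $a_2 = w_{m+1}\cdots w_k$ gives $a = a_1 a_2$ with $\theta(a_1) = S_1$, $\theta(a_2) = S_2$. It remains to check $a_1, a_2 \in H$ (not merely in $G_+$): since $a = a_1 a_2 \in H$, we have $a_2 \mid_{G_+}^r a$ with $a, a_1 \in H$... — rather, $a_1 \mid_D^r a$ in the appropriate sense; using that $H$ is right-saturated in $G_+$ one concludes $a_1 \in H$, and then $a_2 = a_1^{-1}a$ lies in $G_+$ and, via hypothesis~1 applied to $\eta(a_2) = \eta(a)\eta(a_1)^{-1} \in \quo(\eta(H))$ together with $s(a_2) \in H_0$, also $a_2 \in HH^{-1} \cap G_+ = H$. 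The careful bookkeeping of which saturation/closure argument applies to $a_1$ versus $a_2$ — and making sure the source/target conditions needed to invoke hypothesis~1 are actually met at each stage — is where the real work lies; everything else is formal.
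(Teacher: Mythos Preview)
Your approach is essentially identical to the paper's: the construction of $\theta$, well-definedness via \hyperref[ffg:hf]{Proposition \ref*{prop:fact-freeish-grpd}.\ref*{ffg:hf}}, the surjectivity argument in (T1), and the use of \hyperref[ffg:realize]{Proposition \ref*{prop:fact-freeish-grpd}.\ref*{ffg:realize}} to reorder the rigid factorization in (T2) all match.

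The only place your write-up stumbles is the final bookkeeping in (T2), which is in fact simpler than you make it. Your attempt to obtain $a_1 \in H$ via right-divisibility is confused: $a_1$ is a \emph{left} factor of $a$, so ``$a_1 \mid_{G_+}^r a$'' is false in general, and right-saturation in its divisibility form requires both $a$ and the right divisor to already lie in $H$. The clean argument is exactly the one you already gave for surjectivity in (T1): since $S_1 \in \cB(\cgrp_M)$ we have $\sigma(S_1) = 0$, hence $[\eta(a_1)] = 0$, i.e.\ $\eta(a_1) \in \quo(\eta(H))$; together with $s(a_1) = s(a) \in H_0$, hypothesis~1 yields $a_1 \in HH^{-1}$, and right-saturation ($HH^{-1} \cap G_+ = H$) gives $a_1 \in H$. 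Then $s(a_2) = t(a_1) \in H_0$ (because $a_1 \in H$), and the identical argument applied to $S_2$ gives $a_2 \in H$. This is exactly how the paper handles it; no circularity arises, and no separate saturation argument for $a_1$ versus $a_2$ is needed.
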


\begin{proof}
  Let $\theta\colon H \to \cB(\cgrp_M)$ be defined as follows:
  For $a \in H$ and $\rf[s(a)]{u_1, \ldots, u_k} \in \sZ^*_{G_+}(a)$ with $u_1,\ldots,u_k \in \cA(G_+)$, set $\theta(a) = [\eta(u_1)] \cdot\ldots\cdot [\eta(u_k)] \in \cB(\cgrp_M)$ (in particular, identities are mapped to the empty sequence).
  We have to show that this definition depends only on $a$, and not on the particular rigid factorization into maximal integral elements chosen. Let $\rf[s(a)]{v_1, \ldots, v_k} \in \sZ^*_{G_+}(a)$ be another such rigid factorization. Then there exists a permutation $\tau \in \fS_k$ with $\eta(u_i) = \Phi(u_i) = \Phi(v_{\tau(i)}) = \eta(v_{\tau(i)})$ (due to \hyperref[ffg:hf]{Proposition \ref*{prop:fact-freeish-grpd}.\ref*{ffg:hf}} and by definition of $\eta$). Therefore $[\eta(u_1)] \cdot\ldots\cdot [\eta(u_k)] = [\eta(v_1)] \cdot\ldots\cdot [\eta(v_k)]$.

  With this definition $\theta$ is a homomorphism: Obviously $\theta(e) = \vec 1_{\cB(\cgrp_M)}$ for all $e \in H_0$, and if $b \in H$ with $t(a) = s(b)$ and $\rf[s(b)]{w_1, \ldots, w_l} \in \sZ^*(b)$ is a rigid factorization of $b$ into maximal integral elements, then $\rf[s(a)]{u_1,\ldots, u_k, w_1, \ldots, w_l}$ is a rigid factorization of $ab$. Thus
  \[
    \theta(ab) = [\eta(u_1)]\cdot\ldots\cdot[\eta(u_k)] [\eta(w_1)] \cdot\ldots\cdot [\eta(w_l)] = \theta(a) \theta(b).
  \]

  We still have to check that $\theta$ has properties \ref{tr:T1} and \ref{tr:T2}.

  If $\theta(a) = \vec 1_{\cB(\cgrp_M)}$, then $a$ possesses an empty factorization into maximal elements, hence $a \in G_0 \cap H = H_0 = H^\times$.

  $\theta$ is surjective: $\theta(e) = \vec 1_{\cB(\cgrp_M)}$ for any $e \in H_0$. Let $k \in \bN$ and $g_1 \cdot\ldots\cdot g_k \in \cB(\cgrp_M)$. By definition of $\cgrp_M$ and our second assumption, there exists an element $u_1 \in \cA(G_+)$ with $[\eta(u_1)] = g_1$ and $s(u_1) \in H_0$. Again by our second assumption, for all $i \in [2,k]$, there exist $u_i \in \cA(G_+)$ with $s(u_i) = t(u_{i-1})$ and $[\eta(u_i)] = g_i$. With $a=u_1\cdot\ldots\cdot u_k \in G$ we get $[\eta(a)] = [\eta(u_1)\cdot\ldots\cdot\eta(u_k)] = [\eta(u_1)] + \ldots + [\eta(u_k)] = \vec 0 \in \cgrp$ and $s(a) \in H_0$, and hence $\eta(a) \in \quo(\eta(H))$.
  By our first assumption, therefore $a \in HH^{-1}$, and since moreover $a$ is integral in $G$ and $H$ is right-saturated in $G_+$, we get $a \in H$ and $\theta(a) = g_1 \cdot\ldots\cdot g_k$.

  $\theta$ satisfies \ref{tr:T2}: Let $a \in H$, $\theta(a) = S T$ with $S,T \in \cB(\cgrp_M)$ and $S=g_1\cdot\ldots\cdot g_k$, $T = g_{k+1} \cdot\ldots\cdot g_l$, where $k \in \bN_0$ and $l \in \bN_{\ge k}$. By \hyperref[ffg:realize]{Proposition \ref*{prop:fact-freeish-grpd}.\ref*{ffg:realize}}, we can find a rigid factorization $\rf[s(a)]{u_1,\ldots,u_l} \in \sZ^*(a)$ with $u_i \in \cA(G_+)$ and $[\eta(u_i)] = g_i$ for all $i \in [1,l]$. Let $b = s(a) u_1 \cdot\ldots\cdot u_k$ and $c = t(b) u_{k+1}\cdot\ldots\cdot u_l$. Then $a=bc$. Since $s(b)\in H_0$ and $[\eta(b)] = \sigma(S) = \vec 0$, the first assumption implies $b \in HH^{-1} \cap G_+ = H$. Then $s(c) \in H_0$ and $c \in H$ follows similarly. Finally, $\theta(b) = S$ and $\theta(c) = T$.
\end{proof}

The theorem remains true if $H$ is a left-saturated subcategory of $G_+$, and in the first condition the set $HH^{-1}$ is replaced by $H^{-1} H$, and the condition $s(a) \in H_0$ is replaced by $t(a) \in H_0$. Similarly, one can replace the second condition by a symmetrical one, requiring $t(u) = e$ instead of $s(u)=e$ (in the proof of the surjectivity of $\theta$ one then first chooses $u_k$, followed by $u_{k-1}$ and so on).

\begin{remark} \label{rem:after-abstract-transfer}
  If $G$ is a group, then $G_+$ is the free abelian monoid with basis $\cA(G_+)$ (\autoref{prop:agrpd-gen-freeab}). As a saturated submonoid of this free abelian monoid, $H$ is therefore a reduced commutative Krull monoid \cite[Theorem 2.4.8]{ghk06}. Since $\eta = \id_G$ and $HH^{-1} = \eta(H) \eta(H)^{-1}$ the first condition is trivially satisfied, and because of $G_0 = \{\, 1 \,\}$, the second condition is also trivially satisfied.

  Conversely, let $H$ be a normalizing Krull monoid. Then $H_{\text{red}} = \{\, a H^\times \mid a \in H \,\}$ is a reduced commutative Krull monoid, isomorphic to the monoid of its non-zero principal ideals (\cite[Corollary 4.14]{geroldinger13}). The latter is a submonoid of the divisorial fractional ideals of $H$, which form the free abelian monoid of integral elements in the free abelian group of divisorial ideals of $H$. In this way we recover the well-known transfer homomorphism for Krull monoids as given for example in \cite[Proposition 3.4.8]{ghk06} for commutative Krull monoids, and in \cite[Theorem 6.5]{geroldinger13} for normalizing Krull monoids.

  We continue the discussion of normalizing Krull monoids in Remarks \hyperref[rem:ideal-struct:normalizing]{\ref*{rem:ideal-struct}.\ref*{rem:ideal-struct:normalizing}} and \hyperref[rtr:normalizing]{\ref*{rem:rtr}.\ref*{rtr:normalizing}}, where the divisorial two-sided ideal theory appears as a special case of the divisorial one-sided ideal theory.
\end{remark}

\section{Divisorial ideal theory in semigroups} \label{sec:ideals}

In this section we develop a divisorial one-sided ideal theory in semigroups. This follows again original ideas of Asano and Murata and generalizes the corresponding theory in rings and the theory of divisorial two-sided ideals in cancellative semigroups (see \cite{asano39,jacobson43,asano49,asano50, asano-ukegawa52, asano-murata53, deuring68} for classical treatments, and \cite{mcconnell-robson01, halterkoch98, halterkoch10, geroldinger13, jespers-okninski07} for more modern treatments in this area). In particular, the one-sided ideal theory of classical maximal orders over Dedekind domains is a special case of the theory presented here. 

The divisorial fractional one-sided ideals with left- and right-orders maximal in a fixed equivalence class of orders form a groupoid as studied in the previous section (this was in fact the motivation for Brandt to introduce the notion of a groupoid, see \cite{brandt27, brandt28}). We connect the factorization theory of elements of a maximal order $H$ with the one for the cancellative small category of integral principal ideals with left- and right-order conjugate to $H$, and apply results from the previous section to the factorization of elements in $H^\bullet$. The main result in this section is \autoref{thm:transfer-semigroup}. After having derived it we discuss in detail the case of rings, and of classical maximal orders (\autoref{sec:ideals-rings} and \autoref{sec:ring-csa}).

\smallskip
A semigroup $Q$ is called a \emph{quotient semigroup} if every cancellative element is invertible in $Q$, in short $Q^\bullet = Q^\times$.
A subsemigroup $H \subset Q$ is a \emph{right order} in $Q$ if $H (H \cap Q^\bullet)^{-1} = Q$, and $H$ is a \emph{left order} in $Q$ if $(H \cap Q^\bullet)^{-1} H = Q$. $H$ is an \emph{order} in $Q$ if it is a left and a right order.
We summarize the connection between a subsemigroup $H \subset Q$ being an order, and $Q$ being a semigroup of (left and right) fractions of $H$.

\begin{lemma}
  Let $Q$ be a quotient semigroup, and $H \subset Q$ a subsemigroup.
  \begin{enumerate}
    \item If $H$ is an order in $Q$, then $H^\bullet = H \cap Q^\bullet$ and $Q=\quo(H)$.
    \item If $\quo(H) = Q$, then $H^\bullet = H \cap Q^\bullet$ and $H$ is an order in $Q$.
    \item If $H$ is an order in $Q$, $H'$ is a subsemigroup of $Q$ and there exist $a,b \in Q^\bullet$ with $aHb \subset H'$, then $H'$ is an order in $Q$.
  \end{enumerate}
\end{lemma}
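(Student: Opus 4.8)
The plan is to handle the three parts in sequence. Parts (1) and (2) together assert that, for an order $H$, the intrinsic cancellative elements $H^\bullet$ coincide with the ``externally'' cancellative elements $H \cap Q^\bullet$, and part (3) is then a short computation with fractions. The only point that is not pure bookkeeping is the inclusion $H^\bullet \subseteq H \cap Q^\bullet$ in part (1): an element cancellative inside $H$ need not even be cancellative in an arbitrary overring, let alone invertible, so here one genuinely uses both the order hypothesis (to produce common denominators inside $Q$) and the assumption that $Q$ is a quotient semigroup, $Q^\bullet = Q^\times$. Everything else is routine.

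For (1), I would first note that cancellativity in $Q$ restricts to $H$, giving $H \cap Q^\bullet \subseteq H^\bullet$. Next I would check that $D := H \cap Q^\bullet$ satisfies the left and right Ore conditions in $H$: since $Q = HD^{-1} = D^{-1}H$ is closed under multiplication, for $h \in H$ and $d \in D$ the element $d^{-1}h$ lies in $HD^{-1}$ and $hd^{-1}$ lies in $D^{-1}H$, and clearing denominators turns these into identities $hd' = dh'$, respectively $d'h = h'd$, with suitable $h' \in H$ and $d' \in D$, which are precisely the right, respectively left, Ore condition for $D$ in $H$. Then comes the key step $H^\bullet \subseteq D$: given $a \in H^\bullet$ and $x, y \in Q$ with $ax = ay$, bring $x, y$ to a common right denominator, $x = h_1 d^{-1}$ and $y = h_2 d^{-1}$ (standard Ore calculus), so that $ah_1 = ah_2$ in $H$; left-cancellativity of $a$ in $H$ gives $h_1 = h_2$ and hence $x = y$. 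The symmetric argument, using the left Ore condition, makes $a$ right-cancellative in $Q$, so $a \in Q^\bullet = Q^\times$ because $Q$ is a quotient semigroup, and thus $a \in D$. Therefore $H^\bullet = H \cap Q^\bullet$; in particular $H^\bullet$ satisfies both Ore conditions and $H(H^\bullet)^{-1} = (H^\bullet)^{-1}H = Q$ with the inclusion $H \hookrightarrow Q$, so $Q = \quo(H)$ by the universal property of the semigroup of fractions.

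For (2), the inclusion $H \cap Q^\bullet \subseteq H^\bullet$ holds as before, and conversely any $a \in H^\bullet$ is, by the construction of $\quo(H)$, a unit of $\quo(H) = Q$, hence lies in $Q^\times \subseteq Q^\bullet$; so $H^\bullet = H \cap Q^\bullet$. Substituting this equality into $Q = \quo(H) = H(H^\bullet)^{-1} = (H^\bullet)^{-1}H$ yields precisely $H(H \cap Q^\bullet)^{-1} = (H \cap Q^\bullet)^{-1}H = Q$; that is, $H$ is an order in $Q$.

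For (3), note first that $a, b \in Q^\bullet = Q^\times$ are invertible because $Q$ is a quotient semigroup, and that $H \cap Q^\bullet \ne \emptyset$ (as $H$ is an order). Since $a$ and $b$ are units, the set $a(H \cap Q^\bullet)b \subseteq aHb \subseteq H'$ consists of cancellative elements of $Q$, so $a(H \cap Q^\bullet)b \subseteq H' \cap Q^\bullet$. Now let $q \in Q$. Using that $H$ is a right order, write $a^{-1}qa \in Q = H(H \cap Q^\bullet)^{-1}$ as $a^{-1}qa = h d^{-1}$ with $h \in H$ and $d \in H \cap Q^\bullet$; then $q = a h d^{-1} a^{-1} = (ahb)(adb)^{-1}$ with $ahb \in H'$ and $adb \in H' \cap Q^\bullet$, which shows $H'(H' \cap Q^\bullet)^{-1} = Q$. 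Symmetrically, using that $H$ is a left order, write $bqb^{-1} \in Q = (H \cap Q^\bullet)^{-1}H$ as $d^{-1}h$ to obtain $q = (adb)^{-1}(ahb)$, so $(H' \cap Q^\bullet)^{-1}H' = Q$. Hence $H'$ is an order in $Q$.
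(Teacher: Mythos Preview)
Your proof is correct and follows essentially the same approach as the paper's. The only minor variation is in part (2): you invoke directly that elements of $H^\bullet$ are units in $\quo(H)=Q$ by construction, whereas the paper simply remarks that the argument of part (1) repeats verbatim (with denominators now taken in $H^\bullet$ rather than $H\cap Q^\bullet$); both are valid and equally short.
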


\begin{proof}
  \mbox{}
  \begin{enumerate}
    \item It suffices to show $H^\bullet = H \cap Q^\bullet$, and the inclusion $H \cap Q^\bullet \subset H^\bullet$ is clear.
      Let $a \in H^\bullet$, and $q,q' \in Q$ with $aq = aq'$. 
      Since $H$ is a right order in $Q$, there exist $c,d \in H$ and $s \in H \cap Q^\bullet$ with $q=cs^{-1}$ and $q'=ds^{-1}$, where we can choose a common denominator because $H \cap Q^\bullet$ satisfies the right Ore condition in $H$. Then $ac = ad$, and, because $a \in H^\bullet$, also $c=d$, showing $q=q'$. Since $H$ is a left order it follows in the same way that $a$ is right-cancellative in $Q^\bullet$, and hence $a \in H \cap Q^\bullet$.

    \item It again suffices to show $H^\bullet = H \cap Q^\bullet$, and this follows in the same way as in 1.

    \item It suffices to show that every $q \in Q$ has representations of the form $q = cs^{-1} = t^{-1}d$ with $c,d \in H'$ and $s,t \in H' \cap Q^\bullet$. Since $H$ is an order in $Q$, there exist $c',d' \in H$ and $s',t' \in H \cap Q^\bullet$ with $a^{-1}qa=c's'^{-1}$ and $bqb^{-1}=t'^{-1}d'$. Setting $c=ac'b$, $d=ad'b$, $s=as'b$ and $t=at'b$, the claim follows.
      \qedhere
  \end{enumerate}
\end{proof}

\begin{center}
  \emph{For the remainder of this section, let $Q$ be a quotient semigroup.}
\end{center}

If $H$ and $H'$ are orders in $Q$, then $H$ is (Asano-)equivalent to $H'$, written $H \sim H'$, if there exist $a,b,c,d \in Q^\bullet$ with $a H b \subset H'$ and $c H' d \subset H$. 
This is an equivalence relation on the set of orders in $Q$. 
An order $H$ is \emph{maximal} if it is maximal within its equivalence class with respect to set inclusion.

A feature of the non-commutative theory is that often there is no unique maximal order in a given equivalence class, and in fact in the most important cases we study there are usually infinitely many, but only finitely many conjugacy classes of them. In studying the divisorial one-sided ideal theory of a maximal order $H$, one has to study the ideal theory of all maximal orders in its equivalence class at the same time.

Let $H, H' \subset Q$ be subsemigroups (not necessarily orders), and let $X, Y \subset Q$. As in the previous sections $XY = \{\, xy \mid x \in X, y \in Y \,\}$.
$X$ is a \emph{left $H$-module} if $HX \subset X$, and a \emph{right $H'$-module} if $XH' \subset X$. It is an \emph{$(H,H')$-module} if it is a left $H$- and a right $H'$-module, i.e., $HXH' \subset X$. We define 
\[
  \rc{Y}{X} = \{\, q \in Q \mid Xq \subset Y \,\} \quad\text{and}\quad \lc{Y}{X} = \{\, q \in Q \mid qX \subset Y \,\}.
\]
Every left $H$-module is an $(H, \{1\})$-module, and similarly every right $H'$-module is a $(\{1\}, H')$-module.
We set $\cO_l(X) = \lc{X}{X}$ and $\cO_r(X) = \rc{X}{X}$.

\begin{lemma} \label{lemma:setfrac}
  Let $H, H'$ be subsemigroups of $Q$ and let $X$ be an $(H,H')$-module.
  \begin{enumerate}
    \item $\rc{H}{X}$ and $\lc{H'}{X}$ are $(H', H)$-modules.
    \item \label{setfrac:closure} $X \subset \lc{H}{\rc{H}{X}}$ and $X \subset \rc{H'}{\lc{H'}{X}}$.
    \item $\cO_l(X)$ and $\cO_r(X)$ are subsemigroups of $Q$.
    \item $\rc{\cO_l(X)}{X} = \lc{\cO_r(X)}{X} = \{\, q \in Q \mid XqX \subset X \,\}$.
    \item \label{setfrac:int} $X \subset \cO_l(X)$ if and only if $X \subset \cO_r(X)$ if and only if $X^2 \subset X$.
  \end{enumerate}
\end{lemma}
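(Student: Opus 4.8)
The plan is to verify each of the five assertions by directly unravelling the definitions of $\rc{Y}{X}$, $\lc{Y}{X}$, $\cO_l$ and $\cO_r$, using repeatedly the module inclusions $HX \subset X$ and $XH' \subset X$ together with the fact that $H$ and $H'$ are subsemigroups containing $1$. I do not expect to need cancellativity or the quotient-semigroup property of $Q$; $Q$ will only enter as the ambient semigroup in which all products are formed.

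For part~1, I would take $q \in \rc{H}{X}$ (so $Xq \subset H$), $h' \in H'$ and $h \in H$, and compute $X(h'qh) = (Xh')qh \subset (Xq)h \subset Hh \subset H$, where the first inclusion uses $Xh' \subset X$ and the last uses that $H$ is closed under multiplication; hence $h'qh \in \rc{H}{X}$, so $\rc{H}{X}$ is an $(H',H)$-module, and the statement for $\lc{H'}{X}$ is the mirror image obtained by using $hX \subset X$ in place of $Xh' \subset X$. For part~2, if $x \in X$ then $xq \in Xq \subset H$ for every $q \in \rc{H}{X}$, so $x\,\rc{H}{X} \subset H$, i.e.\ $x \in \lc{H}{\rc{H}{X}}$; the inclusion $X \subset \rc{H'}{\lc{H'}{X}}$ is symmetric. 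Part~3 is immediate: $1 \in \cO_l(X)$, and $q_1 X \subset X$, $q_2 X \subset X$ give $(q_1 q_2) X = q_1(q_2 X) \subset q_1 X \subset X$, so $\cO_l(X)$, and likewise $\cO_r(X)$, is a subsemigroup of $Q$.

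For part~4 I would unravel both sides of each equality down to the single condition $XqX \subset X$: indeed $q \in \rc{\cO_l(X)}{X}$ means $Xq \subset \cO_l(X) = \lc{X}{X}$, i.e.\ $xq \in \lc{X}{X}$ for all $x \in X$, i.e.\ $xqX \subset X$ for all $x \in X$, i.e.\ $XqX \subset X$; reading the definitions from the right instead shows in the same way that $q \in \lc{\cO_r(X)}{X}$ if and only if $XqX \subset X$. For part~5, $X \subset \cO_l(X)$ says precisely that $xX \subset X$ for all $x \in X$, that is $X^2 \subset X$, and $X \subset \cO_r(X)$ says $Xx \subset X$ for all $x \in X$, again $X^2 \subset X$; hence the three conditions coincide.

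The argument is entirely routine, so there is no real obstacle; the only point demanding a little attention is part~1, where for each of the two modules $\rc{H}{X}$ and $\lc{H'}{X}$ one must select the correct one of the inclusions $Xh' \subset X$, $hX \subset X$. This is bookkeeping rather than mathematical substance, and I would organize the write-up as two short blocks, parts~1--3 and then parts~4--5, each essentially a one-line definition chase.
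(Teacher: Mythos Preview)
Your proposal is correct and follows essentially the same approach as the paper: both proofs are direct definition-chases, with the paper writing the verifications at the level of set products (e.g.\ $XH'\rc{H}{X}H \subset X\rc{H}{X}H \subset HH = H$ for part~1) while you spell them out elementwise. There is no substantive difference.
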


\begin{proof}
  \mbox{}
  \begin{enumerate}
    \item $XH'\rc{H}{X}H \,\subset\, X\rc{H}{X}H \,\subset HH = H$ and similarly for $\lc{H'}{X}$.

    \item $X\rc{H}{X} \subset H$ by definition of $\rc{H}{X}$ and thus $X \subset \lc{H}{\rc{H}{X}}$. The other identity is proven analogously.

    \item Clearly $1 \in \cO_l(X)$ and $\cO_l(X)\cO_l(X) X \subset \cO_l(X) X \subset X$ implies $\cO_l(X)\cO_l(X) \subset \cO_l(X)$. The claim for $\cO_r(X)$ is shown similarly.

    \item We show $\rc{\cO_l(X)}{X} = \{\, q \in Q \mid XqX \subset X \,\}$. Let $q \in Q$. Then $XqX \subset X$ is equivalent to $Xq \subset \cO_l(X)$, which in turn is equivalent to $q \in \rc{\cO_l(X)}{X}$.

    \item Immediate from the definitions of $\cO_l(X)$ and $\cO_r(X)$.
    \qedhere
  \end{enumerate}
\end{proof}

\begin{defi}
  For $X \subset Q$ as in \autoref{lemma:setfrac}, we define 
  \[
    X^{-1} = \rc{\cO_l(X)}{X} = \lc{\cO_r(X)}{X} = \{\, q \in Q \mid XqX \subset X \,\} \quad\text{and}\quad X_v = (X^{-1})^{-1}.
  \]
\end{defi}

\begin{defi} \label{defi:ideals}
  Let $H$ and $H'$ be orders in $Q$.
  \begin{enumerate}
    \item A \emph{fractional left $H$-ideal} is a left $H$-module $I$ such that $I \cap Q^\bullet \ne \emptyset$ and $\rc{H}{I} \cap Q^\bullet \ne \emptyset$. 
    \item A \emph{fractional right $H'$-ideal} is a right $H'$-module $I$ such that $I \cap Q^\bullet \ne \emptyset$ and $\lc{H'}{I} \cap Q^\bullet \ne \emptyset$.
    \item If $I$ is a fractional left $H$-ideal and a fractional right $H'$-ideal, then $I$ is a \emph{fractional $(H,H')$-ideal}.
    \item A \emph{fractional $H$-ideal} is a fractional $(H,H)$-ideal.
    \item $I$ is a \emph{left $H$-ideal} if it is a fractional left $H$-ideal and $I \subset H$. A \emph{right $H'$-ideal} is defined analogously.
    \item If $I$ is a left $H$-ideal and a right $H'$-ideal, then $I$ is an \emph{$(H,H')$-ideal}.
    \item An \emph{$H$-ideal} is an $(H,H)$-ideal.
    \item A fractional left $H$-ideal $I$ is \emph{integral} if $I \subset \cO_l(I)$ (equivalently, $I \subset \cO_r(I)$).
          The same definition is made for fractional right $H'$-ideals.
  \end{enumerate}
\end{defi}

If $H$ is a maximal order, then the notions of a left $H$-ideal and that of an integral fractional left $H$-ideal coincide (this will follow from Lemma \hyperref[frac:equiv]{\ref*{lemma:frac}.\ref*{frac:equiv}} and \hyperref[frac:max]{\ref*{lemma:frac}.\ref*{frac:max}}). We will sometimes call a fractional left (right) $H$-ideal \emph{one-sided} to emphasize that it need not be a fractional right (left) $H$-ideal, or \emph{two-sided} to emphasize that it is indeed a fractional $H$-ideal.

We recall some properties of fractional left $H$-ideals and first observe the following.

\begin{lemma} \label{lemma:pre-frac}
  If $H$ is an order in $Q$ and $I$ is a fractional left $H$-ideal, then $\cO_l(I)$ and $\cO_r(I)$ are orders.
  $I$ is a fractional $(\cO_l(I),\cO_r(I))$-ideal.
\end{lemma}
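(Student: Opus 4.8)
The plan is to read off both assertions from three things already in place: (i) \autoref{lemma:setfrac}, which tells us $\cO_l(I)$ and $\cO_r(I)$ are subsemigroups of $Q$; (ii) part~3 of the earlier lemma on orders, namely that a subsemigroup $H'$ of $Q$ is an order as soon as $aHb\subseteq H'$ for an order $H$ and some $a,b\in Q^\bullet$; and (iii) the definition of a fractional left $H$-ideal, which hands us a cancellative $c\in I$ and a cancellative $d$ with $Id\subseteq H$.

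First I would fix $c\in I\cap Q^\bullet$ and $d\in\rc{H}{I}\cap Q^\bullet$; since $I$ is a left $H$-module we then have $Hc\subseteq HI\subseteq I$, and $Id\subseteq H$ by the choice of $d$. For $\cO_l(I)=\lc{I}{I}$ the statement is almost immediate: the module condition $HI\subseteq I$ says precisely $H\subseteq\cO_l(I)$, so $\cO_l(I)$ is a subsemigroup of $Q$ containing the order $H$ and is therefore itself an order (apply (ii) with $a=b=1$). For $\cO_r(I)=\rc{I}{I}$ the left/right asymmetry means $H$ need not lie inside $\cO_r(I)$, so instead I would verify $dHc\subseteq\cO_r(I)$: for $h\in H$,
\[
  I(dhc)=(Id)(hc)\subseteq H(hc)=(Hh)c\subseteq Hc\subseteq I,
\]
so $dhc\in\cO_r(I)$; since $d,c\in Q^\bullet$, criterion (ii) applies and $\cO_r(I)$ is an order.

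For the last sentence of the lemma, $\cO_l(I)I\subseteq I$ and $I\cO_r(I)\subseteq I$ hold by the very definitions of $\cO_l$ and $\cO_r$, so $I$ is an $(\cO_l(I),\cO_r(I))$-module; $I\cap Q^\bullet\neq\emptyset$ is given; and it remains to produce a cancellative element of $\rc{\cO_l(I)}{I}$ and one of $\lc{\cO_r(I)}{I}$. The same $d$ works for both: $Id\subseteq H\subseteq\cO_l(I)$ gives $d\in\rc{\cO_l(I)}{I}$, while $IdI\subseteq HI\subseteq I$ gives $d\in\lc{\cO_r(I)}{I}$. Since $d\in Q^\bullet$, $I$ is a fractional left $\cO_l(I)$-ideal and a fractional right $\cO_r(I)$-ideal, hence a fractional $(\cO_l(I),\cO_r(I))$-ideal.

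I do not expect a real obstacle. The only point needing care is to avoid secretly using two-sidedness of $I$: $I$ is only a left $H$-module, so the arguments for $\cO_l(I)$ and $\cO_r(I)$ are parallel but not literally symmetric, and the right multiplication by $c$ in the displayed computation is exactly what brings a product back inside $I$, since $H\not\subseteq I$ in general.
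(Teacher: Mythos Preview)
Your proof is correct and follows essentially the same approach as the paper's: both use $H\subset\cO_l(I)$ for the left order, then sandwich an order between cancellative elements to land inside $\cO_r(I)$, and finally exhibit the same $d$ (the paper's $b$) as the required cancellative element on both sides. The only cosmetic difference is that the paper invokes the identity $\rc{\cO_l(I)}{I}=\lc{\cO_r(I)}{I}$ from \autoref{lemma:setfrac} to get $bI\subset\cO_r(I)$ directly and then sandwiches $\cO_l(I)$ rather than $H$, whereas you verify the needed inclusions by hand; the content is the same.
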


\begin{proof}
  Let $a \in I \cap Q^\bullet$, and let $b \in \rc{H}{I} \cap Q^\bullet$.
  By definition, $H \subset \cO_l(I)$ and since $H$ is an order and $\cO_l(I)$ a semigroup, it is also an order.
  $\cO_l(I) I \subset I$ and $b \in \rc{\cO_l(I)}{I}$ imply that $I$ is a fractional left $\cO_l(I)$-ideal. 
  Since $b \in \rc{\cO_l(I)}{I} = \lc{\cO_r(I)}{I}$, it holds that $b \cO_l(I) a \subset bI \subset \cO_r(I)$, and since $\cO_r(I)$ is a semigroup and $\cO_l(I)$ an order, $\cO_r(I)$ is also an order. Therefore $I$ is also a fractional right $\cO_r(I)$-ideal.
\end{proof}

The previous lemma implies that it is no restriction to require $I$ to be a fractional $(H,H')$-ideal over it, say, being a fractional left $H$-ideal (set $H'=\cO_r(I)$).

\begin{lemma} \label{lemma:frac}
  Let $H$ and $H'$ be orders in $Q$, and let $I$ be a fractional $(H,H')$-ideal.
  \begin{enumerate}
    \item \label{frac:equiv} The orders $H$, $H'$, $\cO_l(I)$ and $\cO_r(I)$ are all equivalent.
    \item \label{frac:max} If $H$ is maximal, then $\cO_l(I) = H$, and similarly, if $H'$ is maximal, then $\cO_r(I)=H'$.
    \item \label{frac:inv} $\rc{H}{I}$ is a fractional right $H$-ideal, and $\lc{H'}{I}$ is a fractional left $H'$-ideal.
    \item If $J$ is a fractional left $H$-ideal, then \label{frac:isec-union} $I \cap J$ and $I \cup J$ are fractional left $H$-ideals.
    \item If $(I_m)_{m \in M}$ is a non-empty family of left $H$-ideals for some index set $M$, then $\bigcup_{m \in M} I_m$ is a left $H$-ideal.
    \item If $H''$ is an order, and $K$ is a fractional $(H',H'')$-ideal, then $IK$ is a fractional $(H,H'')$-ideal.
  \end{enumerate}
\end{lemma}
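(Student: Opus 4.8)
The lemma bundles six assertions of unequal weight; the plan is to take them in roughly increasing order of difficulty, ending with parts~1 and~2, which carry the real content. The tools I would keep at hand are that an order $H$ in the quotient semigroup $Q$ satisfies $Q = \quo(H)$ (so $H^\bullet = H \cap Q^\bullet$ has both Ore conditions in $Q$), the module and colon-ideal identities of \autoref{lemma:setfrac}, and \autoref{lemma:pre-frac}. Part~3 is essentially formal: $\rc{H}{I}$ is a right $H$-module (even an $(H',H)$-module, by \autoref{lemma:setfrac}), it meets $Q^\bullet$ by the hypothesis on $I$, and $\lc{H}{\rc{H}{I}} \supseteq I$ by \hyperref[setfrac:closure]{Lemma~\ref*{lemma:setfrac}.\ref*{setfrac:closure}}, so $\lc{H}{\rc{H}{I}}$ meets $Q^\bullet$ as well; the statement for $\lc{H'}{I}$ is the mirror image. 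Part~5 is immediate too: $\bigcup_{m\in M}I_m$ is a left $H$-module, meets $Q^\bullet$ since $M \ne \emptyset$, and, as every $I_m \subseteq H$, satisfies $1 \in \rc{H}{\bigcup_m I_m}\cap Q^\bullet$.

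For part~4, $I \cap J$ and $I \cup J$ are visibly left $H$-modules, so only the two cancellative-element conditions need attention. For $I \cup J$ I would construct a common right denominator: given $d_I \in \rc{H}{I}\cap Q^\bullet$ and $d_J \in \rc{H}{J}\cap Q^\bullet$, use the right Ore condition to write $d_I^{-1}d_J = a s^{-1}$ with $a \in H$, $s \in H^\bullet$; then $d_I a = d_J s \in Q^\bullet$ forces $a \in H^\bullet$, and $(I \cup J)(d_I a) \subseteq Ha \cup Hs \subseteq H$. Dually, for $I \cap J$ I would produce a common cancellative element: with $a \in I\cap Q^\bullet$, $b \in J\cap Q^\bullet$ and $a b^{-1} = t^{-1}u$, $t \in H^\bullet$, $u \in H$ (left Ore), one gets $ta = ub \in (I\cap J)\cap Q^\bullet$; the other condition is free because $\rc{H}{I \cap J}\supseteq \rc{H}{I}$. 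For part~6, $IK$ is a left $H$- and a right $H''$-module by associativity and meets $Q^\bullet$ because $Q^\bullet$ is product-closed and $I,K$ each meet it; a right denominator comes in two stages, $q_1 \in \rc{H'}{K}\cap Q^\bullet$ giving $IKq_1 \subseteq IH' \subseteq I$ and then $q_2 \in \rc{H}{I}\cap Q^\bullet$ giving $IKq_1q_2 \subseteq H$, and a left denominator symmetrically (absorb $I$ into $H'$ from the left, then $K$ into $H''$). Hence $IK$ is a fractional $(H,H'')$-ideal.

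Parts~1 and~2 are the heart. By \autoref{lemma:pre-frac}, $\cO_l(I)$ and $\cO_r(I)$ are orders, $I$ is a fractional $(\cO_l(I),\cO_r(I))$-ideal, and $H \subseteq \cO_l(I)$, $H' \subseteq \cO_r(I)$ since $I$ is a left $H$- and a right $H'$-module. To show $H \sim \cO_l(I)$ I would fix $c \in I \cap Q^\bullet$ and $d \in \rc{H}{I}\cap Q^\bullet$: for $q \in \cO_l(I)$ one has $qc \in \cO_l(I)I \subseteq I$, hence $qcd \in Id \subseteq H$, i.e.\ $\cO_l(I)(cd) \subseteq H$ with $cd \in Q^\bullet$, which with $H \subseteq \cO_l(I)$ gives equivalence. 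Symmetrically $H' \sim \cO_r(I)$ using a cancellative element of $\lc{H'}{I}$, and $\cO_l(I)\sim\cO_r(I)$ directly: $d'\,\cO_l(I)\,c \subseteq d'I \subseteq H' \subseteq \cO_r(I)$ for suitable $d' \in \lc{H'}{I}\cap Q^\bullet$, $c \in I \cap Q^\bullet$, and $c'\,\cO_r(I)\,d \subseteq Id \subseteq H \subseteq \cO_l(I)$ for suitable $c' \in I \cap Q^\bullet$, $d \in \rc{H}{I}\cap Q^\bullet$. Transitivity then gives part~1, and part~2 is formal: $\cO_l(I)$ is an order in the equivalence class of $H$ that contains $H$, so maximality of $H$ forces $\cO_l(I) = H$, and likewise $\cO_r(I) = H'$ when $H'$ is maximal.

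I do not expect a single deep step; the main obstacle will simply be the discipline of tracking the left/right asymmetry throughout (which colon ideal, which Ore condition, which side each element multiplies on), the one non-formal ingredient being the repeated appeal to the Ore property of $H^\bullet$ in $Q$ to build the common denominators of part~4 (and, implicitly, wherever one passes between $Q$ and fractions over $H$).
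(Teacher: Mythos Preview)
Your proposal is correct and follows essentially the same route as the paper: the same use of \autoref{lemma:setfrac} and \autoref{lemma:pre-frac} for parts~1--3, the same Ore-condition argument to produce a common cancellative element in $I\cap J$ for part~4, and the same two-stage denominator construction for part~6. The only cosmetic difference is in part~4 for the union: the paper observes $\rc{H}{I\cup J}=\rc{H}{I}\cap\rc{H}{J}$ and applies the already-proved intersection case to these fractional right $H$-ideals, whereas you construct a common right denominator directly---both arguments amount to the same Ore computation.
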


\begin{proof}
    \mbox{}
  \begin{enumerate}
    \item By definition of the right and left order, $H \subset \cO_l(I)$ and $H' \subset \cO_r(I)$. Let $a \in I \cap Q^\bullet$, $b \in \rc{H}{I} \cap Q^\bullet$ and $c \in \lc{H'}{I} \cap Q^\bullet$. Then $\cO_l(I)ab \subset Ib \subset H$ and $ca\cO_r(I) \subset H'$, so $H \sim \cO_l(I)$ and $H' \sim \cO_r(I)$. Finally, $cHa \subset cI \subset H'$ and $aH'b \subset H$ imply $H \sim H'$.
    \item By \ref{frac:equiv}., $\cO_l(I) \sim H$ and by definition of the left order $H \subset \cO_l(I)$. Maximality of $H$ implies $H = \cO_l(I)$. Analogously, $H'= \cO_r(I)$ if $H'$ is maximal.
    \item $\rc{H}{I}$ is an $(H',H)$-module, and $\rc{H}{I} \cap Q^\bullet \ne \emptyset$ because $I$ is a fractional left $H$-ideal. Since $I \subset \lc{H}{\rc{H}{I}}$, also $\lc{H}{\rc{H}{I}} \cap Q^\bullet \ne \emptyset$, and thus $\rc{H}{I}$ is a fractional right $H$-ideal. Similarly one shows that $\lc{H'}{I}$ is a fractional left $H'$-ideal.
    \item Clearly $H(I\cap J) \subset I \cap J$ and if $c \in \rc{H}{I} \cap Q^\bullet$, then $(I\cap J)c \subset Ic \subset H$. It remains to show $I \cap J \cap Q^\bullet \ne \emptyset$. Let $a \in I \cap Q^\bullet$ and $b \in J \cap Q^\bullet$. Then $a = a's^{-1}$ and $b = b' s^{-1}$ with $a',b',s \in H \cap Q^\bullet$ (we can choose a common denominator using the right Ore condition). By the left Ore condition there exist $a'' \in H \cap Q^\bullet$ and $b'' \in H$ with $a''a' = b''b'$. Then $a'' a's^{-1} = b''b's^{-1} \in I \cap J \cap Q^\bullet$.

     For the union, again $H(I\cup J) \subset I \cup J$, and there exists $a \in (I \cup J) \cap Q^\bullet$. It remains to show $\rc{H}{(I \cup J)} \cap Q^\bullet \ne \emptyset$. But $\rc{H}{(I \cup J)} = \rc{H}{I} \cap \rc{H}{J}$, and we are done by applying our previous statement about the intersection to the fractional right $H$-ideals $\rc{H}{I}$ and $\rc{H}{J}$.
    \item Set $I = \bigcup_{m \in M} I_m$. Then $HI \subset I$ and $I \cap Q^\bullet \ne \emptyset$ are clear, and $I_m \subset H$ for all $m \in M$ implies $1 \in \rc{H}{I}$.
    \item Certainly $HIKH'' \subset IK$. If $a \in I \cap Q^\bullet$ and $b \in K \cap Q^\bullet$, then $ab \in IK \cap Q^\bullet$.
      Let $c \in \rc{H}{I} \cap Q^\bullet$ and $d \in \rc{H'}{K} \cap Q^\bullet$. Then $IKdc \subset IH'c \subset Ic \subset H$, i.e., $dc \in \rc{H}{IK} \cap Q^\bullet$. 
      If $c' \in \lc{H'}{I} \cap Q^\bullet$ and $d' \in \lc{H''}{K} \cap Q^\bullet$, then $d'c'IK \subset d'H'K \subset d'K \subset H''$, i.e., $d'c' \in \lc{H''}{IK} \cap Q^\bullet$. 
    \qedhere
  \end{enumerate}
\end{proof}

\begin{lemma} \label{lemma:ord}
  Let $H$ and $H'$ be orders in $Q$.
  \mbox{}
  \begin{enumerate}
    \item \label{ord:denom} If $H \sim H'$, then there exist $a,b \in H'^\bullet$ with $aH'b \subset H$. 
                            If moreover $H \subset H'$, then we can even take $a,b \in H^\bullet$.
    \item \label{ord:intermediate} If $H \sim H'$ and $H \subset H'$, then there exists an order $H''$ and $a,b \in H^\bullet$ such that $H \subset H'' \subset H'$ and $H''b \subset H$ and $aH' \subset H''$.
    \item \label{ord:conn} The following statements are equivalent:
      \begin{enumerate}
        \renewcommand{\theenumii}{(\alph{enumii})}
        \renewcommand{\labelenumii}{\theenumii}

        \item $H \sim H'$.
        \item There exists a fractional $(H,H')$-ideal.
        \item There exists a fractional $(H',H)$-ideal.
      \end{enumerate} 
  \end{enumerate}
\end{lemma}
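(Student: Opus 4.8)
The plan is to prove the three parts in order; Parts~\ref{ord:denom} and~\ref{ord:intermediate} are preparatory, and Part~\ref{ord:denom} is used again in Part~\ref{ord:conn}. For Part~\ref{ord:denom} I would start from witnesses $c,d \in Q^\bullet$ with $cH'd \subseteq H$, supplied by the definition of $H \sim H'$, and clear denominators on the appropriate side: since $H'$ is an order, $Q = \quo(H')$, so we may write $c = c_1 c_2^{-1}$ and $d = d_2^{-1} d_1$ with $c_1,c_2,d_1,d_2 \in H'^\bullet$ (each numerator is a product of invertible elements of $Q$ lying in $H'$, hence lies in $H'^\bullet$). Setting $a := c_1$ and $b := d_1$ gives $c^{-1}a = c_2 \in H'$ and $bd^{-1} = d_2 \in H'$, so $aH'b = c(c_2 H' d_2)d \subseteq cH'd \subseteq H$, using $c_2 H' d_2 \subseteq H'$. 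For the ``moreover'' I would run the same trick once more, this time with $Q = \quo(H)$: write $a = a_1 a_2^{-1}$, $b = b_2^{-1}b_1$ with $a_i,b_i \in H^\bullet$; since $a_2,b_2 \in H^\bullet \subseteq H \subseteq H'$ we have $a_2 H' \subseteq H'$ and $H' b_2 \subseteq H'$, whence $a_1 H' b_1 = a(a_2 H' b_2)b \subseteq aH'b \subseteq H$ with $a_1,b_1 \in H^\bullet$.

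For Part~\ref{ord:intermediate} I would use Part~\ref{ord:denom} to fix $a,b \in H^\bullet$ with $aH'b \subseteq H$ and take $H'' := H \cup HaH'$. Because $a \in H \subseteq H'$ we have $H'aH' \subseteq H'$, and this makes $H \cup HaH'$ closed under multiplication, the one product needing a word being $(HaH')(HaH') = HaH'HaH' \subseteq HaH'aH' \subseteq HaH'$ (using $H'H \subseteq H'$ and then $H'aH' \subseteq H'$); thus $H''$ is a subsemigroup of $Q$ containing the order $H$, hence is itself an order (the case $a=b=1$ of the last part of the first lemma of this section). Then $H \subseteq H''$ is clear, $H'' \subseteq H'$ follows from $HaH' \subseteq H'aH' \subseteq H'$, $aH' \subseteq HaH' \subseteq H''$, and $H''b = Hb \cup H(aH'b) \subseteq H$; so $H''$ together with this $a,b$ has all the required properties.

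For Part~\ref{ord:conn}, the implications (b)~$\Rightarrow$~(a) and (c)~$\Rightarrow$~(a) are immediate from \autoref{lemma:frac}.\ref{frac:equiv} (the orders attached to a fractional $(H,H')$-ideal, or to a fractional $(H',H)$-ideal, are all equivalent), and (a)~$\Rightarrow$~(c) will follow from (a)~$\Rightarrow$~(b) applied to the equivalent pair $(H',H)$; so the real content is (a)~$\Rightarrow$~(b). The idea is to build a connecting ideal: by Part~\ref{ord:denom} pick $a,b \in H'^\bullet$ with $aH'b \subseteq H$ and put $I := HaH'$. Then $I$ is an $(H,H')$-module, $a \in I \cap Q^\bullet$, and $Ib = H(aH'b) \subseteq H$ gives $b \in \rc{H}{I} \cap Q^\bullet$, so $I$ is a fractional left $H$-ideal. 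The substantial point is to show it is also a fractional right $H'$-ideal, i.e.\ $\lc{H'}{I} \cap Q^\bullet \ne \emptyset$: this does not follow formally (the right order $\cO_r(I)$ could a priori be strictly larger than $H'$), and I anticipate that one must also invoke Part~\ref{ord:denom} for the pair $(H',H)$ — giving $c,d \in H^\bullet$ with $cHd \subseteq H'$ — and then line up the one-sided inclusions $I \subseteq Hb^{-1}$ and $cH \subseteq H'd^{-1}$ by the same kind of Ore-denominator juggling used in Part~\ref{ord:denom} so as to produce an element of $Q^\bullet$ carrying $I$ into $H'$. That matching-up of the one-sided data is the main obstacle I expect; everything else in the lemma is routine manipulation of orders and modules, together with the closure and intersection properties recorded in \autoref{lemma:setfrac} and \autoref{lemma:frac}.
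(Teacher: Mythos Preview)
Your treatment of Parts~\ref{ord:denom} and~\ref{ord:intermediate} is correct and essentially identical to the paper's. (Your $H'' = H \cup HaH'$ even coincides with the paper's $H'' = H \cup aH' \cup HaH'$, since $1 \in H$ gives $aH' \subset HaH'$.)

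The gap is exactly where you flagged it, in (a)~$\Rightarrow$~(b) of Part~\ref{ord:conn}. Your candidate $I = HaH'$, built from a single witness $aH'b \subset H$, gives $b \in \rc{H}{I}$ for free, but there is no reason to expect any $q \in Q^\bullet$ with $qHaH' \subset H'$, and the ``Ore juggling'' you propose does not produce one: the inclusion $cHd \subset H'$ controls $cH$ only after right-multiplication by $d$, which collides with the $aH'$ on the right of $I$. The paper does not try to salvage this $I$. Instead it invokes Part~\ref{ord:denom} twice --- once as stated, giving $c,d \in H'^\bullet$ with $cH'd \subset H$, and once with the roles of $H$ and $H'$ swapped, giving $a,b \in H^\bullet$ with $aHb \subset H'$ --- and sets
\[
I \;=\; HbcH'.
\]
Now both bounds are immediate: $aI = (aHb)cH' \subset H'cH' \subset H'$ (since $c \in H'$) and $Id = Hb(cH'd) \subset HbH \subset H$ (since $b \in H$), so $I$ is a fractional $(H,H')$-ideal. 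The point is to splice the two witnesses together in the middle of $I$, rather than trying to push one witness across the other after the fact.
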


\begin{proof}
  \mbox{}
  \begin{enumerate}
    \item There exist $x, y \in Q^\bullet$ with $xH'y \subset H$. Since $H'$ is an order in $Q$, $x=ac^{-1}$ and $y=d^{-1}b$ with $a,b \in H'$ and $c,d \in H' \cap Q^\bullet$. If $H \subset H'$ we even take $a,b,c,d \in H$.
      Then $aH'b \subset ac^{-1}H'd^{-1}b \subset H$.

    \item Using \ref{ord:denom}. choose $a,b \in H^\bullet$ with $aH'b \subset H$. Let $H'' = H \cup aH' \cup HaH'$. 
      Then it is easily checked that $H'' H'' \subset H''$ and obviously $H \subset H''$, thus $H''$ is an order. Moreover, $H'' \subset H'$,\, $H''b \subset H$ and $aH' \subset H''$, as claimed.

    \item
      (a) $\Rightarrow$ (b): By \ref{ord:denom}. there exist $a,b \in H^\bullet$ with $aHb \subset H'$ and $c,d \in H'^\bullet$ with $cH'd \subset H$.
            Define $I = HbcH'$. Clearly $I$ is an $(H,H')$-module with $bc \in I \cap Q^\bullet$.
            Since $aI = aHbcH' \subset H'cH' \subset H'$ and $Id=HbcH'd \subset HbH \subset H$, $I$ is a fractional $(H,H')$-ideal.

      (b) $\Rightarrow$ (a): By \hyperref[lemma:frac]{Lemma \ref*{lemma:frac}.\ref*{frac:equiv}}.

      (a) $\Leftrightarrow$ (c) follows by symmetry, swapping the roles of $H$ and $H'$.
    \qedhere
  \end{enumerate}
\end{proof}

\begin{lemma} \label{lemma:max-ord}
  Let $H$ be an order in $Q$. The following statements are equivalent.
  \mbox{}
  \begin{enumerate}
    \enumequiv

    \item $H$ is a maximal order.
    \item If $I$ is a fractional left $H$-ideal, then $\cO_l(I)=H$ and if $J$ is a fractional right $H$-ideal, then $\cO_r(I)=H$.
    \item If $I$ is a fractional $H$-ideal, then $\cO_r(I) = \cO_l(I) = H$.
    \item\label{max-ord:d} If $I$ is an $H$-ideal, then $\cO_r(I) = \cO_l(I) = H$.
  \end{enumerate}
\end{lemma}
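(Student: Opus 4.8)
The plan is to establish the cycle of implications (a) $\Rightarrow$ (b) $\Rightarrow$ (c) $\Rightarrow$ (d) $\Rightarrow$ (a). The implications (b) $\Rightarrow$ (c) and (c) $\Rightarrow$ (d) are a mere unwinding of \autoref{defi:ideals}: a fractional $H$-ideal is by definition simultaneously a fractional left $H$-ideal and a fractional right $H$-ideal, and every $H$-ideal is in particular a fractional $H$-ideal, so in each step one applies the hypothesis to the narrower class of ideals. For (a) $\Rightarrow$ (b), if $I$ is a fractional left $H$-ideal then $H \subseteq \cO_l(I)$, and by \autoref{lemma:pre-frac} the set $\cO_l(I)$ is an order and $I$ is a fractional $(\cO_l(I),\cO_r(I))$-ideal, hence a fractional $(H,\cO_r(I))$-ideal; now \hyperref[frac:max]{Lemma \ref*{lemma:frac}.\ref*{frac:max}} gives $\cO_l(I) = H$. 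The statement for fractional right $H$-ideals follows symmetrically.

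The substantial part is (d) $\Rightarrow$ (a), which I would prove by contraposition. Suppose $H$ is not a maximal order, so there is an order $H' \supsetneq H$ with $H \sim H'$. By \hyperref[ord:intermediate]{Lemma \ref*{lemma:ord}.\ref*{ord:intermediate}} there are an order $H''$ with $H \subseteq H'' \subseteq H'$ and elements $a,b \in H^\bullet$ with $H''b \subseteq H$ and $aH' \subseteq H''$. I would then split into two cases. If $H'' \supsetneq H$, consider the conductor $I = \rc{H}{H''} = \{\, q \in Q \mid H'' q \subseteq H \,\}$: a direct check shows that $I$ is an $(H'',H)$-module contained in $H$, that it contains the cancellative element $b$, and that $1 \in \lc{H}{I} \cap \rc{H}{I}$, so $I$ is an $H$-ideal; moreover $H'' I \subseteq I$ yields $H'' \subseteq \cO_l(I)$, so (d) would force $H'' = H$, a contradiction. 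If instead $H'' = H$, then $aH' \subseteq H$, and the conductor $J = \lc{H}{H'} = \{\, q \in Q \mid qH' \subseteq H \,\}$ is an $(H,H')$-module contained in $H$ and containing the cancellative element $a$, hence an $H$-ideal, with $JH' \subseteq J$ and therefore $H' \subseteq \cO_r(J)$; so (d) would force $H' = H$, again a contradiction.

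The main obstacle --- and the reason the intermediate-order lemma is invoked rather than working with $H'$ directly --- is ensuring that the conductor ideal one writes down actually meets $Q^\bullet$, so that it is a bona fide (fractional) $H$-ideal and not merely a module one cannot control. Starting only from a relation $aH'b \subseteq H$ with $a,b \in Q^\bullet$ (which is all one gets straight from $H \sim H'$), a symmetric two-sided conductor such as $\{\, q \mid H' q H' \subseteq H \,\}$ need not obviously contain a cancellative element, since the flanking $a$ and $b$ cannot in general be absorbed into $H'$. Passing to the intermediate order $H''$ replaces this by a genuinely one-sided containment ($aH' \subseteq H''$ or $H''b \subseteq H$), from which membership of a cancellative element in the appropriate one-sided conductor is immediate. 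Once that point is handled, the remaining verifications (the module identities, that each conductor lies in $H$, and the strict inclusion of orders producing the contradiction) are routine and use only the definitions of \autoref{sec:ideals} together with the elementary properties collected in \autoref{lemma:setfrac} and \hyperref[ord:intermediate]{Lemma \ref*{lemma:ord}.\ref*{ord:intermediate}}.
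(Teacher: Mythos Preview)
Your proof is correct and follows essentially the same approach as the paper's: the same cycle of implications, the same invocation of \hyperref[frac:max]{Lemma \ref*{lemma:frac}.\ref*{frac:max}} for (a)$\Rightarrow$(b), and for (d)$\Rightarrow$(a) the same use of \hyperref[ord:intermediate]{Lemma \ref*{lemma:ord}.\ref*{ord:intermediate}} together with the two conductor ideals $I=\rc{H}{H''}$ and $J=\lc{H}{H'}$. The only cosmetic difference is that the paper argues sequentially (first deduce $H''=H$ from $I$, then $H'=H$ from $J$) rather than via your case split, but the content is identical.
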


\begin{proof}
  (a) $\Rightarrow$ (b): By \hyperref[frac:max]{Lemma \ref*{lemma:frac}.\ref*{frac:max}}, $\cO_l(I) = H$ and $\cO_r(J) = H$.

  (b) $\Rightarrow$ (c) $\Rightarrow$ (d): Trivial.

  (d) $\Rightarrow$ (a): Assume $H'$ is an order equivalent to $H$ and $H \subset H'$. Applying \hyperref[ord:intermediate]{Lemma \ref*{lemma:ord}.\ref*{ord:intermediate}} we find an equivalent order $H''$ with $H \subset H'' \subset H'$ and $a,b \in Q^\bullet$ with $aH' \subset H''$ and $H''b \subset H$. Let $I = \{\, x \in Q \mid H''x \subset H \,\}$. Then $I$ is an $H$-ideal, and $H'' \subset \cO_l(I)$, implying $H'' = H$ by \ref{max-ord:d}.
  Set $J = \{\, x \in Q \mid xH' \subset H \,\}$.
  Then $J$ is again an $H$-ideal (we use $a \in J$ since $H''=H$), and $H' \subset \cO_r(J)$ implies $H' = H$.
\end{proof}

\begin{lemma} \label{lemma:mid}
  Let $H$ be a maximal order in $Q$, let $I$ and $J$ be fractional left $H$-ideals, and let $K$ be a fractional left $\cO_r(I)$-ideal.
  \begin{enumerate}
    \item \label{mid:v} $\cO_l(I)=H$,\, $I^{-1}$ is a fractional right $H$-ideal with $\cO_r(I^{-1})=H$ and $I_v$ is a fractional left $H$-ideal with $\cO_l(I_v) = H$ and $I \subset I_v$. Moreover, $\cO_l(I)_v = H_v = H$.
    \item \label{mid:principal} If $a \in Q^\bullet$, then $Ha$ is a fractional left $H$-ideal with $\cO_r(Ha)=a^{-1}Ha$, $(Ha)^{-1} = a^{-1}H$ and $(Ha)_v = Ha$.
      $Ha$ is integral (equivalently, a left $H$-ideal), if and only if $a \in H^\bullet$.
    \item \label{mid:incl} If $I \subset J$ then $J^{-1} \subset I^{-1}$ and $I_v \subset J_v$.
    \item \label{mid:v-inv} $I \subset I_v = (I_v)_v$,\, $I_v^{-1} = (I^{-1})_v = I^{-1}$ and $\cO_l(I_v) = \cO_l(I) = \cO_l(I)_v = H$.
    \item \label{mid:isec} $(I_v \cap J_v)_v = I_v \cap J_v$.
    \item \label{mid:union} $(I \cup J)_v = (I_v \cup J)_v = (I \cup J_v)_v = (I_v \cup J_v)_v$.
    \item \label{mid:assoc} If $\cO_r(I)$ and $\cO_r(K)$ are also maximal, then $(IK)_v = (I_v K)_v = (I K_v)_v = (I_v K_v)_v$.
  \end{enumerate}
\end{lemma}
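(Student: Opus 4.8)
The plan is to treat items (1) and (2) as the two ``base cases'' that anchor the $v$-operation inside the category of fractional left $H$-ideals, to deduce (3)--(6) by purely formal monotonicity, and finally to prove the multiplicative identity (7) by a short element-wise chase, which is where the two extra maximality hypotheses are spent. For (1), $\cO_l(I)=H$ is just maximality of $H$: by \autoref{lemma:pre-frac} the module $I$ is a fractional $(\cO_l(I),\cO_r(I))$-ideal, so \hyperref[frac:max]{Lemma \ref*{lemma:frac}.\ref*{frac:max}} applies. Hence $I^{-1}=\rc{\cO_l(I)}{I}=\rc{H}{I}$ is a fractional right $H$-ideal by \hyperref[frac:inv]{Lemma \ref*{lemma:frac}.\ref*{frac:inv}}, and the right-hand mirror of the same facts gives $\cO_r(I^{-1})=H$; then $I_v=(I^{-1})^{-1}=\lc{\cO_r(I^{-1})}{I^{-1}}=\lc{H}{I^{-1}}$ is a fractional left $H$-ideal (mirror of \hyperref[frac:inv]{Lemma \ref*{lemma:frac}.\ref*{frac:inv}}) with $\cO_l(I_v)=H$ by maximality, and $I\subset I_v$ is \hyperref[setfrac:closure]{Lemma \ref*{lemma:setfrac}.\ref*{setfrac:closure}}; finally $H$ is a fractional $H$-ideal with $\cO_l(H)=\cO_r(H)=H$, so $H^{-1}=H$ and $H_v=H$. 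For (2) one checks directly that $Ha$ contains $a\in Q^\bullet$ and that $a^{-1}\in\rc{H}{Ha}$, so $Ha$ is a fractional left $H$-ideal; a one-line computation gives $\cO_l(Ha)=\{\,q\mid qH\subset H\,\}=H$ and $\cO_r(Ha)=a^{-1}Ha$, whence $(Ha)^{-1}=\rc{H}{Ha}=a^{-1}H$ and, by the left--right mirror, $(Ha)_v=(a^{-1}H)^{-1}=Ha$; integrality $Ha\subset\cO_l(Ha)=H$ holds iff $a\in H\cap Q^\bullet=H^\bullet$, which for maximal $H$ coincides with $Ha$ being a left $H$-ideal.

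Items (3)--(6) follow from two observations I would record once: (i) $X\mapsto X^{-1}$ reverses inclusions among fractional left (resp.\ right) $H$-ideals, so $X\mapsto X_v$ preserves inclusions; (ii) applying the mirror of (1) to the fractional right $H$-ideal $I^{-1}$ gives $I^{-1}\subset (I^{-1})_v=(I_v)^{-1}$, while $I\subset I_v$ gives $(I_v)^{-1}\subset I^{-1}$, so $(I_v)^{-1}=I^{-1}$ and hence $(I_v)_v=I_v$. Then (3) is (i); (4) combines (i), (ii) and (1); for (5), $I_v\cap J_v$ is a fractional left $H$-ideal (\hyperref[frac:isec-union]{Lemma \ref*{lemma:frac}.\ref*{frac:isec-union}}), it is contained in its $v$-closure by (1), and $(I_v\cap J_v)_v\subset (I_v)_v\cap(J_v)_v=I_v\cap J_v$ by (3) and (4); for (6) all four modules are fractional left $H$-ideals (\hyperref[frac:isec-union]{Lemma \ref*{lemma:frac}.\ref*{frac:isec-union}}), their $v$-closures are nested by (3), and $(I_v\cup J_v)_v\subset\big((I\cup J)_v\big)_v=(I\cup J)_v$ because $I_v,J_v\subset(I\cup J)_v$ (each of $I,J$ lies in $I\cup J$ and $(I\cup J)_v$ is $v$-closed).

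For item (7), put $H'=\cO_r(I)$ and $H''=\cO_r(K)$, maximal by hypothesis; then $I$ is a fractional $(H,H')$-ideal, $K$ a fractional $(H',H'')$-ideal, and $IK$ a fractional $(H,H'')$-ideal (last part of \autoref{lemma:frac}), so maximality forces $\cO_l(IK)=H$, $\cO_r(IK)=H''$, and the same collapses hold for $I_vK$, $IK_v$, $I_vK_v$ (using $\cO_r(I_v)=\cO_l(I^{-1})=H'$, again by maximality of $H'$). The crux is the single inclusion
\[
  I_v K \subset (IK)_v, \qquad\text{equivalently}\qquad (I_vK)\,(IK)^{-1}\subset H:
\]
if $q\in (IK)^{-1}=\rc{H}{IK}$ then $IKq\subset H$, so $Kq\subset\rc{H}{I}=I^{-1}$, hence for $x\in I_v=\lc{H}{I^{-1}}$ and $y\in K$ we get $x(yq)\in xI^{-1}\subset H$, i.e.\ $xy\in\lc{H}{(IK)^{-1}}=(IK)_v$; the mirror argument (via $(IK)^{-1}=\lc{H''}{IK}$, $qI\subset K^{-1}$ and $y\in K_v=\rc{H''}{K^{-1}}$) gives $IK_v\subset (IK)_v$. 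From $IK\subset I_vK\subset (IK)_v$ together with monotonicity and idempotence of $v$ from (3)--(4) we obtain $(I_vK)_v=(IK)_v$, and likewise $(IK_v)_v=(IK)_v$; finally $(I_vK_v)_v=(I_vK)_v=(IK)_v$ by applying the first identity with $I_v$ in place of $I$, legitimate since $\cO_r(I_v)=H'$ is maximal.

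The step I expect to be the real obstacle is (7): one has to be careful that every order appearing as an $\cO_l$ or $\cO_r$ of the products $IK$, $I_vK$, $IK_v$, $I_vK_v$ genuinely collapses to one of $H$, $H'$, $H''$ --- this is precisely where the hypotheses ``$\cO_r(I)$ and $\cO_r(K)$ maximal'' are used, and it is what makes the element-wise chase above valid. Everything else is bookkeeping with \autoref{lemma:setfrac}, \autoref{lemma:frac} and their left--right mirrors.
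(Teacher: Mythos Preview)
Your proof is correct and follows essentially the same route as the paper's. Items (1)--(6) match the paper's arguments almost verbatim, drawing on \autoref{lemma:setfrac} and \autoref{lemma:frac} in the same way. For item (7) there is a small organizational difference: the paper establishes $I_vK_v \subset (IK)_v$ in a single chain (from $K(IK)^{-1}\subset I^{-1}$ it multiplies on the right by $I_v$, deduces $(IK)^{-1}I_v\subset K^{-1}$, then multiplies on the right by $K_v$), whereas you prove the two half-steps $I_vK\subset(IK)_v$ and $IK_v\subset(IK)_v$ separately and then bootstrap to $(I_vK_v)_v=(I_vK)_v=(IK)_v$ by replacing $I$ with $I_v$. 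Both arguments use the same ingredients and the same maximality hypotheses on $\cO_r(I)$ and $\cO_r(K)$ at the same places; the difference is purely stylistic.
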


\begin{proof}
  \mbox{}
  \begin{enumerate}
    \item 
      By \hyperref[frac:max]{Lemma \ref*{lemma:frac}.\ref*{frac:max}}, $\cO_l(I) = H$, and thus \hyperref[frac:inv]{Lemma \ref*{lemma:frac}.\ref*{frac:inv}} implies that $I^{-1}= \rc{H}{I}$ is a right $H$-ideal. By the symmetric statement of what we just showed for fractional right $H$-ideals, therefore $\cO_r(I^{-1}) = H$ and $I_v$ is a fractional left $H$-ideal. Applying the first part of the statement to $I_v$ yields $\cO_l(I_v) = H$.
      Now $I \subset I_v$ follows from \hyperref[setfrac:closure]{Lemma \ref*{lemma:setfrac}.\ref*{setfrac:closure}}, and $H_v = H$ from $H^{-1} = \rc{H}{H} = H$.

    \item Since $a \in Ha \cap Q^\bullet$ and $a^{-1} \in \rc{H}{Ha}$, $Ha$ is a fractional left $H$-ideal.
      Certainly $a^{-1}Ha \subset \cO_r(Ha)$. Conversely, if $x \in \cO_r(Ha)$, then $ax \in Ha$ and thus $x \in a^{-1}Ha$, so that altogether $\cO_r(Ha) = a^{-1}Ha$.
      Moreover, $(Ha)a^{-1}H \subset H$ and if $Hax \subset H$ for $x \in Q^\bullet$, then $ax \in H$ and hence $x \in a^{-1}H$, implying $(Ha)^{-1} = a^{-1}H$. Finally, $(Ha)_v = Ha$ because $(Ha)_v = ((Ha)^{-1})^{-1} = Ha$.
      $Ha$ is a left $H$-ideal if and only if it is integral due to maximality of $H$, and $Ha \subset H$ if and only if $a \in H \cap Q^\bullet = H^\bullet$.

    \item If $x \in Q$ with $Jx \subset H$, then $Ix \subset Jx \subset H$, and hence $J^{-1} \subset I^{-1}$.
      By \ref{mid:v}., $J^{-1}$ and $I^{-1}$ are fractional right $H$-ideals with $\cO_r(I^{-1}) = \cO_r(J^{-1}) = H$, and we apply the symmetric statement for right fractional $H$-ideals to obtain $I_v \subset J_v$.

    \item 
      By \ref{mid:v}., $I^{-1}$ is a fractional right $H$-ideal with $\cO_r(I^{-1}) = H$, and $I_v$ is a fractional left $H$-ideal with $\cO_l(I_v) = H$. Moreover, also by 1, $I \subset I_v$ and $I^{-1} \subset (I^{-1})_v = [(I^{-1})^{-1}]^{-1} = I_v^{-1}$.
      It follows from \ref{mid:incl}., that $I_v \subset (I_v)_v$ and $I^{-1}=I_v^{-1}$. 
      Therefore $(I_v)_v = (I_v^{-1})^{-1} \subset (I^{-1})^{-1} = I_v$, whence $I_v = (I_v)_v$.

    \item $I_v \cap J_v \subset (I_v \cap J_v)_v \subset (I_v)_v \cap (J_v)_v = I_v \cap J_v$.

    \item $I \cup J \subset I_v \cup J \subset I_v \cup J_v \subset (I \cup J)_v$ and by taking divisorial closures, and \ref{mid:v-inv}., the claim follows.

    \item
      We use $\cO_r(I_v) = \cO_r(I)$ and $\cO_l(K) = \cO_l(K_v)$ (from \ref{mid:v}.).
      We have $IK \subset I_v K \subset I_v K_v$, and similarly $IK \subset I K_v \subset I_v K_v$ (by \ref{mid:v}.).
      By \ref{mid:incl}., this implies $(IK)_v \subset (IK_v)_v \subset (I_vK_v)_v$ and $(IK)_v \subset (I_vK)_v \subset (I_vK_v)_v$.

      To prove the claim it suffices to show $(I_v K_v)_v \subset (IK)_v$, which will follow from \ref{mid:incl}. and \ref{mid:v-inv}. if we show $I_v K_v \subset (IK)_v$.
      Since $IK (IK)^{-1} \subset \cO_l(I)$, we have $K(IK)^{-1} \subset \rc{\cO_l(I)}{I}=I^{-1} = I_v^{-1}$, where the last equality is due to \ref{mid:v-inv}. Multiplying by $I_v$ from the right gives $K(IK)^{-1} I_v \subset I_v^{-1} I_v$. By definition, $I_v^{-1} I_v \subset \cO_r(I_v)$. Since $\cO_r(I_v)=\cO_r(I)=\cO_l(K)$, therefore $K(IK)^{-1} I_v \subset \cO_l(K)$. Now $(IK)^{-1} I_v \subset K^{-1} = K_v^{-1}$ (using \ref{mid:v-inv}. again). Multiplying by $K_v$ from the right and using $\cO_r(K_v)=\cO_r(K)$, we obtain $(IK)^{-1} I_v K_v \subset \cO_r(K)$. Since $\cO_l((IK)^{-1})=\cO_r(K)$, this implies $I_v K_v \subset ((IK)^{-1})^{-1} = (IK)_v$.
    \qedhere
  \end{enumerate}
\end{proof}

\begin{defi} \label{defi:divisorial}
  Let $H$ be an order in $Q$. A fractional left or right $H$-ideal $I$ is called \emph{divisorial} if $I = I_v$.
\end{defi}

If $I$ is a fractional left $H$-ideal for a maximal order $H$, then it is not necessarily true that $\cO_r(I)$ is again a maximal order.
The next proposition shows that for divisorial fractional left or right $H$-ideals with $H$ maximal, already both, $\cO_l(I)$ and $\cO_r(I)$, are maximal. We can define an associative partial operation, the \emph{$v$-product}, by $I \cdot_v J = (IJ)_v$ when $J$ is a divisorial fractional left $\cO_r(I)$-ideal. Moreover it shows that every divisorial fractional left or right ideal is \emph{$v$-invertible}, i.e., invertible with respect to this operation.

\begin{prop} \label{prop:div-ideals}
  Let $H$ be a maximal order in $Q$.
  Let $I$ be a fractional left $H$-ideal. Then
  \begin{enumerate}
    \item\label{di:max} $\cO_l(I^{-1})$ is a maximal order. In particular, $\cO_r(I_v)$ is a maximal order.
    \item\label{di:inv} $(II^{-1})_v = \cO_l(I)$ and if $\cO_r(I)$ is also maximal, then $(I^{-1}I)_v = \cO_r(I)$.
    \item\label{di:assoc} If $I$ is a divisorial fractional left $H$-ideal, $J$ a divisorial fractional left $\cO_r(I)$-ideal and $K$ a divisorial fractional left $\cO_r(J)$-ideal, then
      \[
        (I \cdot_v J) \cdot_v K = I \cdot_v (J \cdot_v K).
      \]
  \end{enumerate}
\end{prop}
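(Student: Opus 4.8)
Throughout I will use \hyperref[mid:v]{Lemma~\ref*{lemma:mid}.\ref*{mid:v}} and \hyperref[mid:v-inv]{\ref*{lemma:mid}.\ref*{mid:v-inv}} (so $\cO_l(I)=H$, $\cO_r(I^{-1})=H$, $(I_v)^{-1}=I^{-1}$, $I\subseteq I_v$) together with the defining inclusions $XX^{-1}\subseteq\cO_l(X)$ and $X^{-1}X\subseteq\cO_r(X)$ (immediate from $X^{-1}=\rc{\cO_l(X)}{X}=\lc{\cO_r(X)}{X}$). The plan is to prove the three parts in order.

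\emph{Part \ref{di:max}.} First reduce to a divisorial ideal: since $(I_v)^{-1}=I^{-1}$ it is enough to treat $J:=I_v$. A direct double inclusion, using $\cO_l(J)=H$, $\cO_r(J^{-1})=H$ and $J=J_v=(J^{-1})^{-1}=\lc{H}{J^{-1}}$, gives $\cO_l(J^{-1})=\cO_r(J)=:H_r$; hence $\cO_l(I^{-1})=\cO_r(I_v)=H_r$, and it remains only to prove $H_r$ is a maximal order. I would verify criterion \hyperref[max-ord:d]{Lemma~\ref*{lemma:max-ord}.\ref*{max-ord:d}}: let $\cA$ be an arbitrary $H_r$-ideal and consider $J\cA J^{-1}$. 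Because $JJ^{-1}\subseteq\cO_l(J)=H$, $\cA\subseteq H_r$, $HJ=J$ and $J^{-1}H=J^{-1}$, this is a two-sided $H$-ideal contained in $H$, so by maximality of $H$ we have $\cO_l(J\cA J^{-1})=\cO_r(J\cA J^{-1})=H$. A ``sandwich'' computation does the rest: if $q\in\cO_r(\cA)$ then, using $J^{-1}J\subseteq\cO_r(J)=H_r$, $\cA H_r=\cA$ and $\cA q\subseteq\cA$, $(J\cA J^{-1})(JqJ^{-1})=J\cA(J^{-1}J)qJ^{-1}\subseteq J\cA q J^{-1}\subseteq J\cA J^{-1}$, so $JqJ^{-1}\subseteq H$, whence $Jq\subseteq\lc{H}{J^{-1}}=J$ and $q\in\cO_r(J)=H_r$; symmetrically $\cO_l(\cA)=H_r$. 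Thus $H_r=\cO_r(I_v)=\cO_l(I^{-1})$ is maximal.

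\emph{Part \ref{di:inv}.} I claim $(II^{-1})^{-1}=H$, which immediately gives $(II^{-1})_v=H^{-1}=H=\cO_l(I)$. Since $II^{-1}\subseteq\cO_l(I)=H$, the set $II^{-1}$ is an $H$-ideal, so by maximality of $H$ its one-sided orders are $H$, and therefore $H=\rc{H}{H}\subseteq\rc{H}{II^{-1}}=(II^{-1})^{-1}$. Conversely $(II^{-1})(II^{-1})^{-1}\subseteq\cO_l(II^{-1})=H$ gives $I\bigl(I^{-1}(II^{-1})^{-1}\bigr)\subseteq H$, hence $I^{-1}(II^{-1})^{-1}\subseteq\rc{H}{I}=I^{-1}$ and so $(II^{-1})^{-1}\subseteq\cO_r(I^{-1})=H$. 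This proves the first equality. For the second, when $\cO_r(I)$ is maximal the ideal $I$ is a fractional right $\cO_r(I)$-ideal (\autoref{lemma:pre-frac}), so applying the left--right mirror of what was just shown — with $\cO_r(I)$ in the role of the maximal order — yields $(I^{-1}I)_v=\cO_r(I)$.

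\emph{Part \ref{di:assoc}.} By Part \ref{di:max} the orders $\cO_r(I)$, $\cO_r(J)$, $\cO_r(K)$ are all maximal, and since a maximal order cannot be properly contained in an equivalent order, the containments $\cO_r(J)\subseteq\cO_r(IJ)$ and $\cO_r(K)\subseteq\cO_r(JK)$ (between equivalent orders, by \autoref{lemma:pre-frac}/\hyperref[frac:equiv]{Lemma~\ref*{lemma:frac}.\ref*{frac:equiv}}) are equalities; hence $\cO_r(IJ)$ and $\cO_r(JK)$ are maximal, and the $v$-products in the statement are all defined. Now \hyperref[mid:assoc]{Lemma~\ref*{lemma:mid}.\ref*{mid:assoc}} applies to the pair $(IJ,K)$ and to the pair $(I,JK)$, and gives
\[
  (I\cdot_v J)\cdot_v K=\bigl((IJ)_vK\bigr)_v=(IJK)_v=\bigl(I\,(JK)_v\bigr)_v=I\cdot_v(J\cdot_v K).
\]
The routine work is bookkeeping with the various one-sided module structures and orders; the one genuinely new ingredient is the auxiliary two-sided ideal $J\cA J^{-1}$ in Part \ref{di:max}, and I expect the transport of maximality through it — and the identification $\lc{H}{J^{-1}}=J$ — to be the main point to get right.
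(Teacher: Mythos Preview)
Your proof is correct. Parts~\ref{di:inv} and~\ref{di:assoc} are essentially identical to the paper's arguments.

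For Part~\ref{di:max} the two proofs share the same core idea---sandwich something between $J$ and $J^{-1}$ to transport the maximality of $H$ across---but are organized differently. The paper works with $I$ directly, takes an arbitrary order $H'\supset\cO_l(I^{-1})$ equivalent to $\cO_l(I^{-1})$, forms the $H$-ideal $IH'I^{-1}$, and from $\cO_l(IH'I^{-1})=H$ deduces $H'I^{-1}\subset I^{-1}$, hence $H'=\cO_l(I^{-1})$. You instead first reduce to the divisorial $J=I_v$ (legitimate since $I^{-1}=(I_v)^{-1}$), isolate the identity $\cO_l(J^{-1})=\cO_r(J)$ via $J=\lc{H}{J^{-1}}$, and then verify criterion~\ref{max-ord:d} of \autoref{lemma:max-ord} by sandwiching an arbitrary two-sided $H_r$-ideal $\cA$ as $J\cA J^{-1}$. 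Your route makes the ``in particular'' clause ($\cO_r(I_v)$ maximal) come out simultaneously rather than as a consequence, and the explicit identification $J=\lc{H}{J^{-1}}$ makes the unwrap step $JqJ^{-1}\subset H\Rightarrow q\in H_r$ very clean; the paper's route is slightly shorter since it tests maximality directly rather than through the ideal criterion.
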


\begin{proof}
  \mbox{}
  \begin{enumerate}
    \item
      Because $H$ is maximal, $\cO_l(I) = H$.
      Trivially, $\cO_r(I) \subset \cO_l(I^{-1})$.
      Let $H' \supset \cO_l(I^{-1})$ be an order with $H' \sim \cO_l(I^{-1})$.
      Then $J = I H' I^{-1}$ is an $(H,H)$-module and if $a \in I \cap Q^\bullet$ and $b \in I^{-1} \cap Q^\bullet$,
      then $ab \in J$ and moreover
      \[
        J^2 = I H' I^{-1} I H' I^{-1} \subset I H' \cO_l(I^{-1}) H' I^{-1} = I H' I^{-1} = J,
      \]
      showing that $J$ is an integral left $\cO_l(J)$-ideal.

      We claim $H = \cO_l(J)$.
      Since $H = \cO_l(I) \subset \cO_l(J)$ and $H$ is maximal, it suffices to show $\cO_l(J) \sim H$.
      To this end we first show $b J a \subset H'$:
      \[
        bJa = bIH'I^{-1}a \subset I^{-1}IH'I^{-1}I \subset \cO_r(I)H'\cO_r(I) = H'.
      \]
      Since $H' \sim H$, there exist $c,d \in Q^\bullet$ with $cH'd \subset H$. Since $ab \in J \cap Q^\bullet$ therefore $cb(\cO_l(J)ab)ad \subset cbJad \subset H$, proving the claim.

      Therefore, from the definition of $J$, $H' I^{-1} \subset \rc{J}{I} \subset \rc{\cO_l(J)}{I} = \rc{H}{I} = I^{-1}$ and thus $H' \subset \cO_l(I^{-1})$, and, because we started out with the converse inclusion, also $H' = \cO_l(I^{-1})$.

      Now $\cO_r(I_v) = \cO_l(I^{-1})$ implies the ``in particular'' statement.

    \item 
      We have to show $(I I^{-1})_v = \cO_l(I)$ and $(I^{-1} I)_v = \cO_r(I)$, and we check the first equality as the second one then follows analogously. 
      The inclusion $I I^{-1} \subset \cO_l(I)$ implies $(I I^{-1})_v \subset \cO_l(I)_v = \cO_l(I)$. 
      It remains to prove $\cO_l(I) \subset (II^{-1})_v$.
      Due to maximality of $\cO_l(I)$, it holds that $\cO_l(II^{-1}) = \cO_l(I)$, and therefore $II^{-1} (II^{-1})^{-1} \subset \cO_l(I)$.
      Thus $I^{-1}(II^{-1})^{-1} \subset I^{-1}$, and $(II^{-1})^{-1} \subset \cO_r(I^{-1})=\cO_l(I)$. By \hyperref[mid:incl]{Lemma \ref*{lemma:mid}.\ref*{mid:incl}}, therefore $\cO_l(I) \subset (II^{-1})_v$.

    \item 
      Using \hyperref[mid:assoc]{Lemma \ref*{lemma:mid}.\ref*{mid:assoc}}, which can be applied due to 1., $((IJ)_v K)_v = (IJK)_v = (I(JK)_v)_v$.
      \qedhere
  \end{enumerate}
\end{proof}

\begin{cor}
  If $H$ is a maximal order, then every order $H'$ with $H' \sim H$ is contained in a maximal order equivalent to $H$.
\end{cor}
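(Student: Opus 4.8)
The plan is to exhibit the required maximal order as $H'' := \cO_l(I^{-1})$ for a well-chosen two-sided fractional ideal $I$ linking $H$ and $H'$. Since $H' \sim H$, \hyperref[ord:conn]{Lemma \ref*{lemma:ord}.\ref*{ord:conn}} provides a fractional $(H,H')$-ideal $I$; in particular $I$ is a fractional left $H$-ideal, and since $H$ is a maximal order, \hyperref[frac:max]{Lemma \ref*{lemma:frac}.\ref*{frac:max}} gives $\cO_l(I) = H$, so $I^{-1} = \rc{H}{I}$. I would then show in turn that $H''$ is a maximal order, that $H'' \sim H$, and that $H' \subseteq H''$.

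Maximality of $H''$ is precisely \hyperref[di:max]{Proposition \ref*{prop:div-ideals}.\ref*{di:max}}. For the equivalence, \hyperref[mid:v]{Lemma \ref*{lemma:mid}.\ref*{mid:v}} tells us that $I^{-1}$ is a fractional right $H$-ideal with $\cO_r(I^{-1}) = H$; hence $I^{-1}$ is a fractional $(\cO_l(I^{-1}), \cO_r(I^{-1})) = (H'', H)$-ideal, and \hyperref[frac:equiv]{Lemma \ref*{lemma:frac}.\ref*{frac:equiv}} yields $H'' \sim H$. For the inclusion, observe first that $H' \subseteq \cO_r(I)$, because $I$ is a right $H'$-module. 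It then suffices to check $\cO_r(I) \subseteq \cO_l(I^{-1})$: regarding $I$ as an $(\cO_l(I), \cO_r(I))$-module, the first part of \autoref{lemma:setfrac} shows that $I^{-1} = \rc{\cO_l(I)}{I}$ is an $(\cO_r(I), \cO_l(I))$-module, so $\cO_r(I)\,I^{-1} \subseteq I^{-1}$, i.e. $\cO_r(I) \subseteq \cO_l(I^{-1})$. Chaining the inclusions, $H' \subseteq \cO_r(I) \subseteq \cO_l(I^{-1}) = H''$, which completes the argument.

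I do not expect a genuine obstacle here, since all the real work---the maximality of $\cO_l(I^{-1})$ and the Asano-equivalence of the four orders attached to a fractional ideal---has already been carried out in \autoref{prop:div-ideals} and \autoref{lemma:frac}. The one point demanding a little care is ensuring that the ``ambient'' ideal $I$ is genuinely two-sided, so that its right order contains $H'$; this is exactly what a fractional $(H,H')$-ideal delivers. The only computation is the elementary inclusion $\cO_r(I) \subseteq \cO_l(I^{-1})$, and the only real risk is a left/right slip in the bookkeeping. An essentially equivalent variant would be to use the right order of the divisorial closure $I_v$ instead, relying on the identity $\cO_r(I_v) = \cO_l(I^{-1})$ recorded in the proof of \hyperref[di:max]{Proposition \ref*{prop:div-ideals}.\ref*{di:max}}; either way the proof is only a few lines.
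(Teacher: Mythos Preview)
Your proof is correct and is essentially the paper's argument: the paper takes $\cO_r(I_v)$ as the containing maximal order, invoking \hyperref[di:max]{Proposition \ref*{prop:div-ideals}.\ref*{di:max}} for maximality and \hyperref[ord:conn]{Lemma \ref*{lemma:ord}.\ref*{ord:conn}} for the existence of $I$, and as you yourself note at the end, $\cO_r(I_v) = \cO_l(I^{-1})$ so the two choices coincide. Your explicit verification of the inclusion $H' \subset \cO_r(I) \subset \cO_l(I^{-1})$ is a bit more detailed than what the paper writes down, but the route is the same.
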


\begin{proof}
  By \hyperref[ord:conn]{Lemma \ref*{lemma:ord}.\ref*{ord:conn}}, there exists a fractional $(H,H')$-ideal $I$. Then $I_v$ is divisorial, $\cO_r(I_v) \sim \cO_l(I_v) = H$, $H' \subset \cO_r(I_v)$ and by \hyperref[di:max]{Proposition \ref*{prop:div-ideals}.\ref*{di:max}} $\cO_r(I_v)$ is maximal.
\end{proof}

\begin{cor} \label{cor:ideals-grpd}
  Let $\alpha$ denote an equivalence class of maximal orders of $Q$.
  Let
  \[
    \cF_v(\alpha) = \{\, I \mid \text{$I$ is a divisorial fractional left (or right) $H$-ideal with $H \in \alpha$} \,\}
  \]
  and
  \[
    \cI_v(\alpha) = \{\, I \mid \text{$I$ is a divisorial left (or right) $H$-ideal with $H \in \alpha$} \,\}.
  \]
  Then $(\cF_v(\alpha), \cdot_v, \subset)$ is a lattice-ordered groupoid, with identity elements the maximal orders in $\alpha$.
  If $I, J$ are in $\cF_v(\alpha)$ with $\cO_l(I)=\cO_l(J)$ or $\cO_r(I) = \cO_r(J)$, then $I \meet J = I \cap J$ and $I \join J = (I \cup J)_v$. Moreover, $\cI_v(\alpha)$ is the subcategory of integral elements.
\end{cor}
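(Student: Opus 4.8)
The plan is to assemble the statement from \autoref{lemma:mid} and \autoref{prop:div-ideals}, which already contain the substantive work; what remains is category-theoretic bookkeeping. First I would set up the category structure on $\cF_v(\alpha)$: for $I$ represented as a divisorial fractional left $H$-ideal with $H \in \alpha$, put $s(I) = \cO_l(I)$ and $t(I) = \cO_r(I)$. By \hyperref[mid:v]{Lemma \ref*{lemma:mid}.\ref*{mid:v}} we have $\cO_l(I) = H$, and since $I = I_v$, \hyperref[di:max]{Proposition \ref*{prop:div-ideals}.\ref*{di:max}} shows $\cO_r(I) = \cO_r(I_v)$ is also a maximal order; by \hyperref[frac:equiv]{Lemma \ref*{lemma:frac}.\ref*{frac:equiv}} both are equivalent to $H$, hence lie in $\alpha$. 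In particular $\cF_v(\alpha)(H,\cdot)$ is exactly the set of divisorial fractional left $H$-ideals (the right order takes care of itself), and symmetrically for $\cF_v(\alpha)(\cdot,H')$. The identity elements are the maximal orders $H \in \alpha$: each is a divisorial fractional $H$-ideal (an $(H,H)$-module containing $1$, with $H_v = H$ by \autoref{lemma:mid}), $\cO_l(H) = \cO_r(H) = H$, and $H \cdot_v I = (HI)_v = I_v = I$ whenever $\cO_l(I) = H$ (as $HI = I$), with the symmetric identity on the right. Composition is $I \cdot_v J = (IJ)_v$, defined exactly when $t(I) = \cO_r(I) = \cO_l(J) = s(J)$; that $IJ$ is a fractional $(\cO_l(I), \cO_r(J))$-ideal follows from the last item of \autoref{lemma:frac}, so $(IJ)_v \in \cF_v(\alpha)$, and associativity is \hyperref[di:assoc]{Proposition \ref*{prop:div-ideals}.\ref*{di:assoc}}. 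Finally, every element is a unit by \hyperref[di:inv]{Proposition \ref*{prop:div-ideals}.\ref*{di:inv}}: $I^{-1}$ is divisorial by \hyperref[mid:v-inv]{Lemma \ref*{lemma:mid}.\ref*{mid:v-inv}}, has $t(I^{-1}) = \cO_r(I^{-1}) = H = s(I)$ and $s(I^{-1}) = \cO_l(I^{-1}) = \cO_r(I_v) = \cO_r(I) = t(I)$, and satisfies $I \cdot_v I^{-1} = \cO_l(I)$, $I^{-1} \cdot_v I = \cO_r(I)$; so $\cF_v(\alpha)$ is a groupoid.

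For the lattice order, fix maximal orders $H, H' \in \alpha$ and work in $\cF_v(\alpha)(H,\cdot)$ ordered by $\subset$. Given $I, J$ there, \hyperref[frac:isec-union]{Lemma \ref*{lemma:frac}.\ref*{frac:isec-union}} gives that $I \cap J$ and $I \cup J$ are fractional left $H$-ideals, \hyperref[mid:isec]{Lemma \ref*{lemma:mid}.\ref*{mid:isec}} shows $I \cap J$ is divisorial, and \hyperref[mid:v-inv]{Lemma \ref*{lemma:mid}.\ref*{mid:v-inv}} shows $(I \cup J)_v$ is divisorial, so both lie in $\cF_v(\alpha)(H,\cdot)$. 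Then $I \cap J$ is the greatest lower bound by inspection, and $(I \cup J)_v$ is the least upper bound because any divisorial $K$ with $I, J \subset K$ satisfies $I \cup J \subset K$, hence $(I \cup J)_v \subset K_v = K$ by \hyperref[mid:incl]{Lemma \ref*{lemma:mid}.\ref*{mid:incl}}. The identical argument handles $\cF_v(\alpha)(\cdot, H')$, and for $I, J \in \cF_v(\alpha)(H, H')$ the meet $I \cap J$ and join $(I \cup J)_v$ are literally the same subsets whether formed from the left or from the right and are themselves fractional $(H,H')$-ideals, which is axiom (3) of a lattice-ordered groupoid. Finally, $I$ is integral in $\cF_v(\alpha)$ iff $I \subset \cO_l(I)$ and $I \subset \cO_r(I)$, which by \hyperref[setfrac:int]{Lemma \ref*{lemma:setfrac}.\ref*{setfrac:int}} is equivalent to $I^2 \subset I$; and $I \subset \cO_l(I)$ with $\cO_l(I)$ a maximal order in $\alpha$ says exactly that $I$ is a divisorial left ideal of a maximal order in $\alpha$, i.e. $I \in \cI_v(\alpha)$, so the integral elements are precisely $\cI_v(\alpha)$.

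I do not expect a serious obstacle — the corollary is essentially a repackaging of the preceding two results — but the one point demanding care is that $s$ and $t$ always land among the identities, i.e. that passing to $\cO_r(I)$ for a divisorial $I$ never leaves the equivalence class $\alpha$. This is precisely where the maximality of $\cO_r(I_v)$ from \hyperref[di:max]{Proposition \ref*{prop:div-ideals}.\ref*{di:max}} is indispensable, together with the coherence of the left and right $v$-closures ($\cO_l(I^{-1}) = \cO_r(I_v)$, $\cO_l(I_v) = \cO_l(I)$, etc.) established throughout \autoref{lemma:mid}.
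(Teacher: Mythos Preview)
Your proposal is correct and follows essentially the same route as the paper's proof: both assemble the groupoid structure from \autoref{prop:div-ideals} (associativity and invertibility) and the lattice structure from \autoref{lemma:frac} and \autoref{lemma:mid} (closure of $I\cap J$ and $(I\cup J)_v$). You are more explicit than the paper about why $s(I)$ and $t(I)$ land in $\alpha$ and about the source/target of $I^{-1}$, which is helpful but not a different argument.
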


\begin{proof}
  For $I \in \cF_v(\alpha)$ we have $\cO_l(I) \cdot_v I = I = I \cdot_v \cO_r(I)$. If $J \in \cF_v(\alpha)$, then the $v$-product $I \cdot_v J$ is defined whenever $\cO_r(I) = \cO_l(J)$, and then $I \cdot_v J$ is a divisorial fractional $(\cO_l(I), \cO_r(J))$-ideal. The $v$-product is associative when it is defined (\hyperref[di:assoc]{Proposition \ref*{prop:div-ideals}.\ref*{di:assoc}}).
  Therefore $\cF_v(\alpha)$ with $\cdot_v$ as composition is a category where the set of identities is the set of maximal orders, $\alpha$, and for $I \in \cF_v(\alpha)$ we have $s(I) = \cO_l(I)$ and $t(I) = \cO_r(I)$. This category is a groupoid due to \hyperref[di:inv]{Proposition \ref*{prop:div-ideals}.\ref*{di:inv}}.

  On $\cF_v(\alpha)$ set inclusion defines a partial order, and obviously also the restrictions to $\{\, I \in \cF_v(\alpha) \mid \cO_l(I) = H \,\}$ and $\{\, I \in \cF_v(\alpha) \mid \cO_r(I) = H \,\}$ for $H \in \alpha$, given by set inclusion in these subsets, are partial orders.
  Let $I, J \in \cF_v(\alpha)$ with $\cO_l(I) = \cO_l(J)$. Then $I \cap J \in \cF_v(\alpha)$ and $(I \cup J)_v \in \cF_v(\alpha)$ (by Lemmas \ref{lemma:frac} and \ref{lemma:mid}), and clearly they are the infimum respectively supremum of $\{\, I, J \,\}$ in $\{\, I \in \cF_v(\alpha) \mid \cO_l(I) = H \,\}$, making this set lattice-ordered. Symmetric statements hold if $\cO_r(I) = \cO_r(J)$. If $\cO_l(I) = \cO_l(J)$ and $\cO_r(I) = \cO_r(J)$ both hold, then also $\cO_l(I \cap J) = \cO_l((I \cup J)_v) = \cO_l(I)$ and $\cO_r(I \cap J) = \cO_r((I \cup J)_v) = \cO_r(I)$ both hold. Therefore $(\cF_v(\alpha), \subset)$ is a lattice-ordered groupoid with the claimed meet and join.
  It is immediate from the definitions that $\cI_v(\alpha)$ is the subcategory of integral elements of this lattice-ordered groupoid.
\end{proof}

\begin{defilemma} \label{defilemma:bounded}
  An order $H$ is \emph{bounded} if it satisfies the following equivalent conditions:
  \begin{enumerate}
    \enumequiv

    \item Every fractional left $H$-ideal and every fractional right $H$-ideal contains a fractional (two-sided) $H$-ideal.
    \item \label{bdd:b} Every left $H$-ideal and every right $H$-ideal contains a (two-sided) $H$-ideal.
    \item \label{bdd:c} For all $a \in Q^\bullet$ there exist $b,c \in Q^\bullet$ such that $bH \subset Ha$ and $Hc \subset aH$.
    \item \label{bdd:d} For all $a \in Q^\bullet$ there exist $b,c \in Q^\bullet$ such that $Ha \subset bH$ and $aH \subset Hc$.
    \item For all $a \in Q^\bullet$, $HaH$ is a fractional (two-sided) $H$-ideal.
    \item For all $a \in Q^\bullet$ there exists a fractional (two-sided) $H$-ideal $I$ with $a \in I$.
    \item \label{bdd:g} If $M \subset Q$ and $a,b \in Q^\bullet$ with $aMb \subset H$, then there exist $c,d \in Q^\bullet$ with $cM \subset H$ and $Md \subset H$.
  \end{enumerate}
\end{defilemma}

\begin{proof}
  (a) $\Rightarrow$ (b): Trivial.

  (b) $\Rightarrow$ (c):
  Let $a \in Q^\bullet$. Then $a = d^{-1}c$ with $c,d \in H \cap Q^\bullet$, and $Ha = Hd^{-1}c \supset Hc$. By \ref{bdd:b}, $Hc$ contains an $H$-ideal $J$. If $b \in J \cap Q^\bullet$, then $bH \subset J \subset Ha$. The symmetric claim follows similarly.

  (c) $\Rightarrow$ (d): By \ref*{bdd:c} applied to $a^{-1}$, there exist $b,c \in Q^\bullet$ with $b^{-1}H \subset Ha^{-1}$ and $Hc^{-1} \subset a^{-1}H$. Then $Ha \subset bH$ and $aH \subset Hc$.

  (d) $\Rightarrow$ (e): $HaH$ is an $(H,H)$-module and contains the element $a \in Q^\bullet$. Let $b,c \in Q^\bullet$ with $Ha \subset bH$ and $aH \subset Hc$. Then $HaH \subset bH$ and $HaH \subset Hc$, hence $b^{-1} \in \lc{H}{HaH}$ and $c^{-1} \in \rc{H}{HaH}$.

  (e) $\Rightarrow$ (f): Trivial.

  (f) $\Rightarrow$ (g): $aM \subset Hb^{-1} \subset Hb^{-1}H$, and the latter being contained in a fractional $H$-ideal, there exists $a' \in Q^\bullet \cap \lc{H}{Hb^{-1}H}$ and thus $a'aM \subset H$. Similarly, $Mb \subset a^{-1}H \subset Ha^{-1}H$, and there exists a $b' \in Q^\bullet \cap \rc{H}{Ha^{-1}H}$. Thus $Mbb' \subset H$.

  (g) $\Rightarrow$ (a): Let $I$ be a fractional left $H$-ideal and $a \in I \cap Q^\bullet$. Then $(Ha^{-1})a \subset H$ and so there exists a $b \in Q^\bullet$ such that $b(Ha^{-1}) \subset H$, and thus $bH \subset Ha$. Therefore $HbH \subset Ha \subset I$ and $HbH$ is a fractional $H$-ideal contained in $I$ (as $b \in HbH$, $a^{-1} \in \rc{H}{HbH}$ and, by \ref*{bdd:g} again, there exists a $c \in Q^\bullet$ with $c(Hb) \subset H$, whence $c \in \lc{H}{HbH}$). The case where $I$ is a fractional right $H$-ideal is similar.
\end{proof}

\begin{lemma} \label{lemma:ord2}
  \mbox{}
  \begin{enumerate}
    \item Let $H$ and $H'$ be orders in $Q$. If $H$ is bounded and $H \sim H'$, then $H'$ is also bounded.
    \item \label{ord2:int-conn} Let $H$ and $H'$ be bounded equivalent maximal orders of $Q$. Then there exists an $(H,H')$-ideal $I$.
  \end{enumerate}
\end{lemma}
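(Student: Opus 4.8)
The plan is to handle the two assertions separately, reducing each to one of the equivalent descriptions of boundedness collected in \autoref{defilemma:bounded}. For (1) I would verify that every $a \in Q^\bullet$ lies in some fractional two-sided $H'$-ideal. Since $H \sim H'$, part~\ref{ord:conn} of \autoref{lemma:ord} provides a fractional $(H',H)$-ideal $L$ and a fractional $(H,H')$-ideal $L'$. Given $a \in Q^\bullet$, I would fix cancellative elements $\ell \in L$ and $\ell' \in L'$, put $b = \ell^{-1} a \ell'^{-1} \in Q^\bullet$, and use boundedness of $H$ to choose a fractional two-sided $H$-ideal $J$ with $b \in J$ (e.g.\ $J = HbH$). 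Then $I := LJL'$ does the job: it is an $(H',H')$-module because $H'L \subset L$ and $L'H' \subset L'$; it contains $a = \ell b \ell'$; and pushing the ``denominator'' elements supplied by the definitions of $L$, $J$, $L'$ (a cancellative $m_1$ with $m_1 L \subset H$, one with $\lc{H}{J}\cap Q^\bullet\ne\emptyset$, one with $\lc{H'}{L'}\cap Q^\bullet\ne\emptyset$, and their right-hand mirrors) through the triple product shows $\lc{H'}{I} \cap Q^\bullet \ne \emptyset$ and $\rc{H'}{I} \cap Q^\bullet \ne \emptyset$. Hence $I$ is a fractional two-sided $H'$-ideal containing $a$, so $H'$ is bounded.

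For (2) the plan is to produce a single element $b \in Q^\bullet$ with $bH' \subset H$ and $Hb \subset H'$; granting this, $I := HbH'$ is an $(H,H')$-ideal, since $bH' \subset H$ forces $HbH' \subset H$, $Hb \subset H'$ forces $HbH' \subset H'$, while $b \in I \cap Q^\bullet$ and $1 \in \rc{H}{I} \cap \lc{H'}{I}$ supply the remaining fractional-ideal conditions. To find such a $b$: from $H \sim H'$ there are cancellative elements with $cH'd \subset H$ and $aHb \subset H'$; applying condition~\ref{bdd:g} of \autoref{defilemma:bounded} to the bounded order $H$ with $M = H'$ yields $c_1 \in Q^\bullet$ with $c_1 H' \subset H$, and applying it to the bounded order $H'$ with $M = H$ yields $d_2 \in Q^\bullet$ with $H d_2 \subset H'$. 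Since $1 \in H'$ and $1 \in H$, this gives $c_1 \in c_1 H' \subset H$ and $d_2 \in H d_2 \subset H'$; hence $b := c_1 d_2$ satisfies $bH' = c_1 d_2 H' \subset c_1 H' \subset H$ and $Hb = H c_1 d_2 \subset H d_2 \subset H'$, as required. (Maximality of the orders is in fact not used in this argument.)

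In both parts the delicate point is the left/right asymmetry: scaling a fractional one-sided ideal by a cancellative element on a single side destroys the module structure on that side, so a naive ``sandwich between $H$ and $H'$ and clear denominators'' argument leaves stray factors behind. In (1) this is circumvented by multiplying the two-sided $H$-ideal $J$ by the \emph{entire} connecting bimodules $L$ and $L'$ rather than by elements, so that $LJL'$ is automatically an $(H',H')$-module; in (2) it is circumvented by the observation that the connecting elements $c_1$ and $d_2$ can be taken \emph{inside} $H$ and $H'$ respectively (because the orders contain $1$), so that the single product $c_1 d_2$ already absorbs on both sides. The only remaining work in either case is the routine bookkeeping of checking that the relevant sets $\lc{\cdot}{\cdot} \cap Q^\bullet$ and $\rc{\cdot}{\cdot} \cap Q^\bullet$ are nonempty, which is exactly what the definition of a fractional $(H,H')$-ideal guarantees.
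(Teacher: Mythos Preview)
Your proof is correct, but both parts take genuinely different routes from the paper.

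For part~1, the paper verifies condition~\ref{bdd:c} of \autoref{defilemma:bounded} directly: from $H \sim H'$ and boundedness of $H$ it first obtains $c,d \in Q^\bullet$ with $cH \subset H' \subset dH$, and then, given $a \in Q^\bullet$, chains $cxd^{-1}H' \subset cxH \subset cHa \subset H'a$ where $xH \subset Ha$ comes from boundedness of $H$. Your approach instead verifies condition~(f) by sandwiching a fractional two-sided $H$-ideal between the connecting bimodules $L$ and $L'$. This works, and the bookkeeping you call routine is exactly the content of \hyperref[lemma:frac]{Lemma~\ref*{lemma:frac}.6} applied twice (indeed, $LJ$ is a fractional $(H',H)$-ideal and $(LJ)L'$ a fractional $(H',H')$-ideal); citing that lemma would tighten your write-up considerably. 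The paper's argument is shorter and more elementary, while yours is more structural.

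For part~2, the paper shows that $H'H$ is a fractional $(H',H)$-ideal and then takes $(H'H)^{-1}$; checking that this inverse lands inside both $H$ and $H'$ uses $\cO_r(H'H)=H$ and $\cO_l(H'H)=H'$, which is where maximality enters. Your construction of a single element $b=c_1d_2$ with $bH' \subset H$ and $Hb \subset H'$, exploiting that $c_1 \in H$ and $d_2 \in H'$ because $1$ lies in both orders, sidesteps the inverse entirely. As you note, maximality is not used; your argument therefore proves a slightly stronger statement (any two bounded equivalent orders admit an integral $(H,H')$-ideal), at no extra cost.
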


\begin{proof}
  \mbox{}
  \begin{enumerate}
    \item
      Because $H \sim H'$ and $H$ is bounded, there exist $c,d \in Q^\bullet$ with $cH \subset H' \subset dH$ (using \ref{bdd:c} and \ref{bdd:d} of the equivalent characterizations of boundedness). We verify condition \ref{bdd:c} for $H'$.
    Let $a \in Q^\bullet$.
    Then $cHa \subset H'a$, and there exists an $x \in Q^\bullet$ with $xH \subset Ha$. Then $cx H \subset H'a$ and finally $cxd^{-1}H' \subset cxH \subset H'a$. Similarly, one finds $z \in Q^\bullet$ with $H'z \subset aH'$.

    \item 
      We show: If $H$ and $H'$ are bounded equivalent maximal orders, then $H'H$ is a fractional $(H',H)$-ideal. 
      Then $(H'H)^{-1}$ is an $(H,H')$-ideal (since $\cO_r(H'H)=H$ by maximality of $H$ and $H \subset H'H$, \hyperref[mid:incl]{Lemma \ref*{lemma:mid}.\ref*{mid:incl}} implies $(H'H)^{-1} \subset H$; similarly, one shows $(H'H)^{-1} \subset H'$).

      Clearly $H'H$ is an $(H',H)$-module and $1 \in H'H$.
      We need to show that there exist $a, b \in Q^\bullet$ with $H'Ha \subset H'$ and $bH'H \subset H$.
      Since $H$ and $H'$ are bounded and equivalent there exist $a,b \in Q^\bullet$ with $Ha \subset H'$ and $bH' \subset H$, and the claim follows.
    \qedhere
  \end{enumerate}
\end{proof}

\begin{prop} \label{prop:ideals-are-lgrpd}
  Let $\alpha$ be an equivalence class of maximal orders in $Q$.
  $(\cF_v(\alpha), \cdot_v, \subset)$ is an arithmetical groupoid if and only if all $H \in \alpha$ (equivalently, one $H \in \alpha$) satisfy the following three conditions:
  \begin{enumerate}
    \renewcommand{\theenumi}{A\textsubscript{\arabic{enumi}}}
    \renewcommand{\labelenumi}{(\theenumi)}

  \item \label{A:acc} \label{A:first} $H$ satisfies the ACC on divisorial left $H$-ideals and the ACC on divisorial right $H$-ideals,
  \item \label{A:bdd} $H$ is bounded,
  \item \label{A:mod} \label{A:last} the lattice of divisorial fractional left $H$-ideals is modular, and the lattice of divisorial right $H$-ideals is modular.
  \end{enumerate}
\end{prop}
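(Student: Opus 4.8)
The plan is to lean on \autoref{cor:ideals-grpd}, which already identifies $G := (\cF_v(\alpha), \cdot_v, \subset)$ as a lattice-ordered groupoid: its identities are the maximal orders in $\alpha$, $s(I) = \cO_l(I)$ and $t(I) = \cO_r(I)$, its integral elements are the members of $\cI_v(\alpha)$, and $I \meet J = I \cap J$, $I \join J = (I \cup J)_v$ whenever $\cO_l(I) = \cO_l(J)$ or $\cO_r(I) = \cO_r(J)$. It therefore remains only to decide which of the axioms \ref{P:int}--\ref{P:acc} of \autoref{defi:arith-grpd} hold for an arbitrary equivalence class of maximal orders, and which translate into \ref{A:acc}--\ref{A:mod}. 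Throughout I use that for a maximal order $H$ every fractional left $H$-ideal has left order $H$, that $I \subseteq I_v$ with $\cO_l(I_v) = H$ and $\cO_r(I_v)$ maximal, and that $v$-closure is inclusion-monotone (\autoref{lemma:frac}, \autoref{lemma:mid}, \autoref{prop:div-ideals}).

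First I would check the axioms that hold with no extra hypothesis. Axiom \ref{P:int} is \hyperref[setfrac:int]{Lemma \ref*{lemma:setfrac}.\ref*{setfrac:int}}, because $I \le s(I)$, $I \le t(I)$ and $I^2 \subseteq I$ are equivalent for $I \in G$. Axiom \ref{P:ord} follows from monotonicity of $v$-closure: if $I \subseteq J$ are divisorial fractional left $H$-ideals and $\cO_r(K) = H$, then $KI \subseteq KJ$, so $K \cdot_v I = (KI)_v \subseteq (KJ)_v = K \cdot_v J$, and dually on the right. For axiom \ref{P:sup}, let $(I_m)$ be a non-empty family of integral divisorial left $H$-ideals; each $I_m$ is a left $H$-ideal, so $\bigcup_m I_m$ is a left $H$-ideal by \autoref{lemma:frac}, in particular a fractional left $H$-ideal, hence $(\bigcup_m I_m)_v \in \cF_v(\alpha)$ lies in $G(H,\cdot)$, is integral (it is contained in $H_v = H$), and is manifestly the least divisorial fractional left $H$-ideal above every $I_m$, i.e.\ their supremum; symmetrically for $G(\cdot,f) \cap G_+$, and when all $I_m \in G(H,H')$ the same union is also a fractional right $H'$-ideal, so its $v$-closure already lies in $G(H,H')$ and is the supremum computed in $G(H,\cdot)$ and in $G(\cdot,H')$ alike. (It is crucial here that the $I_m$ are integral; boundedness is not needed.)

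Next the conditional axioms. The integral elements of $G(H,\cdot)$ are exactly the divisorial left $H$-ideals and those of $G(\cdot,H)$ the divisorial right $H$-ideals, so axiom \ref{P:acc} says precisely that every $H \in \alpha$ satisfies \ref{A:acc}. Axiom \ref{P:mod} says $G(H,\cdot)$ is modular for all $H$; since $I \mapsto I^{-1}$ is an order anti-isomorphism $G(H,\cdot) \to G(\cdot,H)$ by \hyperref[mid:incl]{Lemma \ref*{lemma:mid}.\ref*{mid:incl}} and modularity is self-dual, this is equivalent to \ref{A:mod} for all $H$. Axiom \ref{P:bdd} says each $G(H,H')$ contains an integral element. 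If every $H \in \alpha$ is bounded, \hyperref[ord2:int-conn]{Lemma \ref*{lemma:ord2}.\ref*{ord2:int-conn}} gives an $(H,H')$-ideal $I$, which is integral, and $I_v$ is an integral divisorial $(H,H')$-ideal by \autoref{lemma:mid} and \hyperref[di:max]{Proposition \ref*{prop:div-ideals}.\ref*{di:max}}; so \ref{A:bdd} $\Rightarrow$ \ref{P:bdd}. For the converse I would verify characterization (c) of boundedness (\autoref{defilemma:bounded}) for a fixed $H \in \alpha$: given $a \in Q^\bullet$, $Ha$ is a divisorial fractional left $H$-ideal with $\cO_r(Ha) = a^{-1}Ha =: H' \in \alpha$ (\hyperref[mid:principal]{Lemma \ref*{lemma:mid}.\ref*{mid:principal}}), so \ref{P:bdd} furnishes an integral divisorial $(H',H)$-ideal $b'$, and then $\mathfrak b := Ha \cdot_v b'$ is a divisorial fractional two-sided $H$-ideal with $\mathfrak b \subseteq (Ha)H' = Ha$; hence any $b_0 \in \mathfrak b \cap Q^\bullet$ satisfies $b_0 H \subseteq \mathfrak b \subseteq Ha$, and the mirror argument applied to $aH$ and $aHa^{-1}$ yields $c_0 \in Q^\bullet$ with $Hc_0 \subseteq aH$. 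Thus \ref{P:bdd} $\Leftrightarrow$ \ref{A:bdd}. Assembling the three equivalences gives the biconditional for ``all $H \in \alpha$''; and ``one $H$'' suffices because boundedness propagates along $\alpha$ (\autoref{lemma:ord2}), modularity of one $G(H,\cdot)$ transfers to all by the $v$-multiplication order isomorphisms afforded by \ref{P:ord}, and, once boundedness is in force, the same reductions apply to \ref{P:acc} (see the remarks after \autoref{defi:arith-grpd}).

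The step I expect to be the main obstacle is the implication \ref{P:bdd} $\Rightarrow$ \ref{A:bdd}: one is handed only integral $(H,H')$-ideals between pairs of maximal orders but must produce genuine two-sided $H$-ideals, and the device of conjugating $H$ by $a$ so that the principal ideal $Ha$ becomes a morphism $H \to a^{-1}Ha$ of $G$ and then composing with a connecting integral ideal is the point that is not immediately obvious; moreover the containment $\mathfrak b \subseteq Ha$, which is what makes $\mathfrak b$ relevant to characterization (c), has to be checked from $b' \subseteq H' = a^{-1}Ha$ together with $(Ha)_v = Ha$. A secondary technical nuisance, throughout the verification of \ref{P:sup}, is ensuring that the $v$-closure of a possibly infinite union of integral divisorial ideals remains in $\cF_v(\alpha)$ --- i.e.\ has maximal orders as both of its one-sided orders --- which is exactly where \hyperref[di:max]{Proposition \ref*{prop:div-ideals}.\ref*{di:max}} is needed.
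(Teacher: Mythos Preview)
Your proof is correct and follows essentially the same route as the paper's: both verify \ref{P:int}, \ref{P:ord}, \ref{P:sup} unconditionally from the basic lemmas on $v$-closure, and then match \ref{P:mod}, \ref{P:bdd}, \ref{P:acc} with \ref{A:mod}, \ref{A:bdd}, \ref{A:acc} respectively. The only cosmetic difference is in the direction \ref{P:bdd} $\Rightarrow$ \ref{A:bdd}: the paper verifies characterization (a) of \autoref{defilemma:bounded} (every fractional one-sided $H$-ideal contains a two-sided one) by picking $x \in I \cap Q^\bullet$ and forming $Hx \cdot J$ with $J$ an integral $(\cO_r(Hx),H)$-ideal supplied by \ref{P:bdd}, whereas you verify characterization (c) by the same device applied directly to $Ha$; the underlying trick---compose a principal ideal with a connecting integral ideal---is identical.
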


\begin{proof}
  From \autoref{cor:ideals-grpd} we already know that $\cF_v(\alpha)$ is a lattice-ordered groupoid.
  As in the discussion after \autoref{defi:arith-grpd} and from \autoref{lemma:ord2}, we see that if one representative $H \in \alpha$ satisfies \ref{A:first}--\ref{A:last}, then the same is true for all $H' \in \alpha$.

  Assume first that \ref{A:first}--\ref{A:last} hold. Then \ref{P:int} holds due to \hyperref[setfrac:int]{Lemma \ref*{lemma:setfrac}.\ref*{setfrac:int}}, property \ref{P:mod} is just \ref{A:mod} in the present setting.
  \ref{P:ord} follows easily: If $I \subset J$ are divisorial fractional left $H$-ideals, and $K$ is a divisorial fractional right $H$-ideal, then $KI \subset KJ$ and therefore $K \cdot_v I = (KI)_v \subset (KJ)_v = K \cdot_v J$, and similarly for the symmetric statement.
  \ref{P:sup} also holds: Let $(I_m)_{m \in M}$ be a non-empty family of divisorial left $H$-ideals. Then $(\bigcup_{m \in M} I_m)_v \subset H$ is also a divisorial left $H$-ideal, and if $(I_m)_{m \in M}$ is a family of $(H,H')$-ideals with $H' \in \alpha$, then the divisorial closure of the union is again an $(H,H')$-ideal.
  \ref{P:acc} is just \ref{A:acc}.
  \ref{A:bdd} implies \ref{P:bdd}:
  If $H,H' \in \alpha$, then there exists an $(H,H')$-ideal $I$ by \hyperref[ord2:int-conn]{Lemma \ref*{lemma:ord2}.\ref*{ord2:int-conn}}. Then $I_v$ is as required in \ref{P:bdd}.

  Assume now that $(\cF_v(\alpha), \cdot_v, \subset)$ is an arithmetical groupoid, and $H \in \alpha$. Then \ref{P:mod} implies \ref{A:mod}, and \ref{P:acc} implies \ref{A:mod}.
  From \ref{P:bdd} we can derive \ref{A:bdd}:
  Let $I$ be a fractional left $H$-ideal, and $x \in I \cap Q^\bullet$. Then $Hx \in \cF_v(\alpha)$. By \ref{P:bdd}, there exists $J \in \cI_v(\alpha)$ with $\cO_l(J) = \cO_r(Hx)$ and $\cO_r(J) = H$. Thus $HxJ$ is a fractional $H$-ideal, and $HxJ \subset I$. We proceed similarly if $I$ is a fractional right $H$-ideal.
\end{proof}

\begin{remark} \label{rem:ideal-struct}
  \mbox{}
  \begin{enumerate}
    \item From the discussion after \autoref{defi:arith-grpd}, we also see that we can equivalently formulate \ref{A:mod} as ``the lattice of divisorial fractional left (right) $H$-ideals is modular'', as the property for the other side then holds automatically.
      
    \item \label{rem:ideal-struct:normalizing}
    Let $H$ be a normalizing monoid.
    By definition of a monoid, $H$ satisfies the left and right Ore condition, hence it is an order in its quotient group. \hyperref[mid:principal]{Lemma \ref*{lemma:mid}.\ref*{mid:principal}} shows that every fractional left or right $H$-ideal is in fact already a two-sided $H$-ideal, and thus $H$ is bounded.

    Assume that $H$ is a normalizing Krull monoid.
    Then $\alpha = \{\, H \,\}$, and the lattice-ordered groupoid $\cF_v(\alpha)$ is in fact a group.
    The lattice of divisorial fractional $H$-ideals is then modular, even distributive \cite[Theorem 2.1.3(a)]{steinberg10}, and hence by the previous theorem an arithmetical groupoid.
  \end{enumerate}
\end{remark}

\begin{defi}
  We call a maximal order $H$ satisfying \ref{A:first}--\ref{A:last} an \emph{arithmetical maximal order}. If $\alpha$ is its equivalence class of arithmetical maximal orders, then we denote by $\cM_v(\alpha) \subset \cI_v(\alpha)$ the (quiver of) maximal integral elements.
\end{defi}

\begin{center}
  \emph{Let from here on $H$ be an arithmetical maximal order in $Q$, and let $\alpha$ be its equivalence class of arithmetical maximal orders.}
\end{center}

By \hyperref[mid:principal]{Lemma \ref*{lemma:mid}.\ref*{mid:principal}}, every principal left ideal $Ha$ with $a \in H^\bullet$ is a divisorial left $H$-ideal with inverse $a^{-1}H \in \cF_v(\alpha)$.
Let
\[
  \cH(\alpha) = \{\, H'a \in \cI_v(\alpha) \mid H' \in \alpha,\, a \in H'^\bullet \,\}.
\]
The $v$-product coincides with the usual proper product on $\cH(\alpha)$.
Thus $(\cH(\alpha),\cdot) \subset (\cI_v(\alpha), \cdot_v)$ is a wide subcategory, with the product $I J = I \cdot J = I \cdot_v J$ for $I,J \in \cH(\alpha)$ defined whenever $\cO_r(I) = \cO_l(J)$, and then $\cO_l(IJ)=\cO_l(I)$ and $\cO_r(IJ) = \cO_r(J)$. 
The inclusion $(\cH(\alpha),\cdot) \subset (\cI_v(\alpha), \cdot_v)$ is left- and right-saturated.
By $\cH_H(\alpha)$ (or shorter, $\cH_H$, since $H$ determines $\alpha$) we denote the subcategory of $\cH(\alpha)$ where the left and right orders of every element are not only equivalent but in fact conjugate to $H$. Explicitly,
\[
  \cH_H = \cH_H(\alpha) = \{\, d(Ha)d^{-1} \in \cI_v(\alpha) \mid a \in H^\bullet,\, d \in Q^\bullet \,\}.
\]
If $H' \in \alpha$, then $\cH_H = \cH_{H'}$ if and only if $H'$ and $H$ are conjugate.
Again the inclusion $(\cH_H,\cdot) \subset (\cH(\alpha), \cdot)$ is left- and right-saturated, and thus so is the inclusion $(\cH_H, \cdot) \subset (\cI_v(\alpha), \cdot_v)$.

The following simple lemma gives a correspondence between $H$ and $\cH_H$.

\begin{lemma} \label{lemma:fact-bij-pre}
  Let $d \in Q^\bullet$.
  \begin{enumerate}
    \item If $a, a_1,a_2 \in H^\bullet$ with $a = a_1 a_2$, then $d^{-1}(Ha)d = d^{-1}(Ha_2)d \cdot d^{-1}a_2^{-1}(Ha_1)a_2d \in \cH_H$ with $d^{-1}(Ha_2)d,\, d^{-1}a_2^{-1}(Ha_1)a_2d \in \cH_H$.
     
    \item If $a \in H^\bullet$ and $d^{-1}(Ha)d = I_2 \cdot I_1$ with $I_1, I_2 \in \cH_H$, then there exist $a_1,a_2 \in H^\bullet$ with $I_2 = d^{-1}(Ha_2)d$, $I_1 = d^{-1}a_2^{-1}(Ha_1)a_2d$ and $a = a_1 a_2$.

    \item If $a_1,a_2,b_1,b_2 \in H^\bullet$ with $Ha_2 = Hb_2$ and $a_2^{-1}(Ha_1)a_2 = b_2^{-1}(Hb_1)b_2$, then there exist $\varepsilon_1, \varepsilon_2 \in H^\times$ with $b_1 = \varepsilon_1 a_1 \varepsilon_2^{-1}$ and $b_2 = \varepsilon_2 a_2$.
  \end{enumerate}

  In particular, for $a \in H^\bullet$ we have $a \in \cA(H^\bullet)$ if and only if $d^{-1}(Ha)d \in \cA(\cH_H)$.
\end{lemma}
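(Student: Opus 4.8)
\noindent\textit{Proof plan.}
The plan is to reduce all four assertions to elementary bookkeeping with principal left ideals, resting on three facts available from the excerpt. First, for every $c \in Q^\bullet$ conjugation $X \mapsto cXc^{-1}$ is an automorphism of the lattice-ordered groupoid $(\cF_v(\alpha), \cdot_v, \subset)$ (conjugate orders are equivalent, and conjugation commutes with $X\mapsto X^{-1}$ and with the $v$-closure), and it carries $\cH(\alpha)$ and $\cH_H$ onto themselves; this lets me treat (1) and (2) in the case $d = 1$ and re-insert the conjugation at the end. Second, on $\cH(\alpha)$ the $v$-product coincides with the ordinary product of subsets of $Q$, so products of such ideals may be computed directly. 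Third, by the observation opening the proof of \autoref{prop:atomic}, $Hx = Hy$ (for $x,y \in H$) holds iff $y = \varepsilon x$ with $\varepsilon \in H^\times$, and $\varepsilon H = H = H\varepsilon$ for $\varepsilon \in H^\times$; moreover, by \hyperref[mid:principal]{Lemma \ref*{lemma:mid}.\ref*{mid:principal}} together with maximality, for $H' \in \alpha$ and $c \in H'^\bullet$ one has $\cO_l(H'c) = H'$ and $\cO_r(H'c) = c^{-1}H'c$, and more generally $c^{-1}(H'a)c = (c^{-1}H'c)(c^{-1}ac)$ for $a \in H'^\bullet$, so $c^{-1}(H'a)c$ is a principal left ideal of the maximal order $c^{-1}H'c$, hence lies in $\cH_H$.

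For (1) (with $d = 1$): $Ha_2 \in \cH_H$ has right order $a_2^{-1}Ha_2$, while $a_2^{-1}(Ha_1)a_2 = (a_2^{-1}Ha_2)(a_2^{-1}a_1a_2) \in \cH_H$ has left order $a_2^{-1}Ha_2$; hence they are composable and, computing set-theoretically, their $v$-product is $Ha_2 \cdot a_2^{-1}Ha_1a_2 = Ha_1a_2 = Ha$. Conjugating this identity by $d^{-1}$ gives the stated factorization. For (2) (with $d = 1$): write $Ha = J_2 \cdot_v J_1$ with $J_1, J_2 \in \cH_H$. Maximality of $H$ forces $\cO_l(J_2) = \cO_l(Ha) = H$, and writing $J_2 = H'c \in \cH(\alpha)$ with $\cO_l(H'c) = H'$ we get $H' = H$, i.e. $J_2 = Ha_2$ with $a_2 \in H^\bullet$; then $\cO_l(J_1) = \cO_r(J_2) = a_2^{-1}Ha_2$, and repeating the argument for this maximal order (whose cancellative elements are $a_2^{-1}H^\bullet a_2$) gives $J_1 = a_2^{-1}(Ha_1)a_2$ for some $a_1 \in H^\bullet$. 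Multiplying out yields $Ha_1a_2 = Ha$, so $a = \varepsilon a_1 a_2$ with $\varepsilon \in H^\times$; replacing $a_1$ by $\varepsilon a_1$ (which changes neither $Ha_1$ nor $J_1$) gives $a = a_1 a_2$. Conjugating back by $d^{-1}$ finishes (2).

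For (3): $Ha_2 = Hb_2$ gives $b_2 = \varepsilon_2 a_2$ with $\varepsilon_2 \in H^\times$; using $\varepsilon_2^{-1}H = H$, $b_2^{-1}(Hb_1)b_2 = a_2^{-1}(Hb_1\varepsilon_2)a_2$, so the hypothesis becomes $a_2^{-1}(Ha_1)a_2 = a_2^{-1}(Hb_1\varepsilon_2)a_2$; conjugating by $a_2$ gives $Ha_1 = Hb_1\varepsilon_2$, whence $b_1\varepsilon_2 = \varepsilon_1 a_1$ for some $\varepsilon_1 \in H^\times$, i.e. $b_1 = \varepsilon_1 a_1\varepsilon_2^{-1}$. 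Finally, the ``in particular'' clause follows from (1) and (2): these match the factorizations $a = a_1 a_2$ in $H^\bullet$ with the factorizations $d^{-1}(Ha)d = I_2 \cdot_v I_1$ in $\cH_H$, and under this matching a factor is an identity of $\cH_H$ (a maximal order) precisely when the corresponding $a_i$ is a unit of $H$, since a principal left ideal $H'c$ equals its left order $H'$ iff $c \in H'^\times$.

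The step I expect to demand the most care is (2): correctly tracking the left and right orders of $J_1$ and $J_2$ and invoking maximality (of $H$, and then of $a_2^{-1}Ha_2$) through \hyperref[frac:max]{Lemma \ref*{lemma:frac}.\ref*{frac:max}} to pin down each factor as an honest principal left ideal of its left order, together with verifying that conjugation by a cancellative element really is an order-preserving groupoid automorphism of $\cF_v(\alpha)$ carrying $\cH_H$ to itself, which is what licenses the reductions to $d = 1$. Neither point is deep, but both must be handled precisely.
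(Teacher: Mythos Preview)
Your proposal is correct and follows essentially the same route as the paper: both arguments track left and right orders through the groupoid structure, identify each factor as a principal left ideal of its (conjugate-of-$H$) left order, and then use the unit-absorption trick $a_1 \leadsto \varepsilon a_1$ to match the product exactly. The only cosmetic difference is that you first reduce to $d=1$ via the conjugation automorphism and then re-insert $d$, whereas the paper works directly with the maximal order $d^{-1}Hd$ throughout; the computations are otherwise line-for-line the same, including part (3).
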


\begin{proof}
  \mbox{}
  \begin{enumerate}
    \item The multiplication is defined because $\cO_r(d^{-1}(Ha_2)d) = d^{-1}a_2^{-1}H a_2 d = \cO_l(d^{-1}a_2^{-1}(Ha_1)a_2d)$. The remaining statements are then clear.
    \item Since $\cO_l(I_2) = d^{-1}Hd$ we have $I_2 = d^{-1}Hd a_2'$ with $a_2' \in (d^{-1}Hd)^\bullet$, and hence, with $a_2 = d a_2' d^{-1} \in H^\bullet$, $I_2 = d^{-1}(Ha_2)d$. Then $\cO_l(I_1) = \cO_r(I_2) = d^{-1}a_2^{-1}Ha_2d$, and therefore similarly $I_1 = d^{-1}a_2^{-1}(Ha_1')a_2d$ with $a_1' \in H^\bullet$. Hence $d^{-1}(Ha)d = d^{-1}(Ha_1'a_2)d$, and thus $a = \varepsilon a_1' a_2$ with $\varepsilon \in H^\times$. Taking $a_1 = \varepsilon a_1'$ the claim follows.

    \item Since $Ha_2 = Hb_2$, there exists an $\varepsilon_2 \in H^\times$ with $b_2 = \varepsilon_2 a_2$. Then
      \[
        a_2^{-1}(Ha_1)a_2 = b_2^{-1}(Hb_1)b_2 = a_2^{-1}\varepsilon_2^{-1} (H b_1) \varepsilon_2 a_2 = a_2^{-1}(Hb_1 \varepsilon_2)a_2,
      \]
      and thus there exists $\varepsilon_1 \in H^\times$ with $\varepsilon_1 a_1 = b_1 \varepsilon_2$, i.e., $b_1 = \varepsilon_1 a_1 \varepsilon_2^{-1}$.
    \qedhere
  \end{enumerate}
\end{proof}

Observe that we may view a rigid factorization $\rf{Ha_2, a_2^{-1}(Ha_1)a_2} \in \sZ^*(\cH_H)$ as a multiplicative way of writing the chain $H \supset Ha_2 \supset H a_1 a_2$.

\begin{prop} \label{prop:fact-bij}
  Let $a \in H^\bullet$.
  For every $d \in Q^\bullet$ there is a bijection $\sZ_H^*(a) \to \sZ_{\cH_H}^*(d^{-1}(Ha)d)$, given by
  \[
    \rf{u_1, u_2, \ldots, u_k} \,\mapsto\, \rf{d^{-1} (H u_k) d, (d^{-1} u_k^{-1} (H u_{k-1}) u_k d), \ldots, (d^{-1} u_k^{-1} \cdot\ldots\cdot u_2^{-1} (H u_1) u_2 \cdot\ldots\cdot u_k d)}.
  \]

  If $\lbar \theta\colon \cH_H \to B$ is a transfer homomorphism to a reduced cancellative small category $B$ and having the additional property that $\lbar \theta(d^{-1}(Ha)d) = \lbar \theta(Ha)$ for all $a \in H^\bullet$ and $d \in Q^\bullet$, then it induces a transfer homomorphism $\theta\colon H^\bullet \to B^{\text{op}}$ given by $\theta(a) = \lbar\theta(Ha)$.
\end{prop}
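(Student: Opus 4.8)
The plan is to deduce both halves from \autoref{lemma:fact-bij-pre}, which already contains the relevant combinatorics; the remaining work is careful bookkeeping. Two preliminary remarks simplify things. Since $\cH_H$ is the subcategory of integral elements of the arithmetical groupoid $\cF_v(\alpha)$, it is reduced (integral elements of an arithmetical groupoid form a reduced subcategory; cf.\ \autoref{lemma:gb}), so $\sZ^*(\cH_H)=\cF(\cA(\cH_H))$ and a rigid factorization there is literally a word in $\cA(\cH_H)$ — no unit term, no congruence to track. Also, left multiplication by a unit of $H$ fixes every left $H$-module, a fact I would use repeatedly: it is what makes all the ``conjugate-and-cancel-a-unit'' computations collapse.

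\emph{First assertion.} I would keep $d$ general, with the map as displayed in the statement, and induct on $k$. That the image word lies in $\sZ_{\cH_H}^*(d^{-1}(Ha)d)$ follows by iterating part~(1) of \autoref{lemma:fact-bij-pre} (which simultaneously realizes the factors as a composable chain whose product is $d^{-1}(Ha)d$) together with its final clause, by which each factor — a conjugate $c^{-1}(Hu_i)c$ of a principal ideal on an atom of $H^\bullet$ — is an atom of $\cH_H$. Since $\cH_H$ is reduced it only remains to check the assignment is constant on $\sim$-classes: if $v_i=\delta_iu_i\delta_{i+1}^{-1}$ with $\delta_i\in H^\times$ and $\delta_{k+1}$ the identity, then $v_{j+1}\cdots v_k=\delta_{j+1}u_{j+1}\cdots u_k$, so $(Hv_j)v_{j+1}\cdots v_k=Hu_ju_{j+1}\cdots u_k$ and the $j$-th factor formed from the $v$'s equals $u_k^{-1}\cdots u_{j+1}^{-1}(\delta_{j+1}^{-1}H)u_ju_{j+1}\cdots u_k=u_k^{-1}\cdots u_{j+1}^{-1}(Hu_j)u_{j+1}\cdots u_k$, i.e.\ the $j$-th factor formed from the $u$'s. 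For surjectivity I would take $\rf{I_k,\ldots,I_1}\in\sZ_{\cH_H}^*(d^{-1}(Ha)d)$ and peel off $I_k$ by part~(2) of \autoref{lemma:fact-bij-pre}, obtaining $u_k\in\cA(H^\bullet)$ and $b\in H^\bullet$ with $I_k=d^{-1}(Hu_k)d$, $I_{k-1}\cdots I_1=(u_kd)^{-1}(Hb)(u_kd)$ and $a=bu_k$, then apply the inductive hypothesis to the length-$(k-1)$ word over $(u_kd)^{-1}(Hb)(u_kd)$. For injectivity I would again induct on $k$: comparing two images factor by factor starting from the $u_k$-factor, part~(3) of \autoref{lemma:fact-bij-pre} successively produces units $\delta_k,\delta_{k-1},\ldots\in H^\times$ with $Hv_k=Hu_k$ and $v_i=\delta_iu_i\delta_{i+1}^{-1}$; finally $\pi(\rf{v_1,\ldots,v_k})=\delta_1u_1\cdots u_k=\delta_1a$ must equal $a=\pi(\rf{u_1,\ldots,u_k})$, and cancellativity of $a$ forces $\delta_1$ to be the identity, so the two words are $\sim$-equivalent.

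\emph{Second assertion.} First I would note that, $\lbar\theta$ being a transfer homomorphism onto the reduced $B$, the extra hypothesis applied with a unit in place of $a$ shows $\lbar\theta$ sends every identity of $\cH_H$ (these are exactly the conjugates of $H$) to $\lbar\theta(H)$; in particular $B$, hence $B^{\text{op}}$, is a semigroup and $\theta(a)=\lbar\theta(Ha)$ is a well-defined map $H^\bullet\to B^{\text{op}}$ with matching source and target. It is a homomorphism into the opposite category because part~(1) of \autoref{lemma:fact-bij-pre} gives $H(ab)=(Hb)\cdot(b^{-1}(Ha)b)$ in $\cH_H$, whence $\theta(ab)=\lbar\theta(Hb)\,\lbar\theta(b^{-1}(Ha)b)=\lbar\theta(Hb)\,\lbar\theta(Ha)$ in $B$ by the extra hypothesis (with $d=b$), which is $\theta(a)\theta(b)$ in $B^{\text{op}}$ — the order reversal is precisely why the opposite category appears. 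For \ref{tr:T1}: $B=\lbar\theta(\cH_H)=\{\,\lbar\theta(d^{-1}(Ha)d)\,\}=\{\,\lbar\theta(Ha)\,\}=\theta(H^\bullet)$, and $\theta(a)$ is an identity of $B$ iff $Ha\in\lbar\theta^{-1}(B_0)=(\cH_H)_0$, iff $Ha=H$ (its left order being $H$), iff $a\in H^\times$. For \ref{tr:T2}: if $\theta(a)=b_1b_2$ in $B^{\text{op}}$ then $\lbar\theta(Ha)=b_2b_1$ in $B$; \ref{tr:T2} for $\lbar\theta$ gives a composable product $Ha=I_2I_1$ in $\cH_H$ with $\lbar\theta(I_2)=b_2$, $\lbar\theta(I_1)=b_1$, and part~(2) of \autoref{lemma:fact-bij-pre} then yields $a_1,a_2\in H^\bullet$ with $a=a_1a_2$, $I_2=Ha_2$, $I_1=a_2^{-1}(Ha_1)a_2$, so that $\theta(a_2)=\lbar\theta(Ha_2)=b_2$ and $\theta(a_1)=\lbar\theta(a_2^{-1}(Ha_1)a_2)=\lbar\theta(I_1)=b_1$ by the extra hypothesis.

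The step I expect to be the main obstacle is not conceptual but organizational: in the first assertion one must propagate conjugating elements and unit factors accurately through the $\sim$-relation and through both inductions, and it is easy to misplace a unit. The two facts that keep it under control are that left multiplication by a unit of $H$ (and its conjugates) fixes left $H$-modules, which collapses the congruence computation, and that cancellativity of $a$ removes the stray leading unit in the injectivity argument.
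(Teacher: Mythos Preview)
Your proposal is correct and follows the same approach as the paper, which simply says the bijection ``follows by iterating the previous lemma'' and then verifies the transfer homomorphism properties exactly as you do; you have merely spelled out the iteration (well-definedness on $\sim$-classes, surjectivity, injectivity) in full. One small slip: $\cH_H$ is not \emph{the} subcategory of integral elements of $\cF_v(\alpha)$ (that is $\cI_v(\alpha)$) but only a subcategory of it --- the conclusion that $\cH_H$ is reduced is nonetheless correct, since $\cI_v(\alpha)$ is reduced and hence so is any subcategory.
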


\begin{proof}
  The claimed bijection follows by iterating the previous lemma.

  We need to verify that $\theta$ is a transfer homomorphism and first check that $\theta$ is a homomorphism: For $a,b \in H^\bullet$
  \[
    \theta(ab) = \lbar\theta(Hab) = \lbar\theta( Hb \cdot b^{-1}(Ha)b ) = \lbar\theta(Hb) \cdot \lbar\theta(b^{-1}(Ha)b) = \lbar\theta(Hb) \cdot \lbar\theta(Ha) = \theta(a) \cdot^{\text{op}} \theta(b),
  \]
  and if $a \in H^\times$ then $Ha = H$, whence $\theta(a) = \lbar\theta(H) \in B_0$. We verify \ref{tr:T1}: Let $b \in B$. Then there exist $d \in Q^\bullet$ and $a \in H^\bullet$ with $\lbar\theta(d^{-1}(Ha)d) = b$, hence $\theta(a) = b$. If $a \in H^\bullet$ with $\theta(a) \in B_0$, then $\lbar \theta(Ha) \in B_0$, hence $Ha \in (\cH_H)_0$, i.e., $Ha = H$ and $a \in H^\times$. It remains to check \ref{tr:T2}: Let $a \in H^\bullet$ and $b_1,b_2 \in B$ with $\theta(a) = b_1 \cdot^\text{op} b_2$. Then $\lbar\theta(Ha) = b_2 b_1$, hence there exist $a_1,a_2 \in H^\bullet$ with $Ha = Ha_2 \cdot a_2^{-1}(Ha_1) a_2$ and $\lbar\theta(Ha_2) = b_2$, $\lbar\theta(a_2^{-1}(Ha_1)a_2) = b_1$. This implies $a = \varepsilon a_1 a_2$ with $\varepsilon \in H^\times$, and $\theta(\varepsilon a_1) = b_1$, $\theta(a_2) = b_2$.
\end{proof}

\begin{remark}
  The condition $\lbar \theta(d^{-1}(Ha)d) = \lbar \theta(Ha)$ implies in particular $\card{\lbar\theta(\cH_H)_0} = 1$.
  Thus in fact $B$ is necessarily a semigroup.
\end{remark}

Let $\bG$ be the universal vertex group of $\cF_v(\alpha)$, and let $\eta\colon \cF_v(\alpha) \to \bG$ be the abstract norm, as defined in the previous section.

\begin{lemma}
  \mbox{}
  \begin{enumerate}
    \item If $I \in \cF_v(\alpha)$ and $d \in Q^\bullet$, then $\eta(d^{-1}Id) = \eta(I)$.
    \item $\quo( \eta(\cH_H) ) = \quo( \{\, \eta(Ha) \mid a \in H^\bullet \,\} ) = \{\, \eta(Hq) \mid q \in Q^\bullet\,\}$.
  \end{enumerate}
\end{lemma}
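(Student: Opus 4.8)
The plan is to establish part 1 (conjugation-invariance of $\eta$) first and then deduce part 2 from it.

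For part 1, I would observe that for $d \in Q^\bullet$ the conjugation map $\phi_d\colon I \mapsto d^{-1}Id$ is an automorphism of the arithmetical groupoid $\cF_v(\alpha)$: it sends a divisorial fractional left $H'$-ideal to a divisorial fractional left $(d^{-1}H'd)$-ideal with $d^{-1}H'd \in \alpha$, it respects set inclusion, the $v$-operation (since $(d^{-1}Xd)^{-1} = d^{-1}X^{-1}d$) and the $v$-product, and it preserves integrality; in particular it permutes the maximal integral elements. Since $\eta$ and $\eta \circ \phi_d$ are both groupoid homomorphisms $\cF_v(\alpha) \to \bG$, and $\cF_v(\alpha)$ is generated as a groupoid by its maximal integral elements (by atomicity of $\cI_v(\alpha)$, \hyperref[ffg:hf]{Proposition \ref*{prop:fact-freeish-grpd}.\ref*{ffg:hf}}, together with $\cF_v(\alpha) = \quo(\cI_v(\alpha))$, \autoref{lemma:gb} and \autoref{cor:ideals-grpd}), it suffices to check $\eta(d^{-1}ud) = \eta(u)$ for a single maximal integral $u$. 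By definition of $\eta$ one has $\eta(u) = \Phi(u)$ and $\eta(d^{-1}ud) = \Phi(d^{-1}ud)$, so I must show $\Phi(d^{-1}ud) = \Phi(u)$. Writing $H' = s(u) = \cO_l(u)$, the principal ideal $H'd$ is a morphism of $\cF_v(\alpha)$ from $H'$ to $d^{-1}H'd$ with inverse $d^{-1}H'$ (by \hyperref[mid:principal]{Lemma \ref*{lemma:mid}.\ref*{mid:principal}}); transporting an element $\cX \in \bG$ along it gives $\cX_{d^{-1}H'd} = (d^{-1}H') \cdot_v \cX_{H'} \cdot_v (H'd) = (d^{-1}H'\cX_{H'}H'd)_v = d^{-1}\cX_{H'}d$, using that $\cX_{H'}$ is a two-sided $H'$-ideal and that conjugation commutes with the $v$-closure. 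Hence $\cX_{d^{-1}H'd} \subseteq d^{-1}ud$ if and only if $\cX_{H'} \subseteq u$, i.e. $\bG_{\le d^{-1}ud} = \bG_{\le u}$, and therefore $\Phi(d^{-1}ud) = \sup \bG_{\le d^{-1}ud} = \sup \bG_{\le u} = \Phi(u)$.

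For part 2, the equality $\quo(\eta(\cH_H)) = \quo(\{\,\eta(Ha) \mid a \in H^\bullet\,\})$ is immediate from part 1: for $a \in H^\bullet$ and $d \in Q^\bullet$ we have $\eta(d(Ha)d^{-1}) = \eta(Ha)$, so $\eta(\cH_H) = \{\,\eta(Ha) \mid a \in H^\bullet\,\}$. For the remaining equality I plan to show that $N := \{\,\eta(Hq) \mid q \in Q^\bullet\,\}$ is a subgroup of $\bG$. Closure under inverses: $\eta(Hq)^{-1} = \eta((Hq)^{-1}) = \eta(q^{-1}H)$ by \hyperref[mid:principal]{Lemma \ref*{lemma:mid}.\ref*{mid:principal}}, and $\eta(q^{-1}H) = \eta(q^{-1}(Hq^{-1})q) = \eta(Hq^{-1})$ by part 1, with $q^{-1} \in Q^\bullet$. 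For multiplicativity $\eta(Hqq') = \eta(Hq)\eta(Hq')$ I first write $Hqq' = (Hq) \cdot_v (q^{-1}Hqq')$ (a $v$-product, since $\cO_r(Hq) = q^{-1}Hq = \cO_l(q^{-1}Hqq')$ and $Hqq'$ is divisorial), giving $\eta(Hqq') = \eta(Hq)\,\eta\big((q^{-1}Hq)q'\big)$, and then reduce to the claim $\eta(\cO q') = \eta(Hq')$ for every maximal order $\cO \in \alpha$ (applied to $\cO = q^{-1}Hq$). To prove this claim I take the divisorial fractional $(\cO,H)$-ideal $I = (\cO H)_v$, which exists by \hyperref[ord2:int-conn]{Lemma \ref*{lemma:ord2}.\ref*{ord2:int-conn}} (and its proof) and has $\cO_l(I) = \cO$, $\cO_r(I) = H$ by maximality, and observe that the two morphisms $\cO \to q'^{-1}Hq'$ of $\cF_v(\alpha)$ given by $I \cdot_v (Hq')$ and $(\cO q') \cdot_v (q'^{-1}Iq')$ both equal $(Iq')_v$, hence coincide; applying $\eta$, using part 1 to replace $\eta(q'^{-1}Iq')$ by $\eta(I)$, and cancelling $\eta(I)$ in the abelian group $\bG$ yields $\eta(\cO q') = \eta(Hq')$. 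Finally $\{\,\eta(Ha) \mid a \in H^\bullet\,\} \subseteq N$ gives $\quo(\{\,\eta(Ha)\,\}) \subseteq N$, and conversely every $q \in Q^\bullet$ can be written $q = ab^{-1}$ with $a,b \in H^\bullet$ (since $Q = \quo(H)$ and $Q^\bullet = Q^\times$), so $\eta(Hq) = \eta(Hab^{-1}) = \eta(Ha)\eta(Hb)^{-1} \in \quo(\{\,\eta(Ha)\,\})$ by multiplicativity and the inverse formula, whence $N \subseteq \quo(\{\,\eta(Ha)\,\})$.

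The main obstacle is the multiplicativity of $q \mapsto \eta(Hq)$, concretely the identity $\eta(\cO q') = \eta(Hq')$: conjugation-invariance (part 1) only handles the two-sided conjugation $I \mapsto d^{-1}Id$, whereas here one changes the base order of a principal one-sided ideal while keeping its generator, which is not a conjugation, so the path-splicing argument in $\cF_v(\alpha)$ combined with part 1 is what bridges the gap. Everything else is routine bookkeeping with the properties of $v$-ideals already established in \autoref{lemma:mid} and \autoref{prop:div-ideals}.
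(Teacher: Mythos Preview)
Your proof is correct. Part 1 is essentially the paper's argument: both reduce to maximal integral $u$ and then identify $\Phi(d^{-1}ud)$ with $\Phi(u)$ by observing that the representatives of $\cX \in \bG$ at $\cO_l(u)$ and at $d^{-1}\cO_l(u)d$ are related by conjugation by $d$, so the lower bounds agree.

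In part 2 you take a genuinely different route. The paper simply writes $q = ab^{-1}$ with $a,b \in H^\bullet$, factors $Hab^{-1} = (Hb^{-1}) \cdot_v b(Ha)b^{-1}$ in $\cF_v(\alpha)$, and applies $\eta$ together with part~1 to get $\eta(Hq) = \eta(Hb)^{-1}\eta(Ha)$ directly; this one line already gives both inclusions (the same decomposition with $b^{-1}$ replaced by $b \in H^\bullet$ shows $\{\eta(Ha)\}$ is a submonoid, so its group of fractions is exactly $N$). You instead establish the stronger intermediate statement that $q \mapsto \eta(Hq)$ is multiplicative on all of $Q^\bullet$, and for this you need the bridging-ideal trick $I = (\cO H)_v$ to compare $\eta(\cO q')$ with $\eta(Hq')$, since part~1 alone only handles two-sided conjugation $I \mapsto d^{-1}Id$ and not a change of base order with fixed generator. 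Your argument is correct and yields a slightly more general fact (it works for any $\cO \in \alpha$, not just conjugates of $H$), at the cost of being longer; the paper's direct decomposition sidesteps the issue entirely by never needing to compare principal ideals over different orders with the same generator.
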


\begin{proof}
  \mbox{}
  \begin{enumerate}
    \item It suffices to verify the claim for maximal integral $I \in \cI_v(\alpha)$.
      If $P \in \cI_v(\alpha)$ is the maximal divisorial two-sided $\cO_l(I)$-ideal contained in $I$, then $d^{-1} P d$ is the maximal divisorial two-sided ideal contained in $d^{-1}Id$, and since $d^{-1} P d = (\cO_l(d^{-1}Id) d^{-1} \cO_l(I)) \cdot_v P \cdot_v (\cO_l(I) d \cO_l(d^{-1}Id))$ we have $\eta(I) = (P)=(d^{-1}Pd) = \eta(d^{-1}Id) \in \bG$.
    \item The first equality is immediate from 1. For the second equality, note that if $q = ab^{-1}$ with $a,b \in H^\bullet$, then (using 1. multiple times and the fact that $\eta$ is a homomorphism)
      \[
        \begin{split}
          \eta(Hq) &= \eta(Hab^{-1}) = \eta(Hb^{-1} \cdot b(Ha)b^{-1}) = \eta(Hb^{-1}) \eta(b(Ha)b^{-1}) = \eta(bH)^{-1} \eta(Ha) \\
                   &= \eta(b^{-1}(bH)b)^{-1} \eta(Ha) = \eta(Ha) \eta(Hb)^{-1}. \qedhere
        \end{split}
      \]
  \end{enumerate}
\end{proof}

Applying \autoref{thm:abstract-transfer} to the present situation, we obtain a transfer homomorphism $\cH_H \to \cB(\cgrp_M)$ if we impose some additional crucial conditions on $H$.

\begin{thm} \label{thm:transfer-semigroup}
  Let $Q$ be a quotient semigroup, $H$ an arithmetical maximal order in $Q$, and $\alpha$ its equivalence class of arithmetical maximal orders.
  \begin{enumerate}
    \item 
      For all $a \in H^\bullet$, $\sL_{H^\bullet}(a)$ is finite and non-empty.
      If, for every maximal divisorial $H$-ideal $P$, the number of maximal divisorial left $H$-ideals $I$ with $P \subset I$ is finite, then $\sZ^*_{H^\bullet}(a)$ is finite for all $a \in H^\bullet$.

    \item Let $P_{H^\bullet} = \{\, \eta(Hq) \mid q \in Q^\bullet \,\} \subset \bG$, $\cgrp = \bG / P_{H^\bullet}$, and $\cgrp_M = \{\, [\eta(I)] \in \cgrp \mid I \in \cI_v(\alpha) \text{ maximal integral} \,\}$.
      Assume:
      \begin{enumerate}

        \item A divisorial fractional left $H$-ideal $I$ is principal if and only if $\eta(I) \in P_{H^\bullet}$.
        \item For all $H' \in \alpha$ and all $g \in \cgrp_M$ there exists a maximal divisorial left $H'$-ideal with $[\eta(I)] = g$.
      \end{enumerate}
      Then there exists a transfer homomorphism $\theta\colon H^\bullet \to \cB(\cgrp_M)$.
  \end{enumerate}
\end{thm}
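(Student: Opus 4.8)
The plan is to transfer factorizations of $H^\bullet$ to the cancellative category $\cH_H$, run \autoref{thm:abstract-transfer} inside the arithmetical groupoid $\cF_v(\alpha)$, and pull the resulting transfer homomorphism back with \autoref{prop:fact-bij}. By \autoref{prop:ideals-are-lgrpd} the standing hypothesis that $H$ is an arithmetical maximal order makes $G := \cF_v(\alpha)$ an arithmetical groupoid, so $G_+ = \cI_v(\alpha)$, $\cA(G_+) = \cM_v(\alpha)$, and we have the abstract norm $\eta\colon G \to \bG$ at our disposal. The inclusion $\cH_H \subseteq \cI_v(\alpha) = G_+$ is left- and right-saturated, and by the lemma immediately preceding the statement $\quo(\eta(\cH_H)) = \{\, \eta(Hq) \mid q \in Q^\bullet \,\} = P_{H^\bullet}$ and $\eta(d^{-1}Id) = \eta(I)$ for all $d \in Q^\bullet$; hence $\cgrp = \bG/\quo(\eta(\cH_H))$ and $\cgrp_M = \{\,[\eta(u)] \mid u \in \cA(G_+)\,\}$ are precisely the group and set appearing in the statement.

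For part 2 I would verify the two hypotheses of \autoref{thm:abstract-transfer} for $G$ and the saturated subcategory $\cH_H$. Its hypothesis $(2)$ is literally our assumption $(b)$. Its hypothesis $(1)$ asks that for $a \in G$ with $s(a)$ conjugate to $H$ one has $a \in \cH_H\cH_H^{-1}$ if and only if $\eta(a) \in P_{H^\bullet}$; the forward implication is immediate since $\eta$ is a homomorphism. For the converse, conjugation-invariance of $\eta$ lets me conjugate so that $\cO_l(a) = H$, making $a$ a divisorial fractional left $H$-ideal with $\eta(a) \in P_{H^\bullet}$, so assumption $(a)$ forces $a = Hq$ with $q \in Q^\bullet$; it then remains to see that every principal divisorial fractional left $H$-ideal lies in $\cH_H\cH_H^{-1}$. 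Writing $q = ab^{-1}$ with $a,b \in H^\bullet$ and using the Ore condition to choose $x \in H^\bullet$ with $xba^{-1} \in H$, one checks with \autoref{lemma:mid} that $Hx \in \cH_H$, that the choice of $x$ makes $(x^{-1}Hab^{-1})^{-1}$ an integral divisorial ideal equal to some $d'(Hw)d'^{-1}$ with $w \in H^\bullet$ --- hence a member of $\cH_H$ ---, that $\cO_r(Hx) = x^{-1}Hx = \cO_r\bigl((x^{-1}Hab^{-1})^{-1}\bigr)$, and that $Hx \cdot_v (x^{-1}Hab^{-1})^{-1} = (Hab^{-1})_v = Hq$; the general case follows by conjugating by an arbitrary $d \in Q^\bullet$. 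Now \autoref{thm:abstract-transfer} yields a transfer homomorphism $\bar\theta\colon \cH_H \to \cB(\cgrp_M)$ with $\bar\theta(I) = [\eta(u_1)] \cdot\ldots\cdot [\eta(u_k)]$ read off from any maximal-integral rigid factorization $\rf{u_1, \ldots, u_k}$ of $I$. Conjugating such a factorization by $d \in Q^\bullet$ again yields a maximal-integral one and $\eta(d^{-1}u_id) = \eta(u_i)$, so $\bar\theta(d^{-1}(Ha)d) = \bar\theta(Ha)$ for all $a \in H^\bullet$, $d \in Q^\bullet$; hence \autoref{prop:fact-bij} applies and gives a transfer homomorphism $\theta\colon H^\bullet \to \cB(\cgrp_M)^{\text{op}}$, $\theta(a) = \bar\theta(Ha)$. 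Since $\cB(\cgrp_M)$ is commutative, $\cB(\cgrp_M)^{\text{op}} = \cB(\cgrp_M)$, which is part 2.

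For part 1 I would apply \autoref{cor:bf-ff} to the subcategory $\cH_H \subseteq G_+$: it already gives that $\sL_{\cH_H}(I)$ is finite and non-empty for every $I \in \cH_H$, and the length-preserving bijection $\sZ^*_{H^\bullet}(a) \to \sZ^*_{\cH_H}(Ha)$ of \autoref{prop:fact-bij} (with $d = 1$) carries this over to $\sL_{H^\bullet}(a) = \sL_{\cH_H}(Ha)$. For the finiteness of $\sZ^*_{H^\bullet}(a)$ I feed \autoref{cor:bf-ff} the hypothesis that for each prime $\cP \in \bG_+$ the set of maximal integral $u \in \cI_v(\alpha)$ with $s(u) = H$ and $\Phi(u) = \cP$ is finite. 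Under the canonical isomorphism $\bG \cong G(H)$ the primes of $\bG_+$ correspond to the maximal divisorial (two-sided) $H$-ideals $P$, and since such a $\cP$ is maximal integral in $\bG_+$, a maximal integral left $H$-ideal $u$ has $\Phi(u) = \cP$ exactly when $P = \cP_H \subseteq u$; thus this set is the set of maximal divisorial left $H$-ideals containing $P$, so the hypothesis of part 1 is precisely what \autoref{cor:bf-ff} requires, and $\sZ^*_{\cH_H}(Ha)$ --- hence $\sZ^*_{H^\bullet}(a)$ --- is finite.

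The step I expect to be the main obstacle is the converse in hypothesis $(1)$ of \autoref{thm:abstract-transfer}: realizing a principal divisorial fractional left ideal $Hq$ as a $v$-product of an element of $\cH_H$ with the inverse of one. The delicate point is to choose the denominators in $q = ab^{-1}$ so that the complementary ideal stays integral and hence lies in $\cH_H$ rather than merely in $\cF_v(\alpha)$; everything else is routine bookkeeping with \autoref{lemma:mid}, \autoref{prop:div-ideals}, \autoref{cor:ideals-grpd} and \autoref{prop:fact-bij}.
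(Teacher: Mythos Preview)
Your approach is essentially the paper's own: set $G=\cF_v(\alpha)$ via \autoref{prop:ideals-are-lgrpd}, verify the hypotheses of \autoref{thm:abstract-transfer} for the saturated subcategory $\cH_H$, and then pull back through \autoref{prop:fact-bij}; part~1 likewise matches the paper's direct appeal to \autoref{cor:bf-ff} and \autoref{prop:fact-bij}. You are in fact more careful than the paper on the point you flag as the obstacle --- the paper tacitly identifies ``$I$ principal with $\cO_l(I)$ conjugate to $H$'' with ``$I\in\cH_H\cH_H^{-1}$'' --- and your use of the left Ore condition to produce $x\in H^\bullet$ with $xba^{-1}\in H$ is exactly what is needed; the only slip is in the final displayed identity, which should read $Hq = Hx \cdot_v (x^{-1}Hab^{-1})$ (not $Hx \cdot_v (x^{-1}Hab^{-1})^{-1}$), so that $Hq=(Hx)\cdot K^{-1}$ with $K=(x^{-1}Hab^{-1})^{-1}\in\cH_H$ and $t(Hx)=x^{-1}Hx=t(K)$, giving $Hq\in\cH_H\cH_H^{-1}$ as desired.
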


\begin{proof}
  By \autoref{prop:ideals-are-lgrpd}, $(\cF_v(\alpha), \cdot_v, \subset)$ is an arithmetical groupoid, and $\cI_v(\alpha)$ is its subcategory of integral elements. $(\cH_H,\cdot)$ is a left- and right-saturated subcategory of $(\cI_v, \cdot_v)$.
  
  \begin{enumerate}
    \item This follows immediately from \autoref{cor:bf-ff} and \autoref{prop:fact-bij}.

    \item 
      Let $I$ be a fractional left $H'$-ideal with $H'=dHd^{-1}$. 
      Then $I$ is principal if and only if the fractional left $H$-ideal $d^{-1}Id$ is, and this is the case if and only if $\eta(I) = \eta(d^{-1}Id) \in P_{H^\bullet} = \quo(\eta(\cH_H))$ (where the last equality is due to the previous lemma). Therefore the first condition of \autoref{thm:abstract-transfer} is satisfied. Condition (ii) of the present theorem is equivalent to the second condition of \autoref{thm:abstract-transfer}. Thus there exists a transfer homomorphism $\lbar \theta\colon \cH_H \to \cB(\cgrp_M)$ as in \autoref{thm:abstract-transfer}. By \autoref{prop:fact-bij}, there exists a transfer homomorphism $\theta\colon H^\bullet \to \cB(\cgrp_M)$.
      \qedhere
  \end{enumerate}
\end{proof}

\begin{remark} \label{rem:rtr}
  \mbox{}
  \begin{enumerate}
    \item \label{rtr:normalizing}
    We continue our discussion from \autoref{rem:ideal-struct}. Let $H$ be a normalizing Krull monoid.
    Then $\alpha = \{\, H \,\}$, $Ha = HaH = aH$ for all $a \in Q^\bullet$ and associativity is a congruence relation \cite[Lemma 4.4.1]{geroldinger13}, thus $H_{\text{red}} = \{\, H^\times a \mid a \in H \,\}$ with the induced operation is also a monoid.
    Therefore $\cH = \cH_H = \{\, HaH \mid a \in H \,\} \cong H_{\text{red}}$ and $G = \cF_v(\alpha)$ is the free abelian group on the maximal divisorial (two-sided) $H$-ideals, while $\cI_v(\alpha)$ is the free abelian monoid on the same basis.

    In the previous theorem we therefore have $\bG = G$, $\eta = \id$, $P_{H^\bullet} = \{\, Hq \mid q \in Q^\bullet \,\}$, and hence $\cgrp$ is the divisorial class group of $H$, and $\cgrp_M$ is the set of divisorial ideal classes that contain a maximal divisorial $H$-ideal. The second condition of the theorem is trivially true by virtue of $\card{G_0} = 1$ and the definition of $\cgrp_M$, and the first condition is trivially true because $\eta = \id$. We thus get a transfer homomorphism $H \to \cB(\cgrp_M)$ (induced from the transfer homomorphism $H_{\text{red}} \cong \cH_H \to \cB(\cgrp_M)$), which is the same one as in \cite[Theorem 6.5]{geroldinger13}.

  \item \label{rtr:non-bounded}
    If $H$ is a maximal order satisfying only \ref{A:acc} and \ref{A:mod}, then $\sL_{H^\bullet}(a)$ is finite and non-empty for all $a \in H^\bullet$.
    In \autoref{sec:fact} one may drop \ref{P:bdd} and \ref{P:acc}, and still obtain \hyperref[ffg:hf]{Proposition \ref*{prop:fact-freeish-grpd}.\ref*{ffg:hf}} in the weaker form that, for each $a \in G_+$, either $\sZ^*_{G_+}(a) = \emptyset$ or $\card{\sL_{G_+}(a)} = 1$ (and of course without any statement about $\Phi$, which can only be defined in the presence of \ref{P:bdd}). This is possible because \ref{P:acc} is only used to show existence of a rigid factorization of $a$. A sufficient condition for $\sZ^*_{G_+}(a) \ne \emptyset$ is that $G_+(s(a),\cdot)$ and $G_+(\cdot,t(a))$ satisfy the ACC. If $H$ satisfies \ref{A:acc}, then $G_+(e,\cdot)$ and $G_+(\cdot,e)$ with $e \in (\cH_H)_0$ (corresponding to conjugate orders of $H$) satisfy the ACC, and as in \autoref{cor:bf-ff} one shows that $\sL_{\cH_H}(a)$ is finite and non-empty for all $a \in \cH_H$. Hence the same is true for $H^\bullet$.
  \end{enumerate}
\end{remark}

\subsection{Rings} \label{sec:ideals-rings}

Suppose that $Q = (Q,+,\cdot)$ is a quotient ring in the sense of \cite[Chapter 3]{mcconnell-robson01} (but recall that we in addition require it to be unital, as we do for all rings).
Then $(Q,\cdot)$ is a quotient semigroup.
In the remainder of this section we show that the ring-theoretic divisorial one-sided ideal theory for maximal orders in $(Q,+,\cdot)$ coincides with the semigroup-theoretic one. \footnote{In \cite[Chapter 5]{mcconnell-robson01} the terminology ``reflexive'' is used in place of ``divisorial''.}
If $R$ is a ring-theoretic order in $Q$, then a fractional left $R$-ideal $I$ in the semigroup-theoretic sense is a fractional left $R$-ideal in the ring-theoretic sense if and only if $I - I \subset I$ (see \cite[\S3.1.11]{mcconnell-robson01} for the usual definition).

\begin{center}
  \emph{Let for the remainder of this subsection $Q=(Q,+,\cdot)$ be a quotient ring.}
\end{center}

\begin{lemma} \label{lemma:ring-ideals}
  Let $H$ be an order in the multiplicative semigroup $(Q,\cdot)$ and $I$ a fractional left $H$-ideal (in the semigroup-theoretic sense). 
  Consider the following statements:
  \begin{enumerate} 
    \enumequiv

    \item $I - I \subset I$.
    \item $\cO_l(I)$ is a subring of $Q$.
    \item $\cO_r(I)$ is a subring of $Q$.
  \end{enumerate}

  Then (a) $\Rightarrow$ (b) and (a) $\Rightarrow$ (c).
  If $H$ is a maximal order and $I$ is divisorial, then (a) $\Leftrightarrow$ (b) $\Leftrightarrow$ (c).
\end{lemma}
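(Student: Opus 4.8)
The plan is to first prove the two easy implications $(a)\Rightarrow(b)$ and $(a)\Rightarrow(c)$ directly, and then, under the extra hypotheses ($H$ maximal, $I$ divisorial), close the cycle by proving $(b)\Rightarrow(a)$ and symmetrically $(c)\Rightarrow(a)$.

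First I would handle $(a)\Rightarrow(b)$. We already know from \autoref{lemma:setfrac} that $\cO_l(I)=\lc{I}{I}$ is a submonoid of $(Q,\cdot)$ containing $1$, so the only thing to check is that it is closed under subtraction (equivalently, under addition and negation), which will make it an additive subgroup and hence a subring. If $q_1,q_2\in\cO_l(I)$, then $q_1 I\subset I$ and $q_2 I\subset I$, so $(q_1-q_2)I\subset q_1 I - q_2 I \subset I - I\subset I$ by (a); hence $q_1-q_2\in\cO_l(I)$. The argument for $(a)\Rightarrow(c)$ is identical on the right, using $I(q_1-q_2)\subset I-I\subset I$.

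For the converse under the standing hypotheses, suppose $H$ is a maximal order and $I$ is divisorial, and assume (b), i.e. $\cO_l(I)$ is a subring. By \hyperref[frac:max]{Lemma \ref*{lemma:frac}.\ref*{frac:max}} (or \hyperref[mid:v]{Lemma \ref*{lemma:mid}.\ref*{mid:v}}) we have $\cO_l(I)=H$, so the hypothesis (b) really says that $H$ itself is a subring of $Q$ — but we should not assume that a priori, so the cleaner route is: I want to show $I-I\subset I$, and since $I=I_v=(I^{-1})^{-1}=\{\,q\in Q\mid I^{-1}q I^{-1}\subset I^{-1}\,\}$... actually the most direct description is $I_v = \lc{\cO_r(I^{-1})}{I^{-1}} $, i.e. $I = \{\,q\in Q\mid q I^{-1}\subset \cO_r(I^{-1})\,\}$ when $I$ is divisorial. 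Here $\cO_r(I^{-1})=\cO_l(I)$ (this identity appears in the proof of \hyperref[di:inv]{Proposition \ref*{prop:div-ideals}.\ref*{di:inv}}). So $I=\{\,q\in Q\mid qI^{-1}\subset\cO_l(I)\,\}$. Now take $a,b\in I$; then $aI^{-1}\subset\cO_l(I)$ and $bI^{-1}\subset\cO_l(I)$, hence $(a-b)I^{-1}\subset \cO_l(I)-\cO_l(I)\subset\cO_l(I)$ because $\cO_l(I)$ is a subring (this is exactly where (b) is used), and therefore $a-b\in I$. This gives $(b)\Rightarrow(a)$; the implication $(c)\Rightarrow(a)$ is the mirror statement, using $I = \{\,q\in Q\mid I^{-1}q\subset\cO_r(I^{-1})\,\}$ — wait, one must instead use $\cO_l(I^{-1})=\cO_r(I)$ and the divisorial description of $I$ from the right, $I=\{\,q\in Q\mid I^{-1}q\subset\cO_r(I)\,\}$, together with $\cO_r(I)$ being a subring.

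The main obstacle I anticipate is getting the divisorial description of $I$ in the correct one-sided form: I need the identity $I = (I^{-1})^{-1} = \{\,q\in Q\mid I^{-1}\,q\,I^{-1}\subset I^{-1}\,\} = \rc{\cO_l(I^{-1})}{I^{-1}} = \lc{\cO_r(I^{-1})}{I^{-1}}$ and then to feed in the correct identification ($\cO_r(I^{-1})=\cO_l(I)$ for the (b)-case, $\cO_l(I^{-1})=\cO_r(I)$ for the (c)-case), both of which are available from the maximality results in \autoref{prop:div-ideals} and \autoref{lemma:mid}. Once the right form is in hand, the closure-under-subtraction step is a one-line distributivity computation. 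Everything else is bookkeeping with the lemmas already proved, so I would keep the write-up short: two lines for $(a)\Rightarrow(b),(c)$, and a short paragraph for the reverse implications under the standing hypotheses, citing \hyperref[mid:v]{Lemma \ref*{lemma:mid}.\ref*{mid:v}}, \hyperref[di:max]{Proposition \ref*{prop:div-ideals}.\ref*{di:max}} and \hyperref[di:inv]{Proposition \ref*{prop:div-ideals}.\ref*{di:inv}} for the order identities.
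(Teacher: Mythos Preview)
Your proposal is correct and follows essentially the same route as the paper's proof: the forward implications are the one-line distributivity computation $(a-b)I\subset I-I\subset I$, and for the converse under the maximal/divisorial hypotheses you use $I=(I^{-1})^{-1}$ to write $a\in I\Rightarrow aI^{-1}\subset\cO_l(I)$ and then close under subtraction because $\cO_l(I)$ is a ring. The paper does exactly this, only more tersely (it invokes $\cO_r(I^{-1})=H=\cO_l(I)$ implicitly via \hyperref[mid:v]{Lemma~\ref*{lemma:mid}.\ref*{mid:v}} rather than via \autoref{prop:div-ideals}, and it leaves the $(c)$-variants to symmetry); your extra care in citing \hyperref[di:max]{Proposition~\ref*{prop:div-ideals}.\ref*{di:max}} to get $\cO_r(I)$ maximal before running the right-sided argument is appropriate and not something the paper spells out.
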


\begin{proof}
  Assume that (a) holds. We show (b): Let $a,b \in \cO_l(I)$. Then $aI \subset I$ and $bI \subset I$ and hence $(a-b)I \subset aI - bI \subset I - I \subset I$, thus $a-b \in \cO_l(I)$.

  Assume now that $H$ is maximal, $I=I_v$ and (b) holds. We show (a). Let $a, b \in I = I_v = (I^{-1})^{-1}$. Then $a I^{-1} \subset \cO_l(I)$, and $bI^{-1} \subset \cO_l(I)$, whence $(a - b)I^{-1} \subset aI^{-1} - bI^{-1} \subset \cO_l(I) - \cO_l(I) = \cO_l(I)$ and thus $a-b \in (I^{-1})^{-1}=I$.
\end{proof}

\begin{lemma} \label{lemma:ring-max}
  A ring-theoretic order $R$ in $Q$ is maximal in the ring-theoretic sense if and only if it is maximal in the semigroup-theoretic sense.
\end{lemma}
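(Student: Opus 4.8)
The plan is to prove the two implications separately, relying on two elementary observations. First, a subring of $Q$ is an order in the ring-theoretic sense precisely when it is an order of the multiplicative semigroup $(Q,\cdot)$: the defining condition (every element of $Q$ is a left and a right fraction of elements of the subring) is identical, and the additive structure plays no role in it. Second, since $Q$ is a quotient semigroup we have $Q^\bullet = Q^\times$, so Asano-equivalence of orders as used in this section, namely the existence of $a,b,c,d \in Q^\bullet$ with $aRb \subseteq R'$ and $cR'd \subseteq R$, coincides verbatim with ring-theoretic equivalence of orders. Consequently, ``maximal in the ring-theoretic sense'' means maximal by inclusion among those orders in the common equivalence class that happen to be subrings.

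The implication ``semigroup-maximal $\Rightarrow$ ring-maximal'' is then immediate: if $R'$ is a ring-theoretic order with $R \subseteq R'$ and $R' \sim R$, then $R'$ is in particular a semigroup order equivalent to $R$, so maximality of $R$ among semigroup orders forces $R = R'$.

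For the converse, suppose $R$ is maximal in the ring-theoretic sense and let $H'$ be a semigroup order of $(Q,\cdot)$ with $R \subseteq H'$ and $R \sim H'$; the goal is $R = H'$. The key device is to replace $H'$ by the subring $R'' \subseteq Q$ it generates, i.e.\ the set of all finite sums $\sum_i \pm h_i$ with $h_i \in H'$. Since $H'$ is multiplicatively closed and contains $1$, $R''$ is indeed a subring, and $R \subseteq H' \subseteq R''$. It is still an order: every $q \in Q$ is a left and right fraction of elements of $R \subseteq R''$, with denominators already in $R^\bullet = R \cap Q^\bullet \subseteq R'' \cap Q^\bullet$ (alternatively, invoke the third part of the basic lemma on orders in $Q$ with $a = b = 1$). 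Moreover $R \sim R''$: choosing $a,b,c,d \in Q^\bullet$ with $aRb \subseteq H'$ and $cH'd \subseteq R$ from the hypothesis $R \sim H'$, we get $aRb \subseteq H' \subseteq R''$ on one side, while on the other $cR''d \subseteq R$, because $c\bigl(\sum_i \pm h_i\bigr)d = \sum_i \pm\, c h_i d \in R$ (each $c h_i d \in cH'd \subseteq R$ and $R$ is additively closed). Thus $R \subseteq R''$ with $R \sim R''$ and both orders subrings, so ring-theoretic maximality of $R$ gives $R = R''$. Since $H' \subseteq R'' = R \subseteq H'$ we conclude $H' = R$, which is exactly the maximality of $R$ as a semigroup order.

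The argument is essentially a matter of unwinding definitions; the only point requiring a moment's care — and the ``main obstacle'' such as it is — is the verification that the additive closure $R''$ of $H'$ remains an order equivalent to $R$, i.e.\ that passing to finite $\pm$-sums does not destroy the order property and that conjugation by the denominators witnessing $R \sim H'$ keeps $R''$ inside $R$. Both follow readily from the additive closure of the ring $R$ together with the lemma allowing one to pass between equivalent (semigroup) orders in $Q$.
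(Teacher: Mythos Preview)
Your proof is correct, but it takes a different route from the paper's. The paper's argument for the non-trivial direction (ring-maximal $\Rightarrow$ semigroup-maximal) goes via the ideal-theoretic characterization of maximality in \autoref{lemma:max-ord}: given a semigroup-theoretic fractional left $R$-ideal $I$, one passes to the $R$-module ${}_R\langle I\rangle$ it generates, which is a ring-theoretic fractional left $R$-ideal; then $\cO_l(I)\subset\cO_l({}_R\langle I\rangle)=R$ by ring-maximality, forcing $\cO_l(I)=R$, and similarly on the right. Your argument instead works directly with the definition of maximality: given a larger equivalent semigroup order $H'$, you pass to the subring $R''$ it generates, verify that $R''$ is still a ring-theoretic order equivalent to $R$, and conclude $R''=R$ by ring-maximality. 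Both proofs hinge on the same idea---additive closure preserves the relevant finiteness conditions because $R$ is additively closed---but apply it at different levels (ideals versus orders). Your version is slightly more self-contained in that it does not invoke \autoref{lemma:max-ord}; the paper's version illustrates how that lemma absorbs the work once it is available.
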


\begin{proof}
  We show that if $R$ is maximal in the ring-theoretic sense, then it is maximal in the semigroup-theoretic sense, as the other direction is trivial.
  Let $I$ be a fractional left $R$-ideal in the semigroup-theoretic sense. Then ${}_R \langle I \rangle$ is a fractional left $R$-ideal in the ring-theoretic sense, and using $R \subset \cO_l(I)$, it follows that $\cO_l(I) \subset \cO_l({}_R \langle I \rangle)$. Maximality of $R$ in the ring-theoretic sense implies $R = \cO_l({}_R \langle I \rangle)$, hence also $R = \cO_l(I)$. Similarly, if $J$ is a fractional right $R$-ideal in the ring-theoretic sense then $\cO_r(J) = R$.
  Therefore \autoref{lemma:max-ord} implies that $R$ is maximal in the semigroup-theoretic sense.
\end{proof}

As before let $\alpha$ be an equivalence class of maximal orders of $(Q,\cdot)$ in the semigroup-theoretic sense.

\begin{lemma} \label{lemma:rip}
  Let $H \in \alpha$ and assume that $H$ is a subring of $Q$ (i.e., an order in $Q$ in the ring-theoretic sense).
  \begin{enumerate}
    \item \label{rip:max} Every $H' \in \alpha$ is a subring of $Q$ (and therefore an order in $Q$ in the ring-theoretic sense).
    \item \label{rip:prop} If $I$ is a divisorial fractional left $H$-ideal and $J$ is a divisorial fractional left $\cO_r(I)$-ideal, then
      \[
      \{\, ab \mid a \in I, b \in J \,\}_v = \big( {}_H \langle\{\, ab \mid a \in I, b \in J \,\}\rangle \big)_v,
      \]
      i.e., the semigroup-theoretic $v$-product coincides with the ring-theoretic one.

    \item \label{rip:join} If $I$ and $J$ are divisorial fractional left $H$-ideals, then $(I \cup J)_v = (I + J)_v$.
  \end{enumerate}
\end{lemma}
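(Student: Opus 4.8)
The plan is to prove \ref{rip:max} first, use it to conclude that \emph{every} order in $\alpha$ is a subring of $Q$, and then obtain \ref{rip:prop} and \ref{rip:join} as parallel, essentially formal consequences. Throughout I would use that, $H$ being an arithmetical maximal order, all $H'\in\alpha$ are maximal orders in the semigroup-theoretic sense and $(\cF_v(\alpha),\cdot_v,\subset)$ is an arithmetical groupoid (\autoref{prop:ideals-are-lgrpd}); in particular $\cF_v(\alpha)$ is connected, every divisorial fractional left $H'$-ideal $I$ with $H'\in\alpha$ satisfies $\cO_l(I)=H'$ (\hyperref[frac:max]{Lemma~\ref*{lemma:frac}.\ref*{frac:max}}), and $((I)_v)_v=I_v$ (\hyperref[mid:v-inv]{Lemma~\ref*{lemma:mid}.\ref*{mid:v-inv}}).

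For \ref{rip:max}: given $H'\in\alpha$, connectedness of $\cF_v(\alpha)$ provides an element $I\in\cF_v(\alpha)$ with $s(I)=\cO_l(I)=H$ and $t(I)=\cO_r(I)=H'$; thus $I$ is a divisorial fractional left $H$-ideal. Applying \autoref{lemma:ring-ideals} to the maximal order $H$ and the divisorial fractional left $H$-ideal $I$, condition (b) of that lemma holds because $\cO_l(I)=H$ is a subring by hypothesis, hence condition (c) holds, i.e. $\cO_r(I)=H'$ is a subring of $Q$.

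The consequence I would then record is that \emph{every divisorial fractional left $H'$-ideal $I$ with $H'\in\alpha$ is closed under subtraction}: $\cO_l(I)=H'$ is a subring by \ref{rip:max}, so \autoref{lemma:ring-ideals} gives the implication (b)$\Rightarrow$(a), whence $I$ is an additive subgroup of $Q$ and a ring-theoretic left $H'$-module. For \ref{rip:prop}, put $IJ=\{\, ab \mid a\in I,\, b\in J \,\}$; this is a fractional left $H$-ideal (by \autoref{lemma:pre-frac}, $J$ is a fractional $(\cO_r(I),\cO_r(J))$-ideal, so $IJ$ is a fractional $(H,\cO_r(J))$-ideal by the last item of \autoref{lemma:frac}). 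Since $I$ and $J$ are both ring-theoretic modules (note $\cO_r(I)\in\alpha$ by \autoref{prop:div-ideals} and \hyperref[frac:equiv]{Lemma~\ref*{lemma:frac}.\ref*{frac:equiv}}), the left $H$-module ${}_H\langle IJ\rangle$ is precisely the ring-theoretic product $\{\, \sum_i a_ib_i \mid a_i\in I,\ b_i\in J \,\}$, again a fractional left $H$-ideal. Moreover $(IJ)_v=I\cdot_v J$ is a divisorial fractional left $H$-ideal (\autoref{prop:div-ideals}, \autoref{cor:ideals-grpd}), hence by the preceding remark an additive subgroup and ring-theoretic left $H$-module; as it contains $IJ$, it contains ${}_H\langle IJ\rangle$. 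Therefore
\[
  IJ\ \subseteq\ {}_H\langle IJ\rangle\ \subseteq\ (IJ)_v,
\]
and applying $(-)_v$, using monotonicity (\hyperref[mid:incl]{Lemma~\ref*{lemma:mid}.\ref*{mid:incl}}) and $((IJ)_v)_v=(IJ)_v$, forces all three divisorial closures to coincide, which is the assertion. Item \ref{rip:join} goes identically: $I\cup J\subseteq I+J\subseteq (I\cup J)_v$, the first inclusion because $0\in I\cap J$, the second because $(I\cup J)_v$ is a divisorial fractional left $H$-ideal (\hyperref[frac:isec-union]{Lemma~\ref*{lemma:frac}.\ref*{frac:isec-union}}, \hyperref[mid:v]{Lemma~\ref*{lemma:mid}.\ref*{mid:v}}), hence an additive subgroup containing both $I$ and $J$; taking $(-)_v$ and using monotonicity once more yields $(I+J)_v=(I\cup J)_v$.

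The only real content is \ref{rip:max}: one has to realize $H'$ as the right order of a divisorial fractional left $H$-ideal in order to transport the subring property from $H$ through \autoref{lemma:ring-ideals}, and connectedness of the arithmetical groupoid $\cF_v(\alpha)$ makes this immediate. After that, \ref{rip:prop} and \ref{rip:join} reduce to the facts that divisorial fractional left $H$-ideals are closed under subtraction and that the divisorial closure is monotone on fractional left $H$-ideals; the only routine care needed is to check that $IJ$, ${}_H\langle IJ\rangle$ and $I+J$ are genuine fractional left $H$-ideals before invoking \hyperref[mid:incl]{Lemma~\ref*{lemma:mid}.\ref*{mid:incl}}.
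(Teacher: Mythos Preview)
Your argument is correct and matches the paper's proof in all essential respects: for \ref{rip:max} you produce a divisorial fractional $(H,H')$-ideal and apply \autoref{lemma:ring-ideals}, and for \ref{rip:prop} and \ref{rip:join} you sandwich the ring-theoretic object between the semigroup-theoretic one and its divisorial closure, using that the latter is additively closed. One small point: you justify the existence of $I\in\cF_v(\alpha)$ with $\cO_l(I)=H$ and $\cO_r(I)=H'$ by invoking the \emph{arithmetical} groupoid structure (via \autoref{prop:ideals-are-lgrpd}), but at this place in the paper $\alpha$ is merely an equivalence class of maximal orders, not yet assumed arithmetical. The paper instead uses \hyperref[ord:conn]{Lemma~\ref*{lemma:ord}.\ref*{ord:conn}} to obtain a fractional $(H,H')$-ideal and then takes $I_v$; maximality of $H$ and $H'$ forces $\cO_l(I_v)=H$, $\cO_r(I_v)=H'$. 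This gives the same conclusion without the extra hypothesis, so your argument goes through once you replace the appeal to connectedness by that lemma.
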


\begin{proof}
  \mbox{}
  \begin{enumerate}
    \item By \hyperref[ord:conn]{Lemma \ref*{lemma:ord}.\ref*{ord:conn}} there exists a fractional $(H,H')$-ideal $I$. By maximality of $H$ and $H'$, also $\cO_l(I_v) = H$ and $\cO_r(I_v)=H'$ and the claim follows from \autoref{lemma:ring-ideals} applied to $I_v$.

    \item Write $I \cdot_S J = \{\, ab \mid a \in I, b \in J \,\}$ for the semigroup-theoretic ideal product and $I \cdot_R J = {}_H \langle\{\, ab \mid a \in I, b \in J \,\}\rangle$ for the ring-theoretic one. Then $I \cdot_S J \subset I \cdot_R J$, and both of these sets are fractional left $H$-ideals (in the semigroup-theoretic sense). Therefore $(I \cdot_S J)_v \subset (I \cdot_R J)_v$. For the converse inclusion, it suffices to show $I \cdot_R J \subset (I\cdot_S J)_v$, but this is true because by \autoref{lemma:ring-ideals} $(I\cdot_S J)_v$ is additively closed.

    \item Clearly $I \cup J \subset I + J$ and both sets are fractional left $H$-ideals (for $I+J$ proceed as in the proof of \hyperref[frac:isec-union]{Lemma \ref*{lemma:frac}.\ref*{frac:isec-union}}; in particular observe $\rc{H}{I \cup J} \subset \rc{H}{I+J}$). As before it therefore suffices to show $I + J \subset (I \cup J)_v$. This again holds due to \autoref{lemma:ring-ideals}.
      \qedhere
  \end{enumerate}
\end{proof}

Altogether, if $R$ is a maximal order in $Q$ in the ring-theoretic sense, then it does not matter whether we form $\cF_v(\alpha)$ by using the ring-theoretic or the semigroup-theoretic notions.
We use the same notion of boundedness for ring-theoretic orders as in \autoref{defilemma:bounded}; for semiprime Goldie rings this coincides with the notion in \cite{mcconnell-robson01}.

\begin{thm} \label{thm:transfer-ring}
  Let $R$ be a maximal order in a quotient ring $Q$, $\alpha$ its equivalence class of maximal orders in the semigroup-theoretic sense, and $\beta$ its equivalence class of maximal orders in the ring-theoretic sense. Then $\alpha = \beta$ and $\cF_v(\alpha) = \cF_v(\beta)$, where the latter is the ring-theoretic analogue of $\cF_v(\alpha)$.

  If $R$ is bounded, satisfies the ACC on divisorial left $R$-ideals and on divisorial right $R$-ideals, and the lattice of divisorial fractional left (right) $R$-modules is modular, then $(R,\cdot)$ is an arithmetical maximal order in $(Q,\cdot)$ in the semigroup-theoretic sense.
  In particular, the conclusions of \autoref{thm:transfer-semigroup} hold for $R$.
\end{thm}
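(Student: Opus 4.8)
The plan is to verify that the three semigroup-theoretic hypotheses \ref{A:acc}, \ref{A:bdd}, \ref{A:mod} follow from the stated ring-theoretic assumptions, so that \autoref{prop:ideals-are-lgrpd} applies to $(R,\cdot)$, after first reconciling the two ideal theories via Lemmas \ref{lemma:ring-ideals}--\ref{lemma:rip}. First I would establish $\alpha = \beta$: every $H' \in \alpha$ is a subring of $Q$ by \hyperref[rip:max]{Lemma \ref*{lemma:rip}.\ref*{rip:max}} (applied with $H = R$), hence a ring-theoretic order, and \autoref{lemma:ring-max} shows that the maximal orders in the semigroup sense are exactly those maximal in the ring sense; thus the equivalence classes and their members coincide. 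Next, $\cF_v(\alpha) = \cF_v(\beta)$: by \autoref{lemma:ring-ideals} (the equivalence (a)$\Leftrightarrow$(b)$\Leftrightarrow$(c) for divisorial ideals over maximal orders), the divisorial fractional one-sided ideals in the semigroup sense are automatically additively closed, hence are precisely the ring-theoretic ones; and by \hyperref[rip:prop]{Lemma \ref*{lemma:rip}.\ref*{rip:prop}} and \hyperref[rip:join]{Lemma \ref*{lemma:rip}.\ref*{rip:join}} the $v$-product and the join agree in both settings. So $\cF_v(\alpha)$, as a lattice-ordered groupoid, is literally the same object whether computed ring- or semigroup-theoretically, and in particular its ACC and modularity are unambiguous.

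With this identification in hand, the three conditions transfer directly. Condition \ref{A:acc} (ACC on divisorial left $R$-ideals and on divisorial right $R$-ideals, in the semigroup sense) is literally the stated ring-theoretic ACC hypothesis, since the two notions of divisorial left (right) $R$-ideal coincide. Condition \ref{A:mod} (modularity of the lattice of divisorial fractional left (right) $R$-ideals) is likewise the stated modularity hypothesis; by the first part of \autoref{rem:ideal-struct} it suffices to have it on one side. For condition \ref{A:bdd} I would invoke the stated boundedness of $R$, noting that the paper explicitly uses the same definition \autoref{defilemma:bounded} of boundedness for ring-theoretic orders and remarks it agrees with the standard notion for semiprime Goldie rings; since $R$ is a maximal order in a quotient ring it is in particular prime Goldie, so no discrepancy arises. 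Hence all of \ref{A:first}--\ref{A:last} hold, and by \autoref{prop:ideals-are-lgrpd}, $(\cF_v(\alpha),\cdot_v,\subset)$ is an arithmetical groupoid and $(R,\cdot)$ is an arithmetical maximal order in $(Q,\cdot)$.

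Finally, because $(R,\cdot)$ is an arithmetical maximal order in the quotient semigroup $(Q,\cdot)$, the hypotheses of \autoref{thm:transfer-semigroup} are met, so its conclusions apply to $R^\bullet$: all sets of lengths $\sL_{R^\bullet}(a)$ are finite and non-empty, and (under the additional finiteness condition there) all sets of rigid factorizations are finite; and when the two further conditions (i), (ii) of \autoref{thm:transfer-semigroup} hold one obtains the transfer homomorphism $\theta\colon R^\bullet \to \cB(\cgrp_M)$. \emph{The main obstacle} is not any deep argument but rather the bookkeeping in the first step: one must be careful that ``divisorial fractional left $R$-module'' in the theorem statement is interpreted so that \autoref{lemma:ring-ideals} genuinely forces additive closure, and that the passage from ``fractional left $R$-ideal in the semigroup sense'' to ``in the ring sense'' is exactly the $I - I \subset I$ condition, with no hidden gap when $\cO_l(I)$ or $\cO_r(I)$ is a priori only a subsemigroup. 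Once the dictionary of Lemmas \ref{lemma:ring-ideals}--\ref{lemma:rip} is applied cleanly, the rest is a direct citation of \autoref{prop:ideals-are-lgrpd} and \autoref{thm:transfer-semigroup}.
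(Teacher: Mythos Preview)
Your proposal is correct and follows essentially the same approach as the paper's proof: use Lemmas \ref{lemma:ring-ideals}--\ref{lemma:rip} to identify the semigroup- and ring-theoretic ideal theories (giving $\alpha=\beta$ and $\cF_v(\alpha)=\cF_v(\beta)$ as lattice-ordered groupoids), after which \ref{A:acc}--\ref{A:mod} transfer verbatim and \autoref{prop:ideals-are-lgrpd} and \autoref{thm:transfer-semigroup} apply. One harmless inaccuracy: a maximal order in an arbitrary quotient ring need not be prime Goldie, but you do not actually need this---the paper declares just before the theorem that boundedness is taken in the sense of \autoref{defilemma:bounded} for rings as well, so \ref{A:bdd} holds by hypothesis without any appeal to Goldie theory.
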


\begin{proof}
  By \hyperref[rip:max]{Lemma \ref*{lemma:rip}.\ref*{rip:max}}, $\alpha = \beta$, and by \autoref{lemma:ring-ideals}, $\cF_v(\alpha) = \cF_v(\beta)$ as sets. By \autoref{lemma:rip}, the $v$-product, meet and join coincide, and hence $\cF_v(\alpha) = \cF_v(\beta)$ as lattice-ordered groupoids. The remaining claims follow from this.
\end{proof}

In \cite[\S5(d)]{rehm77-2}, Rehm gives examples for bounded maximal orders $E$, that are prime and satisfy the ACC on divisorial two-sided $E$-ideals, but do not satisfy the ACC on divisorial left $E$-ideals or the ACC on divisorial right $E$-ideals.
In fact (unless one takes the special case where $E$ itself is a quotient ring), the orders $E$ are not even atomic.
However, these orders are not Goldie, as they are not of finite left or right uniform dimension, and do not satisfy the ACC on left or right annihilator ideals.

Before going to maximal orders in central simple algebras, we discuss principal ideal rings.

\begin{exm}[Principal ideal rings]
  Let $R$ be a bounded order in a quotient ring $Q$. 
  Assume that every left $R$-ideal and every right $R$-ideal is principal.
  By the characterization in \autoref{lemma:max-ord}, $R$ is then already a maximal order, and it satisfies \ref{A:first}--\ref{A:last}.
  Thus $\cH_{R} = \cI_v(\alpha)$, and facts about the rigid factorizations in $\cI_v(\alpha)$ trivially descend to facts about rigid factorizations of $R^\bullet$.
  Examples we have in mind include bounded skew polynomial rings $D[X,\sigma]$, where $D$ is a division ring and $\sigma\colon D \to D$ is an automorphism, and the Hurwitz quaternions $\bZ[1,i,j,\f{1+i+j+ij}{2}]$ with $i^2=-1$, $j^2=-1$ and $ij=-ji$.
  Both of these examples are left- and right-euclidean domains, and hence principal ideal rings.
  In this way we can for example rediscover Theorem 2 in \cite[\S5]{conway-smith03}.

  Let $Q$ be a quaternion algebra over a field $K$ with $\chr(K) \ne 2$, and $a \in Q^\bullet \setminus K^\times$. Then $\nr(a) = a\lbar{a} \in K^\times$ and $\tr(a) = a + \lbar{a} \in K$. For the polynomial ring $Q[X]$ in the central variable $X$, therefore
  \[
    f = X^2 - \tr(a) X + \nr(a) = (X - cac^{-1})(X - c\lbar{a}c^{-1}) \quad\text{for all $c \in Q^\bullet$},
  \]
  and thus $\card{\sZ^*_{Q[X]}(f)} = \infty$ if $K$ is infinite.
  (But these rigid factorizations are usually considered to be identical factorizations, and $Q[X]$, being left- and right-euclidean, is even a UFD with suitable definitions, see for example \cite[Chapter 3.2]{berrick-keating00} and \cite[Chapter 3]{cohn85}.)
  In terms of ideal theory, every element $X - cac^{-1}$ with $c \in Q^\bullet$ generates a maximal left $Q[X]$-ideal lying above the maximal two-sided $Q[X]$-ideal $Q[X]f$. If also $d \in Q^\bullet$, then $Q[X](X - cac^{-1}) = Q[X](X - dad^{-1})$ if and only if $cac^{-1} = dad^{-1}$, i.e., $d^{-1}c \in K(a)$.
\end{exm}

\subsection{Classical maximal orders over Dedekind domains in CSAs} \label{sec:ring-csa}

Let $\cO$ be a commutative domain with quotient field $K$. 
By a central simple algebra $A$ over $K$, we mean a $K$-algebra with $\dim_K(A) < \infty$, which is simple as a ring, and has center $K$. Then $A$ is artinian because it is a finite-dimensional $K$-algebra, and hence it is a quotient ring (in an artinian ring, every non-zero-divisor is invertible \cite[\S3.1.1]{mcconnell-robson01}, hence it is a quotient ring and an element is left-cancellative if and only if it is right-cancellative if and only if it is cancellative).
By Posner's Theorem (\cite[\S13.6.6]{mcconnell-robson01}), a ring $R$ is a prime PI ring if and only if it is an order in a central simple algebra, and hence in particular, prime PI rings are bounded Goldie rings. Furthermore,
PI Krull rings are characterized as those maximal orders in central simple algebras whose center is a commutative Krull domain (\cite[Theorem 2.4]{jespers86}). We start with a simple corollary of \autoref{thm:transfer-ring}.

\begin{cor} \label{cor:pi-krull-ring}
  If $R$ is a PI Krull ring, then $\sL_{R^\bullet}(a)$ is finite and non-empty for all $a \in R^\bullet$.
\end{cor}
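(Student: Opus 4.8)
The plan is to apply \autoref{thm:transfer-ring} to reduce the finiteness and non-emptiness of sets of lengths to the abstract statement already proved for arithmetical maximal orders in \autoref{thm:transfer-semigroup}(1) (which in turn rests on \autoref{cor:bf-ff}). Concretely, let $R$ be a PI Krull ring with center $Z(R) = \cO$, so $\cO$ is a commutative Krull domain and $R$ is a maximal order in the central simple algebra $A = \quo(R)$ over $K = \quo(\cO)$; here $Q = A$ is a quotient ring because $A$ is artinian, as recorded in the preamble to this subsection. By \autoref{thm:transfer-ring}, it suffices to verify that $R$ is bounded, satisfies the ACC on divisorial left $R$-ideals and on divisorial right $R$-ideals, and that the lattice of divisorial fractional left (equivalently, by \hyperref[rem:ideal-struct]{Remark \ref*{rem:ideal-struct}.1}, right) $R$-ideals is modular. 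Once these three conditions hold, $(R,\cdot)$ is an arithmetical maximal order in $(A,\cdot)$, and \autoref{thm:transfer-semigroup}(1) immediately gives that $\sL_{R^\bullet}(a)$ is finite and non-empty for every $a \in R^\bullet$.

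The verification of the three conditions is where the real content lies, and I expect the ACC on divisorial one-sided ideals to be the main obstacle. Boundedness should follow from the fact that $R$ is a prime PI ring: by Posner's Theorem (cited above) prime PI rings are bounded Goldie rings, and one checks that the ring-theoretic notion of boundedness used there matches \autoref{defilemma:bounded} (e.g.\ via condition \ref{bdd:c} or \ref{bdd:d}). For the ACC on divisorial left and right $R$-ideals, I would use that $\cO$ is a Krull domain: a divisorial one-sided $R$-ideal $I$ is a reflexive $\cO$-submodule of $A$, hence determined by its localizations/completions at the height-one primes of $\cO$, and at all but finitely many such primes $R_\fp$ is a maximal order over a DVR, where the one-sided ideal theory is that of a hereditary (indeed Azumaya, generically) order and is noetherian. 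More directly, since $R$ is a maximal order over a Krull domain it is itself a (noncommutative) Krull ring in the sense recalled in the ``Krull monoids and Krull rings'' subsection, so $R^\bullet$ is a Krull monoid and the ACC on divisorial \emph{two-sided} ideals holds; the passage to one-sided divisorial ideals is where one must work, comparing with the divisorial two-sided ideal contained in a given one-sided one via boundedness, and invoking that each divisorial two-sided ideal meets only finitely many maximal divisorial one-sided ideals above it. Modularity of the lattice of divisorial fractional left $R$-ideals I would obtain locally: after localizing at a height-one prime of $\cO$, $R_\fp$ is a maximal order over a DVR, its divisorial one-sided ideals form a modular (in fact complemented) lattice by classical results on orders over complete DVRs, and modularity of a lattice of submodules can be checked locally.

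An alternative, cleaner route—if the above localization bookkeeping is too heavy—is to cite the already-developed machinery: \autoref{prop:ideals-are-lgrpd} reduces everything to conditions \ref{A:acc}, \ref{A:bdd}, \ref{A:mod} on $R$, and for PI Krull rings these are exactly the properties established in the ideal-theoretic literature on maximal orders over Krull domains (the references collected at the start of \autoref{sec:ideals} and \cite{jespers86}). In the paper's own organization, the natural statement is: a PI Krull ring is an arithmetical maximal order, and then \autoref{cor:pi-krull-ring} is a one-line consequence of \autoref{thm:transfer-semigroup}(1). The hard part, to reiterate, is the ACC on one-sided divisorial ideals: two-sided ACC plus boundedness plus a finiteness-of-overrings statement at each maximal divisorial two-sided ideal is the mechanism, and making that finiteness precise (finitely many maximal divisorial left ideals above a given maximal divisorial two-sided ideal) is the step that needs care, though it is automatic once one knows $R/PR$ is a finite-dimensional algebra over the residue field for each such $P$.
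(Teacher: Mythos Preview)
Your overall strategy---verify \ref{A:acc}, \ref{A:bdd}, \ref{A:mod} for $R$ and invoke \autoref{thm:transfer-ring} together with \autoref{thm:transfer-semigroup}(1)---is exactly the paper's approach, and your identification of boundedness as immediate from the prime PI hypothesis is correct.

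Where you diverge from the paper is in the execution of \ref{A:acc} and \ref{A:mod}. For the ACC on divisorial one-sided ideals, the paper does not attempt the kind of reduction you sketch (two-sided ACC plus boundedness plus finiteness of maximal one-sided ideals above each two-sided prime); it simply observes that by \cite{jespers86} the various Krull notions coincide, so $R$ is a bounded Chamarie--Krull ring, and then cites \cite{chamarie81} (or \cite{halimi12}) directly for the one-sided divisorial ACC. Your heuristic is plausible but would need real work to make rigorous; the paper sidesteps it by appealing to the existing literature. For modularity, the paper localizes not at height-one primes of the center but at $\cC(P)$, the regular elements modulo a divisorial prime $R$-ideal $P$: by Chamarie, $R_{\cC(P)}$ is a \emph{principal ideal ring}, so modularity of its lattice of divisorial fractional one-sided ideals is trivial. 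The subtle point you gloss over---``modularity can be checked locally''---is handled explicitly: the paper verifies that $I_v R_{\cC(P)} = (I R_{\cC(P)})_v$ using the ACC, and then invokes Chamarie's lemma that a global divisorial one-sided ideal is the intersection of its $\cC(P)$-localizations. Your central-localization approach should ultimately work too, but you would still need to prove that divisorial closure commutes with localization and that the local lattices determine the global one, which is exactly the content the paper supplies via the noncommutative localization and Chamarie's local--global lemma.
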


\begin{proof}
  We only have to verify the conditions of \autoref{thm:transfer-ring}.
  By \cite[Theorem 2.4]{jespers86} the various notions of Krull rings coincide for prime PI rings.
  Thus $R$ is a bounded Chamarie-Krull ring.
  The ACC on divisorial left $R$-ideals and divisorial right $R$-ideals follows from \cite{chamarie81} (or \cite[Corollary 3.11]{halimi12}).
  Moreover, for every divisorial prime $R$-ideal $P$, the set of regular elements modulo $P$, denoted $\cC(P)$, is cancellative, satisfies the left and right Ore condition, and for the localization $R_{\cC(P)}={}_{\cC(P)} R \subset Q$ every left (right) $R_{\cC(P)}$-ideal is principal (\cite[Proposition 2.5]{chamarie81}).
  The lattice of divisorial fractional left (right) $R_{\cC(P)}$-ideals is hence modular.
  Using the ACC on divisorial left and right $R$-ideals, one checks as in the commutative case that $I_v R_{\cC(P)} = (I R_{\cC(P)})_v$ for a fractional right $R$-ideal $I$.

  Suppose now $I,J,K$ are divisorial fractional right $R$-ideals, and $K \subset I$. We have to check $I \cap (J+K)_v = ((I \cap J) + K)_v$.
  But $(I \cap (J + K)_v) R_{\cC(P)} = IR_{\cC(P)} \cap (JR_{\cC(P)} + KR_{\cC(P)})_v$ and $((I \cap J) + K)_v R_{\cC(P)} = ((I R_{\cC(P)} \cap J R_{\cC(P)}) + K R_{\cC(P)})_v$, and thus, by modularity in the localizations, they are equal for every divisorial prime $R$-ideal $P$.
  The claim now follows from \cite[Lemme 2.7]{chamarie81}, by which the global divisorial fractional right $R$-ideals can be recovered as intersections from the local ones.
\end{proof}

Using \hyperref[rtr:non-bounded]{Remark \ref*{rem:rtr}.\ref*{rtr:non-bounded}}, we get the above result even for more general classes of rings, namely for Dedekind prime rings and bounded Chamarie-Krull rings (cf. \cite{chamarie81}).

\smallskip
But the aim of this subsection is to restrict to the situation where the base ring $\cO$ is a Dedekind domain, as a preparation for the structural results on sets of lengths in the setting of holomorphy rings.
Suppose that $\cO$ is a Dedekind domain.
A ring $R$ is a \emph{classical $\cO$-order} of $A$ if $\cO \subset R$, $R$ is finitely generated as $\cO$-module and $K R = A$. $R$ is a \emph{classical maximal $\cO$-order} if it is maximal with respect to set inclusion within the set of all classical $\cO$-orders.
Such classical maximal $\cO$-orders as well as their ideal theory are well-studied, in particular Reiner's book \cite{reiner75} provides a thorough description of them.
If $R$ is a classical $\cO$-order, then it is a ring-theoretic order in $A$ in the sense we discussed, and it is a maximal order if and only if it is a classical maximal $\cO$-order (for this see \cite[\S5.3]{mcconnell-robson01}).
The set of all classical maximal $\cO$-orders forms an equivalence class of (ring-theoretic) maximal orders, call it $\beta$ for a moment.
If we write $\alpha$ for the same semigroup-theoretic equivalence class of maximal orders (i.e., $\alpha=\beta$ as sets, but we view the elements of $\beta$ as rings and those of $\alpha$ just as semigroups), then $\cF_v(\alpha) = \cF_v(\beta)$ by \autoref{thm:transfer-ring}.
Next, we recall that our notion of ideals coincides with that of \cite{reiner75} and \cite{vigneras80} in the case of maximal orders, thereby seeing how the one-sided ideal theory of classical maximal $\cO$-orders is a special case of the semigroup-theoretic divisorial one-sided ideal theory developed in this section. We also recognize the abstract norm homomorphism $\eta$ of \autoref{sec:fact} as a generalization of the reduced norm of ideals (in the sense of \cite[\S24]{reiner75}).

\begin{lemma}
  Let $I \subset A$ and let $T$ be a classical $\cO$-order in $A$. The following are equivalent:
  \begin{enumerate}
    \enumequiv 

    \item $I$ is a fractional left $T$-ideal in the ring-theoretic sense (i.e., as in \cite[\S3.1.11]{mcconnell-robson01}).
    \item $I$ is a finitely generated $\cO$-module with $K I = A$ and $TI \subset I$ . \footnote{Here $K I = \{\, \lambda a \mid \lambda \in K, a \in I \,\} = \{\, \sum_{i=1}^n \lambda_i a_i \mid \text{$\lambda_i \in K, a_i \in I$} \,\} = K \otimes_{\cO} I$.} 
  \end{enumerate}
  If $T$ is maximal, then in addition the following statements are equivalent to the previous ones:
  \begin{enumerate}
    \enumequiv
    \setcounter{enumi}{2}
    \item $I$ is a divisorial fractional left $T$-ideal in the semigroup-theoretic sense (Definitions \ref{defi:ideals} and \ref{defi:divisorial}).
    \item $I$ is a divisorial fractional left $T$-ideal in the ring-theoretic sense (i.e., a reflexive fractional left $T$-ideal as in \cite[\S5.1]{mcconnell-robson01}).
  \end{enumerate}
\end{lemma}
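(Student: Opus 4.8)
The equivalences are almost entirely unwinding of definitions, combined with the identification of "finitely generated over $\cO$ with $KI=A$" with the order-theoretic conditions $I\cap A^\bullet\ne\emptyset$ and $\rc{T}{I}\cap A^\bullet\ne\emptyset$. The plan is to prove the cycle (a) $\Leftrightarrow$ (b) first (with $T$ an arbitrary classical $\cO$-order), then, assuming $T$ maximal, (a)/(b) $\Leftrightarrow$ (d) using the definition of reflexive modules in \cite[\S5.1]{mcconnell-robson01}, and finally (d) $\Leftrightarrow$ (c) by invoking \autoref{lemma:ring-ideals} together with \autoref{lemma:ring-max} and \autoref{lemma:rip} to match the ring-theoretic and semigroup-theoretic $v$-operations.

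\emph{(a) $\Leftrightarrow$ (b).} For (a) $\Rightarrow$ (b): if $I$ is a fractional left $T$-ideal in the ring-theoretic sense then $TI\subset I$ and there are $a,b\in A^\times$ with $a\in I$ and $Ib\subset T$; from $a\in I\cap A^\bullet$ we get $A=Aa=KTa\subset KI\subset A$, so $KI=A$, and $I\subset Tb^{-1}$ shows $I$ is contained in a finitely generated $\cO$-module, hence (as $\cO$ is Noetherian) is itself finitely generated over $\cO$. Conversely, given (b), pick any $a\in I$ with $a\in A^\bullet$ (such $a$ exists since $KI=A$ and a $K$-basis of $A$ can be scaled into $I$; at least one basis element is cancellative, or take an appropriate $K$-combination). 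Writing $I=\cO x_1+\dots+\cO x_n$, choose $0\ne c\in\cO$ with $cx_i\in T$ for all $i$; then $Ic\subset T$ exhibits $c\in\rc{T}{I}\cap A^\bullet$, so $I$ is a fractional left $T$-ideal in the semigroup sense, and $I-I\subset I$ is automatic since $I$ is an $\cO$-module, so it is a fractional left $T$-ideal in the ring sense.

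\emph{The maximal case.} Assume $T$ is a classical maximal $\cO$-order, hence by \autoref{lemma:ring-max} a maximal order in the semigroup sense. The equivalence (a) $\Leftrightarrow$ (d) is the statement that a fractional left $T$-module (in the ring sense) satisfying the finiteness conditions is reflexive exactly when it equals its double dual $I^{**}=\rc{T}{\lc{T}{I}}$, which for maximal orders is the standard fact from \cite[\S5.1]{mcconnell-robson01}; I would cite this and note that the ring-theoretic dual $\lc{T}{I}=\{q\in A\mid qI\subset T\}$ literally coincides with the semigroup-theoretic $I^{-1}=\lc{\cO_r(I)}{I}$ because $\cO_l(I)=\cO_r(I^{-1})=T$ by \hyperref[mid:v]{Lemma \ref*{lemma:mid}.\ref*{mid:v}} (which applies once we know $I$ is a fractional left $T$-ideal in the semigroup sense). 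This last point — that the two notions of inverse agree, so that the two notions of divisorial closure $I_v$ agree — is exactly \autoref{lemma:ring-ideals} applied to $I$ and $I^{-1}$ (whose left orders are $T$), giving (c) $\Leftrightarrow$ (d).

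\emph{Main obstacle.} The only genuinely non-formal step is checking that, for $T$ maximal, the \emph{ring-theoretic} divisorial closure (double dual via $\lc{T}{-}$ and $\rc{T}{-}$) coincides with the \emph{semigroup-theoretic} one of \autoref{defi:divisorial} (double application of $(-)^{-1}=\rc{\cO_l(-)}{-}$). This requires knowing $\cO_l(I)=T=\cO_r(I^{-1})$, which is where maximality enters via \hyperref[frac:max]{Lemma \ref*{lemma:frac}.\ref*{frac:max}} and \hyperref[mid:v]{Lemma \ref*{lemma:mid}.\ref*{mid:v}}; once these identifications are in place, $I^{-1}$ in both senses is the same set, so $I_v$ in both senses is the same, and all four conditions line up. Everything else is routine module-finiteness bookkeeping over the Dedekind (hence Noetherian) ring $\cO$, which I would not spell out in full.
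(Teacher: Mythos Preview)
Your proposal is correct and follows essentially the same route as the paper: unwind definitions for (a) $\Leftrightarrow$ (b), then handle the maximal case via (a) $\Leftrightarrow$ (d) and (c) $\Leftrightarrow$ (d), the latter by matching the ring- and semigroup-theoretic inverses using \autoref{lemma:ring-ideals}. The one place where the paper is crisper is (a) $\Rightarrow$ (d): rather than citing ``a standard fact from \cite[\S5.1]{mcconnell-robson01}'' (which is only where reflexive is defined), the paper observes that a classical maximal $\cO$-order in $A$ is a Dedekind prime ring, so every fractional one-sided ideal is invertible and hence automatically divisorial; you should name that mechanism explicitly. (Also a small slip: for a left ideal the double dual is $\lc{T}{\rc{T}{I}}$, not $\rc{T}{\lc{T}{I}}$.)
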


\begin{proof}

  (a) $\Rightarrow$ (b): Recall that $I$ is a fractional left $T$-ideal in the ring-theoretic sense if $TI \subset I$, $I + I \subset I$ and there exist $x,y \in A^\times$ with $x \in I$ and $Iy \subset T$.
  $\cO$ is the center of $T$, and $T$ is finitely generated over the noetherian ring $\cO$. 
  Since $Iy \subset T$, therefore also $I$ is a finitely generated $\cO$-module.
  Writing $x^{-1} = rc^{-1}$ with $r \in T^\bullet$ and $c \in \cO^\bullet$ we see that $c = rx \in I \cap \cO^\bullet$.
  If $a \in A$ is arbitrary, then $a = r'd^{-1}$ with $r' \in T$, $d \in \cO^\bullet$ and therefore $a = (r'c)(c^{-1}d^{-1}) \in KI$.

  (b) $\Rightarrow$ (a): Certainly $TI \subset I$ and $I + I \subset I$. We have to find $x,y \in A^\times$ with $x \in I$ and $Iy \subset T$.
  Since $KI = A$, there exist $\lambda \in K^\times$ and $x \in I \cap A^\times$ with $1=\lambda x$ (in fact even $x \in K^\times$).
  If $I = {}_{\cO} \langle y_1,\ldots,y_l \rangle$ with $y_1,\ldots,y_l \in I$, then due to $KT = A$ there exists a common denominator $y \in \cO^\bullet$ with $y_i y \in T$, hence $Iy \subset T$.

  \smallskip
  Let now $T$ be maximal.
  (d) $\Rightarrow$ (a) is trivial, and (a) $\Rightarrow$ (d) follows because $T$ is a Dedekind prime ring, and hence every fractional left $T$-ideal (in the ring-theoretic sense) is invertible (see \cite[\S5.2.14]{mcconnell-robson01} or \cite[\S22,\S23]{reiner75} for the more specific case where $R$ is a maximal order in a CSA), and therefore divisorial.

  (c) $\Leftrightarrow$ (d) follows from \autoref{lemma:ring-ideals}.
\end{proof}

A subset $I \subset A$ satisfying the second condition of the previous lemma and additionally $\cO_l(I) = T$ is considered to be a left $T$-ideal in \cite{reiner75} and \cite{vigneras80}. Thus, a left $T$-ideal in the sense of \cite{reiner75,vigneras80} is (in our terms) a fractional left $T$-ideal in the ring-theoretic sense with $\cO_l(I) = T$. If $T$ is maximal, then the extra condition $\cO_l(I) = T$ is trivially satisfied, and the definitions are equivalent, but for a non-maximal order the definitions do not entirely agree (we will only need to work with ideals of maximal orders). 

Since all $I \in \cF_v(\alpha)$ are invertible (i.e., $II^{-1} = \cO_l(I)$ and $I^{-1}I=\cO_r(I)$ for the ring-theoretic products), the $v$-product coincides with the usual proper product of ideals: $I \cdot_v J = I \cdot J$ whenever $I, J \in \cF_v(\alpha)$ with $\cO_r(I) = \cO_l(J)$.
Therefore, $\cF_v(\alpha)$ is the groupoid of all normal ideals of $A$ in Reiner's terminology ($\cO$ is fixed implicitly).

To be able to apply our abstract results we still have to check that \ref{A:first} through \ref{A:last} are true for $\alpha$:
\ref{A:acc} follows because every $R \in \alpha$ is noetherian, while \ref{A:bdd} is true because every fractional left $R$-ideal with $R \in \alpha$ in fact even contains a non-zero element of the center (cf. \cite[Prop. 5.3.8(i) and (ii)]{mcconnell-robson01} or see ``(b) $\Rightarrow$ (a)'' of the last proof). Since every fractional left (right) $R$-ideal is divisorial, $\ref{A:mod}$ follows from the modularity of the lattice of left (right) $R$-modules.

Writing $\cF^\times(\cO)$ for the non-zero fractional ideals of the commutative Dedekind domain $\cO$, and $\bG$ for the universal vertex group of $\cF_v(\alpha)$, we have the following.

\begin{lemma} \label{lemma:eta-nr}
  If $R, R' \in \alpha$ and $\cP \in \bG$, then $\cP_R \cap \cO = \cP_{R'} \cap \cO \in \max(\cO)$ and there is a canonical bijection
  \[
    \{\, \cP \mid \cP \in \bG \text{ maximal integral \}} \to \max(\cO), 
  \]
  inducing an isomorphism of free abelian groups $r \colon \bG \isomto \cF^\times(\cO)$.
  The inverse map is given by $\fp \mapsto (\fP)$ where $\fP$ is the unique maximal (two-sided) $R$-ideal lying over $\fp$.
  If $R$ is unramified at $\fp$, then $\fP = R \fp$.

  If all residue fields of $\cO$ are finite, and $\eta\colon \cF_v(\alpha) \to \bG$ is the abstract norm homomorphism, then $r \circ \eta = \nr_{A/K}$.
\end{lemma}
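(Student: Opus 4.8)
The plan is to note that both $\nr_{A/K}$ and $r\circ\eta$ are groupoid homomorphisms $\cF_v(\alpha)\to\cF^\times(\cO)$ (the target regarded as a one-object groupoid) and to compare them on a generating set. Multiplicativity of the reduced norm of normal ideals, $\nr_{A/K}(IJ)=\nr_{A/K}(I)\,\nr_{A/K}(J)$ whenever $\cO_r(I)=\cO_l(J)$, is classical (\cite[\S24]{reiner75}), while $\eta$ and $r$ are homomorphisms by construction. Since $\cI_v(\alpha)$ is the category of integral elements of the arithmetical groupoid $\cF_v(\alpha)$ it is atomic (\autoref{prop:fact-freeish-grpd}), and $\cF_v(\alpha)=\quo(\cI_v(\alpha))$ by \autoref{lemma:gb}; hence the maximal integral elements of $\cF_v(\alpha)$---equivalently, the maximal divisorial left $R$-ideals with $R\in\alpha$---generate $\cF_v(\alpha)$ as a groupoid. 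As both maps are moreover invariant under conjugation by elements of $Q^\bullet$ (for $r\circ\eta$ because $\eta$ is, as established above; for $\nr_{A/K}$ because reduced norms are conjugation invariant), it suffices to prove $r(\eta(P))=\nr_{A/K}(P)$ for one maximal divisorial left $R$-ideal $P$ of a conveniently chosen $R\in\alpha$.

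Fix such a $P$. Since $\eta$ restricts to the lower bound $\Phi$ on atoms (\autoref{deflemma:abstract-norm}), $\eta(P)=\Phi(P)$, a prime of $\bG_+$ by \hyperref[max-int:prime]{Lemma \ref*{lemma:max-int}.\ref*{max-int:prime}}. Its representative in the vertex group at $R=\cO_l(P)$ is the largest divisorial two-sided $R$-ideal contained in $P$; but $R/P$ is a simple left $R$-module, so $\Ann_R(R/P)$ is a maximal two-sided $R$-ideal contained in $P$, and therefore $\Phi(P)_R=\Ann_R(R/P)=:\fP$ is the unique maximal two-sided $R$-ideal lying over $\fp:=\fP\cap\cO\in\max(\cO)$. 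By the definition of $r$ already established in this lemma, $r(\eta(P))=r((\fP))=\fp$, so it remains to show $\nr_{A/K}(P)=\fp$.

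From $P\supseteq\fP\supseteq R\fp$ one gets $P_\fq=R_\fq$ for every $\fq\in\max(\cO)\setminus\{\fp\}$, so $\nr_{A/K}(P)$ is a power of $\fp$; and since the reduced norm of ideals commutes with localization, I may replace $\cO$ by $\cO_\fp$ and assume $\cO$ is a complete discrete valuation ring with finite residue field $k$, $q:=\card k$. Then $R/P$ is the (up to isomorphism unique) simple left $R/\fP$-module $V$. Write $A\cong M_m(D)$ with $D$ a central division $K$-algebra, $\dim_K D=d^2$, and put $n=md$ (so $\dim_K A=n^2$); after conjugating, $R=M_m(\Delta)$ for the unique maximal order $\Delta$ of $D$, and $\fP$ corresponds to the maximal ideal $\fP_\Delta$ of $\Delta$, whence $R/\fP\cong M_m(\Delta/\fP_\Delta)$. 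The residue ring $\Delta/\fP_\Delta$ is a finite division ring, hence a field by Wedderburn's little theorem, and by the structure theory of division algebras over complete local fields $[\Delta/\fP_\Delta:k]=d$. Consequently $\dim_k V=m\,[\Delta/\fP_\Delta:k]=md=n$, so $\card{R/P}=q^{\,n}$; the reduced-norm index formula for maximal orders, $\card{R/P}=\card{\cO/\nr_{A/K}(P)}^{\,n}$ (\cite[\S24]{reiner75}), then forces $\card{\cO/\nr_{A/K}(P)}=q$, i.e.\ $\nr_{A/K}(P)=\fp$.

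The step I expect to be the crux is the identity $[\Delta/\fP_\Delta:k]=d$: this is precisely where finiteness of the residue fields of $\cO$ is indispensable, via Wedderburn's little theorem (which collapses $\Delta/\fP_\Delta$ to a commutative field) together with the structure theory of central division algebras over complete local fields (identifying that field as the residue field of a maximal unramified subfield of $D$, of degree $d$). Over an infinite residue field $\Delta/\fP_\Delta$ may be genuinely noncommutative of $k$-dimension larger than $d$, so that $\dim_k V=m\,[\Delta/\fP_\Delta:k]$ exceeds $n$ and $\nr_{A/K}(P)$ comes out strictly larger than $\fp$, breaking the exact normalization that matches $r\circ\eta$ to $\nr_{A/K}$; everything else is bookkeeping resting on the multiplicativity of both maps and the identity $\eta=\Phi$ on atoms.
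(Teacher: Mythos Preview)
Your argument is essentially correct and follows the same overall strategy as the paper: both maps are groupoid homomorphisms into $\cF^\times(\cO)$, the atoms of $\cI_v(\alpha)$ generate $\cF_v(\alpha)$, so it suffices to check equality on maximal integral elements. The paper then simply cites \cite[Theorem 24.13]{reiner75} for $\nr_{A/K}(P)=\fp$, whereas you unpack this by localizing/completing at $\fp$, identifying $R/P$ with the simple $M_m(\Delta/\fP_\Delta)$-module, and invoking the index formula from \cite[\S24]{reiner75}. So your route is a more hands-on derivation of exactly the Reiner result the paper quotes; the payoff is that you make visible where the hypothesis of finite residue fields enters (Wedderburn's little theorem and $[\Delta/\fP_\Delta:k]=d$), which the paper's citation hides.

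One slip in your reduction: conjugation invariance of $\eta$ and of $\nr_{A/K}$ does \emph{not} let you reduce to ``one maximal divisorial left $R$-ideal $P$ of a conveniently chosen $R\in\alpha$''. Distinct maximal orders in $\alpha$ need not be conjugate (there are finitely many conjugacy classes, generally more than one), and even for fixed $R$ the maximal left $R$-ideals lying over distinct primes of $\cO$ are never conjugate. Fortunately your subsequent computation does not exploit any special choice of $P$ or $R$---it treats an arbitrary maximal integral $P$---so the proof stands once you drop that sentence and simply say ``fix an arbitrary maximal integral element $P\in\cI_v(\alpha)$ with $R=\cO_l(P)$''. Also, ``replace $\cO$ by $\cO_\fp$'' followed by ``assume $\cO$ is a complete DVR'' conflates localization and completion; both steps are legitimate here since reduced norms commute with each, but say so.
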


\begin{proof}
  All but the last statement follow from \cite[Theorem 22.4]{reiner75}.
  Since $r \circ \eta$ and $\nr_{A/K}$ are both homomorphisms $\cF_v(\alpha) \to \cF^\times(\cO)$, it suffices to verify equality for $M$ a maximal integral left $R'$-ideal with $R' \in \alpha$, where it holds due to \cite[Theorem 24.13]{reiner75}.
\end{proof}

\section{Proof of \autoref{thm:main-transfer}} \label{sec:maxord-transfer}

\begin{center}
  \emph{Throughout this section, let $K$ be a global field and $\cO$ be a holomorphy ring in $K$. \footnote{For us, $\cO$ is a holomorphy ring if it is an intersection of all but finitely many of the valuation domains associated to valuations of $K$.} Furthermore, let $A$ be a central simple algebra over $K$, and $R$ a classical maximal $\cO$-order.
  }
\end{center}

Setting $P_A = \{\, aO \mid a \in K^\times, \text{ $a_v > 0$ for all archimedean places $v$ of $K$ where $A$ is ramified} \,\}$, and denoting by $\Cl_A(\cO) = \cF^\times(\cO) / P_A$ the corresponding ray class group, we have the following.

\begin{lemma} \label{lemma:clgrp-iso}
  Let $r$ be as in \autoref{lemma:eta-nr}. Then $r$ induces an isomorphism
  \[
    \bG / P_{R^\bullet} \,\cong\, \Cl_A(\cO),
  \]
  where $P_{R^\bullet} = \{\, \eta(Rx) \mid x \in A^\times \,\} \subset \bG$.
\end{lemma}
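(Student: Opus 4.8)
The plan is to deduce the statement from \autoref{lemma:eta-nr}. That lemma supplies an isomorphism of free abelian groups $r\colon \bG \isomto \cF^\times(\cO)$, and—since $\cO$ is a holomorphy ring in a global field, so that all of its residue fields are finite—the identity $r \circ \eta = \nr_{A/K}$, where $\nr_{A/K}$ denotes the reduced norm of ideals in the sense of \cite[\S24]{reiner75}. Now $P_{R^\bullet}$ is a subgroup of $\bG$ (by construction it is the group of fractions $\quo(\eta(\cH_R))$, cf.\ the lemma preceding \autoref{thm:transfer-semigroup}), so the group isomorphism $r$ carries it to a subgroup $r(P_{R^\bullet})$ of $\cF^\times(\cO)$ and induces an isomorphism $\bG/P_{R^\bullet} \cong \cF^\times(\cO)/r(P_{R^\bullet})$. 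Hence it suffices to prove the equality of subgroups $r(P_{R^\bullet}) = P_A$.

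To identify the left-hand side, I would fix $x \in A^\times$ and compute $r(\eta(Rx)) = \nr_{A/K}(Rx)$. The reduced norm of a principal ideal is the $\cO$-ideal generated by the reduced norm of a generator, i.e.\ $\nr_{A/K}(Rx) = \nr_{A/K}(x)\,\cO$ (\cite[\S24]{reiner75}; this can be checked prime by prime). Therefore
\[
  r(P_{R^\bullet}) \;=\; \{\, \nr_{A/K}(x)\,\cO \mid x \in A^\times \,\}.
\]
The crucial input is then the Hasse--Schilling--Maass norm theorem (\cite[\S33]{reiner75}), which identifies the image of the reduced norm:
\[
  \nr_{A/K}(A^\times) \;=\; \{\, a \in K^\times \mid a_v > 0 \text{ for all archimedean places $v$ of $K$ where $A$ is ramified} \,\};
\]
in the function-field case there are no archimedean places and this just asserts surjectivity of $\nr_{A/K}$ onto $K^\times$. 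Combining the two displays gives $r(P_{R^\bullet}) = \{\, a\cO \mid a \in K^\times,\ a_v > 0 \text{ for all archimedean $v$ where $A$ is ramified} \,\} = P_A$, and thus $r$ descends to the desired isomorphism $\bG/P_{R^\bullet} \cong \cF^\times(\cO)/P_A = \Cl_A(\cO)$.

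I expect the only non-formal ingredient to be the Hasse--Schilling--Maass norm theorem; everything else is bookkeeping with the already-established properties of $r$ and $\eta$ together with the elementary identity $\nr_{A/K}(Rx) = \nr_{A/K}(x)\cO$. A minor point worth spelling out explicitly is that $P_A$ is, by its very definition, precisely the set of principal $\cO$-ideals admitting a generator that is positive at the relevant archimedean places, so once the norm theorem pins down exactly which elements of $K^\times$ arise as reduced norms the equality $r(P_{R^\bullet}) = P_A$ is immediate; one should also observe in passing that this set of admissible generators is a subgroup of $K^\times$ (closure under inversion uses that the archimedean ramified places are real), which reconfirms that $P_A$, and hence $\Cl_A(\cO)$, is well defined.
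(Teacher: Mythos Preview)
Your proof is correct and follows essentially the same route as the paper: both invoke \autoref{lemma:eta-nr} to obtain $r\circ\eta=\nr_{A/K}$, use $\nr_{A/K}(Rx)=\nr_{A/K}(x)\,\cO$, and then apply the Hasse--Schilling--Maass norm theorem to identify $r(P_{R^\bullet})$ with $P_A$. Your write-up simply makes the structural logic (passing to the quotient via the isomorphism $r$) more explicit than the paper's terse two-line version.
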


\begin{proof}
  By \autoref{lemma:eta-nr}, $r \circ \eta = \nr_{A/K}$.
  The isomorphism follows because $\nr(Rx) = \cO \nr(x)$ for all $x \in A^\times$, and $\nr(A^\times) = \{\, a \in K^\times \mid a_v > 0 \text{ for all archimedean places $v$ of $K$ where $A$ is ramified } \,\}$ by the Hasse-Schilling-Mass theorem on norms (\cite[Theorem 33.15]{reiner75}).
\end{proof}

\begin{lemma} \label{lemma:primes-in-every-class}
  For all classical maximal $\cO$-orders $R'$, and all $g \in \Cl_A(\cO)$, there exist infinitely many maximal left $R'$-ideals $I$ with $[\nr(I)] = g$.
\end{lemma}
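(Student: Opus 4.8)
The statement to prove is \autoref{lemma:primes-in-every-class}: for every classical maximal $\cO$-order $R'$ and every $g \in \Cl_A(\cO)$, there are infinitely many maximal left $R'$-ideals $I$ with $[\nr(I)] = g$. The natural approach is to pass through the reduced norm and use the bijection $r$ of \autoref{lemma:eta-nr} between maximal integral elements of $\bG$ and $\max(\cO)$, together with the fact that a maximal left $R'$-ideal $I$ corresponds (via $\eta$, composed with $r$) to the maximal ideal $\fp = \cP_{R'} \cap \cO$ it lies over, with $r(\eta(I)) = \nr(I) = \fp$ when $R'$ is unramified at $\fp$. So the plan is: first reduce the statement about ideal classes to a statement about maximal ideals of $\cO$, then invoke Chebotarev-type density to produce infinitely many such $\fp$.

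\emph{Step 1: Reduce to maximal ideals of $\cO$.} Given $g \in \Cl_A(\cO) = \cF^\times(\cO)/P_A$, I want to find infinitely many $\fp \in \max(\cO)$ whose class in $\Cl_A(\cO)$ equals $g$, with the extra property that $R'$ is unramified at $\fp$ (only finitely many primes are ramified in $A$, so discarding them is harmless). For each such $\fp$, let $\fP$ be the unique maximal two-sided $R'$-ideal over $\fp$; then $\fP = R'\fp$ since $R'$ is unramified at $\fp$, and $\fP$ is itself a maximal left $R'$-ideal (being a maximal ideal of the simple artinian quotient $R'/R'\fp \cong M_n(\cO/\fp)$-type object — more precisely, since $\fp$ is unramified, $R'/R'\fp$ is a matrix algebra over the residue field, which is simple, so $R'\fp$ is already maximal as a one-sided ideal). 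Taking $I = \fP = R'\fp$ gives $\nr(I) = \fp$ by \autoref{lemma:eta-nr}, and hence $[\nr(I)] = [\fp] = g$. Distinct primes $\fp$ give distinct ideals $I$, so infinitely many $\fp$ yield infinitely many $I$.

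\emph{Step 2: Infinitely many primes in each ray class.} It remains to show that each class $g \in \Cl_A(\cO)$ contains infinitely many maximal ideals of $\cO$. Now $\Cl_A(\cO)$ is a ray class group of the holomorphy ring $\cO$ in the global field $K$ (with modulus the product of the archimedean ramified places), so this is exactly the statement that every ray class of a global field contains infinitely many primes — a standard consequence of the generalized Dirichlet theorem / Chebotarev density theorem applied to the ray class field (for number fields this is classical; for function fields it is the analogue, also classical). I can cite this directly. Since only finitely many primes are excluded (those dividing the modulus, those where $A$ is ramified, and those lying under the finitely many primes not in the holomorphy ring), the infinitude survives.

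\emph{Main obstacle.} The density-theorem input (Step 2) is the substantive point but is entirely standard and can be cited. The only genuine care needed is in Step 1: verifying that when $R'$ is unramified at $\fp$ the two-sided ideal $R'\fp$ is in fact a \emph{maximal one-sided} (left) $R'$-ideal, and that $\nr(R'\fp) = \fp$ exactly (not $\fp^n$). This follows from the local structure theory of maximal orders — at an unramified prime, $R'_\fp \cong M_n(\cO_\fp)$ and the maximal left ideals lying over $\fp$ are the "rows", each with reduced norm $\fp$; the two-sided ideal $R'_\fp \fp$ corresponds to $M_n(\fp)$, which is \emph{not} maximal as a one-sided ideal when $n > 1$. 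So in fact, for $n>1$, I should \textbf{not} take $I = R'\fp$ but rather a maximal left $R'$-ideal $I$ with $I \supsetneq R'\fp$, i.e. one of the finitely many maximal left ideals over $\fp$; any such has $\nr(I) = \fp$ by \autoref{lemma:eta-nr} (the bijection there sends maximal integral $\cP$ over $\fp$ to $\fp$), hence $[\nr(I)] = g$, and distinct $\fp$ still give distinct $I$. This is the point to get right; everything else is bookkeeping.
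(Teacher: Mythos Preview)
Your proposal is correct and follows essentially the same route as the paper: first invoke density to get infinitely many $\fp \in \max(\cO)$ in the given ray class, then take any maximal left $R'$-ideal $M$ lying over such a $\fp$ and use that $\nr(M) = \fp$ (the paper cites \cite[Theorem 24.13]{reiner75} or \autoref{lemma:eta-nr} for this). Your self-correction in the ``Main obstacle'' paragraph is exactly right---$R'\fp$ is not a maximal one-sided ideal when $n>1$---and the paper never makes that detour; note also that your restriction to unramified $\fp$ is harmless but unnecessary, since $\nr(M)=\fp$ holds for any maximal left $R'$-ideal $M$ over $\fp$, ramified or not.
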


\begin{proof}
  Let $g \in \Cl_A(\cO)$. Then there exist infinitely many distinct maximal ideals $\fp$ of $\cO$ with $[\fp] = g$: 
  The number field case for $\cO=\cO_K$, the ring of algebraic integers, can be found in \cite[Corollary 2.11.16]{ghk06} or \cite[Corollary 7 to Proposition 7.9]{narkiewicz04}. The general case then follows because $\cO$ is obtained from $\cO_K$ by localizing at finitely many maximal ideals, hence the induced epimorphism $\Cl_A(\cO_K) \to \Cl_A(\cO)$ yields the statement.
  For the function field case see \cite[Proposition 8.9.7]{ghk06}.

  For each $\fp \in \max(\cO)$ with $[\fp]=g$ and every maximal left $R'$-ideal $M$ with $\fp \subset M$, we have $[\nr(M)] = [\fp] = g$ (\cite[Theorem 24.13]{reiner75}, or use \autoref{lemma:eta-nr}).
\end{proof}

In the following equivalent characterizations of the first condition of \autoref{thm:main-transfer}, ``left'' may be replaced by ``right'' in each statement; this follows easily from the first statement.
We write $\LCl(R)$ for the finite set of isomorphism classes of fractional left $R$-ideals, i.e., $[I] = [J]$ in $\LCl(R)$ if and only if $J = Ix$ with $x \in A^\times$. 
The reduced norm induces a surjective map of finite sets $\mu_R\colon \LCl(R) \to \Cl_A(\cO)$, given by $[I] \mapsto [\nr(I)]$.

\begin{lemma} \label{lemma:sff-equiv} The following are equivalent.
  \begin{enumerate}
    \enumequiv

    \item A fractional left $R'$-ideal with $R'$ conjugate to $R$ is principal if and only if $\nr(I) \in P_A$.
    \item A fractional left $R$-ideal is principal if and only if $\nr(I) \in P_A$.
    \item Every fractional left $R$-ideal $I$ with $[\nr(I)] = \vec 0$ is principal.
    \item For the map of finite sets $\mu_R\colon \LCl(R) \to \Cl_A(\cO)$ it holds that $\card{\mu_R^{-1}(\vec 0)} = 1$.
    \item \label{sffe:e} Every stably free left $R$-ideal is free.
    \item Every finitely generated projective $R$-module that is stably free is free.
  \end{enumerate}
\end{lemma}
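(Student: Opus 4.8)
The plan is to establish the cycle of implications among (a)--(f), exploiting that most of the equivalences are either formal consequences of definitions or of Lemma \ref{lemma:clgrp-iso} and the conjugacy-invariance of $\eta$ established in the previous lemmas. First, the equivalence of (a) and (b): the implication (a) $\Rightarrow$ (b) is trivial, and for (b) $\Rightarrow$ (a) I would use that if $R' = dRd^{-1}$ with $d \in A^\times$, then $I$ is a fractional left $R'$-ideal if and only if $d^{-1}Id$ is a fractional left $R$-ideal, that $I$ is principal if and only if $d^{-1}Id$ is principal, and that $\nr(d^{-1}Id) = \nr(I)$ (conjugation-invariance of the reduced norm of ideals, which is the ideal-theoretic content of the lemma preceding \autoref{lemma:clgrp-iso}); thus $\nr(I) \in P_A$ iff $\nr(d^{-1}Id) \in P_A$ iff $d^{-1}Id$ is principal iff $I$ is principal.

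Next I would handle (b) $\Leftrightarrow$ (c) $\Leftrightarrow$ (d). For (b) $\Rightarrow$ (c): if $[\nr(I)] = \vec 0$ in $\Cl_A(\cO) = \cF^\times(\cO)/P_A$, then $\nr(I) \in P_A$, so (b) gives principality. For (c) $\Rightarrow$ (b): one direction ($I$ principal $\Rightarrow$ $\nr(I) = \cO\nr(x) \in P_A$) follows from the Hasse--Schilling--Maass computation $\nr(A^\times) = \{a \in K^\times \mid a_v > 0 \text{ at all archimedean ramified }v\}$ used in \autoref{lemma:clgrp-iso}; the other direction is exactly (c). For (c) $\Leftrightarrow$ (d): the map $\mu_R\colon \LCl(R) \to \Cl_A(\cO)$ sends $[I] \mapsto [\nr(I)]$, and $\mu_R^{-1}(\vec 0)$ is the set of isomorphism classes of left $R$-ideals with $[\nr(I)] = \vec 0$; since the class of $R$ itself lies in this fiber, $\card{\mu_R^{-1}(\vec 0)} = 1$ says precisely that every such $I$ is isomorphic to $R$, i.e.\ principal --- which is (c). (That $\mu_R$ is well-defined and surjective, and that $\LCl(R)$ is finite, are standard facts about classical maximal orders from \cite{reiner75}, which I would cite.)

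Finally the equivalence with the ``stably free'' conditions (e) and (f). Here I would argue: (f) $\Rightarrow$ (e) is immediate since a left $R$-ideal is in particular a finitely generated projective $R$-module (projectivity of one-sided ideals of a maximal order over a Dedekind domain, \cite[\S18,\S21]{reiner75}). For (e) $\Rightarrow$ (f): a finitely generated projective $R$-module $P$ over a classical maximal order decomposes, by the theory in \cite[\S18, \S35]{reiner75}, as $P \cong R^{n} \oplus I$ for some left $R$-ideal $I$ (every f.g.\ projective module is a direct sum of ideals, and one can absorb all but one summand into a free module using stable isomorphism / Steinitz-type cancellation for maximal orders); $P$ stably free then forces $I$ to be stably free, and (e) gives $I$ free, whence $P$ free. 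For the link to (c)/(d): a left $R$-ideal $I$ is stably free iff $I \oplus R^n \cong R^{n+1}$ for some $n$; taking reduced norms of ideals (which is multiplicative on direct sums in the sense that it computes the Steinitz class) this forces $[\nr(I)] = \vec 0$, so (c) applies to give $I$ principal, i.e.\ free --- this is (e). Conversely, if $[\nr(I)] = \vec 0$ then a Steinitz-class argument shows $I \oplus R \cong R \oplus R$ (the Steinitz invariant is the only obstruction to such an isomorphism for maximal orders over Dedekind domains, \cite[\S35]{reiner75}), so $I$ is stably free, and (e) makes it free hence principal --- this is (c). Thus (c) $\Leftrightarrow$ (e), closing the circle.

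The main obstacle I anticipate is the bookkeeping in the last paragraph: precisely invoking the structure theory of finitely generated projective modules over classical maximal orders (the decomposition $P \cong R^n \oplus I$, the Steinitz/Roiter theorem that the isomorphism class of $P$ is determined by its rank together with $[\nr(I)] \in \Cl_A(\cO)$, and the resulting characterization of stable freeness as $[\nr(I)] = \vec 0$). All of this is in \cite[\S\S18, 34, 35]{reiner75}, but one must be careful that the relevant cancellation statement holds for \emph{ideals} and not merely for modules of large rank, which is exactly where the ray class group $\Cl_A(\cO)$ (rather than the plain ideal class group of $\cO$) enters. Once these citations are in place, each individual implication is short.
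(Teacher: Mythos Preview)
Your proposal is correct and follows essentially the same route as the paper. The paper dismisses the equivalence of (a)--(d) as ``trivial'' and then argues (f)~$\Rightarrow$~(e) via hereditariness, (e)~$\Rightarrow$~(f) via the decomposition $M \cong R^n \oplus I$ (citing \cite[Theorem~27.8]{reiner75}), and (d)~$\Leftrightarrow$~(e) by invoking the isomorphism $\Cl_A(\cO) \cong \Cl(R)$ with the projective class group (citing Swan \cite[Corollary~9.5]{swan80} and \cite[Theorem~35.14]{reiner75}); your Steinitz-class argument for (c)~$\Leftrightarrow$~(e) is a concrete unwinding of exactly this last identification, so the two proofs coincide in substance.
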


\begin{proof}
  The equivalence of (a), (b), (c) and (d) is trivial. 
  The remaining equivalences follow from standard literature: 
  (f) $\Rightarrow$ (e) is true because $R$ is hereditary noetherian.
  
  (e) $\Rightarrow$ (f): 
  Let $M \ne \vec 0$ be a stably free finitely generated projective $R$-module. 
  Then $M \cong R^n \oplus I$ for some left $R$-ideal $I$ and $n \in \bN_0$ (\cite[Theorem 27.8]{reiner75} or \cite[\S5.7.8]{mcconnell-robson01}). $I$ is stably free and hence free by \ref*{sffe:e}, but then so is $M$.

  To see (d) $\Leftrightarrow$ (e) it suffices to recall that $\Cl_A(\cO)$ is isomorphic to the projective class group $\Cl(R)$ (see e.g. \cite[Corollary 9.5]{swan80}) and that $\LCl(R)$ is just the set of isomorphism classes of locally free $R$-modules of rank one, i.e., the map $\mu_R$ corresponds to $\mathsf{LF}_1 \to \Cl(R), [I] \mapsto [I] - [R]$ in the notation of \cite{swan80}. (See also \cite[Theorem 35.14]{reiner75} or \cite{froehlich75} for the number field case.)
\end{proof}

\begin{proof}[Proof of \autoref{thm:main-transfer}]
  By \autoref{thm:transfer-ring}, $R$ is an arithmetical maximal order in $Q$. We verify conditions (i) and (ii) of \autoref{thm:transfer-semigroup}.
  Let $I$ be a fractional left $R$-ideal.
  By \autoref{lemma:clgrp-iso}, $\eta(I) \in P_{R^\bullet}$ if and only if $\nr(I) \in P_A$.
  By \autoref{lemma:sff-equiv}, and the fact that every stably free left $R$-ideal is free, this is the case if and only if $I$ is principal, thus condition (i) holds. Condition (ii) holds due to \autoref{lemma:primes-in-every-class}. By \autoref{lemma:clgrp-iso}, $C \cong \Cl_A(\cO)$, and by \autoref{lemma:primes-in-every-class}, therefore $C = C_M$. Hence there exists a transfer homomorphism $\theta\colon R^\bullet \to \cB(\Cl_A(\cO))$.
  The remaining claims in the theorem follow from this by \autoref{prop:zss}.
\end{proof}

\begin{remark}
  If more generally $\cO'$ is an arbitrary Dedekind domain with quotient field the global field $K$, then there is a transfer homomorphism to either $\cB(\Cl_A(\cO'))$ or $\cB(\Cl_A(\cO') \setminus \{ \vec 0 \})$, depending on whether or not $\cO'$ contains prime elements.
  Only \autoref{lemma:primes-in-every-class} has to be adapted: $\cO'$ is a localization of a holomorphy ring $\cO$, and hence there is an epimorphism $\Cl_A(\cO) \to \Cl_A(\cO')$. This implies that every class $g \in \Cl_A(\cO') \setminus \{ \vec 0 \}$ contains a maximal ideal  (see \cite{claborn65} for details). Therefore, for all classical maximal $\cO'$-orders $R'$ and all $g \in \Cl_A(\cO') \setminus \{ \vec 0 \}$, there exists a maximal left $R'$-ideal $I$ with $[\nr(I)]=g$. The trivial class however may or may not contain a maximal ideal. 
  In either case, the statements 1--3 of \autoref{thm:main-transfer} hold true. 
  Thanks to Kainrath for pointing this out.
\end{remark}

\section{Proof of \autoref{thm:main-distances}} \label{sec:maxord-distances}

\emph{Throughout this section, let $\cO_K$ be the ring of algebraic integers in a number field $K$, $A$ a central simple algebra over $K$, and $R$ a classical maximal $\cO_K$-order in $A$ having a stably free left $R$-ideal that is not free. Furthermore, the discriminant of $A$ is denoted by
\[
  \fD = \prod_{\substack{\fp \in \max(\cO_K) \\ \textnormal{$A$ is ramified at $\fp$}}} \fp \ideal \cO_K .
\]}

The aim of this section is to prove Theorem \ref{thm:main-distances}.
The existence of a stably free left $R$-ideal that is not free implies that $A$ is a totally definite quaternion algebra and that $K$ is totally real (Note, that conversely, for all but finitely many isomorphism classes of such classical maximal $\cO_K$-orders in totally definite quaternion algebras there exist stably free left $R$-ideals that are non-free). We proceed in three subsections.

\subsection{Reduction.}

We state two propositions and show how they imply Theorem \ref{thm:main-distances}. The proofs of these two propositions will then be given in \autoref{subsec:proof-totdef}.

\begin{prop} \label{prop:delta-tot-def}
  There exists a totally positive prime element $p \in \cO_K$, a non-empty subset $E \subset \{\, 2,3,4 \,\}$ and for every $l \in \bN_0$ an atom $y_l \in \cA(R^\bullet)$ such that
  \[
    \sL_{R^\bullet}(y_l p) = \{\, 3 \,\} \,\cup\, (l + E).
  \]
  (We emphasize that $E$ does not depend on $l$.)
\end{prop}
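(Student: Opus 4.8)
The plan is to read off the sets of lengths of $R^\bullet$ through the category $\cH_R$ of principal one‑sided ideals, and to engineer the required element inside the $\fp$‑primary part of the arithmetical groupoid $\cF_v(\alpha)$ for a carefully chosen prime $p$. Throughout I use \autoref{prop:fact-bij} (with $d=1$): $\sL_{R^\bullet}(a)=\sL_{\cH_R}(Ra)$, and a rigid factorization of $Ra$ in $\cH_R$ is the same as a chain $R=J_0\supsetneq J_1\supsetneq\dots\supsetneq J_k=Ra$ in which every quotient step is a \emph{principal} maximal‑among‑principal one‑sided ideal. Such a chain need not be a maximal chain in $\cF_v(\alpha)$, because an atom of $\cH_R$ can be a $\cdot_v$‑product of several atoms of $\cI_v(\alpha)$ whenever the intermediate divisorial ideals fail to be principal; this is exactly the mechanism behind non‑half‑factoriality, and the whole construction is about controlling which ideals in a chain are principal.

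First I would fix the general picture. By \autoref{lemma:sff-equiv} the hypothesis that $R$ has a stably free non‑free left ideal means that the fibre $\mu_R^{-1}(\vec 0)$ of $\mu_R\colon\LCl(R)\to\Cl_A(\cO)$ has at least two elements, so there is a non‑principal ideal class $c$ with trivial reduced‑norm class; by the work of Kirschmer and Voight \cite{kirschmer-voight10,kirschmer-voight10cor} this already forces $A$ to be a totally definite quaternion algebra over a totally real field $K$ and, crucially, gives quantitative control over how $c$ is realized by maximal one‑sided ideals. Next I would choose the prime: by \autoref{lemma:primes-in-every-class} there are infinitely many totally positive prime elements $p\in\cO_K$ with $[\fp]=\vec 0$ and $\fp=p\cO_K$ unramified in $A$; for such $\fp$ the completion of $R$ is a matrix algebra over a discrete valuation ring, so the interval $[Rp^{N},R]$ in $\cF_v(\alpha)$ is governed by the well‑understood lattice of $\fp$‑power left ideals of $M_2$ over that ring, while \autoref{lemma:eta-nr} identifies $\eta$ with $\nr_{A/K}$. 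Strong approximation (equivalently the explicit computations of \cite{kirschmer-voight10}) lets me realize every class of $\mu_R^{-1}(\vec0)$ by maximal left ideals of norm $\fp$ once the residue field is large enough, so I can pick $p$ so that the principal ideals occurring inside the $\fp$‑primary tower $[Rp^{N},R]$ sit at a prescribed, $N$‑independent pattern of levels.

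With $p$ fixed, I would build $y_l$ as a generator of a principal left ideal $Ry_l$ with $\eta(Ry_l)=\cP^{\,l+c_0}$ ($\cP$ the vertex‑group element above $\fp$, $c_0$ a fixed constant) chosen inside the tower so that \emph{no} principal left ideal lies strictly between $R$ and $Ry_l$; by \autoref{prop:fact-bij} this makes $y_l\in\cA(R^\bullet)$ even though $Ry_l$ has length $l+c_0$ in $\cF_v(\alpha)$. Since $p$ is central, $Ry_lp=(Ry_l)\cdot_v\bigl(p\,\cO_r(Ry_l)\bigr)$, so $\eta(Ry_lp)=\cP^{\,l+c_0+2}$ and $Ry_lp$ lies at level $l+c_0+2$ of the tower. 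I would then compute $\sL_{R^\bullet}(y_lp)=\sL_{\cH_R}(Ry_lp)$ by enumerating the chains of principal ideals from $R$ down to $Ry_lp$: the short chains descend in one step to the atom $Ry_l$ and then split the central factor $p$ into two principal steps, giving length $3$; every other chain must thread through essentially the whole tower, and a local count at $\fp$ shows that the lengths of those chains form exactly $l+E$ for a non‑empty $E\subseteq\{2,3,4\}$, where $E$ only records the ($l$‑independent) ways a long chain can enter and leave the tower at its two ends. Together with \autoref{prop:fact-freeish-grpd} this yields $\sL_{R^\bullet}(y_lp)=\{3\}\cup(l+E)$.

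The main obstacle is the last two steps: pinning down the principal/non‑principal pattern of the $\fp$‑primary tower in terms of the non‑free class $c$, and extracting from it the precise length spectrum with $E$ independent of $l$. This is where one genuinely needs the explicit structure theory of maximal orders in totally definite quaternion algebras, and in particular the strong‑approximation and class‑number results of Kirschmer and Voight \cite{kirschmer-voight10,kirschmer-voight10cor}; everything else is bookkeeping with the one‑sided ideal theory developed in \autoref{sec:ideals} and the transfer/half‑factoriality statements of \autoref{sec:fact}.
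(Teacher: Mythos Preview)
Your framework---translate to $\cH_R$, build atoms out of non-principal maximal one-sided ideals, read off lengths from the chains of principal ideals---is the right one, but the construction you sketch differs from the paper's in a way that leaves the step you flag as ``the main obstacle'' genuinely unaddressed.

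The paper does \emph{not} work at a single prime. It first chooses a totally positive prime $p$ for which a non-principal stably free maximal right $R$-ideal $U$ of norm $p\cO_K$ exists (\autoref{prop:exist-max}), and then a \emph{second} totally positive prime $q$, coprime to $p$, together with an element $a$ of reduced norm $q$ lying in the intersection of all the relevant maximal orders and satisfying $a^2\ne\varepsilon q$ for any unit $\varepsilon$ (Lemmas~\ref{lemma:eps-in-center} and~\ref{lemma:exist-a}). With distinct non-principal maximal ideals $I\cong J\cong U$ of norm $q\cO_K$, the atom is $y_lS=(a^l\lbar{J}a^{-l})\,a^l\,I$, of norm $q^{l+2}$. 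The reason for two primes is that this $q$-primary ideal is \emph{primitive} and hence has a \emph{unique} rigid factorization in $\cI_v(\alpha)$; multiplying by the central element $p$ only introduces two additional $p$-factors $U_i,\lbar{U_i}$, which can be transposed to any pair of positions by \hyperref[ffg:transposition]{Proposition~\ref*{prop:fact-freeish-grpd}.\ref*{ffg:transposition}} precisely because $p$ and $q$ are coprime. This yields exactly $(r+1)\binom{l+4}{2}$ rigid factorizations in $\cI_v(\alpha)$, and a six-case analysis (according to where the two $p$-factors land) determines which of them give factorizations into atoms of $\cH_S$. The set $E$ records which of finitely many auxiliary ideals $V_i,W_i$ are principal, and is visibly independent of $l$.

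Your single-prime plan loses this mechanism on both ends. First, if $Ry_l$ has norm $\fp^{l+c_0}$ then $Ry_lp$ has norm $\fp^{l+c_0+2}$ and is \emph{not} primitive, so the relations among maximal integral elements of equal norm (products collapsing to $\fP$) intervene and you cannot enumerate factorizations by transposition alone; there is no clean substitute for the coprime-transposition argument. Second, and more seriously, you never construct $y_l$. Saying you will ``pick $p$ so that the principal ideals inside the $\fp$-primary tower sit at a prescribed, $N$-independent pattern of levels'' is not a construction: which ideals in the tree below $R$ are principal depends on the \emph{path}, not merely the depth, and nothing you cite produces, for \emph{every} $l\in\bN_0$, a principal ideal at depth $l+c_0$ with no principal proper ancestor. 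The paper's element $a$ supplies exactly that mechanism at the auxiliary prime $q$: the condition $a^2\ne\varepsilon q$ (obtained by playing an upper bound on optimal embeddings of $\cO_K[\sqrt{-q}]$ against a lower bound on representation numbers of the norm form, \autoref{lemma:exist-a}) guarantees primitivity of $(a^l\lbar{J}a^{-l})a^lI$ for all $l$ simultaneously, and hence atomicity via \autoref{lemma:an-atom}. That is the missing idea in your sketch.
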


\begin{prop} \label{prop:shift}
  If $L \in \cL(R^\bullet)$ and $n \in \bN$, then $n + L = \{\, n + l \,\mid\, l \in L \,\} \in \cL(R^\bullet)$.
\end{prop}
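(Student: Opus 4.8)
The plan is to imitate the classical proof of translation invariance of $\cL$ for commutative Krull monoids: produce a \emph{prime-like atom} $p \in \cA(R^\bullet)$, meaning an atom with
\[
  \sL_{R^\bullet}(ap) = 1 + \sL_{R^\bullet}(a) \qquad\text{for all } a \in R^\bullet .
\]
Granted such a $p$, an immediate induction gives $\sL_{R^\bullet}(ap^n) = n + \sL_{R^\bullet}(a)$, so if $L = \sL_{R^\bullet}(a) \in \cL(R^\bullet)$ and $n \in \bN$ then $n + L \in \cL(R^\bullet)$, which is the assertion. I would carry out everything on the ideal side: by \autoref{thm:transfer-ring} the order $R$ is an arithmetical maximal order, rigid factorizations of $a \in R^\bullet$ correspond to rigid factorizations of $Ra$ inside $\cH_R \subset \cF_v(\alpha)$ (\autoref{prop:fact-bij}), and the abstract norm $\eta$ is the reduced norm of ideals (\autoref{lemma:eta-nr}).

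For the construction of $p$, using the structure theory of classical maximal $\cO_K$-orders in totally definite quaternion algebras (where the work of Kirschmer and Voight is needed), I would select a totally positive prime $\pi \in \cO_K$ at which $A$ is unramified, so that $\fp = \pi\cO_K$ lies in the trivial ray class $\vec 0 \in \Cl_A(\cO_K)$, and such that there is a principal maximal left $R$-ideal $Rp$ above $R\fp$ with $p \in \cA(R^\bullet)$, $\cO_r(Rp)$ conjugate to $R$, and $\nr(p)\cO_K = \fp$; thus $\eta(Rp) = \Phi(Rp) = \cP$, the prime of $\bG$ corresponding to $\fp$ under \autoref{lemma:eta-nr}. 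The inclusion $1 + \sL_{R^\bullet}(a) \subseteq \sL_{R^\bullet}(ap)$ is trivial --- append $p$ to a factorization of $a$; on the ideal side this is the decomposition $Rap = Rp \cdot_v p^{-1}(Ra)p$ of \autoref{lemma:fact-bij-pre}. For the reverse inclusion, let $ap = v_1 \cdot\ldots\cdot v_m$ be a rigid factorization in $R^\bullet$, pass to the corresponding factorization of $Rap$ in $\cH_R$, and apply $\eta$: since $\pi$ may be taken coprime to $\nr(a)\cO_K$, the prime $\cP$ occurs in $\eta(Rap) = \cP\,\eta(Ra)$ with multiplicity exactly one, so precisely one of the $\cH_R$-atoms carries $\cP$; using the transposition rule \hyperref[ffg:transposition]{Proposition \ref*{prop:fact-freeish-grpd}.\ref*{ffg:transposition}} in $\cF_v(\alpha)$ that factor is moved to the far right, it is then pinned down to be $Rp$ up to a unit, and cancelling it on the right leaves a rigid factorization of $a$ of length $m - 1$, so $m - 1 \in \sL_{R^\bullet}(a)$.

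The main obstacle --- and the only place where the hypothesis of the section (a non-free stably free left $R$-ideal, equivalently $\lvert\mu_R^{-1}(\vec 0)\rvert > 1$) truly matters --- is making the choice of $\fp$ precise enough, and the transposition argument legitimate: one must ensure that among all maximal left ideals above $\fp$ of orders conjugate to $R$, exactly one lies in the trivial class of $\LCl(R)$ with right order conjugate to $R$, so that the rightmost factor produced by the transpositions is forced to equal $Rp$; and one must check that the sequence of transpositions carrying the $\cP$-factor to the right stays inside $\cH_R$, i.e. that the intermediate left and right orders remain conjugate to $R$. Both of these rest on the explicit description of ideal classes of maximal orders in totally definite quaternion algebras over totally real fields; the remainder of the argument is formal manipulation of rigid factorizations in the arithmetical groupoid.
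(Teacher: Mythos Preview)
Your approach has a genuine gap, and it differs substantially from the paper's proof.

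First, there is an internal tension you do not resolve: you ask for a single atom $p$ with $\sL(ap)=1+\sL(a)$ for \emph{all} $a$, but later invoke ``$\pi$ may be taken coprime to $\nr(a)\cO_K$'', which makes $p$ depend on $a$. If $p$ depends on $a$, the induction to $n>1$ breaks, since at the next step you would need $\pi$ coprime to $\nr(ap)=\pi\,\nr(a)$.

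Second, and more seriously, the transposition argument does not do what you need. Transposition in $\cF_v(\alpha)$ (\hyperref[ffg:transposition]{Proposition \ref*{prop:fact-freeish-grpd}.\ref*{ffg:transposition}}) replaces the maximal integral factors by \emph{new} ones with the same $\Phi$-values. After moving the $\cP$-factor to the right, the resulting rightmost maximal integral element has reduced norm $\fp$ but there is no reason it is principal, and the remaining factors need not regroup into the original $\cH_R$-atoms $v_1,\ldots,v_{m-1}$. Your proposed fix --- choosing $\fp$ so that \emph{exactly one} maximal left ideal above $\fp$ lies in the trivial class --- is not achievable: \autoref{prop:exist-max} shows that for large $\Norm_{K/\bQ}(\fp)$ there are \emph{many} maximal left $R$-ideals of norm $\fp$ in each class $c$ with $\mu_R(c)=[\fp]$, in particular many principal and many non-principal ones.

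The paper sidesteps both issues by a different device. It does not look for a universal $p$; it is content to show that for each $a\in R^\bullet$ there exists some $x$ (depending on $a$) with $\sL(xa)=1+\sL(a)$, which already gives the case $n=1$ and hence the general case by iteration. The key is the choice of $x$: one lists the finitely many rigid factorizations of $Ra$ in $\cI_v(\alpha)$, forms the intersection $T$ of \emph{all} left and right orders occurring in them, and then uses the quadratic-form estimate (\autoref{prop:pos-def-quad}) to find a totally positive prime $q\nmid\nr(a)$ and an element $x\in T$ with $\nr(x)=q$. Because $x$ lies in every relevant order, \autoref{lemma:a-is-nice} shows that every rigid factorization of $Rxa$ in $\cI_v(\alpha)$ is literally obtained from one of $Ra$ by inserting the principal factor ``$x$'' at some position; no abstract transposition is needed, principality is automatic, and the grouping into $\cH_R$-atoms is preserved. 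This is the idea you are missing.
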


\begin{proof}[Proof of \autoref{thm:main-distances} (based on \autoref{prop:delta-tot-def} and \autoref{prop:shift})]
  We first show that there is no transfer homomorphism $R^\bullet \to \cB(G_P)$ for any subset $G_P$ of an abelian group.
  Assume to the contrary that $\theta\colon R^\bullet \to \cB(G_P)$ is such a transfer homomorphism.

  \begin{enumerate}
   \setlength\itemindent{1.55em}
   \item[Claim \textbf{A}.] If $S \in \cB(G_P)$ and $U \in \cA(\cB(G_P))$, then
      $
        \max \sL_{\cB(G_P)}(SU) \le \card{S} + 1.
      $
  \end{enumerate}

  \begin{proof}[Proof of \textbf{A}]
  \renewcommand{\qedsymbol}{$\Box$(\textbf{A})}
  Let $S=g_1\cdot\ldots\cdot g_l$, with $l = \card{S}$ and $g_1,\ldots,g_l \in G_P$, and suppose that $SU = T_1\cdot\ldots\cdot T_k$ with $k \in \bN$ and $T_1,\ldots,T_k \in \cA(\cB(G_P))$. Then for every $i \in [1,k]$ either $T_i \mid U$, but then already $T_i = U$, or $g_j \mid T_i$ for some $j \in [1,l]$. This shows $k \le \card{S} + 1$.
  \end{proof}

  By \autoref{prop:delta-tot-def}, there exists a totally positive prime element $p \in \cO_K$, and for every $l \in \bN_0$ an atom $y_l \in \cA(R^\bullet)$ with $\max \sL_{R^\bullet}(y_l p) \ge l+2$. But, if $l \ge \card{\theta(p)}$,
  then
  \[
    l + 2 \,\le\, \max \sL_{R^\bullet}(y_l p) = \max \sL_{\cB(G_P)}(\theta(y_l) \theta(p)) \,\le\, \card{\theta(p)}+1 \,\le\, l+1,
  \]
  a contradiction.

  \smallskip
  In order to show $\Delta(R^\bullet) = \bN$, we choose $d \in \bN$.
  Let $p$ and $E$ be as in \autoref{prop:delta-tot-def} and set $\epsilon = \min E$. If $l = d + 3 - \epsilon$ and $y_l$ as in \autoref{prop:delta-tot-def}, then we find $d = (l + \epsilon) - 3 \in \Delta_{R^\bullet}(y_l p)$.

  Let $k \in \bN_{\ge 3}$. By definition, we have $\cU_k(R^\bullet) \subset \bN_{\ge 2}$. Thus it remains to show that for every $k' \ge 3$ there exists an element $a \in R^\bullet$ with $\{\, k, k' \,\} \subset \sL(a)$. Assume without restriction that $k \le k'$ and let $k = 3 + n$ with $n \in \bN_0$. Using \autoref{prop:delta-tot-def}, we find an element $a' \in R^\bullet$ with $\{\,3 = k - n,\, k' - n \,\} \in \sL(a')$, and hence by \autoref{prop:shift} there exists an element $a \in R^\bullet$ with $\{\, k,k' \,\} \in \sL(a)$.
\end{proof}

\subsection{Preliminaries.} \label{sec:dist-prelim}

\emph{Algebraic number theory:}
Our notation mainly follows Narkiewicz \cite{narkiewicz04}. Let $L/K$ be an extension of number fields. Then $D_{L/K}$ is the relative different, $\Norm_{L/K}$ the relative field norm, $d_{L/K} = \Norm_{L/K}(D_{L/K})$ is the relative discriminant and $d_K = d_{K/\bQ}$ the absolute discriminant (we tacitly identify ideals of $\bZ$ with their positive generators for the absolute discriminant and norm). If $\cO \subset \cO_K$ is an order, then $\ff_{\cO}$ is the conductor of $\cO$ in $\cO_K$ and $h(\cO) = \card{\Pic(\cO)}$ is the class number of $\cO$. Given $a \in L$ with minimal polynomial $f \in K[X]$ over $K$, $\delta_{L/K}(a) = f'(a)$ is the different of $a$. Completion at a prime $\fp \in \max(\cO_K)$ is denoted by a subscript $\fp$, e.g., $\cO_{K,\fp}$, $K_\fp$, and so on.
If $\fm \ideal \cO_K$ is a squarefree ideal, then
\[
  \Cl^+_{\fm}(\cO_K) = \{\, \fa \in \cF^\times(\cO_K) \mid (\fa, \fm) = \cO_K \,\} \,\big/\, \{\, a\cO_K \mid \text{$a \in K^\times$ is totally positive, $a \equiv 1 \mod \fm$} \,\}
\]
denotes the corresponding ray class group.
We will repeatedly make use of the fact that every class in $\Cl^+_{\fm}(\cO_K)$ contains infinitely many maximal ideals of $\cO_K$ (\cite[Corollary 7 to Proposition 7.9]{narkiewicz04}).

\smallskip
\emph{Quaternion algebras:} We follow \cite{vigneras80, maclachlan-reid03}, and \cite{kirschmer-voight10,kirschmer-voight10cor} for computational aspects.
Denote by $\lbar{\,\cdot\,}: A \isomto A^{\text{op}}$ the anti-involution given by conjugation of elements.
Then
\[
  \nr_{A/K}(x)=\nr(x) = x \lbar{x} = \lbar{x} x \qquad\text{and}\qquad \tr_{A/K}(x) = \tr(x) = x + \lbar{x} \qquad\text{for all $x \in A$.}
\]
Every element $x \in A$ satisfies an equation of the form
\[
  x^2 - \tr(x) x + \nr(x) = 0,
\]
 and if $x \in A \setminus K$, then $K(x) / K$ is a quadratic field extension. From the equation above we see that $\Norm_{K(x)/K} = \nr_{A/K}|K(x)$ and $\Trace_{K(x)/K} = \tr_{A/K}|K(x)$.

\smallskip
A classical $\cO_K$-order $T$ of $A$ is called a \emph{classical Eichler ($\cO_K$)-order} if it is the intersection of two classical maximal $\cO_K$-orders. \footnote{Though unconventional, we keep the qualifier ``classical'' for consistency with the earlier sections.} The reduced discriminant of a classical $\cO_K$-order $T$ takes the form $\fD \fN$ where $\fN \ideal \cO_K$ is the level of $T$. Furthermore, because $A$ is totally definite, we have $[T^\times: \cO_K^\times] < \infty$ for the unit group, and $\Cl_A(\cO_K) = \Cl^+(\cO_K)$ is the narrow class group of $\cO_K$.

\smallskip
As in the previous section, $\LCl(R)$ is the set of isomorphism classes of left $R$-ideals, and $\mu_R\colon \LCl(R) \to \Cl^+(\cO_K), [I] \mapsto [\nr(I)]$.
\begin{prop} \label{prop:exist-max}
  Let $C \in \bN$.
  Let $\fp \in \max(\cO_K)$ with $\fp \nmid d_K \fD$, and such that
  \[
    \f{h^+}{M w^2} (\Norm_{K/\bQ}(\fp) + 1 ) - \f{2}{w} \sqrt{\Norm_{K/\bQ}(\fp)} \ge C.
  \]
  Then, for every $c \in \LCl(R)$ with $\mu_R(c) = [\fp]$, there exist at least $C$ maximal left (right) $R$-ideals of reduced norm $\fp$ and class $c$. Here $h^+ = \card{\Cl^+(\cO_K)}$ is the narrow class number, and $w$ and $M$ are constants depending on $\fD$ (see \cite{kirschmer-voight10,kirschmer-voight10cor}).
\end{prop}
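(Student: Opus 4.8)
The plan is to reinterpret the quantity to be bounded as an entry of a Brandt matrix and to estimate it by combining the Eichler mass formula with the Ramanujan bound on the eigenvalues of Brandt matrices of totally definite quaternion orders; these last two ingredients, together with the precise shape of the constants $w$ and $M$, are exactly what Kirschmer and Voight make available, so the proof is in essence an application of their results. Concretely, since $\fp \nmid \fD$ the algebra $A$ is split at $\fp$, so $R_\fp \cong M_2(\cO_{K,\fp})$; consequently the maximal left $R$-ideals of reduced norm $\fp$ are precisely the integral left $R$-ideals of reduced norm $\fp$, there are exactly $\Norm_{K/\bQ}(\fp)+1$ of them, and each has image $[\fp]$ under $\mu_R$. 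Fix representatives $I_1 = R, I_2, \dots, I_n$ of the finite set $\LCl(R)$, with right orders $\cO_i = \cO_r(I_i)$ and unit indices $w_i = [\cO_i^\times : \cO_K^\times] \in [1,w]$, and let $B(\fp) = (b_{ij})$ be the Brandt matrix at $\fp$, where $b_{ij}$ counts the integral left $\cO_i$-ideals of reduced norm $\fp$ whose product with $I_i$ lies in the class $[I_j]$. Then the number of maximal left $R$-ideals of reduced norm $\fp$ and class $c$ is exactly the entry $b_{1c}$, and this is what must be shown to be $\ge C$.

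To estimate $b_{1c}$ I would use two structural facts about $B(\fp)$ in the totally definite case (both due to Eichler, in the form organised by Kirschmer and Voight): every row sum equals $\Norm_{K/\bQ}(\fp)+1$, and $B(\fp)$ is self-adjoint for the inner product $\langle e_i, e_j\rangle = w_i^{-1}\delta_{ij}$ (equivalently $w_j b_{ij} = w_i b_{ji}$). The space $\bQ^n$ splits orthogonally into the $h^+$-dimensional \emph{Eisenstein} subspace of vectors factoring through $\mu_R\colon \LCl(R) \to \Cl^+(\cO_K)$ and its complement; $B(\fp)$ preserves both, acting on the Eisenstein part with eigenvalues $\chi([\fp])\bigl(\Norm_{K/\bQ}(\fp)+1\bigr)$ as $\chi$ runs over the characters of $\Cl^+(\cO_K)$, and on the complement with all eigenvalues of absolute value at most $2\sqrt{\Norm_{K/\bQ}(\fp)}$ — the Ramanujan bound, obtained via Jacquet--Langlands from Deligne's bound for weight-two cusp forms. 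Projecting $e_1$ and $e_c$ onto the Eisenstein subspace, which is spanned by the indicator vectors of the fibres of $\mu_R$ whose masses are controlled by the Eichler mass formula $\sum_i w_i^{-1} = M$, isolates from $w_1^{-1} b_{1c} = \langle B(\fp) e_c, e_1\rangle$ a main term of size $\tfrac{h^+}{Mw^2}\bigl(\Norm_{K/\bQ}(\fp)+1\bigr)$ after clearing denominators, while the contribution of the orthogonal complement is at most $\tfrac{2}{w}\sqrt{\Norm_{K/\bQ}(\fp)}$ by Cauchy--Schwarz and $w_i \le w$. This gives $b_{1c} \ge \tfrac{h^+}{Mw^2}\bigl(\Norm_{K/\bQ}(\fp)+1\bigr) - \tfrac{2}{w}\sqrt{\Norm_{K/\bQ}(\fp)} \ge C$, and the statement for right ideals follows by applying the anti-involution $\overline{\,\cdot\,}$ (equivalently, by transposing $B(\fp)$).

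The main obstacle is the Ramanujan bound on the non-Eisenstein eigenvalues of $B(\fp)$: this is the one genuinely deep ingredient (Eichler--Shimura/Deligne transported by Jacquet--Langlands), and it is precisely the estimate that Kirschmer and Voight establish and that we are entitled to quote. The remaining work is organisational — choosing the weighted inner product making $B(\fp)$ self-adjoint, computing the Eisenstein projection in terms of $\Cl^+(\cO_K)$, and tracking the constants $w$ (a uniform bound for the unit indices) and $M$ (the mass) carefully enough that the resulting inequality is exactly the one hypothesised in the statement.
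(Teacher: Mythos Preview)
Your proposal is correct and is essentially the same approach as the paper's: the paper's proof consists entirely of a pointer to Kirschmer--Voight, noting that the lower bound they derive on the entries of the Brandt matrix $T(\fp)$ (via exactly the Eichler mass formula plus the Ramanujan bound on the cuspidal eigenvalues that you describe) is precisely the inequality needed here. You have simply unpacked the content of that citation rather than quoting it, which is fine; the decomposition, the self-adjointness, and the identification of the main and error terms are all as in Kirschmer--Voight's Proposition~7.7.
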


\begin{proof}
  Although not explicitly stated in this way, this is proved by Kirschmer and Voight in \cite{kirschmer-voight10, kirschmer-voight10cor}:
  In the proof of \cite[Proposition 7.7]{kirschmer-voight10}, a lower bound on the entries of a matrix $T'$ (in their notation) is derived, which immediately gives a lower bound on the entries of a matrix $T(\fp)$ (in their notation). This is exactly what we need, as is clear from their definition of $T(\fp)$.
\end{proof}

\smallskip
\emph{Optimal embeddings:} Let $L/K$ be a quadratic field extension, and $T$ a classical Eichler $\cO_K$-order in $A$ of squarefree level $\fN \ideal \cO_K$. If $\cO$ is an order in $L$, every embedding $\iota\colon \cO \to T$ gives rise to a unique embedding $\iota\colon L \to A$, and $\iota$ is an \emph{optimal embedding} if $\iota(L) \cap T = \cO$. For $a \in T^\times$, $\cO \to T, x \mapsto a\iota(x) a^{-1}$ is then another such embedding. The number of optimal embeddings up to conjugation by units is bounded above by a constant (depending only on $\fD$ and $\fN$) times $h(\cO)$ (see \cite[Corollaire III.5.12]{vigneras80}). Since $[T^\times\colon \cO_K^\times]$ is finite, the total number of optimal embeddings of $\cO$ into $T$ is still bounded by a constant times $h(\cO)$.

\emph{Quadratic forms:}
We use a theorem about representation numbers of totally positive definite quadratic forms over totally real fields.
Let $V$ be an $n$-dimensional $K$-vector space.
An $\cO_K$-lattice $L$ of rank $n$ is a finitely generated $\cO_K$-submodule of $V$ that generates $V$ (over $K$).
Together with a quadratic form $q\colon V \to K$ with $q(L) \subset \cO_K$, $(L,q)$ it is a quadratic lattice.
For $a \in \cO_K$ we set
\[
  r(L, a) = \card{ \{ x \in L \mid q(x) = a \} }.
\]
An element $a \in \cO_K$ is locally represented everywhere by $(L,q)$ if it is represented by the completion $L_v = L \otimes_{\cO_K} \cO_{K,v}$ for all places $v$ of $K$.
The following result is a special case of Theorem 5.1 in \cite{schulze-pillot04}.

\begin{prop} \label{prop:pos-def-quad}
  Let $(L,q)$ be a quadratic $\cO_K$-lattice of rank four and suppose that $q$ is totally positive definite.
  Then, for every $\eta > 0$ and $s \in \bN_0$, there exists a constant $C_{\eta,s} > 0$, such that for all $a \in \cO_K$ that are locally represented everywhere by $L$, with $\abs{\Norm_{K/\bQ}(a)}$ sufficiently large and $\fp^s \nmid a\cO_K$ if $L_\fp$ is anisotropic, the asymptotic formula
  \[
    r(L, a) \,=\, r(\gen L, a) + O(\abs{\Norm_{K/\bQ}(a)}^{\f{11}{18} + \varepsilon})
  \]
  holds with
  \[
    r(\gen L, a) \,\ge\, C_{\eta,s} \cdot \Norm_{K/\bQ}\big(a\, ({}_{\cO_K} \langle q(L) \rangle)^{-1}\big)^{1-\eta}.
  \]
\end{prop}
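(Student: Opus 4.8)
The plan is to obtain this as the rank-four case of Theorem~5.1 of \cite{schulze-pillot04}, which treats representation numbers of totally positive definite quadratic $\cO_K$-lattices over a totally real field; what has to be checked is that our hypotheses are exactly the genericity hypotheses of that theorem and that its conclusion, once specialised to $n=4$, can be rewritten in the form needed for \autoref{subsec:proof-totdef}. For orientation I would first recall the structure of the argument behind that theorem. Attaching to $(L,q)$ its theta series $\theta_L$ and to the genus the mass-weighted average $\theta_{\gen L}$ of the theta series of the classes in it, one has the Siegel--Weil identity exhibiting $\theta_{\gen L}$ as an Eisenstein series whose $a$-th Fourier coefficient is the genus representation number $r(\gen L, a)$, while $\theta_L-\theta_{\gen L}$ is a Hilbert cusp form of parallel weight $n/2 = 2$ over $K$. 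Hence $r(L,a) - r(\gen L, a)$ is the $a$-th Fourier coefficient of a fixed cusp form, and the error bound $O(\abs{\Norm_{K/\bQ}(a)}^{11/18+\varepsilon})$ is the unconditional estimate for such coefficients proved in \cite{schulze-pillot04}.

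Next I would verify the hypotheses. That $q$ is totally positive definite of rank four is precisely what makes $\theta_L$ a holomorphic parallel-weight-$2$ Hilbert modular form and gives the archimedean local densities a genuine main term; the assumption that $a$ is locally represented everywhere guarantees $r(\gen L, a)>0$, equivalently that all local densities are positive; and the two technical restrictions---$\abs{\Norm_{K/\bQ}(a)}$ sufficiently large and $\fp^s\nmid a\cO_K$ for every $\fp$ with $L_\fp$ anisotropic---are exactly the conditions in \cite{schulze-pillot04} that prevent the finitely many bad local densities from degenerating and that keep the error term subordinate to the main term.

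The lower bound on $r(\gen L, a)$ I would then read off from Siegel's mass formula, which writes $r(\gen L, a)$ as a positive constant depending only on $L$ times the product $\prod_v \beta_v(L,a)$ of local densities over all places of $K$. Since $q$ is totally positive definite of rank four, each archimedean factor is a surface measure scaling like $a_v^{\,n/2-1} = a_v$, so $\prod_{v\mid\infty}\beta_v(L,a)\asymp \abs{\Norm_{K/\bQ}(a)}$. Every finite factor is positive because $a$ is locally represented everywhere, equals $1$ away from the primes dividing $2\,a\,({}_{\cO_K}\langle q(L)\rangle)\,\fD$, and at those primes is bounded below by a power of $\Norm(\fp)$ controlled by $\val_\fp(a)$ and $\val_\fp({}_{\cO_K}\langle q(L)\rangle)$; collecting these corrections, absorbing the $\langle q(L)\rangle$-part explicitly and bounding the remainder by $\abs{\Norm_{K/\bQ}(a)}^{-\eta}$ (using the bound $s$ on $\val_\fp(a)$ at the anisotropic primes) yields $r(\gen L, a)\ge C_{\eta,s}\,\Norm_{K/\bQ}\big(a\,({}_{\cO_K}\langle q(L)\rangle)^{-1}\big)^{1-\eta}$.

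I expect no real obstacle beyond careful bookkeeping, since the two genuinely hard inputs---the unconditional Fourier-coefficient bound with the exponent $11/18$, and the delicate lower bounds for the dyadic and ramified local densities---live entirely inside \cite{schulze-pillot04}. The remaining work is to match normalisations (the Eisenstein-series and mass conventions, the precise definition of $\gen L$, and the weight-$2$ normalisation) and to check that $11/18<1$, so that the displayed equality really is an asymptotic with $r(\gen L,a)$ dominating. The one point that needs attention is the last step of the local-density estimate: one must confirm that, once $\val_\fp(a)$ is bounded by $s$ at the anisotropic primes, the product of the finite-prime corrections is absorbed \emph{uniformly} into $\abs{\Norm_{K/\bQ}(a)}^{-\eta}$---it is exactly this uniformity that forces the constant to depend on both $\eta$ and $s$.
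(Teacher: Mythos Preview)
Your proposal is correct and takes the same approach as the paper: the paper simply states that this is a special case of Theorem~5.1 in \cite{schulze-pillot04} and gives no further argument. Your outline of how the Siegel--Weil identity, the cusp-form error bound, and the local-density lower bounds combine is a faithful unpacking of that citation, and in particular your identification of the hypotheses with the genericity conditions of that theorem is exactly the point.
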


In particular, $r(L,a)$ is of order of magnitude $\abs{\Norm_{K/\bQ}(a)}^{1 - \eta}$.
If $T$ is any classical $\cO_K$-order and $I$ any left $T$-ideal (in particular if $I=T$), then the restriction of the reduced norm to $I$ makes $(I, \nr\mid I)$ into a quadratic $\cO_K$-lattice of rank four and this is the situation that we will apply this result to.

\medskip
\emph{Ideal theory in $R$:}
Let $\alpha$ be the set of all classical maximal $\cO_K$-orders in $A$ (i.e., the equivalence class of the maximal order $R$).
 Conjugation extends to ideals: For $I \in \cF_v(\alpha)$ define $\lbar{I} = \{\, \lbar{x} \mid x \in I \,\}$. Then $\lbar{I}$ is a fractional $(\cO_r(I), \cO_l(I))$-ideal, $I \cdot \lbar{I} = \cO_l(I) \nr(I)$, and $\lbar{I} \cdot I = \cO_r(I) \nr(I)$, and hence $I^{-1} = \lbar{I} \cdot (\cO_l(I)\nr(I))^{-1} = (\nr(I)\cO_r(I))^{-1} \cdot \lbar{I}$.

$\cF_v(\alpha)$ takes a particularly simple form: If $\fp \mid \fD$ then there exists a maximal two-sided $R$-ideal $\fP$ with $\fP^2 = \fp$, $\nr(\fP) = \fp$ and if $I$ is a left or right $R$-ideal with $\nr(I)=\fp^k$, then $I = \fP^k$.

If $\fp \nmid \fD$, then $\fP = \fp R$ is the maximal two-sided $R$-ideal lying above $\fp$, and $R_{\fp} / \fP_{\fp} \cong M_2(\cO_{K,\fp} / \fp_\fp ) \cong M_2(\bF_{\Norm_{K/\bQ}(\fp)})$. In particular there are $\Norm_{K/\bQ}(\fp) + 1$ maximal left $R$-ideals (respectively maximal right $R$-ideals) with reduced norm $\fp$. If $M,N$ are two distinct maximal left $R$-ideals with $\nr(M) = \nr(N) = \fp$, then $M \cap N = \fP$ (since the composition length of $M_2(\bF_{\Norm_{K/\bQ}(\fp)})$ is two). This implies that if $M \cdot M' = N \cdot N'$ with maximal integral $M',N' \in \cF_v(\alpha)$, then $M\cdot M' = N \cdot N' = \fP$, and thus necessarily $M' = \lbar{M}$, $N' = \lbar{N}$.

We therefore explicitly know all relations between maximal integral elements of $\cF_v(\alpha)$: From \autoref{prop:fact-freeish-grpd} we know that all relations are generated from those between pairs of products of two elements and it also characterizes the only relation between maximal integral elements of coprime reduced norm. With the discussion above we now also know the relations between two maximal integral elements of the same reduced norm: Either there are none, or the product is $\fP$.

A left $R$-ideal $I$ is \emph{primitive} if it is not contained in an ideal of the form $R \fa$ with $\fa \ideal \cO_K$. If $\nr(I) = \fp^k$ with $\fp \in \max(\cO_K)$ and $I$ is primitive, then it has a unique rigid factorization in $\cI_v(\alpha)$.

\subsection{Proofs of \autoref{prop:delta-tot-def} and \autoref{prop:shift}} \label{subsec:proof-totdef}
We start with some lemmas.

\begin{lemma} \label{lemma:eps-in-center}
  Let $T$ be a classical $\cO_K$-order in $A$.
  For all but finitely many associativity classes of totally positive prime elements $q \in \cO_K$ we have:
  If $x \in T$ with $\nr(x) = q$ and $x^2 = \varepsilon q$ for some $\varepsilon \in T^\times$, then $\varepsilon = -1$.
\end{lemma}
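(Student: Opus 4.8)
The plan is to exploit that in this section $A$ is a (totally definite quaternion, hence) division algebra, and that the reduced norm restricts to a totally positive definite quadratic form on the $\cO_K$-lattice $T$. First I would record two preliminary facts: $T \cap K = \cO_K$ (every element of the classical order $T$ is integral over the integrally closed ring $\cO_K$), and the set $T^1 = \{\, \varepsilon \in T \mid \nr(\varepsilon) = 1 \,\}$ is \emph{finite} — indeed, since $A$ is totally definite, $\Trace_{K/\bQ} \circ \nr$ is a positive definite quadratic form on the $\bZ$-lattice $T$, so $T$ contains only finitely many elements of bounded reduced norm; note moreover $T^1 \subset T^\times$, because $\varepsilon^{-1} = \lbar{\varepsilon} \in T$ whenever $\nr(\varepsilon) = 1$.

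Now suppose $x \in T$ with $\nr(x) = q$ and $x^2 = \varepsilon q$ for some $\varepsilon \in T^\times$, where $q$ is a totally positive prime element of $\cO_K$. Taking reduced norms, $\nr(\varepsilon) = \nr(x)^2/q^2 = 1$, so $\varepsilon \in T^1$. The case $x \in K$ is excluded at once: then $x \in \cO_K$ and $q = \nr(x) = x^2$, so $q\cO_K = (x\cO_K)^2$, which is impossible for the prime ideal $q\cO_K$ in the Dedekind domain $\cO_K$. Hence $x \notin K$, so $x$ satisfies $x^2 - tx + q = 0$ with $t = \tr(x) \in \cO_K$, and comparing with $x^2 = \varepsilon q$ gives $(\varepsilon + 1) q = t x$. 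Applying the reduced norm to this identity yields $\nr(\varepsilon + 1)\, q = t^2$; since $\nr(\varepsilon + 1) \in \cO_K$ and $q\cO_K$ is prime, $q \mid t$, so $t = qs$ with $s \in \cO_K$, and cancelling $q$ gives $\varepsilon + 1 = sx$. Applying the reduced trace, $\tr(\varepsilon) = s\,\tr(x) - 2 = qs^2 - 2$, that is,
\[
  q s^2 = \tr(\varepsilon) + 2 .
\]

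Finally I would split on the value of $\tr(\varepsilon) + 2$. If $\tr(\varepsilon) = -2$, then together with $\nr(\varepsilon) = 1$ we get $(\varepsilon + 1)^2 = \varepsilon^2 + 2\varepsilon + 1 = (\tr(\varepsilon)\varepsilon - \nr(\varepsilon)) + 2\varepsilon + 1 = 0$, and since $A$ is a division algebra this forces $\varepsilon = -1$. Otherwise $\tr(\varepsilon) + 2$ is a nonzero element of $\cO_K$, and $qs^2 = \tr(\varepsilon) + 2$ shows that $q\cO_K$ is one of the finitely many prime divisors of the ideal $(\tr(\varepsilon) + 2)\cO_K$. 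Thus, whenever $\varepsilon \ne -1$ occurs, the prime ideal $q\cO_K$ lies among the (finitely many) prime divisors of the fixed nonzero ideal $\prod_{\varepsilon \in T^1 \setminus \{-1\}} (\tr(\varepsilon) + 2)\cO_K$, which is well-defined because $T^1$ is finite. Hence only finitely many associativity classes of totally positive prime elements $q$ admit a non-central $\varepsilon$, as claimed. The only steps that need any care are the finiteness of $T^1$ and the exclusion of $x \in K$; everything else is a direct manipulation of the reduced norm and trace together with unique factorization of ideals in $\cO_K$.
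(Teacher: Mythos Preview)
Your argument is correct and takes a genuinely different route from the paper's. Both proofs start from the same identity $(\varepsilon+1)q = \tr(x)\,x$, but then diverge. The paper assumes $\tr(x)\ne 0$, deduces $K(x)=K(\varepsilon)=:L$, observes that $\cO_L q=(\cO_L x)^2$ forces $q$ to ramify in $L/K$, hence $q\cO_K\mid d_{L/K}$; finiteness then comes from $[T^\times:\cO_K^\times]<\infty$, which bounds the number of fields $K(\varepsilon)$. You instead take the reduced norm of the identity to get $q\mid \tr(x)$, substitute back, and arrive at $qs^2=\tr(\varepsilon)+2$; finiteness then comes from the finiteness of $T^1=\{\varepsilon\in T\mid \nr(\varepsilon)=1\}$, which bounds the set of possible values $\tr(\varepsilon)+2$. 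Your approach is more elementary in that it avoids any reference to the extension $K(\varepsilon)/K$ and its ramification, replacing this by a direct norm--trace computation; the paper's approach is slightly more conceptual and yields the sharper intermediate statement $\tr(x)=0$. One cosmetic point: in your final sentence ``non-central $\varepsilon$'' should read ``$\varepsilon\ne -1$'' (the element $-1$ is of course central).
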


\begin{proof}
  $x$ satisfies the polynomial equation $x^2 - \tr(x) x + \nr(x) = 0$.
  Substituting $x^2 = \varepsilon q$ and $\nr(x) = q$ yields
  \begin{equation} \label{eq:tr-eps}
    \tr(x) x = (1 + \varepsilon) q.
  \end{equation}
  It will thus suffice to show that for all but finitely many $\cO_K q$, we have $\tr(x) = 0$.

  Assume that $q \in \cO_K$ is a totally positive prime element, $x \in T$ with $x^2 = \varepsilon q$ and $\tr(x) \ne 0$.
  Then $K(x) = K(\varepsilon)$ by \eqref{eq:tr-eps}. Let $L=K(\varepsilon)$. Since $\varepsilon \in L \cap T^\times \subset \cO_L^\times$,
  \[
    \cO_L q = \cO_L \varepsilon q = (\cO_L x)^2,
  \]
  and therefore $q$ ramifies in $\cO_L$, implying $\cO_K q \mid d_{L/K}$.
  Hence, for fixed $L$ there are only finitely many possibilities for $\cO_K q$, and moreover there are only finitely many possibilities for $L=K(\varepsilon)$ because $[T^\times:\cO_K^\times]$ is finite since $A$ is totally definite.
  \footnote{It should be pointed out that an even stronger statement is true. For any fixed totally real field $K$, there are only finitely many totally imaginary quadratic extensions that have larger unit group (i.e., weak unit defect), while all other totally imaginary quadratic extensions $L/K$ have $\cO_L^\times = \cO_K^\times$ (i.e., strong unit defect). This follows from $[\cO_L^\times:\mu(L)\cO_K^\times]\in \{1,2\}$ (\cite[Theorem 4.12]{washington97}): For $\varepsilon \in \cO_L^\times$ we have $\varepsilon^2 = \eta \zeta$ with $\zeta \in \mu(L)$ and $\eta \in \cO_K^\times$. But $\ord(\zeta) \mid [L:\bQ]=2[K:\bQ]$ and if $\gamma \in (\cO_K^\times)^2$, then $\eta \gamma \zeta$ yields the same extension. Since $[\cO_K^\times:(\cO_K^\times)^2] < \infty$ there are therefore only finitely many such extensions. This argument is due to Remak in \cite[\S3]{remak54}. }
  Thus there are, up to associativity, only finitely many such $q$.
\end{proof}

\begin{lemma} \label{lemma:exist-a}
  Let $T$ be a classical $\cO_K$-order in $A$.
  For every $M \in \bN$ there exists a $C \in \bN$ such that for all totally positive prime elements $q \in \cO_K$ with $q \in \nr_{A_\fp/K_\fp}(T_{\fp})$ for all $\fp \in \max(\cO_K)$ and $\Norm_{K/\bQ}(q) \ge C$
  \[
    \card{\{ a \in T \mid \nr(a) = q \text{ and } a^2 \ne -q \}} \ge M.
  \]
\end{lemma}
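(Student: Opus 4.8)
The plan is to count the pairs $(a, \lbar a) \in T \times T$ with $\nr(a) = q$ by applying the quadratic forms estimate of \autoref{prop:pos-def-quad} to the quaternary lattice $(T, \nr\mid T)$, and then to subtract the bounded contribution of the ``bad'' elements with $a^2 = -q$ and of those $a$ with $a^2 = \varepsilon q$, $\varepsilon \neq -1$. First I would recall that $(T, \nr\mid_T)$ is a totally positive definite quadratic $\cO_K$-lattice of rank four (since $A$ is totally definite, $\nr$ is totally positive definite on $A$, and $\nr(T) \subset \cO_K$). The hypothesis that $q \in \nr_{A_\fp/K_\fp}(T_\fp)$ for all finite places $\fp$ says precisely that $q$ is locally represented everywhere by $T$ at all finite places; at the archimedean places $q$ is totally positive, hence also represented there (in a definite quaternion algebra every totally positive element is locally a norm at each archimedean place). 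Thus \autoref{prop:pos-def-quad} applies: for a suitable fixed $\eta \in (0,1)$ and $s$ (any valid choice — $q$ being a prime element, $\fp^s \nmid q\cO_K$ automatically for $s \ge 2$ when $\fp$ ranges over primes other than $q\cO_K$, and we only need it where $T_\fp$ is anisotropic, i.e.\ $\fp \mid \fD$), we get
\[
  r(T, q) = r(\gen T, q) + O\!\bigl(\abs{\Norm_{K/\bQ}(q)}^{\frac{11}{18} + \varepsilon}\bigr), \qquad r(\gen T, q) \ge C_{\eta,s} \cdot \Norm_{K/\bQ}\bigl(q\,({}_{\cO_K}\langle \nr(T)\rangle)^{-1}\bigr)^{1-\eta}.
\]
Since ${}_{\cO_K}\langle \nr(T)\rangle$ is a fixed nonzero ideal independent of $q$, the right-hand lower bound is $\gg \abs{\Norm_{K/\bQ}(q)}^{1-\eta}$, which dominates the error term once $\abs{\Norm_{K/\bQ}(q)}$ is large (here is where $1 - \eta > 11/18$, i.e.\ $\eta < 7/18$, must be chosen). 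Hence $r(T, q) \to \infty$ as $\Norm_{K/\bQ}(q) \to \infty$ along such primes $q$.

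Next I would bound the number of elements we wish to exclude. The set $\{a \in T \mid \nr(a) = q,\ a^2 = -q\}$ is exactly $\{a \in T \mid \nr(a) = q,\ \tr(a) = 0\}$ (from $a^2 - \tr(a)a + \nr(a) = 0$ and $\nr(a) = q$ we get $a^2 = \tr(a)a - q$, which equals $-q$ iff $\tr(a) = 0$ since $a \notin K$ as $\nr(a) = q$ is not a square in $K$ for $q$ prime — alternatively $a$ is cancellative). This is precisely the set of lattice points in the rank-three sublattice $T \cap \ker(\tr)$ with reduced norm $q$; this is the thing we are happy to keep, so I do not need to bound it — rather I must bound its complement-within-the-bad-set, namely those $a$ with $a^2 = \varepsilon q$ for some $\varepsilon \in T^\times \setminus \{-1\}$. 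By \autoref{lemma:eps-in-center}, for all but finitely many associativity classes of totally positive primes $q$ there are \emph{no} such $a$ at all; enlarging $C$ to exclude that finite set of classes, the bad set is empty. Therefore, for $\Norm_{K/\bQ}(q) \ge C$ with $C$ chosen large enough to (i) make $r(T,q) \ge M$ via the asymptotic above and (ii) avoid the finitely many exceptional associativity classes from \autoref{lemma:eps-in-center}, we obtain
\[
  \card{\{a \in T \mid \nr(a) = q \text{ and } a^2 \neq -q\}} = r(T, q) \ge M.
\]

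The main obstacle I anticipate is bookkeeping around the ``locally represented everywhere'' hypothesis and the side conditions of \autoref{prop:pos-def-quad} — specifically, verifying that the archimedean local representability is automatic (from total positivity of $q$ and total definiteness of $A$), and checking that the condition ``$\fp^s \nmid a\cO_K$ if $L_\fp$ is anisotropic'' is harmless for prime elements $q$ (the anisotropic places are the finitely many $\fp \mid \fD$, and $q\cO_K$ is a prime ideal, so $\fp^s \nmid q\cO_K$ for $s \ge 2$ unless $q\cO_K = \fp$, a case one excludes by a further finite enlargement of $C$, or handles by noting the representation count is still governed by the same asymptotic). Everything else — the definiteness of the quadratic lattice, the application of \autoref{lemma:eps-in-center}, and the final inequality — is routine once the hypotheses of \autoref{prop:pos-def-quad} are in place.
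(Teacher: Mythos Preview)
Your proposal has a genuine gap: you have inverted what needs to be bounded. The lemma asks for many $a$ with $\nr(a)=q$ and $a^2 \neq -q$, so one must show
\[
  \card{\{a \in T \mid \nr(a)=q\}} \;-\; \card{\{a \in T \mid \nr(a)=q,\ a^2=-q\}} \;\to\; \infty.
\]
You correctly use \autoref{prop:pos-def-quad} to make the first term grow like $\Norm_{K/\bQ}(q)^{1-\eta}$, but the second term --- the trace-zero elements --- is precisely the set you must \emph{bound above}, not ``keep''. Your invocation of \autoref{lemma:eps-in-center} is misplaced: that lemma says that (for almost all $q$) $a^2=\varepsilon q$ with $\varepsilon \in T^\times$ forces $\varepsilon=-1$; it says nothing about how many $a$ satisfy $a^2=-q$, and in particular does not make that set empty. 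Consequently your final equality $\card{\{a \mid \nr(a)=q,\ a^2\neq -q\}} = r(T,q)$ is unjustified and generally false.

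The missing ingredient is exactly what the paper supplies: an element with $a^2=-q$ gives an embedding $\cO_K[\sqrt{-q}] \hookrightarrow T$, and the number of such embeddings is controlled (via optimal embedding counts in an Eichler overorder) by a constant times $h(\cO_L)$ for $L=K(\sqrt{-q})$, which is $\ll \sqrt{\Norm_{K/\bQ}(q)}\,\log(\Norm_{K/\bQ}(q))^{2[K:\bQ]-1}$. This upper bound is dominated by the $\Norm_{K/\bQ}(q)^{1-\eta}$ lower bound on $r(T,q)$ for $\eta<\tfrac{1}{4}$, and the difference tends to infinity. Your approach can be salvaged once you replace the appeal to \autoref{lemma:eps-in-center} with this class-number bound on the trace-zero locus.
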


\begin{proof}
  Let $q$ be a totally positive prime element of $\cO_K$.
  We derive an upper bound with order of magnitude $\sqrt{\Norm_{K/\bQ}(q)} \log(\Norm_{K/\bQ}(q))^{2[K:\bQ]-1}$ on the number of elements $a \in T$ with $a^2 = -q$ (based on counting optimal embeddings). Comparing this to the lower bound of order of magnitude $\Norm_{K/\bQ}(q)^{1-\eta}$ for the number of elements $a \in T$ with $\nr(a) = q$ obtained from \autoref{prop:pos-def-quad} will give the result.

 If $a \in T$ with $\nr(a) = q$ and $a^2 = -q$, then $\cO_K[a] \subset T$ is isomorphic to the order $\cO_K[\sqrt{-q}]$ in the relative quadratic extension $K(\sqrt{-q})$.
  We determine an upper bound the number of embeddings of $\cO_K[\sqrt{-q}]$ into $T$.
  For this we may without loss of generality assume that $T$ is a classical Eichler order of squarefree level, for otherwise we may replace it by a classical Eichler order of squarefree level in which it is contained (e.g., a classical maximal order), and bound the number of embeddings there.

  Let $L = K(\sqrt{-q})$.
  Since $f = X^2 + q$ is the minimal polynomial of $\sqrt{-q}$ over $K$, we get for the different $\delta_{L/K}(\sqrt{-q}) = f'(\sqrt{-q}) = 2 \sqrt{-q}$. Therefore, we find for the conductor of $\cO_K[\sqrt{-q}]$ in $\cO_L$,
  \[
    \ff_{\cO_K[\sqrt{-q}]} = \delta_{L/K}(\sqrt{-q}) D_{L/K}^{-1} \;\mid\; 2 \cO_L.
  \]
  (cf. \cite[Proposition 4.12 and Theorem 4.8]{narkiewicz04}).
  Since $2\cO_L \subset \cO_K[\sqrt{-q}] \subset \cO_L$ and $\card{\cO_L / 2 \cO_L} = 2^{[L:\bQ]}$, there are at most $2^{2^{[L:\bQ]}}$ orders in $\cO_L$ that contain $\cO_K[\sqrt{-q}]$. For any such order $\cO$ with $\cO_K[\sqrt{-q}] \subset \cO \subset \cO_L$, we have
  \[
    h(\cO) = h(\cO_L) \f{\card{(\cO_L/\ff_{\cO})^\times}}{\card{(\cO/\ff_{\cO})^\times}} \,\le\, h(\cO_L) 2^{[L:\bQ]}
  \]
  (cf. \cite[\S I.12.9 and \S I.12.11]{neukirch92}).
  The number of optimal embeddings of $\cO$ into $T$ is bounded by a constant times $h(\cO)$, and hence the total number of embeddings of $\cO_K[\sqrt{-q}]$ into $T$ is bounded by a constant times $h(\cO_L)$, where the constant does not depend on $q$.
  Combining the upper bound
  \[
    h(\cO_L) \,\ll\, \sqrt{\abs{d_{L}}} \,\log(\abs{d_{L}})^{[L:\bQ]-1}
  \]
  (cf. \cite[Theorem 4.4]{narkiewicz04}), with
  \[
    d_{L} = \Norm_{L/\bQ}(D_{L/\bQ}) = \Norm_{L/\bQ}(D_{K/\bQ}) \Norm_{L/\bQ}(D_{L/K}) \le d_{K}^2 2^{[L:\bQ]} \abs{\Norm_{L/\bQ}(\sqrt{-q})} = d_{K}^2 2^{[L:\bQ]} \Norm_{K/\bQ}(q).
  \]
  (here $D_{L/K} \mid 2\sqrt{-q}\cO_L$ was used), we obtain
  \[
   h(\cO_L) \,\ll\, \sqrt{\Norm_{K/\bQ}(q)} \,\log{(\Norm_{K/\bQ}(q))}^{[L:\bQ]-1},
  \]
  and thus an upper bound of the same order for $\card{\{ a \in T \mid \nr(a) = q \text{ and } a^2 = -q \}}$.

  By \autoref{prop:pos-def-quad}, for every $\eta > 0$ and sufficiently large (in norm) $q$ with $q$ being locally represented everywhere by the norm form, $\card{\{ a \in T \mid \nr(a) = q \}}$ grows with order of magnitude $\Norm_{K/\bQ}(q)^{1 - \eta}$, and the claim follows, by choosing $\eta$ small enough, say $\eta < \f 1 4$.
\end{proof}

\begin{remark}
  For any classical $\cO_K$-order $T$ of $A$ there are infinitely many pairwise non-associated totally positive primes $q \in \cO_K$ that are locally represented everywhere by $\nr_{A/K}$ on $T$.
  This can easily be seen as follows: Let $\fD \fN \ideal \cO_K$ be the discriminant of $T$. If $\fp \in \max(\cO_K)$ with $\fp \nmid \fD \fN$, then $T_\fp \cong M_2(\cO_{K,\fp})$ and thus $\nr_{A_\fp/K_\fp}(T_\fp) = \cO_{K,\fp}$.
  If $\fp \mid \fD \fN$, since $\cntr(T_\fp) = \cO_{K,\fp}$, certainly still every square of $\cO_{K,\fp}$ is represented by $\nr_{A_\fp/K_\fp}$ on $T_\fp$ (in fact, if $\fp \mid \fD$ but $\fp \nmid \fN$ then $T_{\fp}$ is isomorphic to the unique classical maximal $\cO_{K,\fp}$-order in the unique quaternion division algebra over $K_\fp$, for which $\nr(T_\fp)=\cO_{K,\fp}$ also holds). By Hensel's Lemma therefore every totally positive prime element $q \in \cO_K$ with $q \equiv 1 \mod 4 \fD \fN$ is locally represented everywhere by $\nr_{A/K}$ on $T$. But there are infinitely many pairwise non-associated such primes, because every class of the ray class group $\Cl_{4 \fD \fN}^+(\cO_K)$ contains infinitely many pairwise distinct maximal ideals, and primes $q$ of the required form correspond exactly to the trivial class in $\Cl_{4 \fD \fN}^+(\cO_K)$.
\end{remark}

\begin{lemma} \label{lemma:an-atom}
  Let $q$ be a totally positive prime element of $\cO_K$.
  Let $I$ be a non-principal right $R$-ideal with $\nr(I) = q^m\cO_K$ for some $m \in \bN$,
  and $J$ be a left $S=\cO_l(I)$-ideal with $\nr(J)=q^n\cO_K$ for some $n \in \bN$ such that: $I \cong J$ (as left $S$-ideals) and $I$ (respectively $J$) is not contained in any principal left $S$-ideal except $S$ itself, and not contained in any principal right $\cO_r(I)$-ideal (respectively right $\cO_r(J)$-ideal) except $\cO_r(I)$ (respectively $\cO_r(J)$) itself.

  Assume further that $a \in S$ with $\nr(a) = q$ and $a^2 S \ne qS$.
  \begin{enumerate}
    \item For all $l \in \bN$, $(a^l \lbar{J} a^{-l}) a^l I$ is a principal right $R$-ideal and an atom of $\cH_{R^\bullet}$.
      In particular, $a^l q^{m} \in \cA(R^\bullet)$ for all $l \in \bN$.
    \item $\lbar{J} I \in \cA(\cH_{R^\bullet})$ if it is primitive. In particular if $m=n=1$ and $I\ne J$, then $\lbar{J} I \in \cA(\cH_{R^\bullet})$.
  \end{enumerate}
\end{lemma}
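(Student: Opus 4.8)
Throughout put $S=\cO_l(I)$, and note that we may (as in the intended application) assume $q\nmid\fD$, so that the description of $\cF_v(\alpha)$ at $q$ from \autoref{sec:dist-prelim} applies: the two-sided prime above $q$ is $R'q$ for $R'\in\alpha$, and a left or right $R'$-ideal of reduced norm a power of $q$ is primitive if and only if it has a unique rigid factorization in $\cI_v(\alpha)$.

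\emph{Reduction.} Since $I\cong J$ as left $S$-ideals, write $J=Ix$ with $x\in A^\times$; then $\lbar J=\lbar x\,\lbar I$, and using $\lbar I\,I=\nr(I)\cO_r(I)=q^mR$ together with $\lbar S=S$ (an order in a quaternion algebra is stable under the canonical involution) one gets $\lbar JI=q^m\lbar xR\subseteq SI=I\subseteq R$, whence
\[
  (a^l\lbar Ja^{-l})(a^lI)=a^l\lbar JI=cR,\qquad c:=q^ma^l\lbar x\in R^\bullet .
\]
Thus $cR$ is a principal integral right $R$-ideal, so $cR=c(Rc)c^{-1}\in\cH_{R^\bullet}$, and by \autoref{lemma:fact-bij-pre} we have $cR\in\cA(\cH_{R^\bullet})$ if and only if $c\in\cA(R^\bullet)$. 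Finally $c$ is an atom of $R^\bullet$ if and only if there is no principal right $R$-ideal strictly between $cR$ and $R$ (a factorization $c=c_1c_2$ into non-units yields the intermediate ideal $c_1R$), and since $\nr(c)=q^{l+m+n}$ with $q$ prime, every right $R$-ideal $T$ with $cR\subseteq T\subseteq R$ satisfies $\nr(T)=q^j\cO_K$. Part (2) is the case $l=0$ (so $c=q^m\lbar x$).

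\emph{The primitive case.} Assume $cR$ is primitive (for Part (2) this is the hypothesis; e.g.\ if $m=n=1$ and $I\ne J$ then $\lbar JI$ is not the two-sided prime above $q$, hence primitive). Then the right $R$-ideals between $cR$ and $R$ form a chain $R=T^{(0)}\supsetneq\dots\supsetneq T^{(l+m+n)}=cR$ with $\nr(T^{(j)})=q^j\cO_K$; refining the block decomposition $cR=(a^l\lbar Ja^{-l})\cdot_v(a^lS)\cdot_v I$ (valid since $a^l\in S=\cO_l(I)$, so $a^lI=(a^lS)\cdot_v I$ and the suffix of $a^lS$ of length $k$ equals $a^kS$) into maximal integral elements, $T^{(j)}$ is the product of the last $j$ of these atoms. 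For $0<j<m$ this is a right-factor of $I$, so $T^{(j)}\supsetneq I$, and $T^{(j)}$ principal would place $I$ in a proper principal right $R$-ideal; $T^{(m)}=I$ is non-principal; for $m<j\le m+l$ one has $T^{(j)}=a^{j-m}I$, and $T^{(j)}$ principal would make $I=a^{m-j}T^{(j)}$ a principal right $R$-ideal; for $m+l<j<l+m+n$ one has $cR=P\cdot_v T^{(j)}$ with $P$ a proper prefix of the $\lbar J$-block, and a conjugation argument (using $J=Ix$ and $\lbar IJ=R(q^mx)$) shows that $T^{(j)}$ principal would make a proper suffix of $J$ a principal right-factor, placing $J$ in a proper principal right $\cO_r(J)$-ideal. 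Each alternative contradicts a hypothesis, so no intermediate ideal is principal; hence $c\in\cA(R^\bullet)$ and $cR\in\cA(\cH_{R^\bullet})$. The ``in particular'' of Part (1) is the case $J=I$ (so $x$ has unit reduced norm and $c=q^ma^l$); that of Part (2) is the case $m=n=1$, $I\ne J$.

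\emph{Primitivity of $cR$ --- the main obstacle.} For Part (1) it remains to show $cR=a^l\lbar JI$ is primitive. Since a two-sided prime above $q$ is central and therefore commutes past any maximal integral element, $cR$ fails to be primitive exactly when some two consecutive atoms in the refined three-block factorization multiply to such a prime (it then slides to the front, giving $cR\subseteq Rq$); by \autoref{prop:fact-freeish-grpd} this is unaffected by rearranging the factorization. A consecutive pair inside the $\lbar J$-block would put $\lbar J$, hence $J$, in a proper principal right $\cO_r(J)$-ideal; a pair inside the $I$-block would put $I$ in a proper principal left $S$- or right $R$-ideal; a pair straddling the $\lbar J/a^lS$ or $a^lS/I$ boundary would force the first atom of $J$, resp.\ of $I$, to be $Sa$, resp.\ $\lbar{aS}$, again placing $J$, resp.\ $I$, in a proper principal left $S$-ideal; and a pair inside the $a^lS$-block would force $Sa^2=qS$, i.e.\ $a^2S=qS$. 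All of these are excluded by the hypotheses, so $cR$ is primitive and the previous paragraph finishes the proof. The delicate point is precisely this last step --- that left multiplication by powers of $a$ never introduces a central factor $q$ --- and it is exactly what the hypothesis $a^2S\ne qS$ (whose genericity is ensured by \autoref{lemma:eps-in-center}) is there to guarantee.
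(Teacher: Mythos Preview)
Your proof is correct and follows the same approach as the paper's. Both arguments write down the explicit rigid factorization of $(a^l\lbar Ja^{-l})a^lI$ in $\cI_v(\alpha)$, show that this factorization is the unique one (equivalently, that the ideal is primitive), and then check that no proper initial or terminal subproduct is principal, so that the element is an atom of $\cH_{R^\bullet}$. The paper is terser: it simply asserts uniqueness ``by the restrictions imposed on $I$, $J$ and $a$'' and that ``any non-empty proper subproduct starting from the left (or the right) is non-principal'', whereas you actually unpack the primitivity check into the case analysis (a consecutive pair inside $\lbar J$, inside the $a^l$-block, inside $I$, or straddling a boundary) and phrase the atom check as the non-existence of an intermediate principal right $R$-ideal. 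For the range $m+l<j<l+m+n$ the paper in effect just switches to subproducts from the left, which land in the $\lbar J$-block; this is marginally cleaner than your conjugation detour but amounts to the same thing.
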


\begin{proof}
  Since $I$ is not contained in any principal right $R$-ideal, it is in particular not contained in $qR$, hence primitive. Similarly, $J$ is primitive. Let $\rf{M_1,\ldots,M_m} \in \sZ^*_{\cI_v(\alpha)}(I)$ and $\rf{N_1,\ldots,N_n} \in \sZ^*_{\cI_v(\alpha)}(J)$, with $M_1,\ldots,M_m, N_1,\ldots,N_n \in \cM_v(\alpha)$, be the unique rigid factorizations of $I$ and $J$.

  \begin{enumerate}
  \item
      Since $I \cong J$ as left $S$-ideals, $(a^l \lbar{J} a^{-l}) a^l I = a^l \lbar{J} I$ is principal.
      A rigid factorization of it is given by
      \[
        \rf{(a^{l} \lbar{N_n} a^{-l}), \ldots, (a^l \lbar{N_1} a^{-l}), (a^{l} S a^{-l}) a, (a^{l-1} S a^{-(l-1)}) a, \ldots, (aSa^{-1})a, M_1, \ldots, M_m}
      \]
      with $M_i,\, a^{l} \lbar{N_j} a^{-l},\, (a^{l-k} S a^{-(l-k)}) a \in \cM_v(\alpha)$ for $i \in [1,m]$, $j \in [1,n]$ and $k \in [0,l-1]$.
      By the restrictions imposed on $I$, $J$ and $a$, this is the only rigid factorization of $a^l \lbar{J} I$.
      Since any non-empty proper subproduct starting from the left (or the right) is non-principal, it is an atom in $\cH_{R^\bullet}$.
      The ``in particular'' statement follows by setting $J=I$, as then $a^l \lbar{J}I=a^l q^m R \in \cA(\cH_{R^\bullet})$ and because of \autoref{prop:fact-bij}, therefore $a^l q^m \in \cA(R^\bullet)$.

  \item By primitivity,
    \[
      \rf{\lbar{N_n}, \ldots, \lbar{N_1}, M_1, \ldots, M_m} \in \sZ^*_{\cI_v(\alpha)}(\lbar{J}I)
    \]
    is the unique rigid factorization of $\lbar{J}I$, and since as before no non-empty proper subproduct from the left (or the right) is principal, it is an atom in $\cH_{R^\bullet}$.
    For the ``in particular'' statement, note that if $m=n=1$ (i.e., $I$ and $J$ are both maximal left $S$-ideals), then $\lbar{J} I = qR$ if and only if $I = J$, and otherwise $\lbar{J} I$ is necessarily primitive.
  \end{enumerate}
\end{proof}

\begin{lemma} \label{lemma:a-is-nice}
  Let $I$ be a left $R$-ideal, $S = \cO_r(I)$, and $a \in R \cap S$. Then
  \[
    \prod_{i=1}^l (a^{l-i+1} R a^{-(l-i+1)}) a \cdot I = a^l I = (a^{l} I a^{-l}) a^l = (a^{l} I a^{-l}) \cdot \prod_{i=1}^l (a^{l-i+1} S a^{-(l-i+1)}) a
  \]
  with the left-most and the right-most expressions being proper products of
  \[
    (a^{l-i+1} R a^{-(l-i+1)}) a,\; I,\; (a^{l-i+1} S a^{-(l-i+1)}) a,\; a^l I a^{-l} \,\in\, \cI_v(\alpha).
  \]
  (The products have to be read in ascending order with ``$i=1$'' to the very left.)
\end{lemma}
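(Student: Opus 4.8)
The plan is to prove the four set‑theoretic equalities by a single telescoping computation with conjugates of $R$ and $S$, and then to check separately that every factor occurring lies in $\cI_v(\alpha)$ with matching source and target, so that the outermost expressions are genuine proper products in $\cI_v(\alpha)$. Since in the present setting of classical maximal orders over the Dedekind domain $\cO_K$ every ideal in $\cF_v(\alpha)$ is invertible, the proper ($v$‑)product coincides with the ordinary ideal product, and it suffices to compute the latter.

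First I would record the basic facts. As $a^{-1}$ occurs in the statement, $a\in Q^\bullet$, hence $a\in R^\bullet\cap S^\bullet$. For $k\in\bN_0$ the conjugate $a^kRa^{-k}$ is again a classical maximal $\cO_K$‑order, so it lies in $\alpha$; the same holds for $a^kSa^{-k}$, using that $S=\cO_r(I)$ is maximal (\autoref{prop:div-ideals}; recall that in the present setting every fractional left $R$‑ideal is divisorial, so its right order is maximal) and equivalent to $R$ (\autoref{lemma:frac}). From $a^{-k}aa^k=a\in R\cap S$ one gets $a\in(a^kRa^{-k})\cap(a^kSa^{-k})$, so by \autoref{lemma:mid} the ideal $(a^kRa^{-k})a$ is an integral divisorial left $(a^kRa^{-k})$‑ideal with left order $a^kRa^{-k}$ and right order $a^{k-1}Ra^{-(k-1)}$, and likewise for $(a^kSa^{-k})a$. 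Moreover $I\in\cI_v(\alpha)$ ($I$ is integral since it is a left $R$‑ideal, and divisorial in this setting), and $a^lIa^{-l}$ is an integral divisorial left $(a^lRa^{-l})$‑ideal (integral because $I\subset R$; divisoriality follows from $(a^lIa^{-l})^{-1}=a^lI^{-1}a^{-l}$, which is immediate from the definition of ${}^{-1}$). With these identifications the source/target conditions become mechanical: the $i$‑th factor $R_i:=(a^{l-i+1}Ra^{-(l-i+1)})a$ of the left‑hand product has right order $a^{l-i}Ra^{-(l-i)}=\cO_l(R_{i+1})$ for $i<l$ and right order $R=\cO_l(I)$ for $i=l$, and symmetrically on the right‑hand side, while the middle equality $(a^lIa^{-l})a^l=a^lI$ is trivial.

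Then I would carry out the computation. Using $(a^{l-i+1}Ra^{-(l-i+1)})a=a^{l-i+1}Ra^{-(l-i)}$ and the telescoping identity $(a^{m}Ra^{-(m-1)})(a^{m-1}Ra^{-(m-2)})=a^{m}Ra^{-(m-2)}$ — which follows from $R\cdot R=R$ after expanding $\sum_j(a^mr_ja^{-(m-1)})(a^{m-1}s_ja^{-(m-2)})=a^m\bigl(\sum_j r_js_j\bigr)a^{-(m-2)}$ — one obtains $R_1R_2\cdots R_l=a^lR$, hence $(R_1\cdots R_l)I=a^l(RI)=a^lI$ since $RI=I$. The identical telescoping with $S$ in place of $R$ gives $\prod_{i=1}^l(a^{l-i+1}Sa^{-(l-i+1)})a=a^lS$, whence $(a^lIa^{-l})(a^lS)=a^l(IS)=a^lI$ since $IS=I$ for $S=\cO_r(I)$. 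This yields all four equalities and exhibits the outermost expressions as proper products of the stated factors in $\cI_v(\alpha)$.

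This lemma is essentially bookkeeping, so I do not expect a serious obstacle; the two points needing a little care are the appeal to maximality of $\cO_r(I)$ (so that the conjugates $a^kSa^{-k}$ are legitimate identities of $\cF_v(\alpha)$ and the factors $(a^kSa^{-k})a$ legitimate elements of $\cI_v(\alpha)$) and keeping the conjugation exponents straight through the telescoping.
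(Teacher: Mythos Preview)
Your proof is correct and follows essentially the same approach as the paper's own (very terse) proof: both identify the key point as verifying that the factors $(a^kRa^{-k})a$ and $(a^kSa^{-k})a$ are integral, using that $a\in R\cap S$ implies $a\in a^kRa^{-k}$ and $a\in a^kSa^{-k}$, after which the equalities and properness are routine. You simply spell out the telescoping and the source/target checks that the paper dismisses as ``clear''.
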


\begin{proof}
  The formulas are clear, and so is that the products are proper ones. The key point is that these one-sided ideals are indeed integral. But this is so because $a \in S$, hence $a \in a^k S a^{-k}$ for all $k \in \bN_0$, implying that $(a^kSa^{-k})a \in \cI_v(\alpha)$, and similarly $a \in R$, thus $(a^k R a^{-k})a \in \cI_v(\alpha)$.
\end{proof}

\begin{lemma} \label{lemma:transpose-conj}
  Let $M$ be a maximal left $R$-ideal, and $N$ a maximal left $\cO_r(M)$-ideal.
  If $M \cdot N = N' \cdot M'$, then $\lbar{M} \cdot N' = N \cdot \lbar{M'}$.
\end{lemma}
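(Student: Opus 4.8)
The plan is to derive the identity from a cancellation in the groupoid $\cF_v(\alpha)$, using the conjugation formulas $I\cdot\lbar I=\cO_l(I)\nr(I)$ and $\lbar I\cdot I=\cO_r(I)\nr(I)$ recalled at the beginning of this subsection. Throughout one must keep track of left and right orders so that each $v$-product written down is actually defined; here this is automatic, since $M$ is a left $R$-ideal (so $\cO_l(M)=\cO_l(MN)=\cO_l(N'M')=\cO_l(N')=R$), $\cO_r(M)=\cO_l(N)$, $\cO_r(N)=\cO_r(M')$, and $\cO_r(N')=\cO_l(M')$. First I would left-multiply the relation $M\cdot N=N'\cdot M'$ by $\lbar M$: on the left this gives $\lbar M\cdot(M\cdot N)=(\lbar M\cdot M)\cdot N=\bigl(\cO_r(M)\nr(M)\bigr)\cdot N=\nr(M)\,N$, the last step being multiplication of $N$ by the central ideal $\nr(M)$; on the right it gives $(\lbar M\cdot N')\cdot M'$. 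Hence $\nr(M)\,N=(\lbar M\cdot N')\cdot M'$.

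Next I would right-multiply this equation by $\lbar{M'}$. The left side becomes $\nr(M)\cdot\bigl(N\cdot\lbar{M'}\bigr)$, and the right side becomes $(\lbar M\cdot N')\cdot\bigl(M'\cdot\lbar{M'}\bigr)=(\lbar M\cdot N')\cdot\bigl(\cO_l(M')\nr(M')\bigr)=\nr(M')\cdot\bigl(\lbar M\cdot N'\bigr)$, using $\cO_l(M')=\cO_r(N')$. This yields
\[
  \nr(M)\cdot\bigl(N\cdot\lbar{M'}\bigr)\;=\;\nr(M')\cdot\bigl(\lbar M\cdot N'\bigr)
\]
as an equality of divisorial fractional left $\cO_l(N)$-ideals. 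It then remains to identify the reduced norms $\nr(M)=\nr(M')$: granting this, the common factor $\nr(M)$ is an invertible element of $\cF_v(\alpha)$ and may be cancelled, giving $N\cdot\lbar{M'}=\lbar M\cdot N'$, as claimed. For the norm identity, apply $\nr$ to $M\cdot N=N'\cdot M'$ (equivalently, apply $\eta$ together with \autoref{lemma:eta-nr}): one gets $\nr(M)\nr(N)=\nr(N')\nr(M')$ in $\cF^\times(\cO)$, and since the reduced norm of a maximal integral element of $\cF_v(\alpha)$ is a maximal ideal of $\cO$ (by the description of $\cF_v(\alpha)$ given above, in both the ramified and the unramified case), unique factorization forces $\{\nr(M),\nr(N)\}=\{\nr(N'),\nr(M')\}$ as multisets. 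If $\nr(M)=\nr(N)$ the two members coincide and we are done; otherwise $N'\cdot M'$ is the transposition of $M\cdot N$ in the sense of \hyperref[ffg:transposition]{Proposition \ref*{prop:fact-freeish-grpd}.\ref*{ffg:transposition}}, so that $\Phi(N')=\Phi(N)$ and $\Phi(M')=\Phi(M)$, whence again $\nr(M')=\nr(M)$.

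The computation itself is short; the two points requiring care are the bookkeeping of left and right orders so that every $v$-product is legitimate, and — the real crux — the identification $\nr(M')=\nr(M)$, which is precisely where the maximality and integrality of $M,N,N',M'$ and the explicit structure of $\cF_v(\alpha)$ for a maximal order in a quaternion algebra enter.
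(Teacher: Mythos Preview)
Your approach is essentially the paper's: left-multiply by $\lbar M$ to turn $M\cdot N=N'\cdot M'$ into $\nr(M)\,N=\lbar M\cdot N'\cdot M'$, then cancel. The paper does the cancellation slightly more directly---it rewrites $\nr(M)\,N=N\cdot\cO_r(N)\nr(M)=N\cdot\lbar{M'}\cdot M'$ and cancels $M'$ on the right---whereas you right-multiply by $\lbar{M'}$ and cancel the central factor; both come to the same thing.

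You are right that the identity $\nr(M)=\nr(M')$ is the one nontrivial ingredient, and you are more explicit about it than the paper, which simply uses it in the step $\cO_r(N)\nr(M)=\lbar{M'}\cdot M'$. However, your justification has a small gap: from $\{\nr(M),\nr(N)\}=\{\nr(N'),\nr(M')\}$ alone you cannot conclude that $(N',M')$ is the transposition rather than the trivial re-reading $(N',M')=(M,N)$, and in that degenerate case (with $\nr(M)\ne\nr(N)$) the asserted identity $\lbar M\cdot N'=N\cdot\lbar{M'}$ would in fact fail. The lemma is really being stated (and used) under the convention that $N',M'$ arise from $M,N$ by transposition in the sense of \hyperref[ffg:transposition]{Proposition~\ref*{prop:fact-freeish-grpd}.\ref*{ffg:transposition}}, so that $\nr(M')=\nr(M)$ holds by hypothesis; once you grant that, your argument and the paper's are the same.
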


\begin{proof}
  Since $\cO_r(\lbar{M}) = \cO_l(M) = \cO_l(N')$ and $\cO_r(N) = \cO_r(M') = \cO_l(\lbar{M'})$ the product is proper.
  We have
  \[
    \lbar{M} \cdot N' \cdot M' = \lbar{M} \cdot M \cdot N = \nr(M)\cO_l(N) \cdot N = N \cdot \cO_r(N)\nr(M) = N \cdot \lbar{M'} \cdot M',
  \]
  and thus $\lbar{M} \cdot N' = N \cdot \lbar{M'}$.
\end{proof}

\begin{proof}[Proof of \autoref{prop:delta-tot-def}]
  Let $p \in \cO_K$ be a totally positive prime element with $p\cO_K \nmid d_K \fD \fN$ and with $\nr(p)$ satisfying the bound of \autoref{prop:exist-max} for the classical maximal order $R$ (with $C=1$). Then there exists a maximal right $R$-ideal $U$ with $\nr(U) = p\cO_K$ that is non-free (i.e., non-principal) but is stably free (i.e., $[\nr(U)] = \vec 0$ in $\Cl^+(\cO_K)$). Let $U=U_0,\ldots,U_{r}$ be the maximal left $\cO_l(U)$-ideals of reduced norm $p\cO_K$ (of which there are $r + 1 = \Norm_{K/\bQ}(p) + 1$). By \autoref{lemma:exist-a}, there exists a totally positive prime element $q \in \cO_K$, $q \cO_K \nmid p d_K \fD \fN$, and an element
  \[
    a \,\in\, \cO_l(U) \,\cap\, \bigcap_{j=0}^r \cO_r(U_j) \quad\text{with}\quad \nr(a) = q \text{ and } a^2 \ne -q,
  \]
  and in fact, by \autoref{lemma:eps-in-center}, we may make this choice such that $a^2 \ne \varepsilon q$ for any $\varepsilon \in R^\times$.
  In addition, we may take $\Norm_{K/\bQ}(q)$ to be sufficiently large to satisfy the bound of \autoref{prop:exist-max} for $\cO_l(U)$ (with $C=2$). Then there exist distinct left $\cO_l(U)$-ideals $I$ and $J$ such that $I \cong J \cong U$ and $\nr(I) = \nr(J) = q \cO_K$.

  Set $S = \cO_r(I)$, and observe that $S \cong R$, because $U \cong I$.
  By \autoref{lemma:an-atom}, $(a^l \lbar{J} a^{-l}) a^l I \in \cA(\cH_{S^\bullet})$ for all $l \in \bN_0$, say $(a^l \lbar{J} a^{-l}) a^l I =y_lS$ with $y_l \in \cA(S^\bullet)$.
  We consider the principal right $S$-ideal $X_l = (a^l \lbar{J} a^{-l}) a^l I p \subset S$, say $X_l = x_l S$ with $x_l \in S^\bullet$. We will first determine all possible rigid factorizations of $X_l$ in $\cI_v(\alpha)$.
  As in \autoref{lemma:an-atom}, the right $S$-ideal $(a^l \lbar{J} a^{-l}) a^l I$ has reduced norm $q^{l+2} \cO_K$, is primitive, and thus possesses a unique rigid factorization,
  \[
    \rf{(a^{l} \lbar {J} a^{-l}), (a^{l} \cO_l(I) a^{-l}) a, (a^{l-1} \cO_l(I) a^{-(l-1)}) a, \ldots, (a\cO_l(I)a^{-1}) a, I} \;\in\; \sZ^*_{\cI_v(\alpha)}((a^l \lbar{J} a^{-l}) a^l I),
  \]
  with the $l+2$ factors $a^l \lbar{J} a^{-l},\, (a^{l-k} \cO_l(I) a^{-(l-k)}) a$ for $k \in [0,l-1]$ and $I$ all in $\cM_v(\alpha)$.

  For an element with a unique rigid factorization we make the convention of identifying the element and its factorization when this is notationally convenient. For principal ideals we omit the order and only write the generator if it is clear from the neighboring elements in the factorization what the order must be. For example, we can write the previous rigid factorization as $\rf{a^{l} \lbar{J} a^{-l}, a^l, I}$.

  $X_l$ has $(r+1)\binom{l+4}{2}$ rigid factorizations: They arise from the different rigid factorizations $\rf{U_i, \lbar{U_i}} \in \sZ^*_{\cI_v(\alpha)}(\cO_l(U)p)$ for $i \in [0,r]$ and the possible transpositions of $U_i$ and $\lbar{U_i}$.
  We denote the rigid factorization of $X_l$ that arises from $\rf{a^l J a^{-l}, a^l, U_i, \lbar{U_i}, I}$ by transposing the one-sided ideals of norm $p$ to the positions $m \in [-1,l+1]$ and $n \in [m,l+1]$ in the factorization by $F_{i,m,n}$: Here, the left-most position in the rigid factorization is denoted by $-1$, the right-most by $l+1$. So, by ``the rigid factorization obtained by transposing $U_i$ to the position $-1$ and $\lbar{U_i}$ to $l+1$'' we mean the unique rigid factorization of $X_l$ that has a factor of norm $p\cO_K$ as the first factor and as the last factor, and that can be transformed into $\rf{a^l\lbar{J}a^{-l},a^l, U_i, \lbar{U_i}, I}$ by transposition of maximal integral elements with coprime norm.

  For $i \in [0,r]$ let $V_i \in \cM_v(\alpha)$ and $M_i \in \cM_v(\alpha)$ be defined by $\lbar{U_i} I = M_i \lbar{V_i}$ under transposition, and let $W_i \in \cM_v(\alpha)$ and $N_i \in \cM_v(\alpha)$ be defined by $W_i \lbar{N_i} = \lbar{J} U_i$ under transposition.
 ($\{ V_i \mid i \in [0,r]\}$ is then the set of all $r+1$ left $S=\cO_r(I)$-ideals of reduced norm $p$. Similarly $\{ W_i \mid i \in [0,r] \}$ is then the set of all $r+1$ left $\cO_r(J)$-ideals of reduced norm $p$, and since $\cO_r(I) \cong \cO_r(J)$ the sets are actually conjugate under conjugation by an element of $A^\times$.)
  By \autoref{lemma:transpose-conj} we then also have $U_i M_i = I V_i$ and $ \lbar{W_i}\, \lbar{J} = \lbar{N_i}\, \lbar{U_i}$ under transposition.
  Using \autoref{lemma:a-is-nice} to see that $a$ transposes ``nicely'' with $U_i$ and $\lbar{U_i}$, we can explicitly describe all $F_{i,m,n}$ as follows:

  \begin{enumerate}
    \renewcommand{\theenumi}{{\textbf{Case \arabic{enumi}}}}
    \renewcommand{\labelenumi}{(\theenumi)}

    \item \label{C1}
      If $m=n=-1$:
      \[
        F_{i,m,n} = \rf{a^l W_i a^{-l}, a^l \lbar{W_i} a^{-l}, a^l \lbar{J}  a^{-l}, a^{l}, I}.
      \]

    \item \label{C2}
      If $m=-1$ and $0 \le n \le l$:
      \[
        F_{i,m,n} = \rf{a^l W_i a^{-l}, a^l \lbar N_i a^{-l}, a^{n}, a^{l-n} \lbar{U_i} a^{-(l-n)}, a^{l-n}, I}.
      \]

    \item \label{C3}
      If $0 \le m \le n \le l$:
      \[
        F_{i,m,n} = \rf{a^l \lbar J a^{-l}, a^{m}, a^{l-m} U_i a^{-(l-m)}, a^{n-m}, a^{l-n} \lbar{U_i} a^{-(l-n)}, a^{l-n}, I}.
      \]

    \item \label{C4}
      If $m=-1$ and $n=l+1$:
      \[
        F_{i,m,n} = \rf{a^l W_i a^{-l},  a^l \lbar{N_i} a^{-l}, a^{l}, M_i, \lbar{V_i}}.
      \]

    \item \label{C2s}
      If $0 \le m \le l$ and $n = l+1$:
      \[
        F_{i,m,n} = \rf{a^l \lbar J a^{-l}, a^{m}, a^{l-m} U_i a^{-(l-m)}, a^{l-m}, M_i, \lbar{V_i}}.
      \]

    \item \label{C1s}
      If $m=n=l+1$:
      \[
        F_{i,m,n} = \rf{a^l \lbar{J} a^{-l},  a^{l}, I, V_i, \lbar{V_i}}.
      \]
  \end{enumerate}
  For each of these rigid factorizations of the ideal $X_l$ in $\cI_v(\alpha)$ we can form minimal subproducts of principal one-sided ideals (starting from the left or the right) to obtain a representation of $X_l$ as a product in $\cH_{S^\bullet}$ (and hence a representation of $x_l$ as a product of elements of $S^\bullet$). But only when each of these minimal principal subproducts is an atom of $\cH_{S^\bullet}$ this gives rise to an actual rigid factorization of $x_l$ into atoms.
  We discuss the individual cases one-by-one:

  \begin{enumerate}
    \renewcommand{\theenumi}{{\textbf{Case \arabic{enumi}}}}
      \item 
        If $m=n=-1$:
        If $W_i$ is non-principal, then this does not give rise to a rigid factorization into atoms, as the first principal factor is $a^l(W_i \lbar{W_i}) a^{-l} = a^{l} (p\cO_r(J)) a^{-l}$, and this is not an atom (since there is at least one element in $\{ W_i \mid i \in [0,r] \}$ that is principal by \autoref{prop:exist-max}).
        If on the other hand $W_i$ is principal, then this gives rise to a rigid factorization of $X_l$ in $\cH_{S^\bullet}$ of length $3$, with atomic factors $a^{l} W_i a^{-l}$, $a^{l} \lbar{W_i} a^{-l}$ and $(a^{l} \lbar{J} a^{-l}) a^l I$, which in turn gives rise to a rigid factorization of length $3$ of $x_l \in S$.

      \item 
        If $m=-1$ and $0 \le n \le l$:
        \begin{enumerate}
          \item[\textbf{Case 2a}] If $U_i \cong I$:
            Then the last principal factor is necessarily $(a^{l-n} \lbar{U_i} a^{-(l-n)}) a^{l-n} I$. If $n < l$, then transposing $\lbar{U_i}$ to the right shows that this is not an atom in $\cH_{S^\bullet}$. If $n = l$ then $\lbar{U_i} I = M_i \lbar{V_i}$ is an atom if and only if $V_i$ is non-principal. Since also $U_i \cong J$, the factor $W_i \lbar{N_i} = \lbar{J}U_i$ is principal, and, because $\lbar{J}$ and $\lbar{U_i}$ are non-principal, this is either an atom (if $W_i$ is non-principal), or a product of two atoms (if $W_i$ is principal). So if $V_i$ is non-principal we get a rigid factorization of length either $l+2$ or $l+3$, and if $V_i$ is principal we get no rigid factorization into atoms.

          \item[\textbf{Case 2b}] If $U_i \not \cong I$:
            Then either there are no non-trivial principal factors (if $W_i$ is non-principal), or the first factor is $W_i$ and the remaining product does not factor into non-trivial principal factors.
        But then this second factor is not an atom, because after transposition of $\lbar{U_i}$ to the very left of the second factor (i.e, position $0$), we have a principal factor $\lbar{W_i}$. So in any case, this does not give rise to a rigid factorization into atoms.
        \end{enumerate}

      \item 
        If $0 \le m \le n \le l$:
        If $I \not \cong U_i$, then there are no non-trivial principal factors, and hence no rigid factorization into atoms is obtained.

        If $I \cong U_i$, then the first principal factor is $(a^l \lbar J a^{-l}) a^{m} (a^{l-m} U_i a^{-(l-m)})$, and the last one is $(a^{l-n}\lbar{U_i} a^{-(l-n)}) a^{l-n} I$.
        If $m > 0$ (or $n < l$), then by transposing $U_i$ to the left in the first factor (or $\lbar{U_i}$ to the right in the second factor) once, we see that this does not give rise to a rigid factorization into atoms.
        Consider now $m=0$ and $n=l$. If $V_i$ is principal, then $\lbar{J} U_i = V_i \lbar{K_i}$ implies that the first factor $a^l \lbar{J} U_i a^{-l}$ is no atom, and hence again we get no rigid factorization into atoms. Analogously we get no rigid factorization into atoms if $W_i$ is principal. If on the other hand $V_i$ and $W_i$ are both non-principal then $\lbar{J} U_i$ is an atom, and so is $\lbar{U_i} I$.
        Thus we obtain a rigid factorization of $X_l$ (and hence of $x_l$) of length $l+2$.
        (It is then in fact the same one as the one obtained from \ref{C2} in the same situation.)

      \item 
        If $m=-1$ and $n=l+1$:
        \begin{enumerate}
          \item[\textbf{Case 4a}] If $U_i \cong I$: Then $W_i \lbar{N_i}$ and $M_i \lbar{V_i}$ are both principal.
            If $W_i$ is non-principal, then $W_i \lbar{N_i} = \lbar{J} U_i$ is an atom since $J$ is non-principal. If $W_i$ is principal, then $W_i \lbar{N_i}$ is a product of two atoms. Similarly, $M_i \lbar{V_i}$ is either an atom or a product of two atoms.
            So in this case we get a rigid factorization of length $l+2$, $l+3$ or $l+4$.
            (In the case that $V_i$ is non-principal, it is the same one as in \ref{C2}. In the case that $V_i$ and $W_i$ are both non-principal it is the same as in \ref{C3} in the same situation.)

          \item[\textbf{Case 4b}] If $U_i \not \cong I$: Then $W_i \lbar{N_i}$ and $M_i \lbar{V_i}$ are both non-principal.
            If $W_i$ is principal, but $\lbar{V_i}$ is not, then the second principal factor is necessarily $(a^l \lbar{N_i} a^{-l}) a^{l} M_i \lbar{V_i}$, and this cannot be split as a non-trivial product of principal factors. But transposing $\lbar{V_i}$ to the very left in this factor gives a principal factor $\lbar{W_i}$, hence $(a^l \lbar{N_i} a^{-l}) a^{l} M_i \lbar{V_i}$ is not an atom. Arguing analogously, if $\lbar{V_i}$ is principal but $W_i$ is not, no rigid factorization into atoms is obtained.

            Finally, if $W_i$ and $V_i$ are both principal, we get a rigid factorization into $3$ atoms.
        \end{enumerate}

      \item 
        If $0 \le m \le l$ and $n = l+1$: This is analogous to \ref{C2}.

      \item 
        If $m=n=l+1$: This is analogous to \ref{C1}.
  \end{enumerate}

  Since there is at least one $i \in [0,r]$ for which $W_i$ is principal, we get at least one rigid factorization of $x_l$ of length $3$ from \ref{C1}.
  For $i=0$, $U_i \cong I$, so \ref{C4} gives at least one factorization with length in $[l+2,l+4]$.
  Note that which of the lengths in $[l+2,l+4]$ occur in \ref{C2}, \ref{C3}, and \ref{C4} depends only on the principality of certain one-sided ideals, and not on $l$. Thus we have shown that there exists a set $\emptyset \ne E \subset \{\, 2,3,4 \,\}$ such that, for any choice of $l \in \bN_0$,
  \[
    \sL_{S^\bullet}(x_l) = \{\, 3 \,\} \;\cup\; (l + E),
  \]
  and $x_l$ has the claimed form $x_l = y_l p$ with $y_l \in \cA(S^\bullet)$ and $p$ a totally positive prime element of $\cO_K$.
  Since $S \cong R$, the same is true for $R$.
\end{proof}

\begin{remark} \label{remark:alternative}
  \mbox{}
  \begin{enumerate}

  \item
    In the proof, the classical $\cO_K$-order
    \[
      T = \cO_l(U) \,\cap\, \bigcap_{j=0}^r \cO_r(U_j)
    \]
    is maximal at every prime $\fr \in \max(\cO_K)$ with $\fr \ne p \cO_K$ (thus $T_\fr \cong M_2(\cO_{K,\fr})$ if $\fr \nmid p\fD$ and $T_\fr$ is isomorphic to the unique classical maximal $\cO_{K,\fr}$-order in the unique quaternion division algebra over $K_\fr$ if $\fr \mid \fD$). At $\fp=p\cO_K$, it is not hard too see by local calculations that
  \[
    T_{\fp} \cong \left\{ \begin{pmatrix} a & b \\ p^2 c & a + p d \end{pmatrix} \;\bigg\lvert\; a,b,c,d \in \cO_{K,\fp} \right\}.
  \]
  But $T_{\fp}$ is not a classical Eichler order, and so neither is $T$.

\item If $\fr \in \max(\cO_K)$ we can find infinitely many pairwise non-associated totally positive prime elements $q \in \cO_K$ such that $\fr$ splits in $K(\sqrt{-q})$, and infinitely many pairwise non-associated totally positive prime elements $q \in \cO_K$ such that $\fr$ is inert in $K(\sqrt{-q})$: We may restrict ourselves to $q$ with $\Norm_{K/\bQ}(q)$ odd, and $q \cO_K \ne \fr$. Let $\fr' = \fr^{1 + \val_{\fr}(4)}$.
    If $-q \equiv 1 \mod \fr'$, then $-q$ is a square in $\cO_K/\fr'$ and hence $\fr$ splits in $K(\sqrt{-q})$. If $-q \equiv a \mod \fr'$, with $a$ a non-square in $\cO_K/\fr'$, then $\fr$ is inert in $K(\sqrt{-q})$. It therefore suffices to show that in every class of $(\cO_K/\fr')^\times$ there are infinitely many pairwise non-associated totally positive prime elements of $\cO_K$.
  Since we have the exact sequence
  \[
    \vec 1 \to \cO_K^{\times,+}/ \{ x \in \cO_{K}^{\times,+} \mid x \equiv_{\fr'} 1 \} \to (\cO_K/\fr')^\times \to \Cl_{\fr'}^+(\cO_K) \to \Cl^+(\cO_K) \to \vec 1,
  \]
  (cf. \cite[Lemma 3.2]{narkiewicz04}, \cite[Exercises VI.1.12, VI.1.13]{neukirch92}), it suffices that every class in the kernel of $\Cl_{\fr'}^+(\cO_K) \to \Cl^+(\cO_K)$ contains infinitely many pairwise non-associated prime elements. But this is so, because in fact every class of $\Cl_{\fr'}^+(\cO_K)$ contains infinitely many pairwise distinct maximal ideals (cf. \cite[Corollary 7 to Proposition 7.9]{narkiewicz04}).

\item Using the previous observation to find a suitable element $a$ in the proof, we can replace \autoref{lemma:exist-a} by a simpler one if $\fD \ne \cO_K$: Choosing the totally positive prime element $q \in \cO_K$ such that a prime divisor $\fr \mid \fD$ splits in $K(\sqrt{-q})$, the field $K(\sqrt{-q})$ does not embed into $A$ at all (see e.g. \cite[Theoreme III.3.8]{vigneras80} or \cite[Theorem 7.3.3]{maclachlan-reid03}).

  If $\fD = \cO_K$, we may make use of the fact that the particular classical order $T$ in the proof is contained in a classical Eichler order of squarefree level $\fp$. Taking $q$ such that $\fp$ is inert in $K(\sqrt{-q})$, the formulas for counting optimal embeddings (\cite[Corollaire III.5.12]{vigneras80}) show that no order of $K(\sqrt{-q})$ embeds into $T$.

    For this approach we only need the qualitative statement of \autoref{prop:pos-def-quad}, but not the order of magnitude.
  \end{enumerate}
\end{remark}

\begin{proof}[Proof of \autoref{prop:shift}]
  It suffices to prove the claim for $n=1$.
  Let $a$ in $R^\bullet$, and let
  \[
    \{ \rf{I_1^{(1)},\ldots, I_k^{(1)}}, \;\ldots,\; \rf{I_1^{(l)},\ldots, I_k^{(l)}} \} = \sZ^*_{\cI_v(\alpha)}( Ra) \subset \cF(\cM_v(\alpha))
  \]
  be the set of all rigid factorizations of $Ra$ in $\cI_v(\alpha)$.
  Using \autoref{prop:pos-def-quad}, we can choose a totally positive prime element $q \in \cO_K$ with $q \nmid \nr(a)$ and such that there exists an $x \in A^\times$ with $\nr(x) = q$ and
  \[
    x \;\in\; T = \bigcap_{i=1}^l \bigcap_{j=1}^k \cO_l(I_j^{(i)}) \cap \cO_r(I_j^{(i)}).
  \]
  We claim $\sL(xa) = 1 + \sL(a)$.
  The rigid factorizations of $Rxa$ in $\cI_v(\alpha)$ are given by all possible transpositions of $x$ to any position in
  \[
    \rf{I_1^{(i)},\ldots,I_m^{(i)},x},
  \]
  for all $i \in [1,l]$. But, since $x \in T$, it follows from \autoref{lemma:a-is-nice} that any such rigid factorization is of the form
  \[
    \rf{I_1^{(i)},\ldots, I_m^{(i)}, x, x^{-1} I_{m+1}^{(i)} x,\ldots, x^{-1} I_{k}^{(i)} x}.
  \]
  for $m \in [0,k]$. We see that for the principal subproducts this does not change anything except insert one additional factor (corresponding to $x$) at some position.
  Thus, for each $i \in [1,l]$, $\rf{I_1^{(i)}, \ldots, I_m^{(i)}, x, x^{-1}I_{m+1}^{(i)}x, \ldots, x^{-1} I_{k}^{(i)} x}$ gives rise to a rigid factorization of $ax$ in $R^\bullet$ of length $l+1$ if and only if $\rf{I_1^{(i)}, \ldots, I_k^{(i)}}$ gives rise to a rigid factorization of $a$ of length $l$.
\end{proof}

\section{Acknowledgments}

I thank Alfred Geroldinger for suggesting the topic, and him and Franz Halter-Koch for reading preliminary versions of this manuscript and providing many valuable comments.

\bibliographystyle{hyperabbrv}
\bibliography{maxord}

\begin{thebibliography}{10}

\bibitem{andersondd97}
D.~D. Anderson, editor.
\newblock {\em Factorization in integral domains}, volume 189 of {\em Lecture
  Notes in Pure and Applied Mathematics}, New York, 1997. Marcel Dekker Inc.

\bibitem{asano39}
K.~Asano.
\newblock Arithmetische {I}dealtheorie in nichtkommutativen {R}ingen.
\newblock {\em Jpn. J. Math.}, 16:1--36, 1939.

\bibitem{asano49}
K.~Asano.
\newblock Zur {A}rithmetik in {S}chiefringen. {I}.
\newblock {\em Osaka Math. J.}, 1:98--134, 1949.

\bibitem{asano50}
K.~Asano.
\newblock Zur {A}rithmetik in {S}chiefringen. {II}.
\newblock {\em J. Inst. Polytech. Osaka City Univ. Ser. A. Math.}, 1:1--27,
  1950.

\bibitem{asano-murata53}
K.~Asano and K.~Murata.
\newblock Arithmetical ideal theory in semigroups.
\newblock {\em J. Inst. Polytech. Osaka City Univ. Ser. A. Math.}, 4:9--33,
  1953.

\bibitem{asano-ukegawa52}
K.~Asano and T.~Ukegawa.
\newblock Erg\"anzende {B}emerkungen \"uber die {A}rithmetik in {S}chiefringen.
\newblock {\em J. Inst. Polytech. Osaka City Univ. Ser. A. Math.}, 3:1--7,
  1952.

\bibitem{baeth-wiegand13}
N.~R. Baeth and R.~Wiegand.
\newblock Factorization theory and decompositions of modules.
\newblock {\em
  \href{http://dx.doi.org/10.4169/amer.math.monthly.120.01.003}{Amer. Math.
  Monthly}}, 120(1):3--34, 2013.

\bibitem{berrick-keating00}
A.~J. Berrick and M.~E. Keating.
\newblock {\em An introduction to rings and modules with {$K$}-theory in view},
  volume~65 of {\em Cambridge Studies in Advanced Mathematics}.
\newblock Cambridge University Press, Cambridge, 2000.

\bibitem{brandt27}
H.~Brandt.
\newblock \"{U}ber eine {V}erallgemeinerung des {G}ruppenbegriffes.
\newblock {\em \href{http://dx.doi.org/10.1007/BF01209171}{Math. Ann.}},
  96(1):360--366, 1927.

\bibitem{brandt28}
H.~Brandt.
\newblock Idealtheorie in {Q}uaternionenalgebren.
\newblock {\em \href{http://dx.doi.org/10.1007/BF01459083}{Math. Ann.}},
  99(1):1--29, 1928.

\bibitem{chamarie81}
M.~Chamarie.
\newblock Anneaux de {K}rull non commutatifs.
\newblock {\em \href{http://dx.doi.org/10.1016/0021-8693(81)90318-5}{J.
  Algebra}}, 72(1):210--222, 1981.

\bibitem{chapman05}
S.~T. Chapman, editor.
\newblock \href{http://dx.doi.org/10.1201/9781420028249}{{\em Arithmetical
  properties of commutative rings and monoids}}, volume 241 of {\em Lecture
  Notes in Pure and Applied Mathematics}. Chapman \& Hall/CRC, Boca Raton, FL,
  2005.

\bibitem{claborn65}
L.~Claborn.
\newblock Dedekind domains and rings of quotients.
\newblock {\em Pacific J. Math.}, 15:59--64, 1965.

\bibitem{cohn85}
P.~M. Cohn.
\newblock {\em Free rings and their relations}, volume~19 of {\em London
  Mathematical Society Monographs}.
\newblock Academic Press Inc. [Harcourt Brace Jovanovich Publishers], London,
  second edition, 1985.

\bibitem{cohn06}
P.~M. Cohn.
\newblock \href{http://dx.doi.org/10.1017/CBO9780511542794}{{\em Free ideal
  rings and localization in general rings}}, volume~3 of {\em New Mathematical
  Monographs}.
\newblock Cambridge University Press, Cambridge, 2006.

\bibitem{conway-smith03}
J.~H. Conway and D.~A. Smith.
\newblock {\em On quaternions and octonions: their geometry, arithmetic, and
  symmetry}.
\newblock A K Peters Ltd., Natick, MA, 2003.

\bibitem{deuring68}
M.~Deuring.
\newblock {\em Algebren}.
\newblock Zweite, korrigierte Auflage. Ergebnisse der Mathematik und ihrer
  Grenzgebiete, Band 41. Springer-Verlag, Berlin, 1968.

\bibitem{estes91}
D.~R. Estes.
\newblock Factorization in quaternion orders over number fields.
\newblock In {\em The mathematical heritage of {C}. {F}. {G}auss}, pages
  195--203. World Sci. Publ., River Edge, NJ, 1991.

\bibitem{estes-nipp89}
D.~R. Estes and G.~Nipp.
\newblock Factorization in quaternion orders.
\newblock {\em \href{http://dx.doi.org/10.1016/0022-314X(89)90009-7}{J. Number
  Theory}}, 33(2):224--236, 1989.

\bibitem{fontana-houston-lucas12}
M.~Fontana, E.~Houston, and T.~Lucas.
\newblock {\em Factoring {I}deals in {I}ntegral {D}omains}, volume~14 of {\em
  Lecture Notes of the Unione Matematica Italiana}.
\newblock Springer, 2012.

\bibitem{frisch12}
S.~Frisch.
\newblock A construction of integer-valued polynomials with prescribed sets of
  lengths of factorizations.
\newblock {\em \href{http://dx.doi.org/10.1007/s00605-013-0508-z}{Monatsh.
  Math.}}, 2013.
\newblock to appear.

\bibitem{froehlich75}
A.~Fr{\"o}hlich.
\newblock Locally free modules over arithmetic orders.
\newblock {\em \href{http://dx.doi.org/10.1515/crll.1975.274-275.112}{J. Reine
  Angew. Math.}}, 274/275:112--124, 1975.
\newblock Collection of articles dedicated to Helmut Hasse on his seventy-fifth
  birthday, III.

\bibitem{gabriel-zisman67}
P.~Gabriel and M.~Zisman.
\newblock {\em Calculus of fractions and homotopy theory}.
\newblock Ergebnisse der Mathematik und ihrer Grenzgebiete, Band 35.
  Springer-Verlag New York, Inc., New York, 1967.

\bibitem{geroldinger09}
A.~Geroldinger.
\newblock \href{http://dx.doi.org/10.1007/978-3-7643-8962-8}{Additive group
  theory and non-unique factorizations}.
\newblock In {\em Combinatorial number theory and additive group theory}, Adv.
  Courses Math. CRM Barcelona, pages 1--86. Birkh\"auser Verlag, Basel, 2009.

\bibitem{geroldinger13}
A.~Geroldinger.
\newblock Non-commutative {K}rull monoids: a divisor theoretic approach and
  their arithmetic.
\newblock {\em Osaka Math. J.}, 50:503 -- 539, 2013,
  \href{http://arxiv.org/abs/1208.4202}{{\tt arXiv:1208.4202}}.

\bibitem{geroldinger-grynkiewicz09}
A.~Geroldinger and D.~J. Grynkiewicz.
\newblock On the arithmetic of {K}rull monoids with finite {D}avenport
  constant.
\newblock {\em \href{http://dx.doi.org/10.1016/j.jalgebra.2008.11.020}{J.
  Algebra}}, 321(4):1256--1284, 2009.

\bibitem{ghk06}
A.~Geroldinger and F.~Halter-Koch.
\newblock \href{http://dx.doi.org/10.1201/9781420003208}{{\em Non-unique
  factorizations}}, volume 278 of {\em Pure and Applied Mathematics (Boca
  Raton)}.
\newblock Chapman \& Hall/CRC, Boca Raton, FL, 2006.
\newblock Algebraic, combinatorial and analytic theory.

\bibitem{geroldinger-hassler08}
A.~Geroldinger and W.~Hassler.
\newblock Arithmetic of {M}ori domains and monoids.
\newblock {\em \href{http://dx.doi.org/10.1016/j.jalgebra.2007.11.025}{J.
  Algebra}}, 319(8):3419--3463, 2008.

\bibitem{geroldinger-yuan12}
A.~Geroldinger and P.~Yuan.
\newblock The set of distances in {K}rull monoids.
\newblock {\em \href{http://dx.doi.org/10.1112/blms/bds046}{Bull. London Math.
  Soc.}}, 44:1203–1208, 2012.

\bibitem{graetzer11}
G.~Gr{\"a}tzer.
\newblock \href{http://dx.doi.org/10.1007/978-3-0348-0018-1}{{\em Lattice
  theory: foundation}}.
\newblock Birkh\"auser/Springer Basel AG, Basel, 2011.

\bibitem{halimi12}
N.~H. Halimi.
\newblock Right {M}ori orders.
\newblock \href{http://arxiv.org/abs/1203.2785}{{\tt arXiv:1203.2785}}.
\newblock preprint.

\bibitem{halterkoch98}
F.~Halter-Koch.
\newblock {\em Ideal systems}, volume 211 of {\em Monographs and Textbooks in
  Pure and Applied Mathematics}.
\newblock Marcel Dekker Inc., New York, 1998.
\newblock An introduction to multiplicative ideal theory.

\bibitem{halterkoch10}
F.~Halter-Koch.
\newblock \href{http://dx.doi.org/10.1007/978-1-4419-6990-3_8}{Multiplicative
  ideal theory in the context of commutative monoids}.
\newblock In {\em Commutative algebra---{N}oetherian and non-{N}oetherian
  perspectives}, pages 203--231. Springer, New York, 2011.

\bibitem{jacobson43}
N.~Jacobson.
\newblock {\em The {T}heory of {R}ings}.
\newblock American Mathematical Society Mathematical Surveys, vol. I. American
  Mathematical Society, New York, 1943.

\bibitem{jespers86}
E.~Jespers.
\newblock On {$\Omega$}-{K}rull rings.
\newblock {\em Quaestiones Math.}, 9(1-4):311--338, 1986.
\newblock Classical and categorical algebra (Durban, 1985).

\bibitem{jespers-okninski07}
E.~Jespers and J.~Okni{\'n}ski.
\newblock {\em Noetherian semigroup algebras}, volume~7 of {\em Algebras and
  Applications}.
\newblock Springer, Dordrecht, 2007.

\bibitem{jespers-wang01}
E.~Jespers and Q.~Wang.
\newblock Noetherian unique factorization semigroup algebras.
\newblock {\em \href{http://dx.doi.org/10.1081/AGB-100107954}{Comm. Algebra}},
  29(12):5701--5715, 2001.

\bibitem{kainrath99}
F.~Kainrath.
\newblock Factorization in {K}rull monoids with infinite class group.
\newblock {\em Colloq. Math.}, 80(1):23--30, 1999.

\bibitem{kirschmer-voight10}
M.~Kirschmer and J.~Voight.
\newblock Algorithmic enumeration of ideal classes for quaternion orders.
\newblock {\em \href{http://dx.doi.org/10.1137/080734467}{SIAM J. Comput.}},
  39(5):1714--1747, 2010.

\bibitem{kirschmer-voight10cor}
M.~Kirschmer and J.~Voight.
\newblock Corrigendum: Algorithmic enumeration of ideal classes for quaternion
  orders.
\newblock {\em \href{http://dx.doi.org/10.1137/120866063}{SIAM J. Comput.}},
  41(3):714--714, 2012.

\bibitem{maclachlan-reid03}
C.~Maclachlan and A.~W. Reid.
\newblock {\em The arithmetic of hyperbolic 3-manifolds}, volume 219 of {\em
  Graduate Texts in Mathematics}.
\newblock Springer-Verlag, New York, 2003.

\bibitem{mcconnell-robson01}
J.~C. McConnell and J.~C. Robson.
\newblock {\em Noncommutative {N}oetherian rings}, volume~30 of {\em Graduate
  Studies in Mathematics}.
\newblock American Mathematical Society, Providence, RI, revised edition, 2001.
\newblock With the cooperation of L. W. Small.

\bibitem{narkiewicz04}
W.~Narkiewicz.
\newblock {\em Elementary and analytic theory of algebraic numbers}.
\newblock Springer Monographs in Mathematics. Springer-Verlag, Berlin, third
  edition, 2004.

\bibitem{neukirch92}
J.~Neukirch.
\newblock {\em Algebraic number theory}, volume 322 of {\em Grundlehren der
  Mathematischen Wissenschaften [Fundamental Principles of Mathematical
  Sciences]}.
\newblock Springer-Verlag, Berlin, 1999.
\newblock Translated from the 1992 German original and with a note by Norbert
  Schappacher, With a foreword by G. Harder.

\bibitem{rehm77-1}
H.~P. Rehm.
\newblock Multiplicative ideal theory of noncommutative {K}rull pairs. {I}.
  {M}odule systems, {K}rull ring-type chain conditions, and application to
  two-sided ideals.
\newblock {\em J. Algebra}, 48(1):150--165, 1977.

\bibitem{rehm77-2}
H.~P. Rehm.
\newblock Multiplicative ideal theory of noncommutative {K}rull pairs. {II}.
  {F}actorization of one-sided ideals.
\newblock {\em J. Algebra}, 48(1):166--181, 1977.

\bibitem{reiner75}
I.~Reiner.
\newblock {\em Maximal orders}.
\newblock Academic Press [A subsidiary of Harcourt Brace Jovanovich,
  Publishers], London-New York, 1975.
\newblock London Mathematical Society Monographs, No. 5.

\bibitem{remak54}
R.~Remak.
\newblock \"{U}ber algebraische {Z}ahlk\"orper mit schwachem {E}inheitsdefekt.
\newblock {\em Compositio Math.}, 12:35--80, 1954.

\bibitem{schmid09}
W.~A. Schmid.
\newblock A realization theorem for sets of lengths.
\newblock {\em \href{http://dx.doi.org/10.1016/j.jnt.2008.10.019}{J. Number
  Theory}}, 129(5):990--999, 2009.

\bibitem{schulze-pillot04}
R.~Schulze-Pillot.
\newblock \href{http://dx.doi.org/10.1090/conm/344/06226}{Representation by
  integral quadratic forms---a survey}.
\newblock In {\em Algebraic and arithmetic theory of quadratic forms}, volume
  344 of {\em Contemp. Math.}, pages 303--321. Amer. Math. Soc., Providence,
  RI, 2004.

\bibitem{steinberg10}
S.~A. Steinberg.
\newblock \href{http://dx.doi.org/10.1007/978-1-4419-1721-8}{{\em
  Lattice-ordered rings and modules}}.
\newblock Springer, New York, 2010.

\bibitem{swan80}
R.~G. Swan.
\newblock Strong approximation and locally free modules.
\newblock In {\em Ring theory and algebra, {III} ({P}roc. {T}hird {C}onf.,
  {U}niv. {O}klahoma, {N}orman, {O}kla., 1979)}, volume~55 of {\em Lecture
  Notes in Pure and Appl. Math.}, pages 153--223. Dekker, New York, 1980.

\bibitem{vigneras80}
M.-F. Vign{\'e}ras.
\newblock \href{http://dx.doi.org/10.1007/BFb0091027}{{\em Arithm\'etique des
  alg\`ebres de quaternions}}, volume 800 of {\em Lecture Notes in
  Mathematics}.
\newblock Springer, Berlin, 1980.

\bibitem{washington97}
L.~C. Washington.
\newblock \href{http://dx.doi.org/10.1007/978-1-4612-1934-7}{{\em Introduction
  to cyclotomic fields}}, volume~83 of {\em Graduate Texts in Mathematics}.
\newblock Springer-Verlag, New York, second edition, 1997.

\end{thebibliography}

\end{document}